\newtheorem{definition}{Definition}[subsection]
\newtheorem{theorem}{Theorem}[section]
\newtheorem{proposition}{Proposition}[subsection]
\newtheorem{lemma}{Lemma}[subsection]
\newtheorem{corollary}{Corollary}[subsection]
\newtheorem{remark}{Remark}[subsection]
\renewcommand{\labelenumi}{(\roman{enumi})}
\newcommand{\gothic}{\mathfrak}
\renewcommand{\bold}[1]{\medskip \noindent {\bf #1 }\nopagebreak}
\long\def\symbolfootnote[#1]#2{\begingroup%
\def\thefootnote{\fnsymbol{footnote}}\footnote[#1]{#2}\endgroup}
\begin{document}

\title{Coarse differentiation and quasi-isometries of a class of solvable Lie groups II}
\date{}
\author{Irine Peng}
\maketitle

\begin{abstract}
In this paper, we continue with the results in \cite{Pg} and compute the group of
quasi-isometries for a subclass of split solvable unimodular Lie groups.  Consequently,
we show that any finitely generated group quasi-isometric to a member of the subclass has
to be polycyclic, and is virtually a lattice in an abelian-by-abelian solvable Lie group.
We also give an example of a unimodular solvable Lie group that is not quasi-isometric to
any finitely generated group, as well deduce some quasi-isometric rigidity results.
\end{abstract}

\tableofcontents

\section{Introduction}
A \emph{$(\kappa, C)$ quasi-isometry} $f$ between metric spaces $X$ and $Y$ is a map $f: X \rightarrow Y$ satisfying
\[ \frac{1}{\kappa} d(p,q) -C \leq d(f(p), f(q)) \leq \kappa d(p,q) + C \] \noindent with the additional property
that there is a number $D$ such that $Y$ is the $D$ neighborhood of $f(X)$.  Two quasi-isometries $f, g$ are
considered to be equivalent if there is a number $E>0$ such that $d(f(p), g(p)) \leq E$ for all $p \in X$. \smallskip

Let $G= \mathbb{R}^{m} \rtimes_{\varphi} \mathbb{R}^{n}$, $G'=\mathbb{R}^{m'} \rtimes_{\varphi'} \mathbb{R}^{n'}$ be connected,
simply connected non-degenerate unimodular split solvable groups (See section \ref{geometry of G} for definitions).  We say a map
from $G$ to $G'$ is \emph{standard}, if it splits as a product map that respects $\varphi$ and $\varphi'$
(See definition \ref{standard map}).  Also homomorphisms $\varphi$ is called
\emph{diagonalizable} if its image can be conjugated into the set of diagonal matrices.  The main result of this paper is the following
statement.\\

\textit{Theorem \ref{behaviors of phi} (abridged)}
\textit{Let $G$, $G'$ be non-degenerate, unimodular, split abelian-by-abelian solvable Lie groups, and $\phi: G \rightarrow G'$ a $\kappa, C$
quasi-isometry.  Then $\phi$ is bounded distance from a composition of a left translation and a standard
map.}

Consequently,
\textit{Corollary \ref{diagORnot}}
\textit{ If $\varphi$ is diagonalizable and $\varphi'$ isn't, then there is no quasi-isometry
between them.}

\begin{corollary} \label{QIgroup}
\[ \mathcal{QI}(G) = \left( \prod_{[\alpha]} Bilip(V_{[\alpha]}) \right) \rtimes \mbox{Sym}(G) \] \end{corollary}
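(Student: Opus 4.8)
\medskip
\noindent\emph{Proof sketch.} The plan is to build an explicit isomorphism $\Psi\colon\left(\prod_{[\alpha]}Bilip(V_{[\alpha]})\right)\rtimes\mbox{Sym}(G)\to\mathcal{QI}(G)$ and to use Theorem \ref{behaviors of phi} to see that it is onto. To define $\Psi$, I would send a pair consisting of a tuple $(f_{[\alpha]})_{[\alpha]}\in\prod_{[\alpha]}Bilip(V_{[\alpha]})$ and an element $\sigma\in\mbox{Sym}(G)$ to the bounded-distance class of the self-map of $G$ that acts on the $\mathbb{R}^n$-factor by the linear map determined by $\sigma$ and on $\mathbb{R}^m=\bigoplus_{[\alpha]}V_{[\alpha]}$ by permuting the root-space blocks according to $\sigma$ and then applying $f_{[\alpha]}$ on the $[\alpha]$-block. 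By Definition \ref{standard map} and the discussion in Section \ref{geometry of G} this is a standard map, hence a quasi-isometry of $G$, and the class it represents depends only on $(f_{[\alpha]})_{[\alpha]}$ and $\sigma$. Composing two such standard maps and reading off the effect block by block shows that $\Psi$ is a homomorphism, with $\mbox{Sym}(G)$ acting on $\prod_{[\alpha]}Bilip(V_{[\alpha]})$ by permuting the factors (and permuting coordinates inside a factor); this is exactly the semidirect-product multiplication on the left-hand side.

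For surjectivity, let $\phi\in\mathcal{QI}(G)$. By Theorem \ref{behaviors of phi}, $\phi$ is bounded distance from $L_g\circ\psi$ with $L_g$ a left translation, $g=(v,a)$, and $\psi$ standard. One checks directly that $L_g\circ\psi$ is bounded distance from a standard map: the $\mathbb{R}^n$-translation part of $L_g$ displaces every point of $G$ by a bounded amount, while on each block $V_{[\alpha]}$ the map $\varphi(a)$ acts by a homogeneous dilation of the quasi-metric (the functionals in the class $[\alpha]$ being mutually proportional) and $v$ acts by a translation, so these are absorbed into the block data. Since standard maps are closed under composition, $L_g\circ\psi$, and hence $\phi$, is bounded distance from a standard map, so $\phi$ lies in the image of $\Psi$.

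For injectivity, I would show that a standard map at bounded distance from the identity has all $f_{[\alpha]}$ equal to the identity and $\sigma$ trivial. This is the rigidity direction: each block $V_{[\alpha]}$, with its natural quasi-metric, is the piece of the boundary of $G$ attached to the class $[\alpha]$; the map $f_{[\alpha]}$ is recovered up to bounded error from the way a representative of $\phi$ distorts the corresponding foliation of $G$; and a nontrivial $\sigma$ would identify foliations whose asymptotic geometry genuinely differs. Hence a bounded perturbation of the identity cannot alter this data, and $\Psi$ is injective; together with the previous steps, $\Psi$ is an isomorphism.

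I expect injectivity to be the main obstacle: it amounts to a converse of Theorem \ref{behaviors of phi} restricted to standard maps --- that distinct data $(f_{[\alpha]},\sigma)$ always produce standard maps at infinite distance from one another --- which requires a precise description of how each $V_{[\alpha]}$ sits at infinity and of the unavoidable metric distortion caused by a nontrivial $f_{[\alpha]}$ or a nontrivial $\sigma$. The surjectivity and homomorphism verifications, by contrast, are essentially bookkeeping once Definition \ref{standard map} and Theorem \ref{behaviors of phi} are in hand.
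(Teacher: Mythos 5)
The paper states this corollary without proof, so you are supplying the argument yourself, and your overall strategy --- define $\Psi$ on pairs $\bigl((f_{[\alpha]}),\sigma\bigr)$ via a standard map, get surjectivity from Theorem \ref{behaviors of phi} and the fact that left translations are themselves standard maps, and check injectivity --- is the natural one and is essentially what the result amounts to. Two caveats, though. First, $Bilip(V_{[\alpha]})$ has to be read as bilipschitz with respect to the boundary quasi-metric $D_{[\alpha]}$ of Section \ref{shadow+slab} (equivalently, maps respecting the graded/upper-triangular structure in Proposition \ref{QI preserving flats}(v)), not with respect to the Euclidean metric on $V_{[\alpha]}$; otherwise the standard map $\Psi\bigl((f_{[\alpha]}),\sigma\bigr)$ need not even be a quasi-isometry, and the target would not match the quasi-similarity structure recorded in parts (v)--(vi) of that proposition.

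Second, and more substantively, you present injectivity as a deep ``converse of Theorem \ref{behaviors of phi}'' needing a precise analysis of how each $V_{[\alpha]}$ sits at infinity, but it is in fact a short computation from the coarse distance estimate of Section \ref{geometry of G}. Suppose standard maps $(g_1,f)$ and $(g_2,f)$ with $f$ linear and $f_{*}=\sigma$ are within $M$ of each other; fix $\mathbf{x}$, and look at how close the flats through $g_1(\mathbf{x})$ and $g_2(\mathbf{x})$ are at height $f(\mathbf{t})$. By the description of where two flats come together, that region is contained in $\bigcap_{\alpha}\alpha^{-1}\bigl[U_{\alpha}\bigl(|(g_1(\mathbf{x})-g_2(\mathbf{x}))_{\alpha}|\bigr),\infty\bigr)$; whenever some component $(g_1(\mathbf{x})-g_2(\mathbf{x}))_{\alpha}$ is nonzero, letting $\alpha(\mathbf{t})\to-\infty$ (possible by unimodularity and non-degeneracy) forces $d\bigl((g_1(\mathbf{x}),f(\mathbf{t})),(g_2(\mathbf{x}),f(\mathbf{t}))\bigr)\to\infty$. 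Hence $g_1=g_2$ exactly, not merely ``up to bounded error'' as your sketch suggests. Likewise distinct permutations $\sigma_1\neq\sigma_2$ send a fixed $[\alpha]$-horocycle to horocycles of genuinely different root classes, which are at unbounded Hausdorff distance, so $\sigma$ is also pinned down. Spelling out this computation, rather than appealing to a vaguely stated rigidity, is what closes the gap you flagged.
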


\noindent Here $[\alpha]$ is an equivalence class of roots and $Sym(G)$ is a finite group,
analogous to the Weyl group in reductive Lie groups, that that reflects
the symmetries of $G$. (See section \ref{geometry of G}) \smallskip

When $\varphi$ is diagonalizable, as an application the work by Dymarz \cite{Dy} on quasi-conformal maps on the boundary of $G$,
we have  \begin{corollary} \label{showing polycyclic}
In the case that $\varphi$ is diagonalizable, if $\Gamma$ is a finitely generated group quasi-isometric to a $G$,
then $\Gamma$ is virtually polycyclic.  \end{corollary}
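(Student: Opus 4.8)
The plan is to promote the quasi-isometry $f\colon\Gamma\to G$ to a geometric quasi-action on $G$ and then rigidify it using the description of $\mathcal{QI}(G)$. Picking a coarse inverse $\bar f$ and conjugating the left-translation action of $\Gamma$ on itself, every $\gamma\in\Gamma$ acts on $G$ as the quasi-isometry $f\circ L_\gamma\circ\bar f$, with quasi-isometry constants independent of $\gamma$, and $\gamma\mapsto[f\circ L_\gamma\circ\bar f]$ (class in $\mathcal{QI}(G)$) defines a homomorphism $\Gamma\to\mathcal{QI}(G)$; moreover this quasi-action is proper and cobounded because $f$ is a quasi-isometry. By Theorem~\ref{behaviors of phi} each $f\circ L_\gamma\circ\bar f$ is a bounded distance from a left translation followed by a standard map, so by Corollary~\ref{QIgroup} the homomorphism lands in $\bigl(\prod_{[\alpha]}\mathrm{Bilip}(V_{[\alpha]})\bigr)\rtimes\mathrm{Sym}(G)$. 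Since $\mathrm{Sym}(G)$ is finite, after replacing $\Gamma$ by the finite-index subgroup $\Gamma_0$ projecting trivially to it we obtain a homomorphism $\rho\colon\Gamma_0\to\prod_{[\alpha]}\mathrm{Bilip}(V_{[\alpha]})$; because $\varphi$ is diagonalizable, each $V_{[\alpha]}$ is an ordinary Euclidean space, so $\rho$ is an action of $\Gamma_0$ on each $V_{[\alpha]}$ by bi-Lipschitz homeomorphisms.

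Next I rigidify $\rho$. The properness and coboundedness of the geometric quasi-action, together with the height-respecting normal form of Theorem~\ref{behaviors of phi}, force the induced action of $\Gamma_0$ on the visual boundary of $G$ to be \emph{uniformly quasiconformal} --- quasiconformality, unlike the bi-Lipschitz constant, being unaffected by the unbounded scalings coming from translation in the $\varphi$-direction. This is exactly the setting of Dymarz's boundary analysis \cite{Dy}: a Tukia--Sullivan-type theorem conjugates the action on each factor $V_{[\alpha]}$ quasiconformally to an action by similarities, and the existence of a discrete cobounded group of such similarities forces the spectral data of $\varphi$ along the root spaces to be of arithmetic type. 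One may then build an abelian-by-abelian group $G''=\mathbb{R}^{m}\rtimes_{\varphi''}\mathbb{R}^{n}$, with $\varphi''$ obtained from $\varphi$ by this rescaling, which admits a cocompact lattice, and unwinding the conjugacy exhibits a finite-index subgroup of $\Gamma_0$ as a lattice in a simply connected solvable Lie group. Since lattices in simply connected solvable Lie groups are polycyclic (a theorem of Mostow), $\Gamma$ has a finite-index polycyclic subgroup, i.e.\ $\Gamma$ is virtually polycyclic.

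The crux is the rigidification in the second paragraph, and essentially all of its hard input is supplied by \cite{Dy}. Three points genuinely require work: (i) checking that the boundary action is uniformly \emph{quasiconformal}, which uses the properness and coboundedness of the geometric quasi-action and the standard-map normal form --- not merely the bare fact that each element of $\Gamma_0$ lies in $\mathrm{Bilip}(V_{[\alpha]})$; (ii) the quasiconformal rigidity that conjugates each factor action to a similarity action, which is delicate precisely in the one-dimensional factors, where $\mathrm{Bilip}(\mathbb{R})$ is not tame and one must exploit the discreteness and coboundedness rather than pure quasiconformal analysis; and (iii) extracting from the rigidified similarity action the arithmeticity of the spectrum of $\varphi$ needed for $G''$ to possess a lattice. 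By contrast, the extraction of the boundary quasi-action from Theorem~\ref{behaviors of phi} and Corollary~\ref{QIgroup} in the first paragraph, and the passage from ``lattice in a simply connected solvable Lie group'' to ``virtually polycyclic'', are routine.
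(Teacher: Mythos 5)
Your first paragraph is essentially the paper's opening move and is fine: conjugate the left-regular action of $\Gamma$ by the quasi-isometry to get a uniform subgroup $\tilde\Gamma\subset QI(G)$, use Theorem~\ref{behaviors of phi} to identify each $\tilde L_\gamma$ (up to bounded error) with a left translation composed with a standard map, and pass to the finite-index kernel $\tilde\Gamma_0$ of the induced homomorphism to $\mathrm{Sym}(G)$.

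The second paragraph has two real problems.  First, the correct framework is \emph{quasi-similarity}, not quasiconformality: Proposition~\ref{QI preserving flats}(vi) says a quasi-isometry of the type at hand induces a quasi-similarity of each $(\partial^{-}_{[\alpha]},D_{[\alpha]})$, and because all $\tilde L_\gamma$ have the same QI constants, the image of $\tilde\Gamma_0$ is a \emph{uniform} subgroup of $\prod_{[\alpha]}QSim(\partial^{-}_{[\alpha]})$ --- the unbounded scaling $N$ is built into the quasi-similarity notion and does not destroy uniformity of the distortion constant $K$. Dymarz's Theorem~2 conjugates a uniform group of quasi-similarities to almost similarities factor by factor; it is not Tukia--Sullivan quasiconformal rigidity, and recasting it as such blurs exactly the one-dimensional case you yourself flag as delicate.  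Second, and more seriously, your final step --- read off ``arithmeticity of the spectrum,'' manufacture a lattice-bearing group $G''$, exhibit $\Gamma_0$ as virtually a lattice, and then invoke Mostow --- is left as a sketch, and it is precisely the hard content of the later Corollary~\ref{showing lattice in almost me}, which the paper proves \emph{using} Corollary~\ref{showing polycyclic} as input. To avoid circularity you would have to redo that argument from scratch without the polycyclic conclusion.  The paper instead takes a shorter route: after conjugating into $\prod_{[\alpha]}ASim(\partial^{-}_{[\alpha]})$, it proves a small but essential Claim that the similarity constants $t_{[\Xi],\gamma}$ are coherent --- they all equal $e^{\Xi_0(\mathbf s_\gamma)}$ for a single $\mathbf s_\gamma\in\mathbf A$, which is exactly where unimodularity (the roots summing to zero) enters; this produces a homomorphism $h\colon\hat\Gamma_0\to\mathbf A$, and then Theorem~18 of \cite{Dy}, applied to the proper quasi-action of $\hat\Gamma_0$ and $\ker h$ on $\prod_{[\Xi]}(\partial^{-}_{[\Xi]},D_{[\Xi]})$, gives virtual polycyclicity directly. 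Your outline omits the coherence Claim, does not use unimodularity at all, and replaces the decisive citation of Dymarz's Theorem~18 with a lattice construction that is neither carried out nor available at this stage of the paper.
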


\subsection{Proof outline}
Our starting point is Theorem \ref{exisence of standard maps in small boxes}, and our
first task is to show that the $\mathbf{A}'$ part of the standard maps (See Definition \ref{standard map}) $f_{i}$ are affine.  This is done
in Section 3, where will see that the linear part is a scalar multiple of a finite order
element in $O(n)$ (where $n$ is the rank of $G$).  We also give interpretations of the linear and constant part of
$f_{i}$ in terms of properties of $G'$ and measure of certain sets in the box where $f_{i}$ was partially defined.
In Section 4, we show that the linear part of the $f_{i}$'s in different boxes have to be  the same up to scalar
multiple in the case that that the rank of $G$ is 2 or higher.  The rank $1$ case is the
same as the content in \cite{EFW2}.  The proof for higher rank case basically consists of
many rank $1$ argument as appeared in \cite{EFW2}.  In the last section, we put all
theses partially defined standard maps together to produce a splitting of the original
quasi-isometry.


\bold{Acknowledgement} I would like to thank Alex Eskin and David Fisher for their
incredible patience and assistance.  I also wish to thank Tullia Dymarz for helpful
conversations and coordinating results of her paper \cite{Dy} with this one, and to Mikhail Ershov for finding
me the reference containing the example of a solvable Lie group that admits no lattices.

\section{Preliminaries}
Here we recall the settings from \cite{Pg} and define new terms that will be used in this paper.

\subsection{Geometry of a certain class of solvable Lie groups} \label{geometry of G}
\bold{Non-degenerate, split abelian-by-abelian solvable Lie groups }
Let $\gothic{g}$ be a (real) solvable Lie algebra, and $\gothic{a}$ be a Cartan
subalgebra.  Then there are finitely many non-zero linear functionals $\alpha_{i}: \gothic{a}
\rightarrow \mathbb{C}$ called \emph{roots}, such that

\[ \gothic{g} = \gothic{a} \oplus \bigoplus_{\alpha_{i}} \gothic{g}_{\alpha_{i}} \]

\noindent where $\gothic{g}_{\alpha_{i}}=\{ x \in \gothic{g}: \forall t \in \gothic{a}, \exists n, \mbox{ such that }
(ad(t)-\alpha_{i}(t)Id)^{n}(x) = 0 \}$, $Id$ is the identity map on $\gothic{g}$, and
$ad: \gothic{g} \rightarrow Der_{\mathbb{R}}(\gothic{g})$ is the adjoint representation.
Let $\triangle$ denotes for the set of roots. Then $Aut(\gothic{a})$ acts on $\triangle$ in a natural way.  We define $Perm(\gothic{g})$ to
be the subgroup consisting $A \in Aut(\gothic{a})$ such that
\begin{enumerate}
\item it leaves the set of roots invariant, i.e. $A \triangle =\triangle$.

\item for every $\alpha \in \triangle$, dim$\gothic{g}_{\alpha \circ
A}=$dim$\gothic{g}_{\alpha}$.  \end{enumerate}

In this way, elements of $Perm(\gothic{g})$ induces a permutation on the set $\triangle$,
and we define $Sym(\gothic{g})$ to be the image of $Perm(\gothic{g})$ in the group of
permutations of $\triangle$.  For a generic $\gothic{g}$, its $Perm(\gothic{g})$ is
trivial.

We say $\gothic{g}$ is \emph{split abelian-by-abelian} if $\gothic{g}$ is a semidirect product
of $\gothic{a}$ and $\bigoplus_{i} \gothic{g}_{\alpha_{i}}$, and both are abelian Lie algebras;
\emph{unimodular} if the the roots sum up to zero; and \emph{non-degenerate} if the roots span $\gothic{a}^{*}$.  In particular,
non-degenerate means that each $\alpha_{i}$ is real-valued, and the number of roots is at least the dimension of
$\gothic{a}$.  Being unimodular is the same as saying that for every $t \in \gothic{a}$, the trace of
$ad(t)$ is zero.  We extend these definitions to a Lie group if its Lie algebra has these
properties, and write $Perm(G)$, $Sym(G)$ to mean $Perm(\gothic{g})$ and $Sym(\gothic{g})$ where $\gothic{g}$
is the Lie algebra of $G$. \smallskip

Therefore a connected, simply connected solvable Lie group $G$ that is non-degenerate, split abelian-by-abelian necessary takes the form
$G= \mathbf{H} \rtimes_{\varphi} \mathbf{A}$ such that  \begin{enumerate}
\item both $\mathbf{A}$ and $\mathbf{H}$ are abelian Lie groups.

\item $\varphi: \mathbf{A} \rightarrow Aut(\mathbf{H})$ is injective

\item there are finitely many $\alpha_{i} \in \mathbf{A}^{*} \backslash 0$ which together span
$\mathbf{A}^{*}$, and a decomposition of $\mathbf{H}=\oplus_{i} V_{\alpha_{i}}$

\item there is a basis $\mathcal{B}$ of $\mathbf{H}$ whose intersection with each of
$V_{\alpha_{i}}$ constitute a basis of $V_{\alpha_{i}}$, such that for each $\mathbf{t} \in \mathbf{A}$, $\varphi(\mathbf{t})$ with respect
to $\mathcal{B}$ is a matrix consists of blocks, one for each $V_{\alpha_{i}}$, of the form $e^{\alpha_{i}(\mathbf{t})} N(\alpha_{i}(t))$,
where $N(\alpha_{i}(t))$ is an upper triangular with 1's on the diagonal and whose off-diagonal entries are polynomials of $\alpha_{i}(t)$.
\smallskip  If in addition, $G$ is unimodular, then $\varphi(\mathbf{t})$ has determinant 1 for all
$\mathbf{t} \in \mathbf{A}$.  \end{enumerate}
\noindent The \emph{rank} of a non-degenerate, split abelian-by-abelian group $G$ is
defined to be the dimension of $\mathbf{A}$, and by a result of Cornulier \cite{Cornulier}, if two such groups are quasi-isometric,
then they have the same rank. \smallskip

By abuse of notation, we call $\triangle$ roots of $G$ as well and coordinatize its points as $((\mathbf{x}_{\alpha})_{\alpha \in \triangle}, \mathbf{t})$,
where $\mathbf{x}_{\alpha}=(x_{1,\alpha}, x_{2, \alpha}, \cdots, x_{dim(V_{\alpha}),\alpha} ) \in V_{\alpha}$, $\mathbf{t} \in
\mathbf{A}$.  A left invariant Finsler metric that is quasi-isometric to a left invariant
Riemannian is given by:

\begin{eqnarray*}
d \mathbf{t} +  \sum_{\alpha \in \triangle} e^{-\alpha(\mathbf{t})} \left( d \mathbf{x}_{\alpha} +
\sum_{j} P_{j,\alpha}(\alpha(\mathbf{t}))dx_{j,\alpha} \right) \end{eqnarray*} \noindent where $P_{j,\alpha}$ is a polynomial.
The following consequence is immediate.

\begin{lemma} \label{QI embedding}
If $G$ is non-degenerate, split abelian-by-abelian, then it can be QI embedded into $\prod_{\alpha \in \triangle}
\mathit{H}_{\mbox{dim}(V_{\alpha})+1}$, where $\mathit{H}_{s+1}=\mathbb{R}^{s}
\rtimes_{\psi} \mathbb{R}$ is a non-unimodular solvable Lie group determined by $\psi(t)=e^{t}N(t)$, where $N(t)$ is a
nilpotent matrix (upper triangular with 1's on the diagonal) with polynomial entries, equipped with a left-invariant Finsler
metric given by
\[ dt + e^{-t} \left( d\mathbf{x} + \sum_{j}P_{j}(t) dx_{j} \right) \] \noindent where $P_{j}(t)$ is
a polynomial. \end{lemma}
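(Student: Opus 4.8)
The plan is to exhibit an explicit quasi-isometric embedding by using the coordinates $((\mathbf{x}_{\alpha})_{\alpha\in\triangle},\mathbf{t})$ on $G$ and the left-invariant Finsler metric displayed just before the statement. First I would record the candidate map: send a point $((\mathbf{x}_{\alpha})_{\alpha},\mathbf{t})\in G$ to the tuple $\big((\mathbf{x}_{\alpha}, t_{\alpha})\big)_{\alpha\in\triangle}\in\prod_{\alpha}\mathit{H}_{\dim(V_{\alpha})+1}$, where in the $\alpha$-factor the $\mathbb{R}$-coordinate is $t_{\alpha}:=\alpha(\mathbf{t})$ and the defining homomorphism of $\mathit{H}_{\dim(V_{\alpha})+1}$ is $\psi(t_{\alpha})=e^{t_{\alpha}}N(t_{\alpha})$ with $N$ the nilpotent block occurring in $\varphi(\mathbf{t})|_{V_{\alpha}}$. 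Because the $\alpha_i$ span $\mathbf{A}^{*}$ (non-degeneracy), the linear map $\mathbf{t}\mapsto(\alpha(\mathbf{t}))_{\alpha\in\triangle}$ from $\mathbf{A}\cong\mathbb{R}^{n}$ to $\mathbb{R}^{|\triangle|}$ is injective, so this map is a genuine injection; the only subtlety is that it is not surjective onto the $\mathbf{A}$-directions of the product.

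Next I would estimate distances. Equip the product $\prod_{\alpha}\mathit{H}_{\dim(V_{\alpha})+1}$ with, say, the $\ell^{1}$ (or any $\ell^{p}$) sum of the left-invariant Finsler metrics $dt_{\alpha}+e^{-t_{\alpha}}\big(d\mathbf{x}_{\alpha}+\sum_{j}P_{j,\alpha}(t_{\alpha})\,dx_{j,\alpha}\big)$ on the factors; all such choices are bi-Lipschitz equivalent. Pulling this product metric back along the candidate map gives, infinitesimally, $\sum_{\alpha}|\alpha(\mathbf{t})'| + \sum_{\alpha}e^{-\alpha(\mathbf{t})}\big(\|d\mathbf{x}_{\alpha}\|+\cdots\big)$, which differs from the Finsler metric on $G$ displayed above only in the first ("$\mathbf{A}$-direction") term: on $G$ that term is $\|d\mathbf{t}\|$, whereas here it is $\sum_{\alpha}|d(\alpha(\mathbf{t}))|$. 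The key linear-algebra fact is that $\mathbf{t}\mapsto\sum_{\alpha}|\alpha(\mathbf{t})|$ is a norm on $\mathbf{A}$ (again by non-degeneracy: no nonzero $\mathbf{t}$ is killed by every root), hence comparable to $\|\mathbf{t}\|$ up to a multiplicative constant $\kappa_{0}$ depending only on the roots. Therefore the pulled-back metric and the original metric on $G$ are bi-Lipschitz, and integrating along paths turns this into the two-sided quasi-isometry inequality $\tfrac1\kappa d_{G}(p,q)-C\le d(f(p),f(q))\le \kappa d_{G}(p,q)+C$; in fact one gets a bi-Lipschitz embedding of $G$, so even $C=0$ works, though a QI embedding is all that is claimed.

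Finally, since a QI \emph{embedding} is required rather than a QI, I do not need to worry about the image being coarsely dense; I simply note that $f(G)$ need not be coarsely dense in the product (the $\mathbf{A}$-directions of the product that are transverse to $(\alpha(\mathbf{t}))_{\alpha}$ are missed) and that this is fine for the statement. I would also remark that, by the standing hypotheses, each block $N(\alpha_i(\mathbf{t}))|_{V_{\alpha_i}}$ depends polynomially on the single scalar $\alpha_i(\mathbf{t})$, which is exactly why the target factor can be taken to be the $(s+1)$-dimensional group $\mathit{H}_{s+1}=\mathbb{R}^{s}\rtimes_{\psi}\mathbb{R}$ with $\psi(t)=e^{t}N(t)$ and the advertised Finsler metric — i.e. the block structure of $\varphi$ is precisely what makes the factorization possible.

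The main obstacle, and the only non-formal point, is the comparison of the $\mathbf{A}$-direction terms: one must check carefully that replacing the norm $\|d\mathbf{t}\|$ by $\sum_{\alpha}|d(\alpha(\mathbf{t}))|$ only changes the metric by a bounded multiplicative factor, and that this infinitesimal comparison genuinely upgrades to a global quasi-isometry estimate (this is where one uses that both metrics are length metrics, so path-length comparison suffices). Everything else — writing down the map, matching the exponential conjugation factors $e^{-\alpha(\mathbf{t})}$ versus $e^{-t_{\alpha}}$, and handling the nilpotent polynomial parts $P_{j,\alpha}$ — is a direct unwinding of the coordinates and the displayed Finsler metrics.
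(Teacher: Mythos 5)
Your map, the non-degeneracy observation making $\mathbf{t}\mapsto(\alpha(\mathbf{t}))_{\alpha}$ injective, and the equivalence of the norms $\|\cdot\|$ and $\sum_{\alpha}|\alpha(\cdot)|$ on $\mathbf{A}$ are all exactly what is needed, and this is surely the intended map (the paper offers no proof beyond displaying the metric). The gap is the lower bound $\tfrac{1}{\kappa}d_{G}(p,q)-C\le d(f(p),f(q))$, which does \emph{not} follow from the infinitesimal bi-Lipschitz comparison of Finsler forms; the sentence ``both metrics are length metrics, so path-length comparison suffices'' is precisely where this is elided. Comparability of $f^{*}(\text{product form})$ with the form on $G$ shows $f$ is Lipschitz and that the \emph{intrinsic} path metric of $f(G)$ is bi-Lipschitz to $d_{G}$; but the product distance between $f(p)$ and $f(q)$ is an infimum over \emph{all} paths in $\prod_{\alpha}\mathit{H}_{\dim V_{\alpha}+1}$, including paths that leave $f(G)$ and shortcut through the extra $\mathbf{A}$-directions you yourself note are not hit. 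An immersion with everywhere bi-Lipschitz differential need not be globally bi-Lipschitz (a long U-shaped arc in $\mathbb{R}^{2}$ is the elementary counterexample), so the infinitesimal comparison alone is genuinely insufficient for the lower bound, even after ``integrating along paths.''

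To close the gap one must show that shortcuts through the product do not help up to QI constants. The direct route is to prove $d_{G}(p,q)\lesssim \sum_{\alpha} d_{\mathit{H}_{\dim V_{\alpha}+1}}\big(\pi_{\alpha}(p),\pi_{\alpha}(q)\big)+O(1)$ by exhibiting a path in $G$: move $\mathbf{t}(p)$ to $\mathbf{t}(q)$ along a segment (cost $\lesssim\sum_{\alpha}|\alpha(\Delta\mathbf{t})|$, by the norm equivalence), then for each $\alpha$ in turn climb in a direction on which $\alpha$ is positive up to height $\approx U_{Q_{\alpha}}(|\Delta\mathbf{x}_{\alpha}|)$, move $\mathbf{x}_{\alpha}$, and descend; by the distance formula (\ref{distance}) and property (\ref{property of U}) of $U_{Q}$, each such excursion costs $\lesssim$ the corresponding factor distance plus $|\alpha(\Delta\mathbf{t})|$. (The naive linear projection of the coordinates $(t_{\alpha})_{\alpha}$ onto the subspace $\{(\alpha(\mathbf{t}))_{\alpha}:\mathbf{t}\in\mathbf{A}\}$ is \emph{not} Lipschitz for the displayed Finsler metrics, since lowering a single $t_{\alpha}$ increases the cost of $\mathbf{x}_{\alpha}$-motion, so a cheap retraction argument does not substitute.) Everything else in your proposal is correct and matches the structure the paper sets up.
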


\begin{remark} When $\psi(t)$ is diagonal, $\mathit{H}_{s+1}$ is just the usual hyperbolic space. \end{remark}

To understand the geometry of $\mathit{H}_{s+1}$ better, we first note that the metric is bilipchitz
to one given by $dt + e^{-t}(1+\max_{j}P_{j}(t)) d \mathbf{x}$, which is quasi-isometric to
one given by $dt + e^{-t} Q(t) d\mathbf{x}$ for some polynomial $Q(t)$.  So a function
q.i. to the metric on $\mathit{H}_{s+1}$ is the following
\begin{equation}\label{distance}
d((x_{1},t_{1}), (x_{2},t_{2})) = \left \{ \begin{array}{ll}
                                     |t_{1}-t_{2}| & \mbox{ if $e^{-t_{i}}Q(t_{i}) |x_{1}-x_{2}| \leq 1 $ for some $i=1,2$};\\
                               U_{Q}(|x_{1}-x_{2}|) - (t_{1}+ t_{2}) & \mbox{ otherwise} \end{array} \right. \end{equation}
\noindent where $U_{Q}(|x_{1}-x_{2}|)=t_{0}$ satisfies
\[ e^{-t_{0}} Q(t_{0}) |x_{1}-x_{2}| = 1 \]

Since exponential grows faster than any polynomials, the function $U_{Q}$ has the
following property:
\begin{equation} \label{property of U}
\ln(x) - C_{Q} \leq    U_{Q}(x)    \leq    2 \ln(x) + C_{Q} \end{equation} \noindent for some constant $C$ depends
only on the polynomial $Q$. \medskip

Back to the description of $G$, we declare two roots equivalent if they are positive multiples of each other, and write $[\Xi]$ for the equivalence
class containing $\Xi \in \triangle$.  Moreover, for $\Xi_{i}, \Xi_{2} \in [\Xi]$, we say $\Xi_{1}$ less than $\Xi_{2}$
if $\Xi_{2}/ \Xi_{1} > 1$.  This makes sense because all roots in a root class are positive multiples of each other.  A left
translate of $V_{[\Xi]}=\oplus_{\sigma \in [\Xi]} V_{\sigma}$ will be called a \emph{horocycle of root class $[\Xi]$}.

A left translate of $\mathbf{H}$, or a subset of it, is called a
\emph{flat}.  For $p=(x_{\alpha})_{\alpha}$, $q=(y_{\alpha})_{\alpha}$ points in
$\mathbf{H}$, we compute subsets of $p \mathbf{H}$ and $q \mathbf{H}$ that are within
distance 1 of each other according to the embedded metric in Lemma \ref{QI embedding}, as the $p$ and $q$ translate of the
subset

\[  \bigcap_{\alpha \in \triangle : \ln(|x_{\alpha}-y_{\alpha}|) \geq 1}  \alpha^{-1} [U_{\alpha}(|x_{\alpha}-y_{\alpha}|), \infty ]
\subset \mathbf{A} \]

\noindent Since the roots sum up to zero in a non-degenerate, unimodular, split
abelian-by-abelian group, the set where two flats come together can be empty,i.e. the two flats have
no intersection.  If it is not empty, then the equation above says that it is an unbounded convex subset of $\mathbf{A}$ bounded by hyperplanes
parallel to root kernels.

\begin{definition} \label{standard map}
Let $G$, $G'$ be non-degenerate, split abelian-by-abelian Lie groups.  A map from $G$ to $G'$ or a subset of them, is called
\emph{standard map} if it takes the form $f \times g$, where $g: \mathbf{H} \rightarrow
\mathbf{H}'$ sends foliation by root class horocycles of $G$ to that of $G'$, and $f: \mathbf{A} \rightarrow
\mathbf{A}$ sends foliations by root kernels of $G$ to that of $G'$.   We will often refer to $f$ as the $\mathbf{A}'$ part
 of a standard map.  \end{definition}

\begin{remark} Note that when $G$ has at least $rank(G)+1$ many root kernels,
the condition on $f$ means that $f$ is affine, and when $G$ is rank 1, the condition on $f$ is
empty. \end{remark}

\subsection{Notations}

\subsubsection{General remarks about neighborhoods}

\bold{Neighborhoods of a set }
We write $B(p,r)$ for the ball centered at $p$ of radius $r$, and $N_{c}(A)$ for the $c$ neighborhood of the set
$A$.  We also write $d_{H}(A,B)$ for the Hausdorff distance between two sets $A$ and $B$.  If $\Omega \subset \mathbb{R}^{k}$ is a
bounded compact set, and $r \in \mathbb{R}$, we write $r \Omega$ for the bounded compact set that is scaled from $\Omega$ with respect
to the barycenter of $\Omega$. \smallskip

Given a set $X$, a point $x_{0} \in X$, the \emph{$(\eta, C)$ linear neighborhood of $X$ with respect to
$x_{0}$} is the set $\{ y, s.t. \exists \hat{x} \in X, d(y,\hat{x})=d(y,X) \leq \eta d(\hat{x},x_{0}) + C$.
Equivalently it is the set $\bigcup_{x \in X}B(x,\eta d(x,x_{0})+C)$.  By $(\eta, C)$ linear neighborhood of a set $X$, we mean
the $(\eta, C)$ linear neighborhood of $X$ with respect to some $x_{0}\in X$. \smallskip

If a quasi-geodesic $\lambda$ is within $(\eta, C)$ linear (or just $\eta$-linear) neighborhood of a geodesic
segment $\gamma$, where $\eta \ll 1$ and $C \ll \eta |\lambda|$, then we say that $\lambda$
\emph{admits a geodesic approximation} by $\gamma$.

\subsubsection{Notations used in split abelian-by-abelian groups}

Let $G=\mathbf{H} \rtimes \mathbf{A}$ stands for a non-degenerate, unimodular, split abelian-by-abelian group.  Fix a point $p \in G$.
We define the following:

\begin{itemize}
\item For a root class $[\alpha]$, we write $\mathit{l}_{[\alpha]}=\sum_{\xi \in [\alpha]} \xi$ be the sum of all roots in the equivalence
class.  Let $R_{G}$ be the set of all $\mathit{l}_{[\alpha]}$'s for $[\alpha]$ ranging over all root classes of $G$, and $G_{R}$ for the group of linear maps
that leaves $R_{G}$ invariant.  As $R_{G}$ is finite, $G_{R}$ is a subgroup of $O(n)$.

\item For $\alpha \in \triangle$ a root, we write $\alpha_{0} \in \mathbf{A}^{*}$ for the positive
multiple of $\mathit{l}_{[\alpha]}$ of unit norm with respect to the usual Euclidean inner product
on $\mathbf{A}$ and $\vec{v}_{[\alpha]} \in \mathbf{A}$ for the dual of $\alpha_{0}$.

\item Given $\vec{v} \in \mathbf{A}$, we define \begin{eqnarray*}
W_{\vec{v}}^{+} &=& \oplus_{\Xi(\vec{v}) >0} V_{\Xi} \\
W_{\vec{v}}^{-} &=& \oplus_{\Xi(\vec{v}) <0} V_{\Xi} \\
W_{\vec{v}}^{0} &=& \oplus_{\Xi(\vec{v})=0} V_{\Xi} \end{eqnarray*} \noindent We say a vector
$\vec{v} \in \mathbf{A}$ is \emph{regular} if
$\Xi(\vec{v}) \not=0$ for all roots $\Xi$, and a linear functional
$\ell \in \mathbf{A}^{*}$ is regular if its dual $\vec{v}_{\ell}$ is regular.

\item Let $\ell \in \mathbf{A}^{*}$, we define $W_{\ell}^{+}$, $W_{\ell}^{-}$,
$W_{\ell}^{0}$, as $W_{\vec{v}_ {\ell}}^{+}$, $W_{\vec{v}_ {\ell}}^{-}$, $W_{\vec{v}_
{\ell}}^{0}$ respectively, where $\vec{v}_{\ell} \in \mathbf{A}$ is the dual of $\ell$.

\item By \emph{the walls based at p}, we mean the set $p \bigcup_{\Xi} ker(\Xi)$.

\item The root kernels partition the unit sphere in $\mathbf{A}$ into into convex subsets called \emph{chambers}.  For vectors $\vec{u}, \vec{v}$ in the
interior of the same chamber $\mathfrak{b}$, $W^{+}_{\vec{u}}=W^{+}_{\vec{v}}$, and we define $W^{+}_{\mathfrak{b}}$ for this common
subspace of $\mathbf{H}$ and $W^{-}_{\mathfrak{b}}$ for its complement in $\mathbf{H}$
according to the root space decomposition, so that $\mathbf{H}=W^{+}_{\mathfrak{b}} \oplus W^{-}_{\mathfrak{b}}$.

\item By a \emph{geodesic segment through p}, we mean a set $p \overline{AB}$,
where $\overline{AB}$ is a directed line segment in $\mathbf{A}$.  By direction of a directed line segment in Euclidean space, we mean
a unit vector with respect to the usual Euclidean metric, and by direction of $p \overline{AB}$ we mean the direction of $\overline{AB}$.

\item For $i=2,3,.. rank(G)-1$, by \emph{$i$-hyperplane through p}, we mean a set $p S$, where $S \subset \mathbf{A}$ is an $i$-dimensional
linear subspace or an intersection between an $i$-dimensional linear subspace with a convex set.

\item Let $\pi_{A}:G  \longrightarrow \mathbf{A}$ be the projection
onto the $\mathbf{A}$ factor as $( \mathbf{x}, \mathbf{t} ) \mapsto \mathbf{t}$.

\item For each root $\alpha_{i}$, define $\pi_{\alpha_{i}}: G \longrightarrow  V_{\alpha_{i}} \rtimes \langle \vec{v}_{\alpha_{i}} \rangle $ as
$(\mathbf{x}_{1}, \mathbf{x}_{2}, \cdots \mathbf{x}_{|\triangle|}) \mathbf{t} \mapsto (\mathbf{x}_{i},
\alpha_{i}(\mathbf{t})\vec{v}_{\alpha_{i}})$.  We refer to negatively curved spaces
$V_{\alpha_{i}} \rtimes \langle \vec{v}_{\alpha_{i}} \rangle$ or $V_{[\alpha]} \rtimes \langle \vec{v}_{\alpha} \rangle$
as weight (or root) hyperbolic spaces.

\item For a regular vector $\vec{v} \in \mathbf{A}$, we define
$\Pi_{\vec{v}}: G \longrightarrow  \mathbf{H} \rtimes \langle \vec{v} \rangle$ as
$(\mathbf{x}, \mathbf{t}) \mapsto (\mathbf{x}, \langle \vec{v}, \mathbf{t} \rangle \vec{v})$,
where $\langle , \rangle$ is the standard inner product on $\mathbf{A}$.  In the rank 1 space
$\mathbf{H} \rtimes \langle \vec{v} \rangle$, the height function is given as
$\mathbf{H} \rtimes \langle \vec{v} \rangle  \stackrel{\pi_{\vec{v}}}{\longrightarrow}
\langle \vec{v} \rangle$

\item For a regular liner functional $\ell \in \mathbf{A}^{*}$ with unit norm, we define
$\Pi_{\ell}: G \rightarrow W^{-}_{\ell} \rtimes \mathbb{R}\vec{v}_{\ell}$ as
$(\mathbf{x}, \vec{t}) \mapsto ([\mathbf{x}]_{W^{-}_{\ell}}, \ell(\mathbf{t})
\vec{v}_{\ell})$.  \end{itemize}

\bold{Box associated to a compact convex set} Fix a net $\gothic{n}$ of $G$.
For $\alpha \in \triangle$, let $b(r) \subset V_{\alpha}$ be maximal product of intervals of size $r$, $[0,r]^{\mbox{dim}(V_{\alpha})}$.
Let $\Omega \subset \mathbf{A}$ be a convex compact set with non-empty interior whose barycenter is the identity
of $\mathbf{A}$.  We define the \emph{box associated to $\Omega$}, $\mathbf{B}(\Omega)$,
as the set $\prod_{j=1}^{\sharp} b(e^{\max(\alpha_{j}(\Omega))}) \Omega$.  We write

\begin{itemize}
\item $\mathcal{L}(\Omega)[m]$ (or $\mathcal{L}(\mathbf{B}(\Omega))[m]$) for the set of geodesics in $\mathbf{B}(\Omega)$
whose $\pi_{A}$ images begin and end at points of $\partial \Omega$
such that the ratio between its length and the diameter of $\Omega$
lies in the interval $[1/m, m]$.  This is a set of $\gothic{n}$ approximations of such geodesic segments, so is finite.

\item For $i=2,3,\cdots ,n$, write $\mathcal{L}_{i}(\Omega)[m_{i}]$ (or $\mathcal{L}_{i}(\mathbf{B}(\Omega))[m_{i}]$) for the set of $i$
dimensional hyperplanes in $\mathbf{B}(\Omega)$ such that the ratio between its diameter and the diameter of $\Omega$ lies in the
interval $[1/m_{i}, m_{i}]$.  This is a set of $\gothic{n}$ approximations of such bounded subsets in $i$-hyperplanes, so again is finite.

\item $\mathcal{P}(\Omega)$ (or $\mathcal{P}(\mathbf{B}(\Omega))$) for the set of points in $\mathbf{B}(\Omega)$.
That is $\mathcal{P}(\Omega)=\mathbf{B}(\Omega) \cap \gothic{n}$, so a finite set too.

\item  Let $S$ be an element of   $\bigcup_{i=2}^{n} \mathcal{L}_{i}(\Omega) \bigcup \mathcal{L}(\Omega) \bigcup
\mathcal{P}(\Omega)$.  We write $L(S)$, $L_{i}(S)$ for subset of $\mathcal{L}(\Omega)$, $\mathcal{L}_{i}(\Omega)$ contained or
containing $S$, and $P(S)$ for the subset of $\mathcal{P}(\Omega)$ contained in $S$. \end{itemize}

The following lemma shows that $G$ is amenable.  \begin{lemma} \label{boxes are folner}
Let $\Omega \subset \mathbf{A}$ be compact convex with non-empty interior.  Then,
$\mathbf{B}(r \Omega)$, $r \rightarrow \infty$ is a F\"{o}lner sequence. The volume ratio between $N_{\epsilon}(\partial (\mathbf{B}(r \Omega)))$
and $\mathbf{B}(r \Omega)$ is $O(\epsilon/ diam( \mathbf{B}(r \Omega))$\symbolfootnote[2]{because
the ratio of volumes of $\partial \Omega$ to $\Omega$ is roughly $\frac{1}{diam(\Omega)}$, and $r diam(\Omega)=diam(r
\Omega)$}.\end{lemma}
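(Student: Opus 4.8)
The plan is to reduce the Følner property of $\mathbf{B}(r\Omega)$ to the elementary fact about convex bodies that the boundary-to-volume ratio of $r\Omega$ decays like $1/(r\,\mathrm{diam}(\Omega))$. First I would compute the volume of $\mathbf{B}(r\Omega)$ directly from its definition $\prod_{j=1}^{\sharp} b(e^{\max(\alpha_j(r\Omega))})\cdot r\Omega$: with respect to the coordinates $((\mathbf{x}_\alpha)_\alpha,\mathbf{t})$ the box is (up to the net-spacing normalization) a product of an $\mathbf{A}$-factor of Lebesgue measure $\mathrm{vol}(r\Omega)$ and, for each root $\alpha$, a cube in $V_\alpha$ of side $e^{\max(\alpha(r\Omega))}$; so $\mathrm{vol}(\mathbf{B}(r\Omega)) \asymp \mathrm{vol}(r\Omega)\cdot\prod_\alpha e^{\dim(V_\alpha)\max(\alpha(r\Omega))}$. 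The point of choosing the $V_\alpha$-side lengths this way is that a left translate by any fixed element $g_0$ acts on the $V_\alpha$ coordinate by the affine map $\varphi(\mathbf{t}_0)$ composed with a translation, and the expansion factor $e^{\alpha(\mathbf{t}_0)}N(\alpha(\mathbf{t}_0))$ is bounded on the $\mathbf{A}$-range $r\Omega+\mathbf{t}_0$ relative to the chosen cube size once $r$ is large — this is exactly what makes the family asymptotically translation-invariant.

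Next I would estimate the neighborhood $N_\epsilon(\partial\mathbf{B}(r\Omega))$. The boundary of the box decomposes into two kinds of faces: those coming from $\partial(r\Omega)$ in the $\mathbf{A}$-direction, and those coming from $\partial b(\cdot)$ in each $V_\alpha$-direction. For the first kind, the $\epsilon$-neighborhood has volume bounded by (volume of the $V$-cubes) times $\mathrm{vol}(N_\epsilon(\partial(r\Omega)))$, and since $\Omega$ is a fixed convex body, $\mathrm{vol}(N_\epsilon(\partial(r\Omega))) = O(\epsilon\cdot\mathrm{vol}(r\Omega)/(r\,\mathrm{diam}(\Omega)))$ by the standard surface-area bound for convex sets — this gives the stated $O(\epsilon/\mathrm{diam}(\mathbf{B}(r\Omega)))$ since $\mathrm{diam}(\mathbf{B}(r\Omega))\asymp r\,\mathrm{diam}(\Omega)$ (the $\mathbf{A}$-diameter dominates up to constants, the exponential $V_\alpha$-widths only contribute logarithmically to the metric via $U_Q$, cf. \eqref{property of U}). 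For the second kind of face, one uses that the $V_\alpha$-cube has side $e^{\max(\alpha(r\Omega))}$, which is exponentially large in $r$, so its $\epsilon$-neighborhood in the relevant (exponentially distorted) metric is again a vanishing fraction; here I would invoke the metric description \eqref{distance} to see that moving distance $\epsilon$ in the space changes the $V_\alpha$-coordinate by at most $e^{t}Q(t)^{-1}\epsilon$ with $t$ in the $\mathbf{A}$-range, which is comparable to the cube side, so the contribution is $O(\epsilon/\mathrm{diam})$ as well.

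The main obstacle I anticipate is bookkeeping the interaction between the exponentially-scaled $V_\alpha$-directions and the polynomial factors $N(\alpha(\mathbf{t}))$, $Q(t)$: one must check that left-translation by a bounded element really does map $\mathbf{B}(r\Omega)$ to within a set of comparable volume and bounded symmetric difference, uniformly as $r\to\infty$, rather than merely pointwise. The cleanest way around this is to fix the translating element $g_0$, observe that conjugation/translation distorts each $V_\alpha$-cube's side length only by the bounded factor $\|\varphi(\pi_A(g_0))|_{V_\alpha}\|^{\pm1}$ (bounded because $\pi_A(g_0)$ is fixed), absorb this into the implied constants, and note that the $\mathbf{A}$-factor $r\Omega$ is genuinely translation-invariant up to the bounded shift $\pi_A(g_0)$; then $|g_0\mathbf{B}(r\Omega)\,\triangle\,\mathbf{B}(r\Omega)|/|\mathbf{B}(r\Omega)|$ is controlled by the $\epsilon$-neighborhood estimate above with $\epsilon\asymp d(e,g_0)$, which tends to $0$. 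Finally, amenability of $G$ follows since $G$ is the increasing union of such boxes and the Følner sequence exhausts it.
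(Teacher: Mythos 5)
Your sketch of the boundary-volume estimate is essentially sound: after decomposing $\partial \mathbf{B}(r\Omega)$ into the faces transverse to the $\mathbf{A}$-factor and those transverse to each $V_\alpha$-cube, the $\mathbf{t}$-dependent width $\sim e^{\alpha(\mathbf{t})}\epsilon$ of the $\epsilon$-neighborhood of a $V_\alpha$-face integrates against the Haar measure to a contribution of order $\epsilon\,r^{n-1}e^{\max\alpha(r\Omega)}\prod_\beta e^{\dim V_\beta\max\beta(r\Omega)}$, which is $O(\epsilon/r)\cdot|\mathbf{B}(r\Omega)|$; the $\mathbf{A}$-faces give the same order directly from the convex-body surface-to-volume bound. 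So the second claim of the lemma comes out along the lines you propose.

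The passage from this estimate to the F\o lner property, however, is wrong as written. You want to control $|g_0\mathbf{B}(r\Omega)\,\triangle\,\mathbf{B}(r\Omega)|$ by $|N_{\epsilon}(\partial\mathbf{B}(r\Omega))|$ with $\epsilon\asymp d(e,g_0)$, which implicitly uses the inclusion $g_0\mathbf{B}\,\triangle\,\mathbf{B}\subset N_{d(e,g_0)}(\partial\mathbf{B})$. That inclusion fails for \emph{left} translation: if $p\in g_0\mathbf{B}\setminus\mathbf{B}$ then $g_0^{-1}p\in\mathbf{B}$, but $d(p,g_0^{-1}p)=d(g_0q,q)$ with $q=g_0^{-1}p$, which by left-invariance equals $d(q^{-1}g_0 q,e)$ --- the norm of a \emph{conjugate} of $g_0$, not of $g_0$ --- and in a split solvable group this is unbounded in $q$. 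The conclusion is in fact false: if $\pi_A(g_0)=\mathbf{t}_0\neq 0$, the $V_\alpha$-cube of $g_0\mathbf{B}(r\Omega)$ has side $e^{\alpha(\mathbf{t}_0)}e^{\max\alpha(r\Omega)}$, a fixed multiplicative distortion of the original for every $r$, so the discrepancy is a fixed positive \emph{fraction} of the volume rather than a bounded multiplicative error, and cannot be absorbed into implied constants. A direct computation gives $|g_0\mathbf{B}(r\Omega)\cap\mathbf{B}(r\Omega)|/|\mathbf{B}(r\Omega)|\to\prod_\alpha e^{\dim V_\alpha\min(0,\alpha(\mathbf{t}_0))}<1$; in $\mathrm{Sol}$ with $\Omega=[-1,1]$ and $g_0=(0,0,s)$ the symmetric-difference ratio tends to $2(1-e^{-s})>0$. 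So the boxes are not a left F\o lner sequence, and your argument cannot establish that.

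The fix is to use \emph{right} translation, which is what your boundary estimate actually controls. If $p\in\mathbf{B}g_0\,\triangle\,\mathbf{B}$ then exactly one of $p$, $pg_0^{-1}$ lies in $\mathbf{B}$, and by left-invariance $d(p,pg_0^{-1})=d(e,g_0)$, so $p\in N_{d(e,g_0)}(\partial\mathbf{B})$. Then your estimate gives $|\mathbf{B}(r\Omega)g_0\,\triangle\,\mathbf{B}(r\Omega)|/|\mathbf{B}(r\Omega)|=O\bigl(d(e,g_0)/diam(\mathbf{B}(r\Omega))\bigr)\to 0$, i.e.\ the boxes form a right F\o lner sequence --- which is the version compatible with the second part of the lemma and suffices for amenability of the unimodular group $G$. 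Replace the left translate $g_0\mathbf{B}(r\Omega)$ in your last paragraph by the right translate $\mathbf{B}(r\Omega)g_0$; the rest of your plan then goes through.
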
 \begin{proof} See Lemma \ref{Forste-boxes are folner} in \cite{Pg} \end{proof}

\begin{remark} \label{general folner}
The same calculation as above shows that for any set $\tilde{B}$ of the form $\Lambda \rtimes \Omega$,
where $\Lambda \subset \mathbf{H}$, $\Omega \subset \mathbf{A}$, the ratio of volumes of
$N_{\epsilon}(\partial \tilde{B})$ and that of $\tilde{B}$ is $O(\epsilon/
diam(\tilde{B}))$.   \end{remark}

\section{Shadows, slabs and coarsening} \label{shadow+slab}
We recall the following from \cite{Pg}.

\begin{theorem} \label{exisence of standard maps in small boxes}
Let $G$, $G'$ be non-degenerate, unimodular, split abelian-by-abelian Lie groups, and
$\phi:G \rightarrow G'$ be a $(\kappa, C)$ quasi-isometry.  Given $0< \delta, \eta < \tilde{\eta} < 1$, there exist numbers $L_{0}$, $m > 1$, $\varrho, \hat{\eta} < 1$ depending on $\delta$,
$\eta$, $\tilde{\eta}$ and $\kappa, C$ with the following properties: \smallskip

If $\Omega \subset \mathbf{A}$ is a product of intervals of equal size at least $mL_{0}$, then a tiling of $\mathbf{B}(\Omega)$ by isometric copies of $\mathbf{B}(\varrho \Omega)$
\[ \mathbf{B}(\Omega)= \bigsqcup_{i \in \mathbf{I}} \mathbf{B}(\omega_{i}) \sqcup \Upsilon \]

\noindent contains a subset $\mathbf{I}_{0}$ of $\mathbf{I}$ with relative measure at least $1-\nu$ such that

\begin{enumerate}

\item  For every $i \in \mathbf{I}_{0}$, there is a subset $\mathcal{P}^{0}(\omega_{i}) \subset \mathcal{P}(\omega_{i})$ of relative measure
at least $1-\theta$.

\item The restriction $\phi|_{\mathcal{P}^{0}(\omega_{i})}$ is within $\hat{\eta} diam(\mathbf{B}(\omega_{i}))$ Hausdorff neighborhood of a
standard map $g_{i} \times  f_{i}$. \end{enumerate}  Here, $\nu$, $\theta$ and $\hat{\eta}$ all approach zero as $\tilde{\eta}$, $\delta$ go to
zero. \end{theorem}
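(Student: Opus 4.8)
The plan is to run the Eskin--Fisher--Whyte coarse differentiation scheme, adapted to the product-of-weight-hyperbolic-spaces picture furnished by Lemma \ref{QI embedding}. First I would pass from $\phi$ to its coordinate maps in the rank-one factors: composing the quasi-isometric embeddings $G\hookrightarrow\prod_{\alpha}\mathit{H}_{\dim V_{\alpha}+1}$ and $G'\hookrightarrow\prod_{\beta}\mathit{H}_{\dim V'_{\beta}+1}$ with $\phi$, one studies the $\pi_{\alpha}\circ\phi$. The point is that a geodesic segment in an $\mathbf{A}$-direction $\vec v$ (an element of $\mathcal{L}(\Omega)[m]$) projects in each weight-hyperbolic factor to a \emph{height-monotone} path — ascending if $\alpha(\vec v)<0$, descending if $\alpha(\vec v)>0$ — whose shape is governed by the distance formula (\ref{distance}); under $\phi$ it becomes a $(\kappa,C)$-quasigeodesic in $G'$ whose projection to each weight-hyperbolic factor of $G'$ is again a quasigeodesic. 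Everything is thereby reduced to understanding quasigeodesics in the model spaces $\mathit{H}_{s+1}$, which in the rank-one case is exactly the content of \cite{EFW2}.

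The core step is the coarse differentiation estimate. Fix a hierarchy of nested scales obtained by iterated subtiling, $\varrho^{0}\Omega\supset\varrho^{1}\Omega\supset\cdots$, and for a quasigeodesic $\lambda$ in a weight-hyperbolic space define, scale by scale, a non-straightness functional measuring how much $\lambda$ backtracks in height relative to a best-fitting vertical geodesic over a sub-segment of that length. Because in $\mathit{H}_{s+1}$ a backtrack of height $h$ forces length at least $e^{h}$ up to polynomial corrections — this is precisely property (\ref{property of U}), $\ln x-C_{Q}\le U_{Q}(x)\le 2\ln x+C_{Q}$ — the total non-straightness of $\lambda$ summed over all $\sim\log_{1/\varrho}(\mathrm{diam}\,\Omega)$ scales is $O(|\lambda|)$. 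A pigeonhole over scales, together with a Fubini-type averaging over positions which is legitimate because the boxes are F\o lner (Lemma \ref{boxes are folner} and Remark \ref{general folner}), then yields: for all but a $\nu$-fraction of subtiles $\mathbf{B}(\omega_{i})$, and for all but a $\theta$-fraction of the segments in $\mathcal{L}(\omega_{i})[m]$ and simultaneously of the intermediate hyperplanes in $\mathcal{L}_{j}(\omega_{i})[m_{j}]$, the $\phi$-image admits an $\hat\eta$-geodesic approximation in every weight-hyperbolic factor, hence in $G'$. The parameters $\nu,\theta,\hat\eta$ are driven to $0$ by first letting $\tilde\eta,\delta\to0$, which permits more scales and a finer pigeonhole.

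It remains to upgrade "all relevant geodesics and hyperplanes are straightened on $\mathbf{B}(\omega_{i})$" to "$\phi|_{\mathcal{P}^{0}(\omega_{i})}$ is close to a standard map". Here one argues that a quasi-isometry which straightens a family of geodesics and hyperplanes covering every $\mathbf{A}$-direction and every intermediate dimension must respect all the foliations: (a) the $\mathbf{A}$-direction geodesic segments through net points go to segments fellow-travelling geodesic segments through points of $G'$, and since the pattern of ascent/descent across the weight-hyperbolic factors detects which directions are regular and where the walls lie, the induced correspondence on net points agrees with a map $f_{i}$ of $\mathbf{A}$ sending root kernels to root kernels — affine, when $G$ has at least $\mathrm{rank}(G)+1$ root kernels, by the remark after Definition \ref{standard map}; (b) flats of $G$, characterized as the loci where a maximal family of $\mathbf{A}$-direction geodesics stays at bounded height in all factors, map to flats of $G'$; (c) hence horocycles of a root class — where flats come together, an unbounded convex set cut out by half-spaces parallel to root kernels as in the discussion preceding Definition \ref{standard map} — map to horocycles of a root class, producing the $\mathbf{H}$-part $g_{i}$. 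Combining (a)--(c) gives that $\phi|_{\mathcal{P}^{0}(\omega_{i})}$ lies within $\hat\eta\,\mathrm{diam}(\mathbf{B}(\omega_{i}))$ of the standard map $g_{i}\times f_{i}$.

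The main obstacle is the quantitative bookkeeping that forces $\nu$, $\theta$ and $\hat\eta$ to zero \emph{simultaneously}: the straightening of the middle step must be uniform across all weight-hyperbolic factors and all intermediate hyperplane dimensions, and the subtiling depth, the scale ratio $\varrho$, the net spacing, and the various neighborhood parameters must be chosen compatibly so that errors do not accumulate over the logarithmically many scales. A second difficulty, genuinely absent from the rank-one setting, is that since the roots sum to zero two flats may fail to intersect at all, so step (c) cannot rely on a single boundary identification and instead needs the convex combinatorics of intersections of root-kernel half-spaces; and in the non-diagonalizable case the polynomial factors $N(\alpha(t))$ enter every estimate, controlled uniformly only through the bound $\ln x\le U_{Q}(x)\le 2\ln x$ on $U_{Q}$. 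Full details are in \cite{Pg}.
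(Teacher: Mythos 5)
This theorem is not proved in the present paper: the text preceding it reads ``We recall the following from \cite{Pg},'' so there is no proof here to compare against, only a citation to Part I. Your sketch does correctly identify the approach that Part I takes — embed $G$ and $G'$ into products of weight-hyperbolic spaces via Lemma~\ref{QI embedding}, run the Eskin--Fisher--Whyte coarse differentiation / multi-scale pigeonhole using the F\o lner property of the boxes (Lemma~\ref{boxes are folner}, Remark~\ref{general folner}) and the logarithmic divergence bound (\ref{property of U}), then upgrade straightened geodesics and intermediate hyperplanes to a standard map — and you correctly flag the two genuinely higher-rank complications (simultaneity of the parameters across factors and dimensions; flats that do not intersect). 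That said, what you have written is a roadmap, not a proof: the ``core step'' (the coarse differentiation estimate and the pigeonhole) is described rather than carried out, the quantitative dependence of $\nu,\theta,\hat\eta$ on $\delta,\eta,\tilde\eta$ is acknowledged but not produced, and the upgrade (a)--(c) from straightened geodesics to the existence of $g_i\times f_i$ — in particular the quadrilateral/flat-to-flat argument needed to get the $\mathbf{H}$-part $g_i$ — is asserted rather than established. You yourself defer to \cite{Pg} for the details, which is consistent with the paper, but it means the proposal cannot be checked on its own; it is best read as a correct identification of the strategy rather than an independent proof.
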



In this section, we focus on a particular standard map $g_{i} \times f_{i}$ supported on the subset $U_{i}$ of a good box $\mathbf{B}(\omega_{i})$, $i \in
\mathbf{I}_{0}$.  We will first show that the $f_{i}$ is affine for all ranks.  Then we will interpret its the constant and linear parts:
the linear part has to come from a finite set related to the geometry of $G'$, and the constant part depends on measure of certain subsets
in  $\mathbf{B}(\omega_{i})$.  We will drop the subscript $i$ from now on.

\subsection{Definitions}
In this subsection we define a list of objects that will be used for the remaining of this section.

\bold{Root class half planes }  A set of the form $p \mbox{   } \alpha_{0}^{-1}[-\infty, c]$ (resp.$p \mbox{   }\alpha_{0}^{-1}[c,\infty]$),
where $p \in \mathbf{H}$, $c \in \mathbb{R}$, is called a $[\alpha]$ negative
(resp. positive) half plane.   We write $\mathcal{H}^{-}_{[\alpha]}$ (resp. $\mathcal{H}^{+}_{[\alpha]}$) for the
set of $[\alpha]$ negative (resp. positive) half planes.  When we refer to a $[\alpha]$ half plane in a bounded set, we mean
$p \mbox{  } \alpha_{0}^{-1}([c,d])$, for some $p \in \mathbf{H}$, $c, d \in \mathbb{R}$.  We will also say that the length of this
$[\alpha]$ half plane is $|c-d|$. (\textit{remember here that the domain of roots are is
$\mathbb{A}$, not the entire group $G$})


\bold{Upper root boundary}  We define the \emph{upper boundary of root class $[\alpha]$ }, $\partial^{+}_{[\alpha]}$, as the quotient of
$\mathcal{H}^{+}_{[\alpha]}$ under the equivalence relation of bounded Hausdorff distance.

If two positive $[\alpha]$ half-planes $E_{p}$, $E_{q}$ where $p,q \in \mathbf{H}$, are bounded Hausdorff distance apart, then $p$, $q$ can only
differ by $V_{[\alpha]}$ coordinates.  This means each equivalence class can be identified with $V_{[\alpha]}$, and the collection of all
equivalence classes, $\partial^{+}_{[\alpha]}$, can be identified with $\oplus_{[\beta] \not= [\alpha]}
V_{[\beta]}$.

\bold{Lower root boundary}  We say two $[\alpha]$ negative half planes $H_{p}$, $H_{q}$ are equivalent if there is a sequence $H_{i} \in
\mathcal{H}^{-}_{[\alpha]}$ such that $H_{0}=H_{p}$, $H_{q}=H_{n}$ and any two successive $H_{i}$'s intersect at an unbounded convex set.
This is an equivalence relation because if $H_{p}$ is equivalent to $H_{q}$, and $H_{q}$ is equivalent to $H_{r}$, then concatenation of the
sequences used to connect the two pairs is a sequence that connects $H_{p}$ and $H_{r}$.
We define the \emph{lower boundary of $[\alpha]$} $\partial^{-}_{[\alpha]}$ as the quotient of $\mathcal{H}^{-}_{[\alpha]}$ under this
equivalence relation. \smallskip

We see that if $H_{p}$, $H_{q} \in \mathcal{H}^{-}_{[\alpha]}$ based at $p,q \in \mathbf{H}$ have non-empty
intersection, then $p$ and $q$ cannot differ by $V_{[\alpha]}$ coordinate.  On the other hand, if $p$ and $q$ differ only
in some $V_{[\beta]}$ coordinate, where $[\beta] \not= [\alpha]$, then $H_{p} \cap H_{q} \not =\emptyset$, so $H_{p}$ is equivalent
 to $H_{q}$ in this case.  This way, we see that the equivalence class containing $H_{p}$, $p \in \mathbf{H}$ are all those $H_{q} \in
\mathcal{H}_{[\alpha]}$, where $q \in \mathbf{H}$ differ from $p$ by some elements of $\oplus_{[\beta] \not=[\alpha]}
V_{[\beta]}$, and consequently, $\partial^{-}_{[\alpha]}$ an be identified with $V_{[\alpha]}$.   \smallskip

\bold{Measures on lower root boundaries}  If $p \in G$, we write $\pi^{-}_{[\alpha]}(p) \subset \partial^{-}_{[\alpha]}$ (resp.
$\pi^{+}_{[\alpha]}(p) \subset \partial^{+}_{[\alpha]}$) for the set of equivalence classes, each containing a minimal negative
(resp. positive) $[\alpha]$ half planes through a point that is at most distance $\rho$ away from $p$, where $\rho$ is the scale of
discretization.  Since $V_{[\alpha]}$ is the direct sums of $V_{\Xi}$, where $\Xi \in [\alpha]$, we will write $\pi^{-}_{\sigma}(p)$,
where $\sigma \in [\alpha]$, for the $V_{\sigma}$ coordinate of $\pi^{-}_{[\alpha]}(p)$.  For $A \subset G$,
we write $\pi^{*}_{[\alpha]}(A)= \bigcup_{p \in A} \pi^{*}_{[\alpha]}(p)$, where $* \in \{ + , - \}$. \smallskip

Since $\partial^{-}_{[\alpha]}$ is a homogeneous space (the subgroup $V_{[\alpha]} \subset \mathbf{H}$ acts faithfully and transitively on it),
it admits a Haar measure.  We normalize this measure $| \centerdot |$ by requiring that for each $\sigma \in [\alpha]$,
\begin{equation}\label{measure on boundary}
\left| \pi^{-}_{\sigma}(p) \right| e^{-\sigma(p)} =1 , \mbox{       } \forall p \in G \end{equation}

\bold{Upper and lower boundaries of a linear functional } We call the intersection of half planes corresponding to
two perpendicular linear functionals, a quarter plane.

We now define an equivalence relation on $\mathcal{H}^{+}_{\ell}$ (resp. $\mathcal{H}^{-}_{\ell}$) as follows.  Two
positive (resp. negative) $\ell$ half planes $H_{p}$, $H_{q}$ are equivalent if there is a sequence of
$H_{i} \in \mathcal{H}^{+}_{\ell}$ such that $H_{0}=H_{p}$, $H_{n}=H_{q}$ and the intersection between any two
successive $H_{i}$'s does not contain a quarter plane.  This is an equivalence relation because
if $H_{p}$ is equivalent to $H_{q}$, and $H_{q}$ is equivalent to $H_{r}$, then the
concatenation of the sequences used to connect the two pairs is a sequence that
establishes equivalence between $H_{p}$ and $H_{r}$.  We see that if the positive (resp. negative) $\ell$ half planes
$H_{p}, H_{q}$ based at $p,q \in \mathbf{H}$ are equivalent, then $p,q$ differ by an element of
$W^{-}_{\ell}$. (resp. $W^{+}_{\ell}$)

We define the \emph{upper boundary of $\ell$}, $\partial^{+}_{\ell}$, (resp. \emph{lower boundary of $\ell$},
$\partial^{-}_{\ell}$) as the quotient of $\mathcal{H}^{+}_{\ell}$ (resp.
$\mathcal{H}^{-}_{\ell}$) under this equivalence relation.  In light of the forgoing
discussion, we see that $\partial^{+}_{\ell}$ (resp. $\partial^{-}_{\ell}$) can be
identified with $W^{+}_{\ell}$.  (resp. $W^{-}_{\ell}$)

\bold{Measure on upper and lower boundaries of a linear functional}  Let $\ell$ be a
generic linear functional.  Since $\partial^{+}_{\ell}$ can be identified with
$W^{+}_{\ell}$ which itself is a direct sum of $V_{[\Xi]}$ where $\ell(\vec{v}_{\xi})
>0$, and each of $V_{[\Xi]}$ can be identified with $\partial_{[\Xi]}$, we can identify
$\partial^{+}_{\ell}$ with $\prod_{[\Xi]: \ell(\vec{v}_{\Xi}) >0} \partial_{[\Xi]}$, and
equip it with the product measures on the constituent root boundaries.  The same
procedure can be applied to $\partial^{-}_{\ell}$ to turn into a measure space.
\smallskip

If $p \in G$, we write $\pi^{-}_{\ell}(p) \subset \partial^{-}_{\ell}$ (resp.
$\pi^{+}_{\ell}(p) \subset \partial^{+}_{\ell}$) for the set of equivalence classes, each
containing a smallest negative (resp. positive) $\ell$ half planes through a point that
is at most distance $\rho$ away from $p$, where $\rho$ is the scale of discretization.
For $A \subset G$, and $* \in \{ +, - \}$, $\pi^{*}_{\ell}(A)$ is the union of $\pi^{+}_{\ell}(p)$, where $p$
ranges over all points of $A$.

\bold{Branching constant}  The branching constant $\mathbf{b}_{[\alpha]}$ of root class
$[\alpha]$ is the number such that $e^{\mathbf{b}_{[\alpha]} L}$ represents the number of $[\alpha]$ half planes
of length $L$ leaving a point.  It equals $\mathit{l}_{[\alpha]}/ \alpha_{0}$. \medskip

The branching constant $\mathbf{b}_{\ell}$, of a generic linear functional $\ell$, is a number such
that $e^{\mathbf{b}_{\ell} L}$ represents the number of $\ell$ half planes of length
$L$ leaving a point.  Its value is given by
\begin{equation}\label{branching constant defined}
\mathbf{b}_{\ell} = \sum_{\sigma: \sigma(\vec{v}_{\ell}) > 0} \sigma(\vec{v}_{\ell})
= \sum_{\sigma: \sigma(\vec{v}_{\ell}) < 0} \sigma(\vec{v}_{\ell}) \end{equation}

If $\tilde{\ell}$ is a linear functional whose norm is not 1,  we will write
$\mathbf{b}_{\tilde{\ell}}$ for $\mathbf{b}_{\tilde{\ell}/ \| \tilde{\ell} \|}$.

\bold{Distances on lower root boundaries}  Given $p,q \in \partial^{-}_{[\alpha]} \sim V_{[\alpha]}$,
let $t_{p,q}$ be the minimal $t \in \mathbb{R}$ such that there exists negative $[\alpha]$ half
planes in the equivalence class of $p$ and $q$ that are distance 1 (or $\rho$ if the scale of discretization is not $1$) at sets whose
$\pi_{A}$ projection is $\ell_{[\alpha]}^{-1}(t)$.

Fix a positive number $c$, we define a \emph{psudo distance} $D_{[\alpha]}$ between $p,q$
as \[ D_{[\alpha]}(p,q) = e^{c t_{p,q}} \]

\noindent Different choice of $c$ leads to quasi-symmetric equivalent metric.  In this way, the space
$(\partial^{-}_{[\alpha]}, D_{[\alpha]})$ becomes those whose quasi-conformal maps are studied in \cite{Dy}.



\bold{Shadows, slabs  }  Using the same root class as before, we define any subset $H$ of a left translate of $\oplus_{[\beta] \not=
[\alpha]}V_{[\beta]} \rtimes ker(\alpha_{0})$ a \emph{$[\alpha]$ block}.  Note that for any element $\sigma \in [\alpha]$,
$\sigma(H)$ is well-defined.  For $\rho>1$, we define the \emph{$\rho$-shadow of $H$}, $Sh(H, \rho)$, as the union of smallest
negative $[\alpha]$ half planes containing a point in $N_{\rho}H$.  For $h_{2} < h_{1} < \alpha_{0}(H)$, we define a \emph{slab of
$H$}, denoted by $Sl^{1}_{2}(H)$ as the intersection between $Sh(H,\rho)$ with $\alpha_{0}^{-1}([h_{2},h_{1}])$.  That
is, it's the subset of $Sh(H,\rho)$ that whose $\alpha_{0}$ values lies in between $h_{2}$ and $h_{1}$.

\bold{Generalized slabs  } For $E_{-} \subset \partial^{-}_{[\alpha]}$, $E^{+} \subset \partial^{+}_{[\alpha]}$,
$K \subset ker(\alpha_{0})$, $h_{2} < h_{1}$, we call a set \[ S(E_{-}, E^{+}, K, h_{2}, h_{1}) = \{ \left( \mathbf{x}_{[\alpha]},
(\mathbf{x}_{[\beta]})_{[\beta] \not= [\alpha]}, \mathbf{t} \right):  \mathbf{t} \in [h_{2},
h_{1}]K, \mbox{   } \mathbf{x}_{[\alpha]} \in E_{-}, \mbox{   } (\mathbf{x}_{[\beta]})_{[\beta] \not= [\alpha]} \in E^{+} \} \]

\noindent a \emph{generalized $[\alpha]$ slab}.  This generalizes the definition of slabs defined in the previous paragraph.


\bold{Coarsening } We define a process \emph{coarsening} as follows.  For $h \in
\mathbb{R}$, $E^{+} \subset \partial^{+}_{[\alpha]}$, \emph{the coarsening of $E^{+}$ by
$h$}, $\mathcal{C}_{h}(E^{+})$ is defined as the subset of $\partial^{+}_{[\alpha]}$ consisting of those equivalence classes that contains a
positive $[\alpha]$ half plane that has a non-empty intersection with a positive $[\alpha]$ half plane whose equivalence
class belongs to $E^{+}$ at a set whose $\pi_{A}$ projection belongs to $\alpha_{0}^{-1}[h, \infty]$. \smallskip

Similarly, for $E_{-} \subset \partial^{-}_{[\alpha]}$ a subset of the lower boundary, the coarsening of $E_{-}$ by $h$, $\mathcal{C}_{h}^{E_{-}}$,
is defined to be those equivalence classes of negative $[\alpha]$ half planes that contains an element which intersects non-empty with a negative
$[\alpha]$ half plane whose equivalence class belongs to $E_{-}$ at a set whose $\pi_{A}$
projection is a subset of $\alpha_{0}^{-1}([-\infty, h])$\symbolfootnote[2]{This is the same as the set of points in $\partial^{-}_{[\alpha]}$
that is distance $e^{h}$ from a point in $E_{-}$.}. \smallskip

Observe that as long as $h_{3} \leq h_{2}$, $h_{4} \geq h_{1}$, we have
\[ S \left( E_{-}, E^{+}, h_{2}, h_{1} \right) = S \left( \mathcal{C}_{h_{3}}(E_{-}),\mathcal{C}_{h_{4}}(E^{+}), h_{2}, h_{1} \right) \]

\begin{lemma}\label{lemma 3-5}
The number of $[\alpha]$ planes in $S=S(  \mathcal{C}_{h_{3}}(E_{-}), \mathcal{C}_{h_{4}}(E^{+}), K, h_{2}, h_{1})$ is comparable to
\[ \frac{Vol(S)}{|K| (h_{1}-h_{2}) } e^{\mathbf{b}_{[\alpha]}(h_{1}-h_{2})}  \]
\noindent That is, it is compatible to the area of the cross-section times
$e^{\mathbf{b}_{[\alpha]}(h_{1}-h_{2})}$. \end{lemma}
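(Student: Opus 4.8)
The plan is to count the $[\alpha]$ half planes contained in the generalized slab $S = S(\mathcal{C}_{h_3}(E_-), \mathcal{C}_{h_4}(E^+), K, h_2, h_1)$ by fibering the slab over its base $K \subset \ker(\alpha_0)$ and applying, fiberwise, the already-established branching count for a single rank-one weight-hyperbolic picture. First I would fix the coordinate splitting $\mathbf{H} = V_{[\alpha]} \oplus \bigoplus_{[\beta]\ne[\alpha]} V_{[\beta]}$ together with $\mathbf{A} = \mathbb{R}\vec v_{[\alpha]} \oplus \ker(\alpha_0)$, so that a $[\alpha]$ half plane in $S$ is determined by: (i) an element of $K$-worth of $\ker(\alpha_0)$-direction (its "transverse flat position"), and (ii) within the corresponding $[\alpha]$-weight-hyperbolic slice $V_{[\alpha]} \rtimes \mathbb{R}\vec v_{[\alpha]}$, a descending half-plane from height $h_1$ down to height $h_2$. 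The coarsening observation already quoted in the excerpt, namely $S(E_-,E^+,h_2,h_1) = S(\mathcal{C}_{h_3}(E_-),\mathcal{C}_{h_4}(E^+),h_2,h_1)$ for $h_3\le h_2$, $h_4\ge h_1$, ensures that every half plane genuinely "fills" the slab from top to bottom, so the count is not diminished by boundary effects at the two ends.

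**Next I would** carry out the fiberwise count. Fixing a point of $\ker(\alpha_0)$ in $K$, the relevant slice is a rank-one space whose height function is $\alpha_0$, and the number of smallest negative $[\alpha]$ half planes of vertical extent $h_1 - h_2$ leaving a point is, by the very definition of the branching constant $\mathbf{b}_{[\alpha]}$ (with $\mathbf{b}_{[\alpha]} = \mathit{l}_{[\alpha]}/\alpha_0$), comparable to $e^{\mathbf{b}_{[\alpha]}(h_1 - h_2)}$; more precisely the half planes in the slice that are constrained to have upper boundary in $E^+$ and lower boundary in $E_-$ number $\approx \frac{\mathrm{Area}(\text{slice cross-section})}{h_1 - h_2} e^{\mathbf{b}_{[\alpha]}(h_1-h_2)}$, the factor $h_1 - h_2$ in the denominator accounting for the thickness of a single discretized half plane. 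Integrating this over the transverse base $K$ (each half plane occupies a $|K|$-independent cross-section, so the transverse multiplicity contributes a factor $\mathrm{Vol}_{\text{transverse}}(S)/|K|$ after normalizing by the base size $|K|$), and using $\mathrm{Vol}(S) = \mathrm{Vol}(\text{fiber cross-section}) \cdot |K| \cdot (h_1 - h_2)$ up to the polynomial distortions $N(\alpha(\mathbf t))$ which are absorbed into the "comparable" relation, yields exactly the claimed expression $\frac{\mathrm{Vol}(S)}{|K|(h_1-h_2)} e^{\mathbf{b}_{[\alpha]}(h_1-h_2)}$.

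**The main obstacle** I expect is bookkeeping the normalization constants and the polynomial ($N(\alpha(\mathbf t))$) distortions of the Finsler metric so that the volume $\mathrm{Vol}(S)$ that appears — which is the honest Riemannian/Finsler volume of the generalized slab — matches the naive product "cross-section $\times$ base $\times$ height." The point is that the metric on each $V_\sigma$ factor is $e^{-\sigma(\mathbf t)}(\text{polynomial corrections})$, so the volume element carries a factor $\prod_{\sigma} e^{-\sigma(\mathbf t)\dim V_\sigma}$, and one must check that, after the coarsening has pinned the half planes between heights $h_2$ and $h_1$, the integral of this factor against the counting measure on half planes reproduces the stated formula with the single exponential $e^{\mathbf{b}_{[\alpha]}(h_1-h_2)}$ and no leftover dependence on the $[\beta]\ne[\alpha]$ data beyond what is recorded in $\mathrm{Vol}(S)$ and $|K|$. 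I would handle this by computing everything in the $\alpha_0$-direction coordinate $t$, noting that within a root class all $\sigma \in [\alpha]$ are positive multiples of $\alpha_0$ so their contributions combine into $\mathbf{b}_{[\alpha]} = \mathit{l}_{[\alpha]}/\alpha_0 = \sum_{\sigma\in[\alpha]}\sigma/\alpha_0$, and the transverse $V_{[\beta]}$ directions are constant along the slab fibers hence contribute only to the cross-sectional area; the polynomial factors are swept into the implied constants since they are negligible against the exponential, exactly as in the passage from the exact metric to $dt + e^{-t}Q(t)\,d\mathbf x$ recorded in the preliminaries.
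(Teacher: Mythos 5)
Your proposal is, in substance, the paper's argument: reduce the count of $[\alpha]$ half planes in the generalized slab to a rank-one count in the weight-hyperbolic space $V_{[\alpha]}\rtimes\mathbb{R}\vec{v}_{[\alpha]}$, identify $Vol(S)/(|K|(h_{1}-h_{2}))$ with the cross-sectional area of that projection, and read the factor $e^{\mathbf{b}_{[\alpha]}(h_{1}-h_{2})}$ directly off the definition of the branching constant, with the polynomial Finsler distortions swept into the implicit constants. One caution on your bookkeeping: a maximal $[\alpha]$ half plane in $S$ is not ``$(h_{1}-h_{2})$ thick'' and is not localized at a single point of $K$---it spans the full $\pi_{A}$-extent $K\times[h_{2},h_{1}]$---so $|K|(h_{1}-h_{2})$ simply converts $Vol(S)$ into the cross-sectional area, and your ``integrate over $K$ then divide by $|K|$'' step is a wash precisely because the fiberwise count is constant over $K$.
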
 \begin{proof} Counting the number of
$[\alpha]$ half planes really means you count the number of geodesics in $V_{[\alpha]} \rtimes \mathbb{R}(\vec{v}_{[\alpha]})$.
In this way, we see that the first term of the product is the size of a cross section in this projection image, and the product is the
no. of geodesics in the range. \end{proof}

\subsection{Improving almost product map}
Recall our setting from Theorem \ref{exisence of standard maps in small boxes}.

\[ \phi: \mathbf{B}(\omega) \rightarrow G' \]

\noindent is a quasi-isometry such that on a subset $U_{*} \subset \mathbf{B}(\omega)$ of relative measure at least $1-\theta$,
the restriction $\phi|_{U_{*}}$, is within $\hat{\eta} diam(\mathbf{B}(\omega))$ of a standard map $\hat{\phi}=g \times f$. \smallskip

If $U_{*}=\mathbf{B}(\omega)$, then it is clear that the image of a slab is a slab, and image of a generalized slab is also a
generalized slab.  However as $U_{*}$ is generally not $\mathbf{B}(\omega)$, it is not clear that for a $[\alpha]$ block $H$, and
$h_{2}< h_{1} < \alpha_{0}(H)$, there is an obvious
relation between $\phi(Sl_{2}^{1}(H))$ and $\hat{Sl}_{2}^{1}(H)$, which is defined as $S(g(\pi^{-}_{[\alpha]}(H)), g(\pi^{+}_{[\alpha]}(H)),
f(\pi_{A}(H) \times [h_{2},h_{1}]))$, other than
\begin{equation*} \phi(Sl_{2}^{1}(H) \cap U_{*}) \subset N_{\hat{\eta} diam(\mathbf{B}(\omega))}
\hat{\phi}(Sl_{2}^{1}(H) \cap U_{*}) \subset N_{\hat{\eta} diam(\mathbf{B}(\omega))} \hat{Sl}_{2}^{1}(H) \end{equation*}

\noindent We will show that by restricting to certain subset of $Sl_{2}^{1}(H)$ whose
coarsened version lies almost entirely in $U_{*}$, a reversal inclusion can be obtained. \smallskip

To this end, fix a $[\alpha]$ block $H$ in $\mathbf{B}(\omega)$, and let $h=\alpha_{0}(H)$.  As $f$ preserves
foliations by root kernels, the $f$ image of $ker(\alpha_{0})$ is the kernel of $\Xi_{0} \in \mathbf{A}^{*}$
for some root class $[\Xi]$.  We identify $\mathbf{A}/ker(\alpha_{0})$ with $\mathbb{R}$ by taking the $\alpha_{0}$ value of a
coset, and do so similarly for $\mathbf{A}/ker(\Xi_{0})$.  Then $f$ induces a map  $q: \mathbb{R} \rightarrow \mathbb{R}$ by sending $t$
to the $\Xi_{0}$ value of $f(t \mbox{ } ker(\alpha_{0}))$.  When the rank of $G$ is 1, this is just $f$. \smallskip

Now, fix heights  $h_{2} < h_{1} < h$, and define
\[ \hat{Sl}_{2}^{1}(H) =S( \mathcal{C}_{q(h_{1})} g(\pi^{-}_{[\alpha]}(H)), \mathcal{C}_{q(h_{2})} g(\pi^{+}_{[\alpha]}(H)),
f(\pi_{A}(H) \times [h_{2},h_{1}]))  \]

\noindent In the rest of this section, we write $B$ for the box $\mathbf{B}(\omega)=\left(\prod_{[\sigma]}A_{[\sigma]} \right) \omega$ where
$A_{[\sigma]} \subset V_{[\sigma]}$, $\omega \subset \mathbf{A}$. \smallskip

We refer to the parameters $\delta, \eta, \tilde{\eta}$ in Theorem \ref{exisence of standard maps in small
boxes} as `initial data', and show now that most slabs inherit the property that `intersection with $U_{*}$ possess relative large
measure'  For the remaining subsections, whenever $\gg $ appears, it means the ratio of the two quantities is more than $2\kappa$.

\begin{lemma} \label{lemma 4.1}
Given $0< \beta \ll \beta' \ll  1$, there exist constants $c_{1}$, $c_{2} \ll 1$ depending on our initial data and a subset
$E_{**} \subset \partial^{-}_{[\alpha]}(B)$ of relative large measure such that whenever $H$ is a $[\alpha]$ block which is at least
$2\kappa \beta' diam(\mathbf{B}(\omega))$ away from $\partial B$ and $\pi^{-}_{[\alpha]}(H) \cap E_{**}
\not=\emptyset$, then $|h_{1}(H) -h_{2}(H)| |\pi_{A}(H)| > \beta |\omega| $ implies
\begin{equation}\label{slab in domain} |Sl_{2}^{1}(H) \cap U_{*}| \geq (1-c_{2}) |Sl_{2}^{1}(H)|
\end{equation} \end{lemma}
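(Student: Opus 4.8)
The plan is to prove this by a Fubini/averaging argument over the lower boundary $\partial^{-}_{[\alpha]}(B)$, exploiting the fact that $U_{*}$ has relative measure at least $1-\theta$ in $\mathbf{B}(\omega)$ together with the comparison formula for the number of half planes in a slab from Lemma \ref{lemma 3-5}. First I would set up the measure decomposition: a $[\alpha]$ slab $Sl_{2}^{1}(H)$ fibers over the lower boundary $\pi^{-}_{[\alpha]}(H) \subset \partial^{-}_{[\alpha]}$, the fiber over each point being (essentially) a product of a piece of the upper boundary with a piece of $[h_2,h_1]\pi_A(H)$, and by Lemma \ref{lemma 3-5} together with the normalization \eqref{measure on boundary} the volume of $Sl_{2}^{1}(H)$ splits as the product of the lower-boundary measure, the cross-sectional area, and the exponential factor $e^{\mathbf{b}_{[\alpha]}(h_1-h_2)}$. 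So the ``bad'' locus $Sl_{2}^{1}(H)\setminus U_{*}$ also decomposes fiberwise, and its relative measure in $Sl_{2}^{1}(H)$ is the lower-boundary average of the relative measures of the bad sets in each fiber.

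Next I would define $E_{**}$. Let $B^{\sharp} \subset B$ be the sub-box obtained by deleting the $2\kappa\beta' \operatorname{diam}(\mathbf{B}(\omega))$ collar of $\partial B$; by Lemma \ref{boxes are folner} (and Remark \ref{general folner}) the portion of $U_{*}^{c}$ inside $B^\sharp$ still has relative measure at most some $\theta' = \theta + O(\kappa\beta')$, which is $\ll 1$ for our initial data. Now push $U_{*}^{c}\cap B^\sharp$ down to $\partial^{-}_{[\alpha]}$ via $\pi^{-}_{[\alpha]}$ and, using the fiberwise measure decomposition above, set
\[
E_{**} = \Big\{ x \in \pi^{-}_{[\alpha]}(B^\sharp) : \text{the relative measure of } U_{*}^{c} \text{ in the fiber over } x \text{ is} \le c_2 \Big\},
\]
with $c_2 = \sqrt{\theta'}$, say. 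A one-line Chebyshev/Markov estimate on the fiberwise average shows $\partial^{-}_{[\alpha]}(B)\setminus E_{**}$ has relative measure at most $\sqrt{\theta'}\ll 1$, so $E_{**}$ has relatively large measure as required; one also needs $c_1$ (the constant hidden in ``relatively large'') to be controlled, which it is. The point of choosing $c_2$ this way is the standard Cauchy–Schwarz trick: you cannot have both the set of bad fibers large and the fibers themselves very bad, on average.

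The remaining step is to upgrade from ``$\pi^{-}_{[\alpha]}(H)$ meets $E_{**}$'' to ``$Sl_{2}^{1}(H)$ is mostly in $U_{*}$.'' Here the coarsening identity $S(E_-,E^+,h_2,h_1) = S(\mathcal{C}_{h_3}(E_-),\mathcal{C}_{h_4}(E^+),h_2,h_1)$ and the hypothesis $|h_1(H)-h_2(H)|\,|\pi_A(H)| > \beta|\omega|$ are used to guarantee that $\pi^{-}_{[\alpha]}(H)$ is not just nonempty in $E_{**}$ but occupies a definite fraction of the slab's lower-boundary footprint — the largeness of $H$ (and of its slab) in terms of $\beta$ prevents it from being a thin sliver whose lower boundary is almost entirely outside $E_{**}$. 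Then the fiberwise bound $|U_{*}^{c}\cap (\text{fiber})| \le c_2 |\text{fiber}|$ for $x\in E_{**}$, integrated over the footprint and combined with the fact that the footprint is mostly inside $E_{**}$, yields $|Sl_2^1(H)\cap U_*| \ge (1-c_2)|Sl_2^1(H)|$ after adjusting $c_2$ by a harmless factor.

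I expect the main obstacle to be the bookkeeping in the third step: making precise that a $[\alpha]$ block $H$ with $|h_1-h_2|\,|\pi_A(H)| > \beta|\omega|$ and lying $2\kappa\beta'\operatorname{diam}$-deep in $B$ has its lower-boundary footprint $\pi^{-}_{[\alpha]}(H)$ landing in a region where the Fubini average genuinely controls the fiberwise defect — i.e.\ coupling the ``escaping from the boundary'' estimate of Lemma \ref{boxes are folner} with the non-uniform exponential weighting $e^{-\sigma(p)}$ in \eqref{measure on boundary} across the height range $[h_2,h_1]$, so that no single height dominates the mass. The inequalities $\beta \ll \beta' \ll 1$ are exactly what is needed to absorb these errors, and the constants $c_1,c_2$ are produced as explicit (small) functions of $\theta,\beta,\beta',\kappa$.
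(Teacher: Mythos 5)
Your proposal goes astray at the definition of $E_{**}$, and the gap is not a bookkeeping issue but a structural one. You propose to set $E_{**}$ to be the set of $x\in\partial^{-}_{[\alpha]}(B)$ for which the $B$-fiber over $x$ has $U_*^c$-proportion at most $c_2$, and to obtain relative largeness of $E_{**}$ from Chebyshev applied to the fiberwise average. That step is fine, but the conclusion you then need --- ``$\pi^{-}_{[\alpha]}(H) \cap E_{**}\neq\emptyset$ implies $Sl_2^1(H)$ is mostly in $U_*$'' --- does not follow from it. The slab's fiber over a given $x$ is a tiny subset of the $B$-fiber over $x$ (you are restricting to the shadow of a single block $H$ and to the height window $[h_2,h_1]$), so the fact that $U_*^c$ has small \emph{global} proportion in the $B$-fiber over $x$ says nothing about its proportion in the slab's fiber over $x$. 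The bad set $U_*^c$ could be entirely concentrated inside that small slab-fiber while still being a negligible fraction of the whole $B$-fiber. Moreover, the hypothesis $\pi^{-}_{[\alpha]}(H) \cap E_{**}\neq\emptyset$ hands you only a single point, and the size hypothesis $|h_1-h_2|\,|\pi_A(H)| > \beta|\omega|$ constrains the height window and the $\mathbf{A}$-footprint of the slab, not the lower-boundary footprint $\pi^{-}_{[\alpha]}(H)$; it cannot be used to argue that $\pi^{-}_{[\alpha]}(H)$ is mostly inside your $E_{**}$. The coarsening identity is likewise an equality of slabs, not a measure estimate, so it gives no leverage here.

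The paper avoids this problem by inverting the logic of the definition. It sets $E_{**}$ to be the \emph{complement} of the set $E_1$ of boundary points $x$ for which there \emph{exists} some $[\alpha]$ block $H_x$ with $x\in\pi^{-}_{[\alpha]}(H_x)$, of the prescribed size and depth, whose slab violates \eqref{slab in domain}. With this definition the implication in the lemma is essentially tautological: if $\pi^{-}_{[\alpha]}(H)$ meets $E_{**}$ then $H$ cannot be one of the offending blocks. The work is then entirely in showing $|E_1|$ is small, and that is done with a Vitali covering argument: $E_1$ is covered by the intervals $I_x=\pi^{-}_{[\alpha]}(H_x)$, one extracts a strongly disjoint subfamily $\{I_k\}$ with $\sum_k|I_k|\geq \tfrac15|E_1|$, notes that the corresponding slabs $Sl_2^1(H_k)$ are disjoint and each spends a $\geq c_2$ fraction of its volume in $U_*^c$, and sums. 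Disjointness bounds $\sum_k|Sl_2^1(H_k)\cap U_*^c|$ by $|B\cap U_*^c|\leq\theta|B|$, while the hypothesis $|h_1-h_2|\,|\pi_A(H_k)|>\beta|\omega|$ gives the lower bound $|Sl_2^1(H_k)|\gtrsim \beta|\omega|\,|I_k|\,|\pi^+_{[\alpha]}(B)|$, yielding $|E_1|\lesssim \frac{\theta}{\beta c_2}|\partial^{-}_{[\alpha]}(B)|$ after division. If you want to salvage the Markov-style intuition, the Vitali step is exactly the device that replaces the naive Fubini: it is the disjointness of the extracted slabs --- not a fiberwise average over a fixed fibration of $B$ --- that lets the global $\theta$-smallness of $U_*^c$ control the measure of the bad boundary set.
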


\begin{proof}
Let $c_{2}$ be a constant to be chosen later.  Let $E_{1} \subset \partial^{-}_{[\alpha]}(B)$ be the subset such that for $x \in E_{1}$,
there exists a $[\alpha]$ block $H_{x}$ such that $x \in I_{x}=\pi^{-}_{[\alpha]}(H_{x})$ and equation (\ref{slab in domain}) fails. Then we
have a cover of $E_{1}$ by intervals $I_{x}$.  By Vitali covering there is a subset of $I_{k}$'s such that
$\sum_{k} \left| I_{k} \right| \geq 1/5 |E_{1}|$, and whose elements are strongly disjoint i.e. for $j \not= k$,
$d(I_{j}, I_{k}) \geq 1/2 \max(|I_{j}|)$, which means that the corresponding $[\alpha]$ block $H_{k}$'s are also disjoint.  By
construction $\left| Sl_{2}^{1}(H_{k}) \cap U^{c}_{*} \right| \geq c_{2} |Sl_{2}^{1}(H_{k}) |$. \smallskip

Summing over $k$ yields
\[ \left| B \cap U_{*}^{c} \right| \geq c_{2} \sum_{k} \left| Sl_{2}^{1}(H_{k}) \right|
\geq \frac{c_{2}}{2} \sum_{k} |h_{1}(H_{k}) - h_{2}(H_{k})| |\pi_{A}(H_{k})| |I_{k}| |\pi^{+}_{[\alpha]}(H_{k})| \]

\noindent As $| \pi^{+}_{[\alpha]}(H_{k}) |= | \pi^{+}_{[\alpha]}(B) |=\prod_{[\sigma] \not=[\alpha]} |  A_{[\sigma]}  |$,
 and $| B \cap U_{*}^{c} |  \leq \theta | \omega | | \pi^{-}_{[\alpha]}(B) | | \pi^{+}_{[\alpha]}(B) |$, we obtain
\[ |E_{1}| \leq 5 \sum_{k} |I_{k}| \leq \frac{10 \theta}{ \beta c_{2}} \left|  \partial^{-}_{[\alpha]}(B) \right| \]
\noindent Now choose $c_{2}$ appropriately.  \end{proof}

Note that as $\hat{\phi}$ is only partially defined, the definition of
$\hat{Sl}_{2}^{1}(H)$ sort of `fills up' $\hat{\phi}(Sl_{2}^{1}(H))$.  Therefore it is
not clear that most of $\hat{Sl}_{2}^{1}(H)$ lie close to the $\phi(U_{*})$.  We now
introduce some notations needed to describe a subset of $\hat{Sl}_{2}^{1}(H)$ which can
be controlled more easily.  Given a set $D \subset B$ and $A_{[\Xi]} \subset
\pi^{+}_{[\Xi]}(B)$, we write $A_{[\Xi]} \cap D$ for the intersection between $A_{[\Xi]}$ and $\pi^{+}_{[\Xi]}(D)$. We then define

\[ \tilde{Sl}_{2}^{1}(H, D):= S(\mathcal{C}_{q(h_{1})}(g(\pi^{-}_{[\alpha]}(H))),
\mathcal{C}_{q(h_{2})}(g(\pi^{+}_{[\alpha]}(H) \cap D)), f(\pi_{A}(H) \times [h_{2}, h_{1}]) ) \]

\noindent This next lemma says that for certain `slimmed down' version of
$\hat{Sl}_{2}^{1}(H)$, $\tilde{Sl}_{2}^{1}(H,D)$, most of them lies in $\phi(U_{*})$.

\begin{lemma} \label{lemma 4.2}
Given $\hat{\eta} < \beta \ll \beta' \ll \beta'' < 1$, there exist constants $c_{3}, c_{4}$ depending
on our initial data and a subset $E_{*} \subset \pi^{-}_{[\alpha]}(B)$ of relative large
measure with the following properties. \smallskip

Let $H_{0}$ be a $[\alpha]$ block in $B$ such that
\begin{enumerate}\renewcommand{\labelenumi}{\alph{enumi}.}
\item The distance between $H_{0}$ and $\partial B$ is at least $4\kappa \beta'' diam(\mathbf{B}(\omega))$.
\item The intersection $\pi^{-}_{[\alpha]}(H_{0}) \cap E_{*}$ is not empty.  \end{enumerate}

Suppose $H$ is a $[\alpha]$ block in $S=U_{*} \cap S(\pi^{-}_{[\alpha]}(H_{0}), \pi^{+}_{[\alpha]}(H_{0}),
\pi_{A}(H_{0}), \ell_{[\alpha]}(H_{0}), \ell_{[\alpha]}(H_{0}) + \beta'' diam(\mathbf{B}(\omega)))$ such that $\pi^{-}_{[\alpha]}(H)$ has
non-empty intersection with $E_{*}$.  Then, $\beta' |\omega| > |h_{1}(H)-h_{2}(H)||\pi_{A}(H)| > \beta
|\omega|$ implies
\begin{equation}\label{slab in range}
\left|  \tilde{Sl}_{2}^{1}(H) \cap \phi(U_{*})  \right| \geq (1-c_{4}) \left| \tilde{Sl}_{2}^{1}(H) \right| \end{equation} \noindent where $\tilde{Sl}_{2}^{1}(H)=\tilde{Sl}_{2}^{1}(H, S)$, and
$c_{4}$ approaches aero as our initial data approach zero's. \end{lemma}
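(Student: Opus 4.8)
The plan is to mirror the structure of Lemma \ref{lemma 4.1}, but now working on the target side (in $G'$, via $\phi(U_*)$) rather than the domain side. The quantity $\tilde{Sl}_2^1(H)=\tilde{Sl}_2^1(H,S)$ differs from the full fake slab $\hat{Sl}_2^1(H)$ precisely in that its upper-boundary factor has been replaced by $\mathcal{C}_{q(h_2)}(g(\pi^+_{[\alpha]}(H)\cap S))$, i.e. by the image under $g$ of only those upper-boundary classes that are witnessed by points of $S=U_*\cap S(\pi^-_{[\alpha]}(H_0),\dots)$. The point of this ``slimming down'' is that every $[\alpha]$ half plane contributing to $\tilde{Sl}_2^1(H)$ is, after coarsening, the $\hat\phi$-image (hence within $\hat\eta\,\mathrm{diam}(\mathbf{B}(\omega))$ of the $\phi$-image) of an actual $[\alpha]$ half plane of $Sl_2^1(H)$ that meets $U_*$. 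So the first step is to make this correspondence precise: using the inclusion $\phi(Sl_2^1(H)\cap U_*)\subset N_{\hat\eta\,\mathrm{diam}}\hat\phi(Sl_2^1(H)\cap U_*)$ together with the coarsening identity $S(E_-,E^+,h_2,h_1)=S(\mathcal{C}_{h_3}(E_-),\mathcal{C}_{h_4}(E^+),h_2,h_1)$ for $h_3\le h_2$, $h_4\ge h_1$, I would show $\tilde{Sl}_2^1(H)$ is covered up to the usual neighborhood by $\phi$ of the part of $Sl_2^1(H)$ lying in $U_*$, with an error controlled by $|Sl_2^1(H)\setminus U_*|$.

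Next I would run the Vitali/measure-packing argument. Let $E_1\subset \pi^-_{[\alpha]}(B)$ be the bad set: those lower-boundary classes $x$ admitting a $[\alpha]$ block $H_x$ inside $S$ with $\pi^-_{[\alpha]}(H_x)\ni x$, with $\beta'|\omega|>|h_1(H_x)-h_2(H_x)||\pi_A(H_x)|>\beta|\omega|$, and for which \eqref{slab in range} fails. Cover $E_1$ by the intervals $I_x=\pi^-_{[\alpha]}(H_x)$, extract a strongly disjoint Vitali subfamily $\{I_k\}$ with $\sum_k|I_k|\ge \tfrac15|E_1|$ and corresponding disjoint blocks $H_k$. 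For each such block, failure of \eqref{slab in range} says $|\tilde{Sl}_2^1(H_k)\setminus\phi(U_*)|\ge c_4|\tilde{Sl}_2^1(H_k)|$; pulling this back through the correspondence from the first step and through the quasi-isometry $\phi$ (here is where the $\gg$-convention with factor $2\kappa$ and the choice of constants $\beta\ll\beta'\ll\beta''$ enter, to absorb the $\hat\eta\,\mathrm{diam}$ fuzz and the $\kappa$-distortion of volumes), I get $|Sl_2^1(H_k)\setminus U_*|\gtrsim c_4|Sl_2^1(H_k)|$, so summing over the disjoint $H_k$ and using Lemma \ref{lemma 3-5} to estimate $|Sl_2^1(H_k)|$ from below by $\tfrac12|h_1-h_2||\pi_A(H_k)||I_k||\pi^+_{[\alpha]}(B)|\gtrsim \beta|\omega||I_k||\pi^+_{[\alpha]}(B)|$, I get a lower bound on $|S\setminus U_*|$ of the form $\gtrsim c_4\beta|\omega||E_1||\pi^+_{[\alpha]}(B)|$ (up to the measure of $S$ relative to $B$). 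Since $S\subset U_*$ essentially by its definition (it is cut out as $U_*\cap(\dots)$), in fact the honest bound is $|S(\pi^-_{[\alpha]}(H_0),\dots)\setminus U_*|\le \theta\cdot(\text{size of }S(\pi^-_{[\alpha]}(H_0),\dots))$, which by Theorem \ref{exisence of standard maps in small boxes} is small; comparing gives $|E_1|\le \tfrac{c\theta}{\beta c_4}|\pi^-_{[\alpha]}(B)|$, and choosing $c_4$ (then $c_3$, controlling a companion tail coming from the coarsening scale $q(h_i)$ against the $\beta''$-window and property (a)) appropriately makes this $\le$ any prescribed fraction. Setting $E_*=\pi^-_{[\alpha]}(B)\setminus E_1$ intersected with the set $E_{**}$ from Lemma \ref{lemma 4.1} (so that both conclusions are available simultaneously on blocks meeting $E_*$) finishes the construction.

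The main obstacle, which I would spend the bulk of the argument on, is the first step: establishing the near-two-sided relationship between $\tilde{Sl}_2^1(H)$ and $\phi(Sl_2^1(H)\cap U_*)$. The inclusion $\phi(Sl_2^1(H)\cap U_*)\subset N_{\hat\eta\,\mathrm{diam}}\hat{Sl}_2^1(H)$ is immediate, but the reverse direction — that $\tilde{Sl}_2^1(H)$ is not much bigger than $\phi$ of something genuinely inside $U_*$ — is exactly the subtlety the ``slimmed-down'' construction is designed to handle, and it requires carefully tracking how the coarsenings $\mathcal{C}_{q(h_1)}$, $\mathcal{C}_{q(h_2)}$ interact with the fact that $g$ sends root-class horocycle foliations of $G$ to those of $G'$: a coarsened upper-boundary class of $G'$ that is the $g$-image of a coarsened upper-boundary class of $H$ witnessed in $S$ must be realized by an actual half plane passing through $\phi(U_*)$, provided the coarsening heights $q(h_i)$ are chosen large enough relative to $\beta''\,\mathrm{diam}(\mathbf{B}(\omega))$ — this is where hypothesis (a) on the distance from $H_0$ to $\partial B$ and the ordering $\beta\ll\beta'\ll\beta''$ are used. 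The rest (Vitali, summing, absorbing $\kappa$ and $\hat\eta$) is the same bookkeeping as in Lemma \ref{lemma 4.1}, and the fact that $\hat\eta<\beta$ guarantees the Hausdorff fuzz never swamps the slab scale.
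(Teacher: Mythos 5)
Your overall scaffolding (Vitali packing of a bad set of lower-boundary classes, strongly disjoint $I_k$'s, summing, and comparison to the $\theta$-smallness of the complement of $U_*$) is the same as the paper's, and you correctly identify that the crux is relating $\tilde{Sl}_2^1(H)$ to $\phi(Sl_2^1(H)\cap U_*)$. But the central ``pull-back'' step does not work as stated. From failure of \eqref{slab in range} you know $\bigl|\tilde{Sl}_2^1(H_k)\setminus\phi(U_*)\bigr|\ge c_4\bigl|\tilde{Sl}_2^1(H_k)\bigr|$. To deduce $\bigl|Sl_2^1(H_k)\setminus U_*\bigr|\gtrsim c_4\bigl|Sl_2^1(H_k)\bigr|$ you would need the $\phi$-preimages of $\tilde{Sl}_2^1(H_k)\setminus\phi(U_*)$ to land in (or near) $Sl_2^1(H_k)$. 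But those preimages lie in $U_*^c$, where $\phi$ is completely uncontrolled by the standard map, and nothing forces them anywhere near the slab $Sl_2^1(H_k)$. Notice also that if your inequality did hold for each $H_k$, it would directly contradict Lemma \ref{lemma 4.1} applied to $H_k$ as soon as $c_4>c_2$ (which rules out bad blocks without any covering argument at all), a structural symptom that the per-block lower bound is the wrong thing to target.

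The paper avoids this by never returning to the domain at this stage: it establishes the ``no foreign entry'' claim $\phi(Sh(H_k,O(1))^c\cap U_*)\cap\tilde{Sl}_2^1(H_k)=\emptyset$ (the paper's equation \eqref{disjoint}, proved via the psudo-distance $t_{p,q}$ and the definition of coarsening, a point you flag but do not supply), deduces $\tilde{Sl}_2^1(H_k)\cap N_{O(\hat\eta\,\mathrm{diam})}\hat\phi(Sl_2^1(H_k))^c\subset\phi(U_*^c)$, and then sums $\bigl|\tilde{Sl}_2^1(H_k)\cap\phi(U_*^c)\bigr|$ over the disjoint $\tilde{Sl}_2^1(H_k)$'s, comparing \emph{upward} against the total $|\phi(U_*^c)|$ rather than \emph{downward} against per-block complements in the domain. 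The per-block quantities $\bigl|\tilde{Sl}_2^1(H_k)\bigr|$ are then bounded below using Lemma \ref{lemma 4.1} and the F\"olner-type Remark \ref{general folner}. To repair your argument you would have to replace the pull-back claim with this target-side counting; as written, the step from $|\tilde{Sl}_2^1(H_k)\setminus\phi(U_*)|$ to $|Sl_2^1(H_k)\setminus U_*|$ is a genuine gap.
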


\begin{proof}
The set $E_{*}$ will be constructed from $E_{**}$ that appeared in Lemma \ref{lemma 4.1}.

We first show that $\tilde{Sl}_{2}^{1}(H) \subset \phi(B)$.  Recall that $H_{0}$ is more
than $4 \kappa^{2} \beta'' diam(\mathbf{B}(\omega))$ away from the boundary of $B$.  This means that
$S$ is also more than $4 \kappa^{2} \beta'' diam(\mathbf{B}(\omega))$ away from $\partial B$.  By
definition  \begin{equation*}
 \tilde{Sl}_{2}^{1}(H) = S(\mathcal{C}_{q(h_{1})}(g(\pi^{-}_{[\alpha]}(H))),
\mathcal{C}_{q(h_{2})} g(\pi^{+}_{[\alpha]}(H) \cap S), f(\pi_{A}(H) \times [h_{2}(H), h_{1}(H)] )) \end{equation*}

By assumption, $|h_{1}(H)-h_{2}(H)| \leq \beta' \frac{|\omega|}{|\pi_{A}(H)|}$, so
$|h_{1}-h_{2}| \leq \beta' diam(\mathbf{B}(\omega))$.  Take $q_{0} \in \tilde{Sl}_{2}^{1}(H)$.  Then
$q_{0}$ is no further than $\beta' diam(\mathbf{B}(\omega))$ away from a point $q$ in the
generalized $[\alpha]$ slab \newline $S(g(\pi^{-}_{[\alpha]}(H)), g(\pi^{+}_{[\alpha]}(H) \cap S),
f(\pi_{A}(H)\times [h_{2}(H), h_{1}(H)] ))$.  This means that there is $p \in S \subset U_{*}$ such that
$\pi^{+}_{[\alpha]}(q)=\pi^{+}_{[\alpha]}(\hat{\phi}(p))$, so $d(q, \hat{\phi}(p)) \leq \beta' diam(\mathbf{B}(\omega))$.

By definition, $\hat{\phi}(p)$ lies on a $[\Xi]$ half plane that is $\hat{\eta} diam(\mathbf{B}(\omega))$ away from the image of a
$[\alpha]$ half-plane containing $p$. This means $d(q,\phi(S)) \leq 2\beta'
diam(\mathbf{B}(\omega))$, and therefore $d(q_{0}, \phi(S)) \leq 3 \beta' diam(\mathbf{B}(\omega))$. Since $d(S,
\partial B) > 4 \beta'' diam(\mathbf{B}(\omega))$,  it follows that $q_{0} \in \phi(B)$. \smallskip

Let $c_{3}$ be a constant to be chosen later.  Let $E_{2} \subset \partial^{-}_{[\alpha]}(B) - E_{1}$ be such that for $x \in E_{2}$ there is a $[\alpha]$ block $H_{x}$ such that $x \in I_{x}=\pi^{-}_{[\alpha]}(H_{x})$ and
equation (\ref{slab in range}) fails. Thus we have a cover of $E_{2}$ by intervals $I_{x}$.  By Vitali covering, we can find a
subset of $I_{k}$'s such that the the inequality opposite to above holds for each $H_{k}$
(instead of $H$) and $\sum_{k} |I_{k}| \geq (1/5) |E_{2}|$, and that $I_{k}$'s are
strongly disjoint\footnote{Note that we cannot proceed in the same manner as Lemma
\ref{lemma 4.1} from this point on because the $\partial^{+}_{[\alpha]}(S)$ is not a
full set: it's the intersection of this with something else, so we need to do something
else first. }.  (That is, for $j \not= k$, $d(I_{j}, I_{k}) \geq 1/2 \max(|I_{j}|,
|I_{k}|)$. In particular, this means $\tilde{Sl}_{2}^{1}(H_{k})$'s are disjoint as well. \medskip

We now claim that \begin{equation} \label{disjoint} \phi(Sh(H_{k}, O(1))^{c} \cap U_{*})
\cap \tilde{Sl}_{2}^{1}(H_{k}) = \emptyset
\end{equation}

\noindent Suppose not, then there is a $p \in Sh(H_{k}, O(\rho_{1}))^{c} \cap U_{*}$ such
that $\phi(p) \in \tilde{Sl}_{2}^{1}(H_{k})$.  The latter means that
$\pi^{-}_{[\alpha]}(\phi(p)) \in \mathcal{C}_{q(h_{1}(H))} g(\pi^{-}_{[\alpha]}(H_{k}))$, so
there is a $p' \in Sh(H_{k}, O(1)) \cap U_{*}$ such that
\begin{equation}\label{close}
t_{\pi^{-}_{[\Xi]}(\phi(p)), \pi^{-}_{[\Xi]}(\hat{\phi}(p'))} \leq q(h_{1}(H_{k})) + O(\hat{\eta} diam(\mathbf{B}(\omega)))  \end{equation}

\noindent However the fact that $p \in Sh(H_{k},O(1))^{c}$ and $p' \in Sh(H_{k}, O(1))$
means that for
\[  t_{\pi^{-}_{[\alpha]}(p), \pi^{-}_{[\alpha]}(p')}   > \alpha_{0}(H_{k})+ O(1) \]

\noindent which in turns means that
\[ t_{\pi^{-}_{[\Xi]}(\phi(p)), \pi^{-}_{[\Xi]}(\hat{\phi}(p'))}   > q(\alpha_{0}(H_{k})) + O(\hat{\eta} diam(\mathbf{B}(\omega)))
> q(h_{1}(H_{k})) + O(\hat{\eta} diam(\mathbf{B}(\omega))) \] \noindent  contradicting (\ref{close}).  So equation (\ref{disjoint}) holds. \\

Next we show that
\begin{equation} \label{measure of tilde}
\tilde{Sl}_{2}^{1}(H_{k}) \cap N_{O(\hat{\eta} diam(\mathbf{B}(\omega)))}
\hat{\phi}(Sl_{2}^{1}(H_{k}))^{c} \subset \phi(U_{*}^{c}) \end{equation}

\noindent Since $U_{*}^{c}$ has relative small measure in $B$, this means that the
measure of left hand side over all $k$'s from the strongly disjoint family remains small,
making the measure of $\sum_{k} N_{O(\hat{\eta} diam(\mathbf{B}(\omega)))} \hat{\phi}(Sl_{2}^{1}(H_{k}))$ a
lower bound for $\sum_{k} |\tilde{Sl}_{2}^{1}(H_{k})|$.

Suppose (\ref{measure of tilde}) is not true.  Then there is a $p \in
\tilde{Sl}_{2}^{1}(H_{k})$, such that $p \in N_{O(\hat{\eta} diam(\mathbf{B}(\omega)))} \phi(Sl_{2}^{1}(H_{k}))^{c}$ and $p \in \phi(U_{*})$.
However, by equation (\ref{disjoint}), the last two conditions means that $p \not \in \tilde{Sl}_{2}^{1}(H_{k})$, which contradicts assumption
that $p \in \tilde{Sl}_{2}^{1}(H_{k})$. \medskip

By choice, $\tilde{Sl}_{2}^{1}(H_{k})$'s satisfy the inequality opposite to equation
(\ref{slab in range}), so $|\tilde{Sl}_{2}^{1}(H_{k}) \cap \phi(U_{*}^{c}) | \geq c_{4}
|\tilde{Sl}_{2}^{1}(H_{k})|$.  As all the $\tilde{Sl}_{2}^{1}(H_{k})$'s are disjoint,
summing over $k$ yields \begin{eqnarray*}
|\phi(U_{*}^{c})| & \geq &  c_{4} \sum_{k} |\tilde{Sl}_{2}^{1}(H_{k})| \geq  c_{4}(1-\theta)\sum_{k} N_{O(\hat{\eta} diam(\mathbf{B}(\omega)))}
\hat{\phi}(Sl_{2}^{1}(H_{k}))\\
& \geq & c_{4}(1-\theta)(1-O(\hat{\eta})) (1-c_{2}) \sum_{k} |Sl_{2}^{1}(H_{k})| \\
&\geq& c_{4}(1-\theta)(1-O(\hat{\eta})) (1-c_{2}) \sum_{k} |\pi_{A}(H_{k})| |h_{1}(H_{k})-h_{2}(H_{k})| |I_{k}| |\pi^{+}_{[\alpha]}(H_{k})|
\end{eqnarray*}

\noindent The second line comes from Lemma \ref{lemma 4.1} and the fact that the volume ratio between $\epsilon$ neighborhood of $Sl_{2}^{1}(H)$ and $Sl_{2}^{1}(H)$ is
$1+O(\epsilon)$ in the second line, as noted in Remark \ref{general folner}. \smallskip

Since $\pi^{+}_{[\alpha]}(H_{k}) =\pi^{+}_{[\alpha]}(B)$, and $|\phi(U_{*}^{c})| \leq \theta |\omega| |\pi^{+}_{[\alpha]}(B)|
|\pi^{-}_{[\alpha]}(B)|$, we obtain
\begin{equation*} |E_{2}| \leq 5 \sum_{k} |I_{k}| \leq \frac{5 \theta}{c_{4}\beta (1-\theta)(1-O(\hat{\eta}))(1-c_{2})} |\pi^{-}_{[\alpha]}(B)| \end{equation*}  \end{proof}

\begin{corollary} \label{cross area preservation}
Let $H$ be a $[\alpha]$ block in $\mathbf{B}(\omega)$ satisfying the hypothesis of Lemma \ref{lemma 4.1} and \ref{lemma 4.2}, and $S$ as in Lemma \ref{lemma 4.2}.
Let $w_{1}, w_{2} \in \mathbb{R}$ be such that $\beta' |\omega| \geq |w_{1}-w_{2}| |\pi_{A}(H)| \geq \beta|\omega|$.  Then

\begin{equation} \label{upper bound}
|\mathcal{C}_{w_{1}}(g(\pi^{-}_{[\alpha]}(H)))| |\mathcal{C}_{w_{2}}(g(\pi^{+}_{[\alpha]}(H) \cap S)) | \geq
d |\pi^{-}_{[\alpha]}(H)| | \pi^{+}_{[\alpha]}(H) | \end{equation}
\noindent and
\begin{equation}\label{lower bound}
|\mathcal{C}_{w_{1}}(g(\pi^{-}_{[\alpha]}(H)))| |\mathcal{C}_{w_{2}}(g(\pi^{+}_{[\alpha]}(H) \cap S)) | \leq
b |\pi^{-}_{[\alpha]}(H)| | \pi^{+}_{[\alpha]}(H) | \end{equation}

\noindent where $d$ and $b$ depend only on $\kappa$, $C$.  \end{corollary}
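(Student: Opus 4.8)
The plan is to obtain both \ref{upper bound} and \ref{lower bound} from a single coarse volume comparison, $Vol_{G'}(\tilde{Sl}_2^1(H)) \asymp Vol_{G}(Sl_2^1(H))$ with constants depending only on $\kappa,C$, combined with the product structure of the volume of a generalized slab. First fix the heights: given $w_1,w_2$ as in the statement, choose $h_2<h_1<\alpha_0(H)$ with $q(h_i)=w_i$ (up to $O(1)$), which is possible since $q$, being induced by the $\mathbf{A}'$-part of $\hat\phi=g\times f$ and hence within $\hat\eta\,diam(\mathbf{B}(\omega))$ of the quasi-isometry $\phi$, is $(\kappa',C')$-bilipschitz on the range in question. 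Then $\mathcal{C}_{w_i}(\cdot)=\mathcal{C}_{q(h_i)}(\cdot)$ up to coarsening by an $O(1)$ amount, which — since $q(h_i)$ is itself large — changes those sets only by a bounded factor, and the hypothesis $\beta'|\omega|\ge|w_1-w_2||\pi_A(H)|\ge\beta|\omega|$ forces $\beta'|\omega|\gg|h_1-h_2||\pi_A(H)|\gg\beta|\omega|$ after harmlessly adjusting $\beta,\beta'$ by factors of $\kappa'$; so $H$ equipped with these heights satisfies the hypotheses of Lemmas \ref{lemma 4.1} and \ref{lemma 4.2}.

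By Lemma \ref{lemma 3-5} — or a direct integration of the left Haar measure, which in the given coordinates is just Lebesgue measure because $G$ and $G'$ are unimodular — the volume of a generalized $[\alpha]$-slab $S(E_-,E^{+},K,h_2,h_1)$ is, for fixed $K$ and heights, proportional to $|E_-|\,|E^{+}|$, with the proportionality factor a product of $|K|$, $(h_1-h_2)$ and the universal branching term $e^{\mathbf{b}_{[\alpha]}(h_1-h_2)}$. Applying this to $Sl_2^1(H)$ and to $\tilde{Sl}_2^1(H)$, and using that $f(ker(\alpha_0))=ker(\Xi_0)$ and that $f,q$ are bilipschitz with constants controlled by $\kappa,C$, the two proportionality factors agree up to constants depending only on $\kappa,C$. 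Hence the volume comparison above, once proved, divides through to the desired $|\mathcal{C}_{w_1}(g(\pi^-_{[\alpha]}(H)))|\,|\mathcal{C}_{w_2}(g(\pi^+_{[\alpha]}(H)\cap S))|\asymp|\pi^-_{[\alpha]}(H)|\,|\pi^+_{[\alpha]}(H)|$, i.e. \ref{upper bound} and \ref{lower bound}.

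It remains to prove $Vol(\tilde{Sl}_2^1(H)) \asymp Vol(Sl_2^1(H))$. Two tools are used: (i) a $(\kappa,C)$-quasi-isometry between spaces of bounded geometry changes the volume of any sufficiently large set by at most a multiplicative constant depending only on $\kappa,C$ (compare nets at scales $1$ and $\kappa C$); and (ii) $Sl_2^1(H)$ and $\tilde{Sl}_2^1(H)$ are \emph{thick}, so that by Remark \ref{general folner} an $\varepsilon$-neighborhood with $\varepsilon=O(\hat\eta\,diam(\mathbf{B}(\omega)))$ multiplies their volume by $1+O(\hat\eta/\beta)$. For $Vol(\tilde{Sl}_2^1(H))\lesssim Vol(Sl_2^1(H))$: by Lemma \ref{lemma 4.2} together with the containment (\ref{measure of tilde}) from its proof, all but a $|\phi(U_*^{c})|$-small portion of $\tilde{Sl}_2^1(H)$ lies in $\phi(U_*)\cap N_{O(\hat\eta\,diam(\mathbf{B}(\omega)))}\hat\phi(Sl_2^1(H))$; pulling this back through $\phi$ and using that $\phi^{-1}\hat\phi$ is $O(\kappa\hat\eta\,diam(\mathbf{B}(\omega)))$-close to the identity on $U_*$, all but a small part of $\tilde{Sl}_2^1(H)$ is the $\phi$-image of a subset of $N_{O(\kappa\hat\eta\,diam(\mathbf{B}(\omega)))}(Sl_2^1(H))$, which by (i) and (ii) has volume $\lesssim Vol(Sl_2^1(H))$. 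For the reverse: Lemma \ref{lemma 4.1} gives $Vol(Sl_2^1(H)\cap U_*)\ge(1-c_2)Vol(Sl_2^1(H))$, while by the construction of $S$ and of $\tilde{Sl}_2^1(H)$ one has $\phi(Sl_2^1(H)\cap U_*)\subset N_{\hat\eta\,diam(\mathbf{B}(\omega))}\hat\phi(Sl_2^1(H)\cap U_*)\subset N_{\hat\eta\,diam(\mathbf{B}(\omega))}(\tilde{Sl}_2^1(H))$ (here $Sl_2^1(H)\cap U_*$ lies in $S$, so only the part of $\pi^+_{[\alpha]}(H)$ visible in $S$ is needed); hence $Vol(Sl_2^1(H))\asymp Vol(\phi(Sl_2^1(H)\cap U_*))\lesssim Vol(\tilde{Sl}_2^1(H))$ by (i) and (ii).

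The main obstacle is this two-sided volume comparison. The subtlety is that $\hat\phi=g\times f$ is \emph{not} yet known to be bilipschitz, so it may only be invoked where it tracks the genuine quasi-isometry $\phi$, namely on $U_*$ — which is exactly what Lemmas \ref{lemma 4.1} and \ref{lemma 4.2} are arranged to supply — and the argument must everywhere route through $\phi$. Keeping all the neighborhood inflations and the $\phi$-versus-$\hat\phi$ slippage under control is where the thickness inequality $|h_1-h_2||\pi_A(H)|\ge\beta|\omega|$, the hypotheses that $H$ is far from $\partial B$, and the scale separation $\hat\eta\ll\beta\ll\beta'\ll\beta''$ all enter.
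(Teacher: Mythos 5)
Your overall strategy matches the paper's: you establish a two-sided volume comparison $Vol(\tilde{Sl}_2^1(H)) \asymp_{\kappa,C} Vol(Sl_2^1(H))$ (the paper compresses this into the single assertion that the ratio of $|\tilde{Sl}_{2}^{1}(H) \cap \phi(U_{*})|$ to $|Sl_{2}^{1}(H) \cap U_{*}|$ lies in $[1/2\kappa, 2\kappa]$ by ``the structure of a standard map,'' and then chains this with Lemmas \ref{lemma 4.1} and \ref{lemma 4.2}; you spell the same comparison out via the QI volume bound plus the thickness remark, which is a legitimate way to fill in that step), and then divide by the product formula for slab volume to isolate the cross-sectional factors. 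So the route is the same, and the outline is sound.

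There is, however, a genuine error in the step where you write down the slab volume. The Haar (= Lebesgue, since $G$ is unimodular) volume of a generalized $[\alpha]$-slab $S(E_-, E^{+}, K, h_2, h_1)$ is simply $|E_-|\,|E^{+}|\,|K|\,(h_1-h_2)$; there is \emph{no} factor $e^{\mathbf{b}_{[\alpha]}(h_1-h_2)}$. The exponential appears in Lemma \ref{lemma 3-5}, which counts the \emph{number of $[\alpha]$ half-planes} in the slab, a different quantity from the volume, and it also contradicts your own ``direct integration of the left Haar measure'' alternative, which yields the formula without the exponential. This is not a cosmetic slip: carried through, the two ``proportionality factors'' you divide out become $|\pi_A(H)|(h_1-h_2)e^{\mathbf{b}_{[\alpha]}(h_1-h_2)}$ and $|f(\pi_A(H))|\,|q(h_1)-q(h_2)|\,e^{\mathbf{b}_{f_*[\alpha]}(q(h_1)-q(h_2))}$, and since $q$ is only known to be bilipschitz with constant on the order of $\kappa$, the two exponents $\mathbf{b}_{[\alpha]}(h_1-h_2)$ and $\mathbf{b}_{f_*[\alpha]}(q(h_1)-q(h_2))$ can differ by an amount of order $h_1-h_2$ itself, so the quotient of proportionality factors can be exponentially large in $diam(\omega)$ --- nothing like ``a constant depending only on $\kappa, C$.'' (Showing that those exponents almost cancel is precisely Lemma \ref{lemma 4-7}, which in the paper is proved \emph{after} and \emph{using} this corollary, so it cannot be invoked here.) Removing the spurious branching factor, the ratio of proportionality factors is $|\pi_A(H)|(h_1-h_2)/\bigl(|f(\pi_A(H))|\,|q(h_1)-q(h_2)|\bigr)$, which is indeed $O_{\kappa,C}(1)$ by the bilipschitz bounds on $f$ and $q$, and then your argument does close.
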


\begin{proof}
Note that from the structure of $U$ and the fact that $\phi$ is a quasi-isometry, it
follows that for $z_{1}, z_{2} \in \pi_{A}(\mathbf{B}(\omega))$ we have
\[ \frac{1}{2\kappa} \left|  z_{1}-z_{2}  \right| - \hat{\eta} diam(\mathbf{B}(\omega)) \leq \left|  q(z_{1})-q(z_{2})  \right|
\leq 2\kappa \left|  z_{1}-z_{2}  \right| + \hat{\eta} diam(\mathbf{B}(\Omega)) \]

\noindent This means that $q$ is essentially monotone, so there exists $h_{1},h_{2}$ such
that $q(h_{1}(H))=w_{1}$, and $q(h_{2}(H))=w_{2}$.  We now apply Lemma \ref{lemma 4.1}
and \ref{lemma 4.2} to the resulting $Sl_{2}^{1}(H)$ and $\tilde{Sl}_{2}^{1}(H)$.
\smallskip

By equation (\ref{slab in range}) in Lemma \ref{lemma 4.2}, we know that
$|\tilde{Sl}_{2}^{1}(H) \cap \phi(U_{*}) | \geq (1-c_{4}) |\tilde{Sl}_{2}^{1}(H)|$.  The
structure of a standard map means that the ratio of measures of $\tilde{Sl}_{2}^{1}(H)
\cap \phi(U_{*})$ to that of $Sl_{2}^{1}(H) \cap U_{*}$ lies in $[1/2\kappa, 2\kappa]$.
These facts together shows that
\begin{equation*}
(1-c_{4}) |\tilde{Sl}_{2}^{1}(H)|  \leq  |\tilde{Sl}_{2}^{1} \cap \phi(U_{*})| \leq 2\kappa |Sl_{2}^{1}(H) \cap U_{*}|
 \leq  2\kappa |Sl_{2}^{1}(H)| \end{equation*}

\noindent Similarly,
\begin{equation*}
(1-c_{2}) |Sl_{2}^{1}(H)| \leq |Sl_{2}^{1}(H) \cap U_{*}| \leq 2\kappa |\tilde{Sl}_{2}^{1}(H) \cap \phi(U_{*}) |
\leq 2\kappa |\tilde{Sl}_{2}^{1}(H)| \end{equation*}

The claims now follow from the following volume formula.
\[ |Sl_{2}^{1}(H)|=|\pi^{-}_{[\alpha]}(H)| |\pi^{+}_{[\alpha]}(H) | |\pi_{A}(H)||h_{2}-h_{1}| \]
\[ |\tilde{Sl}_{2}^{1}(H)|= |\mathcal{C}_{q(h_{1})}g(\pi^{-}_{[\alpha]}(H))| |\mathcal{C}_{q(h_{2})} g(\pi^{+}_{[\alpha]}(H))|
|f(\pi_{A}(H))||q(h_{1})-q(h_{2})| \]  \end{proof}

\subsection{The constant part of $f$}
Continuing with the same notations from the previous subsection, we show in this subsection, that
$q:  \mathbf{A}/ker(\ell_{[\alpha]}) \rightarrow \mathbf{A}/ ker(\ell_{[\Xi]})$ is affine
and compute its constant term.  To compute the the constant term of this affine map,
we make use of Corollary \ref{cross area preservation}, and the property that the standard map $\hat{\phi}$
roughly preserves the number of root class half planes.\smallskip


\begin{lemma} \label{lemma 4-6}
Let $H$ be a $[\alpha]$ block in $B$.  Let $\mathcal{F}$, $\tilde{\mathcal{F}}$ denote the set of (maximal)
$[\alpha]$, $[\Xi]$ half planes in $Sl_{2}^{1}(H)$ and $\tilde{Sl}_{2}^{1}(H)$ respectively.  Then
\[ \log \left|  \mathcal{F}  \right| = \log \left|  \tilde{\mathcal{F}}  \right| + O(\hat{\eta} diam(B)) \] \end{lemma}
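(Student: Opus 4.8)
The plan is to count the two families $\mathcal{F}$ and $\tilde{\mathcal{F}}$ using the volume-versus-count formula of Lemma \ref{lemma 3-5} and then invoke Corollary \ref{cross area preservation} to match the two counts. First I would apply Lemma \ref{lemma 3-5} directly to $Sl_{2}^{1}(H)=S(\pi^{-}_{[\alpha]}(H),\pi^{+}_{[\alpha]}(H),\pi_{A}(H),h_{2},h_{1})$ (with the trivial coarsening, since enlarging the boundary sets below the relevant heights does not change the slab), obtaining
\[
\log|\mathcal{F}| = \log\!\left(\frac{Vol(Sl_{2}^{1}(H))}{|\pi_{A}(H)|\,(h_{1}-h_{2})}\right) + \mathbf{b}_{[\alpha]}(h_{1}-h_{2}) + O(1),
\]
and likewise, viewing $\tilde{Sl}_{2}^{1}(H)=S(\mathcal{C}_{q(h_{1})}g(\pi^{-}_{[\alpha]}(H)),\mathcal{C}_{q(h_{2})}g(\pi^{+}_{[\alpha]}(H)),f(\pi_{A}(H)\times[h_{2},h_{1}]))$ as a generalized $[\Xi]$ slab of vertical extent $|q(h_{1})-q(h_{2})|$, Lemma \ref{lemma 3-5} gives
\[
\log|\tilde{\mathcal{F}}| = \log\!\left(\frac{Vol(\tilde{Sl}_{2}^{1}(H))}{|f(\pi_{A}(H))|\,|q(h_{1})-q(h_{2})|}\right) + \mathbf{b}_{[\Xi]}\,|q(h_{1})-q(h_{2})| + O(1).
\]

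Next I would substitute the two explicit volume formulas recorded at the end of the proof of Corollary \ref{cross area preservation}, namely $Vol(Sl_{2}^{1}(H))=|\pi^{-}_{[\alpha]}(H)||\pi^{+}_{[\alpha]}(H)||\pi_{A}(H)||h_{1}-h_{2}|$ and $Vol(\tilde{Sl}_{2}^{1}(H))=|\mathcal{C}_{q(h_{1})}g(\pi^{-}_{[\alpha]}(H))||\mathcal{C}_{q(h_{2})}g(\pi^{+}_{[\alpha]}(H))||f(\pi_{A}(H))||q(h_{1})-q(h_{2})|$. After cancellation this reduces the claim to showing
\[
\log\!\big(|\pi^{-}_{[\alpha]}(H)||\pi^{+}_{[\alpha]}(H)|\big) + \mathbf{b}_{[\alpha]}(h_{1}-h_{2})
= \log\!\big(|\mathcal{C}_{q(h_{1})}g(\pi^{-}_{[\alpha]}(H))||\mathcal{C}_{q(h_{2})}g(\pi^{+}_{[\alpha]}(H))|\big) + \mathbf{b}_{[\Xi]}|q(h_{1})-q(h_{2})| + O(\hat\eta\,diam(B)).
\]
The first term on each side is controlled by the two-sided bounds \eqref{upper bound} and \eqref{lower bound} of Corollary \ref{cross area preservation}, which say the two products of boundary measures agree up to a multiplicative constant $b/d$ depending only on $\kappa,C$; taking logs this is an additive $O(1)$, absorbed into $O(\hat\eta\,diam(B))$ since $diam(B)$ is large. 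It then remains to match the branching terms: $\mathbf{b}_{[\alpha]}(h_{1}-h_{2})$ against $\mathbf{b}_{[\Xi]}|q(h_{1})-q(h_{2})|$. Here I would use that $\hat\phi$ is a standard map bounded distance from $\phi$, so the number of half planes is a quasi-isometry invariant of the relevant weight-hyperbolic projections; equivalently, I would compare the two expressions for $\log|\mathcal{F}|$ and $\log|\tilde{\mathcal{F}}|$ in a single slab where all other quantities have already been matched, forcing $\mathbf{b}_{[\alpha]}(h_{1}-h_{2}) - \mathbf{b}_{[\Xi]}|q(h_{1})-q(h_{2})| = O(\hat\eta\,diam(B))$ by consistency.

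The main obstacle I anticipate is precisely this last point: the counts $|\mathcal{F}|$, $|\tilde{\mathcal{F}}|$ of half planes are not yet known to be comparable a priori — that comparability is essentially what the lemma asserts — so I cannot simply assert "$\hat\phi$ preserves the number of half planes" without circularity. The honest route is to observe that a $[\alpha]$ half plane of $Sl_{2}^{1}(H)$ that meets $U_{*}$ in large relative measure (which most do, by Lemma \ref{lemma 4.1}) maps under $\phi$ to within $\hat\eta\,diam(B)$ of a $[\Xi]$ half plane of $\tilde{Sl}_{2}^{1}(H)$, and this correspondence is injective up to $e^{O(\hat\eta\,diam(B))}$ multiplicity because $\phi$ is a $(\kappa,C)$-quasi-isometry and distinct half planes in the source separate, at the relevant heights, by more than $\hat\eta\,diam(B)$ after applying $\phi$; symmetrically using $\phi^{-1}$ and Lemma \ref{lemma 4.2} (applied to $\tilde{Sl}_{2}^{1}(H)\cap\phi(U_{*})$, which is most of $\tilde{Sl}_{2}^{1}(H)$) gives the reverse bound. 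Combining the injection and surjection up to $e^{O(\hat\eta\,diam(B))}$ yields $\log|\mathcal{F}| = \log|\tilde{\mathcal{F}}| + O(\hat\eta\,diam(B))$ directly, which is cleaner than going through the branching constants and is the form stated; I would present the argument this way, using the volume/count formulas only as a cross-check.
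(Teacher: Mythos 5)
Your ``honest route'' is essentially the paper's own proof: it constructs a correspondence $\gamma \mapsto \hat\gamma$ from $[\alpha]$ half planes of $Sl_2^1(H)$ that spend a large fraction of their measure in $U_*$ (a set $\mathcal{F}'$ of relative large measure in $\mathcal{F}$, by Lemma \ref{lemma 4.1}) to $[\Xi]$ half planes of $\tilde{Sl}_2^1(H)$, shows the map is at most $e^{O(\hat\eta\,diam(B))}$-to-one because two preimages with the same $\hat\phi$-image must be within $\hat\eta\,diam(B)$ of each other on $U_*$, and then runs the symmetric argument for $\phi^{-1}$ via Lemma \ref{lemma 4.2}. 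You also correctly diagnosed the circularity in the first, volume-formula route (which is really what Lemma \ref{lemma 4-7} then derives \emph{from} this lemma) and discarded it in favor of the direct counting argument.
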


\begin{proof}
The claim is base on the fact that there is a bijection between $Sl_{2}^{1}(H) \cap U_{*}$ and $\tilde{Sl}_{2}^{1}(H) \cap \phi(U_{*})$.

Explicitly, let $\mathcal{F}'$ be the set of $[\alpha]$ half planes in $Sl_{2}^{1}(H)$ that are more than
$O(\hat{\eta} diam(B))$ away from $\partial B$, and spend at least $1-\sqrt{c_{2}}$ fraction of
their measure in $U_{*}$.  Then $\mathcal{F}'$ has a relative large measure in $\mathcal{F}$.

For each $\gamma \in \mathcal{F}'$ there is a $[\Xi]$ half plane $\hat{\gamma} \in \hat{\mathcal{F}}$ such that $\phi(\gamma \cap U_{*})$ is
within $\hat{\eta} diam(B)$ of $\hat{\gamma}$.  We define $\psi(\gamma)=\hat{\gamma}$.
Note $\psi$ is at most $e^{\hat{\eta} diam(B) + \sqrt{c_{2}}diam(B)}$ to one since two $[\alpha]$ planes with the same
$\hat{\phi}$ image must be within $\hat{\eta} diam(B)$ of each other whenever they are in $U_{*}$.  Inverse of $\psi$ is defined
similarly. \end{proof}

\begin{lemma} \label{lemma 4-7}
For all $z_{1}, z_{2} \in [z_{bot}, z_{top}]$, where $z_{top}=\max \alpha_{0}(B)$, and $z_{bot}=\min \alpha_{0}(B)$.
\[ q(z_{1})-q(z_{2}) = \frac{\mathbf{b}_{[\alpha]}}{\mathbf{b}_{[\Xi]}} (z_{1}-z_{2}) + O(\hat{\eta} diam(B)) \] \noindent
where $[\Xi]=f_{*}[\alpha]$  \end{lemma}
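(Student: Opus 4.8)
The plan is to prove the statement by a counting argument: compare the number of $[\alpha]$ half planes inside a slab $Sl_2^1(H)$ with the number of $[\Xi]$ half planes in its image $\tilde{Sl}_2^1(H)$, and extract the linear coefficient $q(z_1)-q(z_2)$ by taking logarithms. The two inputs are already available: Lemma \ref{lemma 4-6}, which says $\log|\mathcal{F}| = \log|\tilde{\mathcal{F}}| + O(\hat\eta \, diam(B))$, and Lemma \ref{lemma 3-5}, which evaluates each of these cardinalities as (cross-sectional area) $\times \exp(\mathbf{b}_{\bullet}\cdot(\text{height range}))$.

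First I would fix $z_1 > z_2$ in $[z_{bot}, z_{top}]$ and pick a $[\alpha]$ block $H$ (satisfying the hypotheses of Lemmas \ref{lemma 4.1} and \ref{lemma 4.2}, with $|\pi_A(H)|$ chosen so that $\beta'|\omega| \geq (z_1-z_2)|\pi_A(H)| \geq \beta|\omega|$) with $\alpha_0(H) > z_1$, and set $h_1 = z_1$, $h_2 = z_2$. By Lemma \ref{lemma 3-5} applied to $Sl_2^1(H) = S(\mathcal{C}_{\dots}(g(\pi^-_{[\alpha]}(H))), \mathcal{C}_{\dots}(g(\pi^+_{[\alpha]}(H))), \pi_A(H), z_2, z_1)$,
\[ \log|\mathcal{F}| = \mathbf{b}_{[\alpha]}(z_1 - z_2) + \log\big(\text{cross-section of } Sl_2^1(H)\big) + O(1), \]
and similarly, using that $\tilde{Sl}_2^1(H)$ lives over the height range $[q(z_2), q(z_1)]$ in $\mathbf{A}/ker(\ell_{[\Xi]})$ (by the definition of $\tilde{Sl}_2^1(H)$ via $f$, and with $q$ essentially monotone as established in the proof of Corollary \ref{cross area preservation}),
\[ \log|\tilde{\mathcal{F}}| = \mathbf{b}_{[\Xi]}(q(z_1) - q(z_2)) + \log\big(\text{cross-section of } \tilde{Sl}_2^1(H)\big) + O(1). \]
Then I would invoke Corollary \ref{cross area preservation} to say that the two cross-sectional areas — which are $|\pi^-_{[\alpha]}(H)||\pi^+_{[\alpha]}(H)|$ and $|\mathcal{C}_{q(h_1)}g(\pi^-_{[\alpha]}(H))||\mathcal{C}_{q(h_2)}g(\pi^+_{[\alpha]}(H)\cap S)|$ respectively — agree up to a multiplicative constant depending only on $\kappa, C$, hence their logarithms agree up to $O(1)$. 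Combining with Lemma \ref{lemma 4-6} gives
\[ \mathbf{b}_{[\alpha]}(z_1 - z_2) = \mathbf{b}_{[\Xi]}(q(z_1) - q(z_2)) + O(\hat\eta \, diam(B)) + O(1), \]
and since $diam(B)$ is large (so the additive $O(1)$ is absorbed into $O(\hat\eta\, diam(B))$) we can divide by $\mathbf{b}_{[\Xi]}$ to obtain the claimed formula $q(z_1) - q(z_2) = \frac{\mathbf{b}_{[\alpha]}}{\mathbf{b}_{[\Xi]}}(z_1-z_2) + O(\hat\eta\, diam(B))$.

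There is a gap to be careful about: the constraint $\beta' |\omega| \geq (z_1-z_2)|\pi_A(H)| \geq \beta|\omega|$ only lets us handle pairs $z_1, z_2$ whose separation, after scaling by an admissible $|\pi_A(H)|$, falls in a bounded window; for general $z_1, z_2 \in [z_{bot}, z_{top}]$ I would chain the estimate over a bounded number of intermediate points $z_2 = y_0 < y_1 < \cdots < y_N = z_1$ with each consecutive gap admissible, using that $q$ is (essentially) additive along the chain up to accumulated error; since $N$ is bounded in terms of the initial data, the accumulated error stays $O(\hat\eta\, diam(B))$. The main obstacle I anticipate is bookkeeping the error terms so that the additive $O(1)$'s from Lemma \ref{lemma 3-5} and the multiplicative constants from Corollary \ref{cross area preservation} genuinely get dominated by $O(\hat\eta\, diam(B))$ uniformly — this requires that $H$ is chosen with $|h_1 - h_2| = |z_1 - z_2|$ comparable to $diam(B)$ (guaranteed by the $\beta|\omega|$ lower bound), so that $\mathbf{b}_{[\alpha]}(z_1-z_2)$ is the dominant term and the constants are lower order; plus verifying that the height range of $\tilde{Sl}_2^1(H)$ really is $[q(z_2), q(z_1)]$ up to $O(\hat\eta\, diam(B))$, which follows from $f$ inducing $q$ on $\mathbf{A}/ker(\ell_{[\alpha]})$ together with the $\hat\eta\, diam(B)$-closeness of $\phi$ to $\hat\phi$ on $U_*$.
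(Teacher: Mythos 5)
Your proposal follows the same approach as the paper's proof: count $[\alpha]$ half planes in $Sl_2^1(H)$ and $[\Xi]$ half planes in $\tilde{Sl}_2^1(H)$, equate them up to $e^{O(\hat\eta\,diam(B))}$ via Lemma \ref{lemma 4-6}, and cancel the cross-sectional factors via Corollary \ref{cross area preservation}. The chaining argument you add at the end to extend from admissible $(z_1,z_2)$ to arbitrary pairs is exactly what the paper compresses into the sentence ``It is sufficient to check this for a $[\alpha]$ block $H$, and $h_1,h_2$ satisfying the hypothesis of Lemma \ref{lemma 4.1} and \ref{lemma 4.2},'' so you have correctly identified and filled in that implicit step.
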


\begin{proof}
It is sufficient to check this for a $[\alpha]$ block $H$, and $h_{1},h_{2}$ satisfying the
hypothesis of Lemma \ref{lemma 4.1} and \ref{lemma 4.2}.  Let $\mathcal{F}$,
$\tilde{\mathcal{F}}$ denote the set of $[\alpha]$, $[\Xi]$ half planes in $Sl_{2}^{1}(H)$ and $\tilde{Sl}_{2}^{1}(H)$ respectively.
The number of $[\alpha]$ half planes in $Sl_{2}^{1}(H)$ is
\[ |\pi^{-}_{[\alpha]}(H)| \mbox{   } |\pi^{+}_{[\alpha]}(H)| \mbox{    }  e^{\mathbf{b}_{[\alpha]}(h_{1}-h_{2})} \]

\noindent while the number of $[\Xi]$ half planes is
\[ |\mathcal{C}_{q(h_{1})} g(\pi^{-}_{[\alpha]}(H))| \mbox{    }  |\mathcal{C}_{q(h_{2})} g(\pi^{+}_{[\alpha]}(H) \cap S)|
\mbox{    } e^{\mathbf{b}_{[\Xi]}(q(h_{1})-q(h_{2}))} \]

By Lemma \ref{lemma 4-6},
\[ |\mathcal{C}_{q(h_{1})} g(\pi^{-}_{[\alpha]}(H))| \mbox{    }  |\mathcal{C}_{q(h_{2})}
g(\pi^{+}_{[\alpha]}(H) \cap S)| \mbox{     } e^{\mathbf{b}_{[\Xi]}(q(h_{1})-q(h_{2}))} = |\pi^{-}_{[\alpha]}(H)|
\mbox{    } |\pi^{+}_{[\alpha]}(H)|e^{\mathbf{b}_{[\alpha]}(h_{1}-h_{2})}  \mbox{    } e^{ \hat{\eta} diam(B) } \]

\noindent After simplifying,
\[q(h_{1}) - q(h_{2}) = \frac{\mathbf{b}_{[\alpha]}}{\mathbf{b}_{[\Xi]}}(h_{1}-h_{2}) +
\frac{1}{\mathbf{b}_{[\Xi]}} \log \frac{|\pi^{-}_{[\alpha]}(H)| |\pi^{+}_{[\alpha]}(H)|}{|\mathcal{C}_{q(h_{1})} g(\pi^{-}_{[\alpha]}(H))|
|\mathcal{C}_{q(h_{2})} g(\pi^{+}_{[\alpha]}(H) \cap S)|} + \hat{\eta}diam(B) \]
\noindent The claim now follows because by Corollary \ref{cross area preservation},
\[ \frac{|\pi^{-}_{[\alpha]}(H)|  \mbox{     } |\pi^{+}_{[\alpha]}(H)|}{|\mathcal{C}_{q(h_{1})}
g(\pi^{-}_{[\alpha]}(H))| \mbox{     }  |\mathcal{C}_{q(h_{2})} g(\pi^{+}_{[\alpha]}(H) \cap S)|} =O(1) \]
\end{proof}

\begin{lemma} \label{leaving RG invariant}
The $\mathbf{A}'$ part of a standard map is affine.  Its natural action sends $R_{G}$ to $R_{G'}$,
hence can only take on one of finitely many possibilities. \end{lemma}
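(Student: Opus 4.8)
The plan is to assemble Lemma \ref{leaving RG invariant} from the pieces already built in this section, namely Lemma \ref{lemma 4-7} together with the remark after Definition \ref{standard map}. First I would recall that by hypothesis $G$ has at least $\mathrm{rank}(G)+1$ root kernels (non-degeneracy plus unimodularity force this in rank $\geq 2$, and the rank $1$ case is vacuous), so the condition that $f$ sends root kernels of $G$ to root kernels of $G'$ already forces $f$ to be an affine map $\mathbf{A}\to\mathbf{A}'$: a bijection of $\mathbf{A}$ that carries each of a spanning family of hyperplanes to a hyperplane, with enough hyperplanes to pin down the linear part, must be affine. This handles the first sentence.

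Next I would pin down what the linear part does to the distinguished vectors. For each root class $[\alpha]$ of $G$, Lemma \ref{lemma 4-7} identifies $f_{*}[\alpha]=[\Xi]$ as a root class of $G'$ and shows that the induced one-dimensional quotient map $q$ has slope exactly $\mathbf{b}_{[\alpha]}/\mathbf{b}_{[\Xi]}$ (the additive error $O(\hat\eta\,\mathrm{diam}(B))$ is negligible on the scale of the box, so $q$ is genuinely affine with this slope). Translating this back: writing $f$ in the form (linear part)$+$(constant), the linear part $\mathbf{L}$ sends $\ker(\ell_{[\alpha]})$ to $\ker(\ell_{[\Xi]})$ for every root class, and the slope computation says precisely that $\mathbf{L}$ scales the $\ell_{[\alpha]}$-direction by $\mathbf{b}_{[\alpha]}/\mathbf{b}_{[\Xi]}$. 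Dually, since $\ell_{[\alpha]}=\mathit{l}_{[\alpha]}$ is (a positive multiple of) the functional whose kernel is $\ker(\ell_{[\alpha]})$ and whose "length" is governed by $\mathbf{b}_{[\alpha]}$, the dual/transpose action of $\mathbf{L}$ carries the vector associated to $\mathit{l}_{[\alpha]}$ to the vector associated to $\mathit{l}_{[\Xi]}=\mathit{l}_{f_*[\alpha]}$. Since $\mathbf{b}_{[\alpha]}=\mathit{l}_{[\alpha]}/\alpha_0$ and the analogous identity holds in $G'$, the ratio of the norms of $\mathit{l}_{[\alpha]}$ and $\mathit{l}_{[\Xi]}$ is exactly absorbed, so $\mathbf{L}$ maps the set $R_G=\{\mathit{l}_{[\alpha]}\}$ bijectively onto $R_{G'}=\{\mathit{l}_{[\Xi]}\}$. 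This is the required statement; the "finitely many possibilities" follows because $R_G$ and $R_{G'}$ are finite sets, so a bijection between them lies in a finite group, and the affine map is determined by its linear part acting on the spanning set $R_G$ together with the constant part already computed in Lemma \ref{lemma 4-7}.

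The main obstacle I expect is bookkeeping about which normalization of the root-class functionals is being used and making sure the slope $\mathbf{b}_{[\alpha]}/\mathbf{b}_{[\Xi]}$ really translates into "$\mathbf{L}$ maps $\mathit{l}_{[\alpha]}\mapsto\mathit{l}_{[\Xi]}$ exactly" rather than up to an undetermined scalar. Concretely one must check that the branching constant $\mathbf{b}_{[\alpha]}=\mathit{l}_{[\alpha]}/\alpha_0$ encodes the Euclidean norm of $\mathit{l}_{[\alpha]}$ correctly, so that the slope of $q$ measured in the chosen coordinates on $\mathbf{A}/\ker(\ell_{[\alpha]})$ and $\mathbf{A}'/\ker(\ell_{[\Xi]})$ gives back the honest linear map on $\mathbf{A}$. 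Once the identification $q'(h_1-h_2)=(\mathbf{b}_{[\alpha]}/\mathbf{b}_{[\Xi]})(h_1-h_2)$ is correctly transported through the quotient identifications, the conclusion is immediate. A secondary point to be careful about is that a priori $f_*$ could collapse two root classes of $G$ to one of $G'$ or fail to be surjective onto the root classes of $G'$; but applying the argument symmetrically to a quasi-inverse of $\phi$ (which is again a quasi-isometry and yields a standard map whose $\mathbf{A}'$ part is an affine inverse of $f$ up to bounded error) shows $f_*$ is a bijection on root classes, hence $\mathbf{L}(R_G)=R_{G'}$ exactly.
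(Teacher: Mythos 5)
Your argument for the first sentence has a genuine gap. You assert that non-degeneracy together with unimodularity force $G$ to have at least $\mathrm{rank}(G)+1$ distinct root kernels in rank $\geq 2$, and that the rank $1$ case is vacuous; both claims are false. For a counterexample in rank $2$, take the group with roots $\{\alpha_{1},-\alpha_{1},\alpha_{2},-\alpha_{2}\}$ (e.g.\ $\mathrm{Sol}\times\mathrm{Sol}$): it is non-degenerate, unimodular, split abelian-by-abelian, and has exactly $\mathrm{rank}(G)=2$ root kernels, so the hyperplane-pinning argument that "a map preserving a spanning family of hyperplane foliations must be affine" does not apply. And in rank $1$ there is only one root kernel and the standard-map condition on $f$ is empty (as the paper's Remark after Definition \ref{standard map} notes), so affineness is a substantive claim there too, not a triviality. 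The paper handles precisely these cases by deducing affineness of $f$ from Lemma \ref{lemma 4-7}: when there are only $\mathrm{rank}(G)$ root kernels, the affineness of each quotient map $q:\mathbf{A}/\ker(\alpha_{0})\to\mathbf{A}'/\ker(\Xi_{0})$ furnished by that lemma, combined across all root classes, is exactly what forces $f$ itself to be affine. Your proposal bypasses the lemma where it is most needed.

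The rest of your argument — reading the conclusion $\mathit{l}_{\sigma([\alpha])}\circ A=\mathit{l}_{[\alpha]}$ directly off the slope $\mathbf{b}_{[\alpha]}/\mathbf{b}_{f_{*}[\alpha]}$ of $q$ from Lemma \ref{lemma 4-7}, using $\mathit{l}_{[\Xi]}=\mathbf{b}_{[\Xi]}\Xi_{0}$ and $\Xi_{0}\circ A=(\mathbf{b}_{[\alpha]}/\mathbf{b}_{[\Xi]})\alpha_{0}$ — is a valid and more direct computation than what the paper does. The paper instead observes from sign preservation that $\mathit{l}_{\sigma([\Xi])}\circ A=c_{[\Xi]}\mathit{l}_{[\Xi]}$ for some positive $c_{[\Xi]}$, then counts $\ell$-half planes for a generic regular $\ell$ to derive, chamber by chamber, that $\sum_{[\Xi]:\Xi(\vec{v}_{\ell})>0}(1-c_{[\Xi]})\mathit{l}_{[\Xi]}=0$, and finally runs the argument that a finite set of functionals summing to zero, with the sum over each side of any codimension-one hyperplane also zero, must consist of zeros — hence $c_{[\Xi]}=1$. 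Your route avoids that system of chamber constraints entirely; its price is that it leans on the slope in Lemma \ref{lemma 4-7} being exact rather than approximate, but the paper's route faces the same $O(\hat\eta\,\mathrm{diam}(B))$ bookkeeping and resolves it in essentially the same implicit way. So once you repair the affineness step by invoking Lemma \ref{lemma 4-7} rather than a false count of root kernels, your remaining argument is a legitimate simplification of the paper's.

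Your closing remark about needing a quasi-inverse to see that $f_{*}$ is a bijection on root classes is unnecessary extra caution: $f$ is defined as carrying the foliation by root kernels of $\mathbf{A}$ to that of $\mathbf{A}'$, and since it is (being affine) a bijection of $\mathbf{A}$ onto $\mathbf{A}'$, it already gives a bijection of root kernels, hence of root classes.
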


\begin{proof}
When $G$ has exact $rank(G)$ many root kernels, every point of $\mathbf{A}$ and $\mathbf{A}'$ is uniquely
determined by the intersection of $rank(G)$ many translates of root kernels, so Lemma \ref{lemma 4-7}
shows that $f$ is affine and takes foliations by root kernels of $G$ to that of $G'$. \smallskip

Let $A$ denote for the linear part of $f$, and $\sigma$ be the permutation that $f$
induces on the root classes. The existence of a standard map $O(\hat{\eta} diam(B))$
away from a quasi-isometry means that for $\vec{u}$ ranges over a large subset of
$\mathbb{S}^{n-1}$, $[\Xi]$ a root class, then $\mathit{l}_{[\Xi]}(\vec{u})>0$ if and only if $\mathit{l}_{\sigma([\Xi])}(A(\vec{u}))>0$, and
$\mathit{l}_{[\Xi]}(\vec{u})=0$ if and only if $\mathit{l}_{\sigma([\Xi])}(A(\vec{u}))=0$\symbolfootnote[2]{Note that in general,
$\mathit{l}_{[\Xi]} \circ A^{-1} = c \mathit{l}_{\sigma([\Xi])}$ for some positive $c$, not necessarily $1$.  All that we know is that $A$
induces a permutation on the root classes, not on the set $R_{G}$.}. So $\mathit{l}_{\sigma([\Xi])} \circ A = c_{[\Xi]}\mathit{l}_{[\Xi]}$ for
some $c_{[\Xi]}>0$. \medskip

Since $f$ is affine, the push-forward of $\ell \in \mathbf{A}^{*}$, $f_{*}(\ell)=\ell \circ A^{-1}$, is
an element of $\mathbf{A}'^{*}$. Take $\ell$ a regular linear functional of unit
norm, and $H$ an $[\alpha]$ block.  Then, inside of $Sl_{2}^{1}(H)$, the number of maximal sets of the form
$p \mbox{   } \ell^{-1}[c,d]$, $p \in \mathbf{H}$, $d-c=L$ is

\[ |\pi^{-}_{[\alpha]}(H)| \mbox{   }  |\pi^{+}_{[\alpha]}(H)|  e^{L \sum_{[\Xi]: \Xi(\vec{v}_{\ell}) >0}
\mathit{l}_{[\Xi]}(\vec{v}_{\ell}) } \]

\noindent while the number of $f_{*} \ell$ half planes in $\tilde{Sl}_{2}^{1}(H)$ is
\[ |\mathcal{C}_{q(h_{1})} g(\pi^{-}_{[\alpha]}(H))| \mbox{    }  |\mathcal{C}_{q(h_{2})}
g(\pi^{+}_{[\alpha]}(H) \cap S)| e^{L  \sum_{[\Xi]: \Xi(\vec{v}_{\ell}) >0} \mathit{l}_{\sigma ([\Xi])} (A\vec{v}_{\ell}) } \]

\noindent Simplifying using Lemma \ref{lemma 4-6} and Corollary \ref{cross area preservation}
yields  \[ \sum_{[\Xi]:\Xi(\vec{v}_{\ell})>0} \mathit{l}_{\sigma ([\Xi])} (A\vec{v}_{\ell})
= \sum_{[\Xi]:\Xi(\vec{v}_{\ell})>0} \mathit{l}_{[\Xi]}(\vec{v}_{\ell}) +  \hat{\eta} diam(B)  \]

\noindent  Since the map from $S^{n-1}$ to disjoint union of root classes defined by
sending $\vec{v}$ to $\{[\Xi]: \Xi(\vec{v}) > 0 \} \sqcup \{[\beta]: \beta(\vec{v})< 0 \}$ is constant on chambers, and each
chamber contains a basis of $\mathbf{A}$, we conclude that up to an error of $\hat{\eta} diam(\mathbf{B}(\omega))$,

\[  \sum_{[\Xi]: \Xi(\vec{v}_{\ell}) >0} \mathit{l}_{[\Xi]} = \sum_{[\Xi]: \Xi(\vec{v}_{\ell}) >0} \mathit{l}_{\sigma([\Xi])} \circ A=
\sum_{[\Xi]: \Xi(\vec{v}_{\ell}) >0} c_{[\Xi]} \mathit{l}_{[\Xi]} \]

\noindent In other words, we have two equations:
\begin{eqnarray*}
\sum_{[\alpha]: \alpha(\vec{v}) > 0 } (1-c_{[\alpha]}) l_{[\alpha]}
&=&0 \\ \sum_{[\beta]: \beta(\vec{v}) < 0 } (1-c_{[\beta]})
l_{[\beta]} &=&0 \end{eqnarray*}

\noindent  therefore $\tilde{R}=\{ (1-c_{[\alpha]}) \mathit{l}_{[\alpha]}, [\alpha]
\mbox{an equivalence class of roots} \}$ is a finite set of linear functionals whose sum
is zero and such that for any codimension 1 hyperplane, the
sum of those elements in $\tilde{R}$ lying entirely on a half plane is zero. Therefore
$\tilde{R}$ consist of zero linear functionals, so $c_{[\alpha]}=1$ for all root
equivalence classes $[\alpha]$.  But this means $\mathit{l}_{\sigma([\alpha])} \circ
A=\mathit{l}_{[\alpha]}$.  So $A$ is a linear map that sends $R_{G}$ to $R_{G'}$.
\end{proof}

\begin{remark}\label{branching constant the same}
Lemma \ref{leaving RG invariant} implies that the linear part of $f$, when viewed just as
a linear map on $\mathbb{R}^{n}$, where $n$ is the rank of $G$ (which is also the rank of
$G'$) is a scalar multiple of an element $A_{f} \in O(n)$, where $A_{f}$ has finite
order.  (This is because $f$ sends foliations by root kernels of $G$ to that of $G'$, and
elements of $R_{G}$ (resp. $R_{G'}$ ) are orthogonal to root kernels.  \end{remark}


The next proposition gives an interpretation to the constant part of $f$.
\begin{proposition} \label{map on heights}
Let $\beta \ll \beta' \ll \beta'' \ll 1$ be as in Lemma \ref{lemma 4.1}.  Let $S(E_{-}, E^{+}, K, h_{bot}, h_{top}) $ be a generalized
$[\alpha]$ slab in $B$.  Suppose $h_{bot} < z_{bot} < z_{top} < h_{top}$ with $4 \beta (h_{top}-h_{bot}) \leq (z_{top}-z_{bot}) \leq
\beta' (h_{top}-h_{bot})$, and $|h_{top}-z_{top}|, |z_{bot}-h_{bot}| > 4 \kappa^{2} \beta''
(h_{top}-h_{bot})$. \smallskip

Then there exists a set $S \subset B$ as in Lemma \ref{lemma 4.2} such that for all $z \in [z_{bot}, z_{top}]$,
\begin{equation}\label{eqn4.5}
q(z)= \frac{\mathbf{b}_{[\alpha]}}{\mathbf{b}_{f_{*}[\alpha]}}z -
\frac{1}{\mathbf{b}_{f_{*}[\alpha]}} \log \frac{|\mathcal{C}_{q(z_{bot})} g(\pi^{+}_{[\alpha]}(H) \cap S)|}
{|\pi^{+}_{[\alpha]}(H)|} + O(\hat{\eta} diam(B)) \end{equation} \end{proposition}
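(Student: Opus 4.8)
The plan is to combine the counting/volume identities already developed in this section so that the constant part of $q$ gets pinned down in terms of the measure of a coarsened upper boundary. The starting point is Lemma \ref{lemma 4-7}, which gives $q(h_1)-q(h_2)=\frac{\mathbf{b}_{[\alpha]}}{\mathbf{b}_{[\Xi]}}(h_1-h_2)+O(\hat\eta\,diam(B))$ for heights in the box, so $q$ is affine up to controlled error. Thus it suffices to identify the constant $q(z_{bot})$ (or equivalently $q(z)-\frac{\mathbf{b}_{[\alpha]}}{\mathbf{b}_{f_*[\alpha]}}z$) explicitly. To do this I would first invoke Corollary \ref{cross area preservation} and choose $H$ a $[\alpha]$ block and heights $h_1=z_{top}$, $h_2=z_{bot}$ (after checking the hypotheses of Lemmas \ref{lemma 4.1} and \ref{lemma 4.2} are met — this is exactly why we need the separation conditions $4\beta(h_{top}-h_{bot})\le z_{top}-z_{bot}\le\beta'(h_{top}-h_{bot})$ and the $4\kappa^2\beta''$ gap to the top and bottom of the slab), producing the set $S\subset B$ as in Lemma \ref{lemma 4.2}.

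Next I would write down the two half-plane counts, as in the proof of Lemma \ref{lemma 4-7}: the number of $[\alpha]$ half planes in $Sl_2^1(H)$ is $|\pi^-_{[\alpha]}(H)|\,|\pi^+_{[\alpha]}(H)|\,e^{\mathbf{b}_{[\alpha]}(h_1-h_2)}$, and the number of $[\Xi]=f_*[\alpha]$ half planes in $\tilde{Sl}_2^1(H)$ is $|\mathcal{C}_{q(h_1)}g(\pi^-_{[\alpha]}(H))|\,|\mathcal{C}_{q(h_2)}g(\pi^+_{[\alpha]}(H)\cap S)|\,e^{\mathbf{b}_{[\Xi]}(q(h_1)-q(h_2))}$. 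Lemma \ref{lemma 4-6} equates the logarithms of these two counts up to $O(\hat\eta\,diam(B))$. Rather than solving for $q(h_1)-q(h_2)$ (which recovers Lemma \ref{lemma 4-7}), I would instead solve for $q(h_2)=q(z_{bot})$ by using Lemma \ref{lemma 4-7} to substitute $q(h_1)-q(h_2)$ back in; but the cleaner route is to rerun the count with $h_1$ fixed at $z_{top}$ and let $h_2=z$ vary over $[z_{bot},z_{top}]$, so that after taking logs and cancelling the $|\pi^-_{[\alpha]}(H)|$ and the $\mathcal{C}_{q(z_{top})}$ factors (which are height-independent constants once $z_{top}$ is fixed), one isolates exactly $\mathbf{b}_{f_*[\alpha]}q(z)=\mathbf{b}_{[\alpha]}z+\log\frac{|\pi^+_{[\alpha]}(H)|}{|\mathcal{C}_{q(z)}g(\pi^+_{[\alpha]}(H)\cap S)|}+O(\hat\eta\,diam(B))$, which rearranges to the claimed formula. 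Here one uses that $\mathcal{C}_{q(z)}$ of the upper boundary is the relevant measure because coarsening the upper boundary by $q(z)$ is what the generalized slab $\tilde{Sl}$ records at height $z$, per the definition of $\tilde{Sl}_2^1(H,S)$ and the coarsening identity $S(E_-,E^+,h_2,h_1)=S(\mathcal{C}_{h_3}(E_-),\mathcal{C}_{h_4}(E^+),h_2,h_1)$.

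The main obstacle I anticipate is bookkeeping rather than conceptual: making sure that the set $S$ delivered by Lemma \ref{lemma 4.2} is genuinely the same $S$ for every $z\in[z_{bot},z_{top}]$ (so that the statement quantifies over $z$ with a single $S$), and that the factor $|\mathcal{C}_{q(z_{bot})}g(\pi^+_{[\alpha]}(H)\cap S)|/|\pi^+_{[\alpha]}(H)|$ in the final formula — which looks like it should be evaluated at $z_{bot}$ — is consistent with the $z$-dependence on the left side. The resolution is that once $q$ is known to be affine with slope $\mathbf{b}_{[\alpha]}/\mathbf{b}_{f_*[\alpha]}$, the quantity $\mathbf{b}_{f_*[\alpha]}q(z)-\mathbf{b}_{[\alpha]}z$ is a constant, so it may be computed at the single height $z=z_{bot}$; the content of the coarsening formalism is precisely that $|\mathcal{C}_{q(z)}g(\pi^+_{[\alpha]}(H)\cap S)|\,e^{\mathbf{b}_{f_*[\alpha]}q(z)}$ is itself $z$-independent up to $O(\hat\eta\,diam(B))$, because coarsening by a larger height shrinks the boundary set at exactly the compensating exponential rate $\mathbf{b}_{f_*[\alpha]}$. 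I would state this compensation explicitly as a short sublemma (or cite Lemma \ref{lemma 3-5}, which packages exactly this cross-section-times-exponential counting), then evaluate at $z_{bot}$ to land on equation \eqref{eqn4.5}. Throughout, all the error terms are additive $O(\hat\eta\,diam(B))$ coming from Lemma \ref{lemma 4-6}, Corollary \ref{cross area preservation}, and the Hausdorff approximation of $\phi$ by $\hat\phi$, and these do not accumulate since only finitely many applications are made.
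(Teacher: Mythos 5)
Your high-level strategy matches the paper — use Lemma \ref{lemma 4-7} to establish that $q$ is affine up to $O(\hat{\eta}\,diam(B))$ with slope $\mathbf{b}_{[\alpha]}/\mathbf{b}_{f_*[\alpha]}$, then pin down the constant by running the half-plane counts of Lemma \ref{lemma 4-6} with $h_1=z_{top}$, $h_2=z$ and invoking Corollary \ref{cross area preservation}. However, the step you gloss over as ``cancelling the $|\pi^-_{[\alpha]}(H)|$ and the $\mathcal{C}_{q(z_{top})}$ factors'' is exactly where the content lies, and it is not a cancellation. After taking logs in Lemma \ref{lemma 4-6} with $h_1=z_{top}$ fixed, solving for $\mathbf{b}_{f_*[\alpha]}q(z)$ leaves you with the $z$-independent term
\[
\mathbf{b}_{f_*[\alpha]}q(z_{top}) - \mathbf{b}_{[\alpha]}z_{top} + \log\left|\mathcal{C}_{q(z_{top})}g(\pi^-_{[\alpha]}(H))\right| - \log\left|\pi^-_{[\alpha]}(H)\right|,
\]
and nothing in your argument shows this is $O(\hat{\eta}\,diam(B))$. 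The paper kills it by invoking the measure normalization (\ref{measure on boundary}) at a $[\alpha]$ block $H_{top}$ of height $z_{top}$: the normalization $|\pi^-_{\sigma}(p)|e^{-\sigma(p)}=1$ forces $|\pi^-_{[\alpha]}(H_{top})|\approx e^{\mathbf{b}_{[\alpha]}z_{top}}$ and, on the image side, $|\mathcal{C}_{q(z_{top})}g(\pi^-_{[\alpha]}(H))|\approx e^{\mathbf{b}_{f_*[\alpha]}q(z_{top})}$, which is exactly the relation you need. This is the missing idea; without it the constant in your formula is unidentified.

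Two secondary issues. First, your proposed ``compensation identity'' — that $|\mathcal{C}_{q(z)}g(\pi^+_{[\alpha]}(H)\cap S)|\,e^{\mathbf{b}_{f_*[\alpha]}q(z)}$ is $z$-independent — cannot be right: substituting it into your own displayed formula $\mathbf{b}_{f_*[\alpha]}q(z)=\mathbf{b}_{[\alpha]}z+\log\tfrac{|\pi^+_{[\alpha]}(H)|}{|\mathcal{C}_{q(z)}g(\pi^+_{[\alpha]}(H)\cap S)|}+O(\hat{\eta}\,diam(B))$ collapses to $0=\mathbf{b}_{[\alpha]}z+\mathrm{const}+O(\hat{\eta}\,diam(B))$, which is false for $z$ ranging over an interval of length $\gtrsim\beta\,diam(B)$. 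What actually lets the paper freeze the coarsening index at $z_{bot}$ is that Corollary \ref{cross area preservation} makes the ratio $\tfrac{|\mathcal{C}_{q(z)}g(\pi^+_{[\alpha]}(H)\cap S)|}{|\pi^+_{[\alpha]}(H)|}$ an $O(1)$ quantity, uniformly over the admissible $z$, and $O(1)$ is absorbed by $O(\hat{\eta}\,diam(B))$. Second, you assert that coarsening by a larger height shrinks the boundary set; with the definitions in Section \ref{shadow+slab}, coarsening at a larger height only requires half-planes to coincide higher up, so the coarsened set grows, not shrinks.
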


\begin{remark}
In all application of Proposition \ref{map on heights}, we change $q$ by $O(\hat{\eta} diam(B))$ in order to have equation (\ref{eqn4.5}) hold with no error term.
\end{remark}

\begin{proof}
We know from Lemma \ref{lemma 4-7} that
\[ q(z) = \frac{\mathbf{b}_{[\alpha]}}{\mathbf{b}_{f_{*}[\alpha]}}(z) + \left( q(z_{bot}) -
\frac{\mathbf{b}_{[\alpha]}}{\mathbf{b}_{f_{*}[\alpha]}}(z_{bot}) \right) + O(\hat{\eta} diam(B)) \]
\noindent We now find an alternative expression for the term in the bracket. \smallskip

Let $H_{top} \subset B$ be a $[\alpha]$ block such that $\alpha_{0}(H_{top}) = z_{top}$.
Then, according to equation (\ref{measure on boundary})

\[ \log   \frac{|\pi^{-}_{[\alpha]}(H)| e^{-\mathbf{b}_{[\alpha]} z_{top} } }
{ |\mathcal{C}_{q(h_{1})} g(\pi^{-}_{[\alpha]}(H))| e^{-\mathbf{b}_{f_{*}[\alpha]} q(z_{top}) }} = O(\hat{\eta} diam(B)) \]

\noindent Simplifying gives

\[  \log   \frac{|\pi^{-}_{[\alpha]}(H)|}{|\mathcal{C}_{q(h_{1})} g(\pi^{-}_{[\alpha]}(H)) |}
= \mathbf{b}_{[\alpha]} z_{top} -  \mathbf{b}_{f_{*}[\alpha]} q(z_{top}) +  O(\hat{\eta} diam(B)) \]

\noindent Together with Corollary \ref{cross area preservation} and Lemma \ref{lemma 4-7}
produces \begin{eqnarray*}
\frac{1}{\mathbf{b}_{f_{*}[\alpha]}} \log \frac{|\mathcal{C}_{q(h_{2})}g(\pi^{+}_{[\alpha]}(H) \cap S)|}{|\pi^{+}_{[\alpha]}(H)|}
&= & \frac{1}{\mathbf{b}_{f_{*}[\alpha]}} \log \frac{|\pi^{-}_{[\alpha]}(H)|}{|\mathcal{C}_{q(h_{1})} g(\pi^{-}_{[\alpha]}(H)) |} \\
&=& \frac{\mathbf{b}_{[\alpha]}}{\mathbf{b}_{f_{*}[\alpha]}} z_{top} - q(z_{top}) +
O(\hat{\eta} diam(B)) \end{eqnarray*}  \end{proof}

\begin{remark} \label{general section 3}
Now that we know the $\mathbf{A}'$ part of a standard map is affine, we can repeat this
entire section for half planes defined for a regular linear functional $\ell \in
\mathbf{A}^{*}$ using the analogous definitions of upper and lower $\ell$ boundaries
given in Section \ref{shadow+slab}. \end{remark}

\section{Aligning the linear part of standard maps}
Again, we refer to Theorem \ref{exisence of standard maps in small boxes}. By Lemma \ref{leaving RG invariant}, the linear
part of $f_{i}$'s that appeared in the conclusion of Theorem \ref{exisence of standard maps in small boxes}
sends $R_{G}$ of $G$, to $R_{G'}$ of $G'$.  A priori, the linear parts of the $f_{i}$'s do not have to be the same.  For a generic $G$ and $G'$,
the group of permutations between $R_{G}$ and $R_{G'}$ coming from linear maps would be trivial in which case the linear par of $f_{i}$'s
are identity. \smallskip

In this section, we show that when rank of $G$ is 2 or higher, the linear parts of the $f_{i}$'s have to be the same.  When $G$ is rank 1,
there are exactly two root classes and the argument follows exactly the same proof as in \cite{EFW2}, with the modification of
replacing $x$ and $y$ horocycles by left translates of horocycles corresponding to those two root classes. \medskip

We aim to prove the following by the end of this section.
\begin{theorem} \label{aligning horocycle}
Let $G, G'$ be non-degenerate, unimodular, split abelian-by-abelian Lie groups.  Let
$\phi: G \rightarrow G'$ be a $(\kappa, C)$ quasi-isometry.  Given $0< \delta, \eta< \tilde{\eta} < 1$, there exist numbers $L_{0}$ such
that if $\Sigma \subset \mathbf{A}'$ is a product of intervals of equal size at least $L_{0}$, then there is a subset
$U \subset \phi^{-1}(\mathbf{B}(\Sigma)) \subset G$ of relative measure at least $1-\theta$ such that for any root class horocycle
$p \mbox{   } V_{[\alpha]} \subset G'$,
\[ d(\phi^{-1}(p V_{[\alpha]} \cap \phi(U)), p' V_{[\beta]} \cap U)= O(\hat{\eta} diam(\Sigma)), \mbox{for some root horocycle }
 p' \mbox{  } V_{[\beta]} \subset G \] \noindent Here $\theta \rightarrow 0$, and $L_{0} \rightarrow \infty$ as $\delta$, $\eta$, $\tilde{\eta}$
 approach zero.  \end{theorem}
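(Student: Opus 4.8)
The strategy is to reduce the statement to a collection of rank-one "alignment" statements of the type proved in \cite{EFW2}, applied inside the partially-defined standard maps produced by Theorem \ref{exisence of standard maps in small boxes}. Fix $0<\delta,\eta<\tilde\eta<1$ and let $\Sigma\subset\mathbf{A}'$ be a product of intervals of size at least $L_0$. Pull back the box $\mathbf{B}(\Sigma)$ under $\phi$ and tile a slightly larger box by small boxes $\mathbf{B}(\omega_i)$; on a subset $\mathbf{I}_0$ of relative measure $1-\nu$, the restriction of $\phi$ to $\mathcal{P}^0(\omega_i)$ is $\hat\eta\, diam(\mathbf{B}(\omega_i))$-close to a standard map $g_i\times f_i$. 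By Lemma \ref{leaving RG invariant} and Remark \ref{branching constant the same}, each linear part $A_{f_i}$ is a scalar multiple of a finite-order element of $O(n)$ carrying $R_G$ to $R_{G'}$. The content to be proved is: (a) the combinatorial/geometric data $A_{f_i}$ and the induced root-class permutation $\sigma_i$ are the \emph{same} for all good $i$, after which (b) a single global root-class correspondence $[\alpha]\mapsto[\beta]$ is forced, and horocycles of class $[\alpha]$ in $G'$ are carried, on $\phi(U)$, to within $O(\hat\eta\,diam(\Sigma))$ of horocycles of the matched class $[\beta]$ in $G$.

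For step (a), I would argue that two adjacent good boxes $\mathbf{B}(\omega_i)$, $\mathbf{B}(\omega_j)$ whose overlap region has non-negligible measure must produce compatible standard maps: on the overlap, $g_i\times f_i$ and $g_j\times f_j$ are both $O(\hat\eta\,diam)$-close to $\phi$, hence $O(\hat\eta\,diam)$-close to each other on a large measure set, and a standard map is (coarsely) determined by its effect on the foliations by root kernels and root-class horocycles — so $f_i$, $f_j$ agree up to lower order, forcing $A_{f_i}=A_{f_j}$ (these lie in a finite set, so "close" means "equal") and $\sigma_i=\sigma_j$. Chaining across the connected tiling of $\mathbf{B}(\Sigma)$ propagates the common value; this is exactly where the rank-$1$ machinery of \cite{EFW2} enters, applied to each root-class weight hyperbolic factor $V_{[\Xi]}\rtimes\langle\vec v_{[\Xi]}\rangle$ via the projections $\pi_{[\alpha]}$, $\Pi_{\vec v}$ of the Notation section. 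In rank $1$ the argument \emph{is} that of \cite{EFW2} with $x,y$-horocycles replaced by the two root-class horocycles; in higher rank, restricting $\phi$ to a generic $2$-dimensional or rank-$1$ slice (an $i$-hyperplane through a point, or $\Pi_{\vec v}$-image) reduces to the rank-$1$ case and the results patch because each chamber contains a basis of $\mathbf{A}$ — the same "sweep over chambers" device used in the proof of Lemma \ref{leaving RG invariant}.

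For step (b), once the root-class permutation $\sigma$ is global, fix a horocycle $pV_{[\alpha]}\subset G'$ and the matched class $[\beta]=\sigma^{-1}([\alpha])$ in $G$. Take $U=\phi^{-1}(\mathbf{B}(\Sigma))\cap\bigcup_{i\in\mathbf I_0}\mathcal{P}^0(\omega_i)$, which has relative measure $1-\theta$ with $\theta=\theta(\nu,\theta_{\text{box}})\to 0$. On each good small box, the standard-map structure sends the foliation by $[\alpha]$-horocycles of $G'$ to the foliation by $[\beta]$-horocycles of $G$ up to $\hat\eta\,diam(\mathbf{B}(\omega_i))$; since all small boxes use the same $\sigma$, these local identifications are mutually consistent, and the leaf through $\phi^{-1}(p)$ is pinned down globally within the box $\mathbf{B}(\Sigma)$ up to $O(\hat\eta\,diam(\mathbf{B}(\omega_i)))=O(\hat\eta\,diam(\Sigma))$. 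That a single leaf (rather than a bounded union of leaves) is selected uses that distinct $[\beta]$-horocycles in $\mathbf{B}(\Sigma)$ are separated by a definite fraction of $diam(\Sigma)$ transverse to the foliation, so the $O(\hat\eta\,diam(\Sigma))$ error cannot jump between leaves; the Hausdorff bound $d(\phi^{-1}(pV_{[\alpha]}\cap\phi(U)),\,p'V_{[\beta]}\cap U)=O(\hat\eta\,diam(\Sigma))$ then follows, with $\theta\to0$, $L_0\to\infty$ as $\delta,\eta,\tilde\eta\to0$ by Theorem \ref{exisence of standard maps in small boxes}.

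**Main obstacle.** The crux is step (a): ruling out that the finite-order orthogonal pieces $A_{f_i}$ genuinely vary from box to box. This is delicate because the standard maps are only partially defined (on $\mathcal{P}^0(\omega_i)$, not the whole box), so "two standard maps close on their common domain" must be leveraged carefully — one cannot directly compare $f_i$ and $f_j$ pointwise everywhere, only on the large-measure overlap, and one must argue that this suffices to equate objects living in a discrete set. This is precisely the higher-rank analogue of the horocycle-alignment step in \cite{EFW2}, and the bulk of the technical work — invoking the slab/shadow/coarsening apparatus of Section \ref{shadow+slab} and the rank-$1$ results through the projections $\pi_{[\alpha]}$, $\Pi_{\vec v}$, $\Pi_\ell$ — goes into making the chaining argument across the tiling rigorous while all error terms stay $O(\hat\eta\,diam(\Sigma))$.
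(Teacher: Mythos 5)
Your plan inverts the logical order of the paper and, more importantly, the key step (a) as you describe it would not go through. In the paper, Theorem \ref{aligning horocycle} is proved \emph{first}, via the $S$/$\hat S$/$H$-graph discretization, the Gromov-product lemmas \ref{lemma 5.15}--\ref{lemma 5.18}, the ``illegal circuit'' Lemma \ref{illegal circuit}, the counting Lemma \ref{counting lemma}, Propositions \ref{horocycle extension I} and \ref{zero-one law}, and finally Theorem \ref{horocycle extenion II}; the equality of the linear parts $A_{f_i}$ is then deduced afterwards as Corollary \ref{linear part the same}, by a separate length-blowup argument. You instead propose to establish equality of the $A_{f_i}$ \emph{first} (your step (a)) and then obtain the horocycle alignment as a consequence (your step (b)). This is a genuinely different route, not merely a reordering, and the route you sketch has a concrete gap.

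The gap is in step (a). Your argument rests on ``two adjacent good boxes whose overlap region has non-negligible measure must produce compatible standard maps,'' but the decomposition from Theorem \ref{exisence of standard maps in small boxes} is a \emph{tiling} by disjoint isometric copies $\mathbf{B}(\omega_i)$, not an overlapping cover. There is no common domain on which $g_i\times f_i$ and $g_j\times f_j$ are both $O(\hat\eta\,\mathrm{diam})$-close to $\phi$, so the asserted closeness of $f_i$ and $f_j$ does not follow. The actual mechanism by which the paper forces the $A_{f_i}$ to agree is not a local comparison at all: it is a global geometric constraint. One shows that if a $\mathfrak{b}$-vertex $v$ supports a standard map, then at least $1-O(\eta)$ of the vertices in $v\,W^{+}_{(f_v)_*\mathfrak{b}}$ are also $\mathfrak{b}$-vertices (Theorem \ref{horocycle extenion II}); this is proved by combining the branching count of Lemma \ref{counting lemma} with the impossibility of ``illegal circuits'' of Lemma \ref{illegal circuit}, whose proof uses the Gromov-product estimates of Lemma \ref{lemma 5.15} in the rank-one projections $\Pi_\ell$. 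This extension phenomenon is what forces the $\phi^{-1}$-image of an entire $W^+_{\mathfrak{b}'}$-coset to lie near a single $W^+_{\mathfrak{b}}$-coset, giving Theorem \ref{aligning horocycle}; only then does the comparison of linear parts across distant boxes (Corollary \ref{linear part the same}) become available, and even there the argument is a QI-distortion contradiction over exponentially many geodesics, not a chaining of closeness on overlaps. Your sketch does not supply a substitute for this extension/illegal-circuit machinery, which is the real content of the section.

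Your step (b) is a reasonable description of what the conclusion says once the alignment is in hand, but it depends on step (a), which is where the missing idea lies.
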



Choose $\delta, \eta \ll 1$.  By Lemma \ref{Forste-boxes are folner} and Remark \ref{Forste-general folner} in \cite{Pg}, $G$ is amenable
and boxes have small boundary compared to its volume, therefore the same is true of its image under $\phi^{-1}$, which means that
we can take a sufficiently large box $\mathbf{B}$ and apply Theorem \ref{exisence of standard maps in small boxes} to
$\phi^{-1}(\mathbf{B})$ to obtain a tiling of $\mathbf{B}$ by images of smaller boxes $\mathbf{B}_{i}$.
\begin{equation} \label{tileB}
\mathbf{B} = \bigsqcup_{i \in \mathbf{I}} \phi(\mathbf{B}_{i}) \sqcup \Upsilon \end{equation}

\noindent where $|\Upsilon| \leq \varrho |\mathbf{B}|$, such that there is a subset $\mathbf{I}_{0} \subset \mathbf{I}$ of relative
measure at least $1-\varkappa$, where $\varkappa \rightarrow 0$ as $\tilde{\eta} \rightarrow 0$, with the following properties.

For each $i \in \mathbf{I}_{0}$, there is a subset $U_{i} \subset \mathbf{B}_{i}$ of relative measure at least $1-\theta$,
such that $\phi|_{U_{i}}$ is $\hat{\eta}diam(\mathbf{B}_{i})$ away from a standard map $g_{i} \times f_{i}$, where $f_{i}$ is a
affine map respecting root kernels, and whose linear part preserves the set $R_{G}$.

Since each $U_{i}$ has a relative large measure, $U_{*} = \bigcup_{i \in \mathbf{I}_{0}} U_{i}$ relative large
measure at least $1-(\varrho + \theta)$ in $\phi^{-1}(\mathbf{B})$.  We write $\mathbf{B}_{i}=\mathbf{B}(\Omega_{i})$ where
$\Omega_{i} \subset \mathbf{A}$ is compact convex, and are all isomorphic to each other.
\medskip

For a subset $V \subset G'$, we write $\mathbf{I}(V)$ for those $i \in \mathbf{I}$ such that $\mathbf{B}_{i} \cap V \not= \emptyset$.
\bigskip

Theorem \ref{aligning horocycle} finishes the alignment step because of the following consequences.
\begin{corollary} \label{linear part the same}
In the conclusion of Theorem \ref{exisence of standard maps in small boxes}, the linear part of $f_{i}$'s,
$i \in \mathbf{I}_{0}$ are all the same.  \end{corollary}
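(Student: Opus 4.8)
The plan is to derive Corollary \ref{linear part the same} as an immediate consequence of Theorem \ref{aligning horocycle}, so the proof is short and mostly a matter of unwinding definitions. Recall that for $i \in \mathbf{I}_0$ the map $\phi|_{U_i}$ is within $\hat\eta \, diam(\mathbf{B}_i)$ of a standard map $g_i \times f_i$, and by Lemma \ref{leaving RG invariant} the linear part $A_i$ of $f_i$ is (a scalar multiple of) a finite-order orthogonal map carrying $R_G$ to $R_{G'}$; in particular $A_i$ induces a permutation $\sigma_i$ of the root classes of $G$ onto those of $G'$. The goal is to show $\sigma_i$, and hence $A_i$, does not depend on $i$.

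First I would fix two indices $i, j \in \mathbf{I}_0$ and a root class $[\alpha]$ of $G$. Applying Theorem \ref{aligning horocycle} to $\phi$ (with the box $\mathbf{B}$ chosen large enough to contain both $\mathbf{B}_i$ and $\mathbf{B}_j$, after rescaling so that the hypotheses of Theorem \ref{exisence of standard maps in small boxes} are met), we obtain that for a generic root class horocycle $pV_{[\alpha]} \subset G$ its image $\phi(pV_{[\alpha]} \cap U_*)$ lies within $O(\hat\eta \, diam)$ of a single root class horocycle $p'V_{[\beta]} \subset G'$ for one well-defined root class $[\beta]$ of $G'$. Then I would intersect this horocycle with $U_i$ and with $U_j$: on $U_i \cap pV_{[\alpha]}$ the image is $O(\hat\eta\, diam(\mathbf{B}_i))$-close to $g_i(pV_{[\alpha]})$, which (since $g_i$ sends root class horocycles of $G$ to those of $G'$ in the manner prescribed by the permutation $\sigma_i$) is a horocycle of root class $\sigma_i([\alpha])$; the same reasoning on $U_j$ gives root class $\sigma_j([\alpha])$. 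Since both of these horocycles must coincide (up to bounded Hausdorff distance) with $p'V_{[\beta]}$, and distinct root class horocycles of $G'$ are at infinite Hausdorff distance, we conclude $\sigma_i([\alpha]) = [\beta] = \sigma_j([\alpha])$. As $[\alpha]$ was arbitrary, $\sigma_i = \sigma_j$.

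Finally, I would upgrade the equality of permutations to the equality of linear maps. By Remark \ref{branching constant the same}, $A_i$ is a scalar multiple of a finite-order element $A_{f_i} \in O(n)$ whose action on $R_G$ realizes $\sigma_i$ (composed with the fixed identification of $R_G$ and $R_{G'}$); but an orthogonal transformation carrying $R_G$ to $R_{G'}$ is determined by the induced bijection of the finite set $R_G$ together with the requirement that it be linear and norm-preserving, since $R_G$ spans $\mathbf{A}^*$ (each chamber contains a basis). Hence $\sigma_i = \sigma_j$ forces $A_{f_i} = A_{f_j}$, and an examination of the scalar — which by Lemma \ref{lemma 4-7} is pinned down by the ratios $\mathbf{b}_{[\alpha]}/\mathbf{b}_{f_*[\alpha]}$, now the same for all $i$ — gives $A_i = A_j$.

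The only real subtlety, and the step I expect to require the most care, is the bookkeeping of error scales: Theorem \ref{aligning horocycle} produces an approximation of size $O(\hat\eta\, diam(\Sigma))$ where $\Sigma$ is the large ambient box, whereas the standard maps $g_i$ come with error $O(\hat\eta\, diam(\mathbf{B}_i))$ relative to the much smaller boxes. One must be sure that the horocycles being compared are long enough — i.e. that their portions inside $U_i$ and $U_j$ have diameter comparable to $diam(\mathbf{B}_i)$ and $diam(\mathbf{B}_j)$ — so that two distinct root class horocycles of $G'$ genuinely cannot both be $O(\hat\eta\, diam)$-close to the image; this is where one invokes that $U_*$ has relative measure close to $1$ and that a generic horocycle meets it in a set of nearly full measure, exactly as in the selection of $\mathcal{F}'$ in the proof of Lemma \ref{lemma 4-6}. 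Once that is arranged the argument closes with no further input.
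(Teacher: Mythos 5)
Your approach correctly identifies the role of Theorem \ref{aligning horocycle}, but there is a genuine gap: it only handles pairs of tiles $\mathbf{B}_i,\mathbf{B}_j$ that lie near the same left coset of $\mathbf{H}$. Recall that a root class horocycle $pV_{[\alpha]}$ is a left translate of a subspace of $\mathbf{H}$, so $\pi_A(pV_{[\alpha]})$ is a single point of $\mathbf{A}$; consequently $pV_{[\alpha]}$ can meet $U_i\subset\mathbf{B}_i$ and $U_j\subset\mathbf{B}_j$ simultaneously only when the two tiles have essentially the same $\pi_A$-projection. Your step ``intersect this horocycle with $U_i$ and with $U_j$'' is vacuous whenever $\pi_A(\mathbf{B}_i)$ and $\pi_A(\mathbf{B}_j)$ are far apart, which is the generic situation in a tiling of a box $\mathbf{B}(\Omega)$ whose $\omega_i$'s cover all of $\Omega$. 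So you have proved only the ``same height'' half of the claim, and no chaining argument through intermediate tiles rescues you, since moving in the $\mathbf{A}$-direction always forces you to compare tiles whose $\pi_A$-images are disjoint.

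This is exactly the case the paper's proof flags and then handles separately: it takes two horocycles $xV_{[\alpha]}$ and $yV_{[\alpha]}$ at genuinely different heights $|\alpha_0(x)-\alpha_0(y)|=\Omega(\tilde{\epsilon}\,diam(\mathbf{B}))$, assumes for contradiction that the two local permutations send $[\alpha]$ to distinct $G'$-root classes $[\Xi]\neq[\beta]$, and then looks at geodesics in a direction $P^\perp$ separating $\vec{v}_{[\Xi]}$ from $\vec{v}_{[\beta]}$ joining the two image horocycles $x'V_{[\Xi]}$ and $y'V_{[\beta]}$. These have length $O(diam(\mathbf{B}))$, but their $\phi^{-1}$-preimages join $xV_{[\alpha]}$ to $yV_{[\alpha]}$ inside $G$, where the equal-measure normalization forces most of them to have length at least $e^{\tilde\epsilon\,diam(\mathbf{B})}$ — contradicting that $\phi$ is a quasi-isometry. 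This exponential-divergence argument is the missing piece. The subtlety you flagged (error scales $\hat\eta\,diam(\Sigma)$ versus $\hat\eta\,diam(\mathbf{B}_i)$) is real but secondary; the structural issue is the one above. Your second and third paragraphs (upgrading the permutation agreement to agreement of linear maps, using that $R_G$ spans $\mathbf{A}^*$ and invoking Lemma \ref{lemma 4-7} for the scalar) are fine as stated and match what the paper implicitly relies on.
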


\begin{proof}
Let $U_{*}=\bigcup_{i \in \mathbf{I}_{0}} U_{i} \subset G$.  Lemma \ref{aligning horocycle} implies that for boxes that
are within $O(\hat{\eta} diam(\mathbf{B}))$ of a left translate of $\mathbf{H}$, the $f_{i}$'s are the same.  Suppose for
some $x, y \in \phi^{-1}(\mathbf{B}) \cap U_{*}$ with $d(\pi_{A}(x), \pi_{A}(y)) \geq \hat{\eta} diam(\mathbf{B})$, the linear part of
of standard maps supported in neighborhoods of $x$ and $y$ corresponds to
distinct permutations $\sigma_{A_{x}}$, $\sigma_{A_{y}}$ from $R_{G}$ to $R_{G'}$.  So there must be a root class $[\alpha]$ such that
$[\beta]=\sigma_{A_{y}}([\alpha])$ is different from $[\Xi]= \sigma_{A_{x}}([\alpha])$.  Let $P$ be a codimension 1 hyperplane such that
$\vec{v}_{[\beta]}$ and $\vec{v}_{[\Xi]}$ lie on either side of it. \smallskip

Without loss of generality, we can take $xV_{[\alpha]}$, $yV_{[\alpha]}$ such that their intersections with $\mathbf{B}$ have
equal measure.  Let $x'V_{[\Xi]}$ be a horocycle such that $x'V_{[\Xi]} \cap \mathbf{B}$ is $O(\hat{\eta} diam(\mathbf{B})$ Hausdorff distance away
from $\phi(xV_{[\alpha]} \cap \phi^{-1}(\mathbf{B}))$, and $y'V_{[\beta]}$ be the corresponding horocycle for
$yV_{[\alpha]}$.  Since the distance between $\pi_{A}(x)$ and $\pi_{A}(y))$ is at least $\hat{\eta} diam(\mathbf{B})$, we also have
$d(\pi_{A}(x') , \pi_{A}(y')) > O(\hat{\eta} diam(\mathbf{B}))$. \medskip

Let $\mathcal{F}$ be the set of geodesic segments in direction $P^{\perp}$ (the unique direction perpendicular to $P$) with one
end point in $x'V_{[\Xi]} \cap \mathbf{B}$ and another in $y'V_{[\beta]} \cap \mathbf{B}$.  Then, elements of
$\mathcal{F}$ have length at most $O(diam(\mathbf{B}))$.  An element of $\phi^{-1}(\mathcal{F})$ on the other hand, is a path with one end point in $xV_{[\alpha]} \cap \phi^{-1}(\mathbf{B})$ and
another in $yV_{[\alpha]} \cap \phi^{-1}(\mathbf{B})$.  Since the distance between $\pi_{A}(x)$ and $\pi_{A}(y)$ is bigger than
$\hat{\eta} diam(\mathbf{B})$, we can assume without loss of generality that $| \alpha_{0}(x) - \alpha_{0}(y) |=
\Omega(\tilde{\epsilon}diam(\mathbf{B}))$, for some $\tilde{\epsilon} \leq \hat{\eta}$
such that $diam(\mathbf{B}) \ll e^{\tilde{\epsilon}diam(\mathbf{B})}$.  As the measure of $xV_{[\alpha]} \cap \mathbf{B}$ and
$yV_{[\alpha]} \cap \mathbf{B}$ are equal, this forces lengths of a large proportion of elements in $\phi^{-1}(\mathcal{F})$ to be at least
$e^{\tilde{\epsilon} diam(\mathbf{B})}$, which is a contradiction because the length of an element of
$\phi^{-1}(\mathcal{F})$ is $O(diam(\mathbf{B}))$.  \end{proof}

\begin{corollary} \label{strengthening part 2}
Given $0< \delta, \eta< \tilde{\eta} < 1$, there exist numbers $L_{0}$, $\hat{\eta}$ such that if $\Omega \subset \mathbf{A}$ is a product of
intervals of equal size at least $L_{0}$, then there is a subset $U \subset \mathbf{B}(\Omega)$ of relative measure at least
$1-\tilde{Q}$, and a standard map $\hat{\phi}=g \times f$ where $f$ is affine defined on it such that
\[ d(\phi|_{U}, \hat{\phi}) = O(\hat{\eta} diam(\mathbf{B}(\Omega))) \]

\noindent Here $\tilde{Q}$ and $\hat{\eta}$ approach zero as $\tilde{\eta}, \eta, \delta \rightarrow 0$. \end{corollary}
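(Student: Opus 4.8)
The plan is to assemble the local standard maps $g_i \times f_i$ produced by Theorem \ref{exisence of standard maps in small boxes} into a single standard map defined on a large subset of $\mathbf{B}(\Omega)$, using the alignment results of this section to guarantee compatibility. First I would apply Theorem \ref{exisence of standard maps in small boxes} to obtain the tiling $\mathbf{B}(\Omega) = \bigsqcup_{i \in \mathbf{I}} \mathbf{B}(\omega_i) \sqcup \Upsilon$ together with the good index set $\mathbf{I}_0$, the subsets $\mathcal{P}^0(\omega_i)$ of relative measure $1-\theta$, and the standard maps $g_i \times f_i$ that are $\hat\eta\,\mathrm{diam}(\mathbf{B}(\omega_i))$-close to $\phi$ on these subsets. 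By Lemma \ref{leaving RG invariant} each $f_i$ is affine, and by Corollary \ref{linear part the same} the linear parts of all the $f_i$, $i \in \mathbf{I}_0$, coincide; call this common linear part $A$. Set $U = \bigcup_{i \in \mathbf{I}_0} \mathcal{P}^0(\omega_i)$, which has relative measure at least $1 - (\varrho + \theta)$ in $\mathbf{B}(\Omega)$, so $\tilde{Q} := \varrho + \theta \to 0$ as the initial data go to zero.

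Next I would upgrade ``same linear part'' to ``same affine map up to controlled error.'' Because the boxes $\mathbf{B}(\omega_i)$ tile $\mathbf{B}(\Omega)$ and are small compared to $\mathbf{B}(\Omega)$, any two of them can be connected by a chain of boxes each adjacent to the next, with chain length $O(1/\varrho)$. On an overlap region (or on points of $U$ lying within $O(\hat\eta\,\mathrm{diam}(\mathbf{B}(\Omega)))$ of the common boundary of two adjacent good boxes) both $f_i$ and $f_j$ must agree with $\pi_{A'} \circ \phi$ up to $O(\hat\eta\,\mathrm{diam}(\mathbf{B}(\omega_i)))$; since they have the same linear part $A$, their constant parts differ by $O(\hat\eta\,\mathrm{diam}(\mathbf{B}(\omega_i))) = O(\hat\eta \varrho\,\mathrm{diam}(\mathbf{B}(\Omega)))$. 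Propagating this estimate along a chain shows all the $f_i$ agree with a single affine map $f$ (with linear part $A$) up to an error $O(\hat\eta\,\mathrm{diam}(\mathbf{B}(\Omega)))$ — here one uses that $\mathrm{diam}(\mathbf{B}(\omega_i)) \asymp \varrho\,\mathrm{diam}(\mathbf{B}(\Omega))$ and that the number of links is bounded independent of $\Omega$. The analogous argument for the $\mathbf{H}'$-parts $g_i$ is exactly the content of Theorem \ref{aligning horocycle}: it identifies, for each root-class horocycle of $G$, a single root-class horocycle of $G'$ to which $\phi$ sends it (up to $O(\hat\eta\,\mathrm{diam})$), globally over $\mathbf{B}(\Sigma)$; translating through $\phi^{-1}$ and reparametrizing gives a single map $g : \mathbf{H} \to \mathbf{H}'$ sending root-class foliations to root-class foliations with $d(g_i, g) = O(\hat\eta\,\mathrm{diam}(\mathbf{B}(\Omega)))$ on $U$.

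Finally I would set $\hat\phi = g \times f$ and verify it is a standard map in the sense of Definition \ref{standard map}: $f$ is affine and respects root kernels (inherited from the $f_i$ and Lemma \ref{leaving RG invariant}), and $g$ carries the foliation by root-class horocycles of $G$ to that of $G'$ (inherited from Theorem \ref{aligning horocycle}). On $U$ we then have, for each $i$, $d(\phi|_{U \cap \mathbf{B}(\omega_i)}, \hat\phi) \le d(\phi, g_i \times f_i) + d(g_i, g) + d(f_i, f) = O(\hat\eta\,\mathrm{diam}(\mathbf{B}(\omega_i))) + O(\hat\eta\,\mathrm{diam}(\mathbf{B}(\Omega)))$, which is $O(\hat\eta\,\mathrm{diam}(\mathbf{B}(\Omega)))$; taking the union over $i \in \mathbf{I}_0$ gives the claim with $\tilde{Q} = \varrho + \theta \to 0$ and $\hat\eta \to 0$ as $\tilde\eta, \eta, \delta \to 0$. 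The main obstacle is the chaining step: one must be careful that the accumulated error in matching constant parts across $O(1/\varrho)$ links does not blow up relative to $\mathrm{diam}(\mathbf{B}(\Omega))$ — this works precisely because each per-link error is $O(\hat\eta \varrho\,\mathrm{diam}(\mathbf{B}(\Omega)))$ and there are $O(1/\varrho)$ links, but it requires choosing the order of quantifiers so that $\hat\eta$ is small relative to $\varrho$, which is legitimate since both are controlled by the initial data in Theorem \ref{exisence of standard maps in small boxes}.
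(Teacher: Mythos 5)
Your high-level idea is on the right track, but the paper proves this corollary by a different mechanism, and the chaining argument you propose has gaps that are not addressed.

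What the paper actually does is re-run the coarse differentiation machinery at the larger scale $\mathbf{B}(\Omega)$, using the new input from Corollary \ref{linear part the same} that all the $f_i$ share a common linear part $A_f$. Concretely: for a geodesic $\mathit{l}$ in the large-measure family $\mathcal{L}^0$, the $\pi_A\circ\phi$-image is (locally, in each good tile) close to a segment in the fixed direction $A_f\circ\mathit{l}$. Because the direction is the same in every tile, $\phi(\mathit{l})$ cannot ``turn around,'' i.e.\ it is weakly monotone; one then invokes Lemma \ref{Forste-how to show weakly monotone} and Proposition \ref{Forste-close to being straight} of \cite{Pg} to conclude $\phi(\mathit{l})$ admits a geodesic approximation in direction $A_f\circ\mathit{l}$. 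From there, Lemma \ref{Forste-bare minimum for flats to flats} of \cite{Pg} gives flats-to-flats on a large set, and the quadrilateral argument already used in the proof of Theorem \ref{exisence of standard maps in small boxes} produces a single standard map $\hat f\times\hat g$ on $\mathcal{P}^0$. The point is that $f$ and $g$ are produced together from the geodesic/flat data; $g$ is not obtained by separately gluing the $g_i$.

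Concerning your chaining step: to compare $f_i$ and $f_j$ you need $p\in U_i$ and $q\in U_j$ with $d(p,q)=O(1)$ near the common face of $\mathbf{B}(\omega_i)$ and $\mathbf{B}(\omega_j)$ (if $d(p,q)$ is of order $\mathrm{diam}(\mathbf{B}(\omega_i))$ the terms $|\pi_{A'}\phi(p)-\pi_{A'}\phi(q)|$ and $|f_j(\pi_A q)-f_j(\pi_A p)|$ are not negligible). But the $O(1)$-thick shell along that face has relative measure $O(1/(\varrho\,\mathrm{diam}\,\mathbf{B}(\Omega)))$ in each tile, which can be far smaller than the bad-set proportion $\theta$, so no such pair $(p,q)$ need exist for a given adjacent pair of tiles. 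You would also need to route a chain of $O(1/\varrho)$ tiles through $\mathbf{I}_0$ while dodging the $\nu$-fraction of bad tiles, a percolation-type claim you would have to justify. Your appeal to Theorem \ref{aligning horocycle} to glue the $g_i$ is likewise a jump: that theorem's role in this corollary is to supply Corollary \ref{linear part the same}, not to hand you a global $g$ with $d(g_i,g)=O(\hat\eta\,\mathrm{diam})$. Finally, the ``order of quantifiers'' worry at the end is misplaced in two opposite ways: your own accounting gives accumulated error $O(\hat\eta\varrho\,\mathrm{diam}\,\mathbf{B}(\Omega))\cdot O(1/\varrho)=O(\hat\eta\,\mathrm{diam}\,\mathbf{B}(\Omega))$ without requiring $\hat\eta\ll\varrho$; and even if it were required, both $\hat\eta$ and $\varrho$ are determined simultaneously by $\delta,\eta,\tilde\eta,\kappa,C$ in Theorem \ref{exisence of standard maps in small boxes}, so you do not get to dial their ratio.
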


\begin{proof}
By Theorem \ref{exisence of standard maps in small boxes} and Corollary \ref{linear part the
same}, there is a tiling of $\mathbf{B}(\Omega)$:
\[ \mathbf{B}(\Omega)= \bigsqcup_{i \in \mathbf{I}} \mathbf{B}(\omega_{i}) \sqcup \Upsilon \]

\noindent where each $\omega_{i}$ is isometric to $\rho \Omega$ with the properties that there is a subset
$\mathbf{I}_{0} \subset \mathbf{I}$, with relative measure at least $1-\varkappa$ such that for $i \in \mathbf{I}_{0}$, there is a
subset $U_{i} \subset \mathbf{B}(\omega_{i})$, with relative measure at least $1-\theta$, and a standard map $g_{i} \times f_{i}$ supported on
it such that
\[ d(\phi|_{U_{i}}, g_{i} \times f_{i}) = \hat{\eta} diam(\mathbf{B}(\omega_{i})) \]

\noindent Furthermore, the linear parts of $f_{i}$'s are all the same.  Denote this common linear map by $A_{f}$.

We now basically proceed according to the same steps as in the proof to Theorem \ref{exisence of standard maps in small boxes} in
\cite{Pg}. \smallskip

Let $\mathcal{P}^{0}=\bigcup_{i \in \mathbf{I}_{0}} U_{i}$.  Since each $U_{i}$ has
relative measure at least $1-\theta$ in $\mathcal{P}(\mathbf{B}(\omega_{i}))$, therefore the measure
of $\mathcal{P}^{0}$ is at least $1-\theta'$ times that of $\mathcal{P}(\mathbf{B})$,
where $\theta' \rightarrow 0$ as $\tilde{\eta}$ approaches zero.  Let $\mathcal{L}^{0}=\{ \mathit{l} \in \mathcal{L}(\mathbf{B}): |\mathit{l} \cap \mathcal{P}^{0}| \geq (1-\theta'^{1/2}) |\mathit{l}| \}$.
By Chebyshev inequality, the size of $\mathcal{L}^{0}$ is at least $1-\theta''$ that of $\mathcal{L}(\mathbf{B})$ where
$\theta'' \rightarrow 0$ as $\tilde{\eta}$ approaches zero. \\

We now show that if $\mathit{l} \in \mathcal{L}^{0}$, then $\phi(\mathit{l})$ is close to a geodesic segment. \smallskip

First we claim that $\pi_{A} \circ \phi(\mathit{l})$ is close to a straight line segment in $\mathbf{A}'$.  By construction, if
$\mathit{l}_{i}=\mathit{l} \cap \mathcal{P}_{i}$ for some $i \in \mathbf{I}_{0}$, then $\pi_{A}(\phi(\mathit{l}_{i}))$ is within
$\hat{\eta}$-linear neighborhood of the line segment in direction $A_{f} \circ
\mathit{l}$, and makes an angle at least $\sin^{-1}(\tilde{\eta})$ with root kernels.  Since $\mathit{l}$ spend all but $\theta'^{1/2}$
proportion of its length in $\mathcal{P}^{0}$, we conclude that $\pi_{A}(\phi(\mathit{l}))$ is within
$(\hat{\eta}, \theta'^{1/2}|\mathit{l}|)$-linear neighborhood of a line segment $\hat{\mathit{l}}$ in direction $A_{f} \circ \mathit{l}$,
and makes an angle at most $\sin^{-1}(\tilde{\eta})$ with root kernels. \smallskip

Let $h: \phi(\mathit{l}) \rightarrow \hat{\mathit{l}}$ sends each point on $\phi(\mathit{l})$ to a point on
$\hat{\mathit{l}}$ that is closest to its $\pi_{A}$ image.  We now show that if for some $p,q \in \mathit{l} \cap \mathcal{P}^{0}$,
the line segment connecting $h(\phi(p))$ to $h(\phi(q))$ is orthogonal to $\hat{\mathit{l}}$,
then $d(\phi(p),\phi(q)) \leq \dot{\eta} |\phi(\mathit{l})|$, where $1 \gg \dot{\eta}/2 \gg \theta'^{1/2}$.

Suppose not, then we have two points with above property except that the distance
between them is bigger than $\dot{\eta} |\phi(\mathit{l})|$.  Then either $p,q \in \mathcal{P}_{i}$ for some $i \in \mathbf{I}_{0}$, or
$p \in \mathcal{P}_{\iota_{1}}, q \in \mathcal{P}_{\iota_{2}}$ for distinct $\iota_{1}, \iota_{2} \in \mathbf{I}_{0}$. \smallskip

The first case implies that $d(\phi(p), \phi(q)) \leq \hat{\eta} diam(\mathbf{B}_{i}) <
\dot{\eta} |\mathit{l}|$ so this case cannot happen.

So $p,q$ must belong to distinct $\mathbf{B}(\omega_{i})$'s.  In this case, the lower bound on the distance between $\phi(p)$ and $\phi(q)$ and the knowledge that
$\pi_{A}(\phi(\mathit{l}))$ is within $(\hat{\eta}, \theta'^{1/2}|\mathit{l}|)$-linear neighborhood of $\hat{\mathit{l}}$
means that there must be two points $x, y \in \mathcal{P}_{j} \cap \mathit{l}$ for
some $j \in \mathbf{I}_{0}$ such that the oriented line segment connecting $\pi_{A}(\phi(x))$ and $\pi_{A}(\phi(y))$ makes an angle
of at least $\pi-\sin^{-1}(\hat{\eta})$ with $\hat{\mathit{l}}$, but this is a contradiction to the Lemma \ref{linear part the same} which says
that the linear parts of $f_{i}$'s are the same. \smallskip

Since $\mathit{l}$ spend all but $\theta'^{1/2}$ proportion of its length in $\mathcal{P}^{0}$, by Lemma
\ref{Forste-how to show weakly monotone} in \cite{Pg}, we conclude that
$\phi(\mathit{l})$ is ($\hat{\eta}, \theta'^{1/2} |\phi(\mathit{l})|$) weakly monotone, and by Proposition
\ref{Forste-close to being straight} of \cite{Pg}, $\phi(\mathit{l})$ is within $(\hat{\eta}, \theta'^{1/2} |\phi(\mathit{l})|)$-linear
neighborhood of a geodesic segment in direction $A_{f} \circ \mathit{l}$. \medskip

So now the $\phi$ image of each element of $\mathcal{L}^{0}$ is close to a geodesic segment and we can apply
Lemma \ref{Forste-bare minimum for flats to flats} in \cite{Pg} to obtain a subset $\mathcal{F}^{0} \subset \mathcal{F}(\Omega)$ of relative
large measure such that if $\gamma \in \mathcal{F}^{0}$, $\phi(f)$ is within $\eta diam(\gamma)$ Hausdorff neighborhood of another flat.
We now apply the same argument as in the proof of Theorem \ref{exisence of standard maps in small boxes} using quadrilaterals
to obtain a standard $\hat{f} \times \hat{g}$ on $\mathcal{P}^{0}$, where linear part of $\hat{f}$
is $A_{f}$. \end{proof}

\subsection{$S$ graph, $\hat{S}$ graph and the $H$ graph}
We continue with the setting from equation (\ref{tileB}) and discretize $\mathbf{B}$ in this section so that it reflects the structures of
the standard maps $\hat{\phi}_{i}$, for $i \in \mathbf{I}_{0}$. \smallskip

\bold{The $S$ graph} Take $\mathcal{G}'$, a $\rho_{1}$ net in $G'$ (the range space) and connect $x,y  \in \mathcal{G}'$ by an edge if
their $\pi_{A}$ images are within $\rho_{1}$ of each other and $d(x,y) \leq 10 \rho_{1}$.  We metrize this graph by letting
lengths of edges be the distance between the corresponding points in $G'$, so all edges have length $O(\rho_{1})$.  We refer to the
discretization restricted to $\mathbf{B}$ as the $S$ graph.  \medskip

Recall that that each box $\mathbf{B}_{i}$ tiling the set
$\phi^{-1}(\mathbf{B})$ is isometric to $\mathbf{B}(\omega)$, the box associated to some convex compact set $\omega \subset \mathbf{A}$.
Let $\rho_{i}$,$i=2,3,4,5$, be numbers such that $\rho_{i} \ll \rho_{i+1} \ll  diam(\omega)$.
Fix $0 < \beta \ll  \beta' \ll \beta'' \ll 1$ such that $\hat{\eta} \ll \beta $.


\bold{The set $E_{\ell}(p)$ } Let $\ell$ be a regular linear functional of
$\mathbf{A}$ with unit norm.  For $p \in G$, we write $E_{\ell}(p)$ for the left coset $p
\mbox{   } W^{+}_{\ell} \rtimes ker(\ell)$.  A left coset of $W^{+}_{\ell} \rtimes ker(\ell)$ is also
the pre-image of a point under the projection map $G \rightarrow W^{-}_{\ell} \rtimes
\mathbb{R}\vec{v}_{\ell}$. \medskip

We say $E_{\ell}(p)$ is \emph{favourable} if
\begin{enumerate}
\item \[ \left|   E_{\ell}(p) \cap U_{*}  \right|  \geq \left(   1-\vartheta^{1/2} \right) \left|   E_{\ell}(p) \cap
    \phi^{-1}(\mathbf{B}) \right|    \]
    \noindent where $\vartheta$ is the relative proportion of $U_{*}^{c}$ in $\phi^{-1}(\mathbf{B})$.

\item For any point $q$ that is such that $\ell(\pi_{A}(p))-\ell(\pi_{A}(q)) =
    \rho_{5}$ \[ \left|   E_{\ell}(q) \cap U_{*}  \right| \geq \left(   1-\vartheta^{1/2}
     \right)  \left|  E_{\ell}(q)  \cap \phi^{-1}(\mathbf{B})  \right| \] \end{enumerate}

We say $E_{\ell}(p)$ is \emph{very favourable} if in addition,
for every $i \in \mathbf{I}_{0}$ such that $|E_{\ell}(q) \cap U_{i}| \geq
\left( 1-\theta^{1/3} \right) |E_{\ell}(q) \cap \mathbf{B}_{i}|$, the
intersection between $\pi^{-}_{\ell}(E_{\ell}(q) \cap \mathbf{B}_{i})$ and $\tilde{E}_{**}$ is
non-empty, where $\tilde{E}_{**}$ is the union of $E_{**}$ in those $\mathbf{B}_{i}$'s, $i \in \mathbf{I}_{0} \cap \mathbf{I}(E_{\ell}(p))$
from Lemma \ref{lemma 4.1} and Remark \ref{general section 3}. Here $\theta$ is an upper bound for $U_{i}^{c}$ in $\mathbf{B}_{i}$. \medskip

Since $U_{*}$ has relative large measure in $\phi^{-1}(\mathbf{B})$, it follows that if
we fix a chamber $\mathfrak{b}$, there is a left translate of $W^{+}_{\mathfrak{b}}$,
$x_{0}W^{+}_{\mathfrak{b}}$, and a function $\varkappa$ with the following properties:

\begin{enumerate} \renewcommand{\labelenumi}{\Alph{enumi}.}

\item The measure of $x_{0}W^{+}_{\mathfrak{b}} \cap U_{*}$ is at least $1-\varkappa$
times that of $x_{0}W^{+}_{\mathfrak{b}}$.

\item There is a subset $\mathcal{S}_{0} \subset \mathfrak{b}$ of relative measure at least
$1-\varkappa$ such that for every $\ell \in \mathcal{S}_{0}$, and $p \in U_{\mathfrak{b}}$, $E_{\ell}(p)$ is very favourable.

\end{enumerate}
\noindent where $\varkappa$ approaches zero as our initial data of $\delta$, $\eta$ and $\tilde{\eta}$ approach zero. \\

Fix such a left coset of $W^{+}_{\mathfrak{b}}$ and for every $\ell \in \mathcal{S}_{0}$, write $\tilde{H}_{\ell}$ for
$E_{\ell}(x_{0}) \cap \phi^{-1}(\mathbf{B})$ and $H_{\ell}$ for the subset of
$Sh(E_{\ell}(x_{0}), \rho_{5}) \cap \phi^{-1}(\mathbf{B})$ whose $\ell$ value is $\ell(x_{0})-\rho_{5}$.

\bold{The sets $\mathbf{I}_{g}(\ell)$, good and bad $\ell$ boxes}  For each $\ell \in
\mathcal{S}_{0}$, let $\mathbf{I}_{g}(\ell) \subset \mathbf{I}_{0}$ consisting of indices such that
\[ \left|  H_{\ell} \cap U_{i} \right| \cap \left(   1-\vartheta^{1/3}  \right) \left|   H_{\ell}  \right|  \]

A box $\mathbf{B}_{i}$ in the domain is called a \emph{good $\ell$ box} if $i \in \mathbf{I}_{g}(\ell)$, and bad otherwise.

\bold{Shadows of $H_{\ell}$ and $\phi(H_{\ell})$ } Fix a $\ell \in \mathcal{S}_{0}$.  Let
$h_{1}=\ell(H_{\ell})-\beta diam(\omega)$ and $h_{2}=\ell(H_{\ell}) - (\beta + \frac{\beta'}{2}) diam(\omega)$.  Since $E_{\ell}(x_{0})$ is
very favourable, Lemma \ref{lemma 4.1} says that with those chosen $h_{2}$ and $h_{1}$, for each $i \in \mathbf{I}_{g}(\ell)$,
\[ | Sl_{2}^{1}(H_{\ell}) \cap U_{i} | \geq (1-c_{2}) |Sl_{2}^{1}(H_{\ell})| \] \noindent
For each $j$ between $h_{2}$ and $h_{1}$, if we write $\rho(j)$ for the relative
proportion of $Sl_{2}^{1}(H_{\ell}) \cap \ell^{-1}(j) \cap U_{i}^{c}$ in
$Sl_{2}^{1}(H_{\ell}) \cap \ell^{-1}(j)$, the above condition means that

\[ \sum_{j=h_{2}}^{h_{1}} \rho(j) \leq 2c_{2} \] \noindent which means that $\rho(h_{0}^{i}) \leq 2\sqrt{c_{2}}$ for some
$h_{0}^{i} \in [h_{2},h_{1}]$.

The shadow of $H_{\ell}$, denoted by $W(H_{\ell})$, is now defined as the union of
$\cup_{i \in \mathbf{I}_{g}(\ell)} Sl_{2}^{1}(H_{\ell})  \cap \ell^{-1}(h_{0}^{i})$ and
$\cup_{i \in \mathbf{I}(H_{\ell}) \backslash \mathbf{I}_{g}(\ell)} Sl_{2}^{1} \cap
\ell^{-1}(h_{1})$.

Let $S=U_{*} \cap S\left( \pi^{-}_{\ell}(\tilde{H}_{\ell}), \pi^{+}_{\ell}(\tilde{H}_{\ell}), \pi_{A}(\tilde{H}_{\ell}),
 \ell(\tilde{H}_{\ell}),  \ell(\tilde{H}_{\ell}) + \beta'' diam(\omega) \right)$.  For each $i \in \mathbf{I}_{g}(\ell)$, we define $\hat{W}_{i}(H_{\ell})$
as $\tilde{Sl}_{2}^{1}(H_{\ell}, S) \cap (f_{*}\ell)^{-1}(q(h_{0}^{i}))$. The shadow of
$\phi(H_{\ell})$, denoted by $\hat{W}(H_{\ell})$ is now defined as the union of
$\hat{W}_{i}(H_{\ell})$'s, where $i$ ranges over $\mathbf{I}_{g}(\ell)$. \smallskip

\bold{Shadow vertices }  We now define a set of \textit{$\ell$ shadow vertices} in the
discretization of $\mathbf{B}$.  By shifting the discretization, we can assume that
$\hat{W}(H_{\ell})$ contains a $\rho_{1}$ net of $S$-vertices.  Every $S$-vertex in
$\hat{W}(H)$ is a shadow vertex.

Furthermore, a $\ell$ shadow vertex is \emph{good} if it lies in $\phi(U_{*})$, belonging to $\ell$ plane containing a point of
$\phi(U_{*})$, and is at least $10\kappa \beta'' diam(\omega)$ from $\partial \mathbf{B}$.

If a shadow vertex is not good, then it is bad.  We enlarge the set of bad shadow vertices by declaring any $S$-vertex in the
$\rho_{1}$ neighborhood of $\phi(U_{*}^{c} \cap W(H))$ a bad shadow vertex, even if it was a good shadow vertex by our previous
definition.

The bad shadow vertices in $N_{\rho_{1}} \phi(U_{*}^{c} \cap W(H))$ are not necessarily
close to $\hat{W}(H)$, even if they come from good boxes\symbolfootnote[2]{Because it's the bad set in the good box, which we have
no control over.  Anything termed 'bad' basically comes from sets that we have no control over: everything in bad boxes, and bad
sets $U_{i}^{c}$'s in good boxes.}.  While these bad shadow vertices are not well controlled, they make up a small proportion of all
shadow vertices and so do not interfere with the geometric argument in the next section.  See Lemma \ref{bad shadow are
small} below. \smallskip

For either good or bad boxes, the number of $\ell$ shadow vertices coming from $\mathbf{B}_{i}$ is proportional to the size of the
set $H_{\ell} \cap \mathbf{B}_{i}$.  The proportionality constant depends only on $\kappa$, $C$,
$G$ and $G'$.

\begin{lemma} \label{bad shadow are small}
For each $\ell \in \mathcal{S}_{0}$, there is a constant $c_{5}$ depending on our initial data
$\delta, \eta, \tilde{\eta}$ such that the proportion of bad shadow vertices is at most
$c_{5}$ which approach zero as our initial data go to zero's.  \end{lemma}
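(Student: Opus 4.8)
The plan is to bound the number of bad shadow vertices by accounting for each of the three sources separately, using that each source is controlled by a small-measure set, together with the proportionality between shadow vertex counts and the measure of $H_\ell \cap \mathbf{B}_i$. Recall the bad shadow vertices come in three flavours: (1) those coming from bad boxes (indices in $\mathbf{I}(H_\ell)\setminus\mathbf{I}_g(\ell)$); (2) those in good boxes that fail the goodness criterion directly — i.e. they do not lie in $\phi(U_*)$ (or the $\ell$-plane through them misses $\phi(U_*)$), or they are within $10\kappa\beta''\,diam(\omega)$ of $\partial\mathbf{B}$; and (3) those $S$-vertices artificially declared bad because they lie in $N_{\rho_1}\phi(U_*^c\cap W(H))$.

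First I would handle source (1). By the definition of $\mathbf{I}_g(\ell)$ and the fact that $E_\ell(x_0)$ is favourable — so $|H_\ell\cap U_*|\geq(1-\vartheta^{1/2})|H_\ell|$ — a Chebyshev/Markov argument on the boxes shows that $\sum_{i\in\mathbf{I}(H_\ell)\setminus\mathbf{I}_g(\ell)}|H_\ell\cap\mathbf{B}_i|$ is at most $O(\vartheta^{1/6})$ times $|H_\ell|$ (the exact exponent is unimportant; it goes to $0$ with the initial data). Since the number of $\ell$ shadow vertices coming from any box $\mathbf{B}_i$ — good or bad — is proportional to $|H_\ell\cap\mathbf{B}_i|$ with a constant depending only on $\kappa,C,G,G'$, the total count of shadow vertices and the count from bad boxes are both proportional to the corresponding measures, so bad-box shadow vertices are a proportion $O(\vartheta^{1/6})$ of all shadow vertices.

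Next, source (2): within a good box, the shadow vertices that fail goodness are controlled by two things. The boundary-layer condition removes only those within $10\kappa\beta''\,diam(\omega)$ of $\partial\mathbf{B}$; by Lemma \ref{boxes are folner} (and Remark \ref{general folner}) this is an $O(\beta'')$-proportion of $\mathbf{B}$, hence contributes an $O(\beta'')$-proportion of shadow vertices. The condition of lying in $\phi(U_*)$ fails only on $\phi(U_*^c)$, which by Corollary \ref{strengthening part 2} / the hypotheses on $U_*$ has relative measure $O(\vartheta)$ in $\mathbf{B}$; intersecting with the shadow set $\hat{W}(H_\ell)$ and using that $\hat{W}(H_\ell)$ is (by construction via $\tilde{Sl}_2^1$, Lemma \ref{lemma 4.2}) mostly contained in $\phi(U_*)$, this too is an $O(\vartheta^{1/2})$-proportion. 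Finally, source (3): $\phi(U_*^c\cap W(H))$ has measure comparable to $|U_*^c\cap W(H)|$ up to the quasi-isometry constant, and $W(H)$ is a union of cross-sections $Sl_2^1(H_\ell)\cap\ell^{-1}(h_0^i)$ where by the estimate $\rho(h_0^i)\leq 2\sqrt{c_2}$ the bad part $U_i^c$ occupies only a $\sqrt{c_2}$-fraction; summing and passing to the $\rho_1$-neighbourhood (which inflates volume by $1+O(\rho_1/diam(\omega))$, negligible) shows these bad shadow vertices are an $O(\sqrt{c_2})$-proportion of all shadow vertices. Adding the three contributions gives the bound $c_5 = O(\vartheta^{1/6} + \beta'' + \sqrt{c_2})$, which $\to 0$ as $\delta,\eta,\tilde\eta\to 0$ (recall $c_2\to 0$ by Lemma \ref{lemma 4.1}, and $\beta''$, $\vartheta$ are governed by the initial data).

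The main obstacle I anticipate is keeping the bookkeeping between the three distinct bad-vertex mechanisms clean — in particular making sure the proportionality "number of shadow vertices from $\mathbf{B}_i$ $\asymp |H_\ell\cap\mathbf{B}_i|$" is applied uniformly to good and bad boxes alike (which the text asserts), and correctly propagating the quasi-isometry distortion through $\phi$ when passing from $|U_*^c\cap W(H)|$ to $|\phi(U_*^c\cap W(H))|$ and then to the $\rho_1$-net count. None of these steps is deep, but the argument must be organized so that every "small" quantity is visibly a function of $\delta,\eta,\tilde\eta$ that vanishes in the limit.
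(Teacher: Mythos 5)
Your plan uses the same three ingredients the paper uses (the F\"olner boundary estimate, Lemma~\ref{lemma 4.2}, and the choice of $h_0^i$ giving the $2\sqrt{c_2}$ cross-section bound), and your handling of what you call source~(3) reproduces the paper's second-stage argument correctly. The decomposition is packaged differently --- the paper works in two stages rather than three --- but the reorganization is harmless: your source~(1) (bad boxes) is actually superfluous, since shadow vertices are by definition the $S$-vertices in $\hat{W}(H_\ell)=\bigcup_{i\in\mathbf{I}_g(\ell)}\hat{W}_i(H_\ell)$, which is built only from good boxes, and the goodness criterion for a shadow vertex never references box indices; still, the Chebyshev estimate you give would do no harm.

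The genuine gap is in your source~(2), in the step where you handle the requirement that a good shadow vertex lie in $\phi(U_*)$ (and lie on an $\ell$-plane meeting $\phi(U_*)$). You write that $\hat{W}(H_\ell)$ is ``mostly contained in $\phi(U_*)$'' as a consequence of Lemma~\ref{lemma 4.2}, but that lemma asserts $\bigl|\tilde{Sl}_2^1(H_\ell)\cap\phi(U_*)\bigr|\geq(1-c_4)\bigl|\tilde{Sl}_2^1(H_\ell)\bigr|$, a statement about the entire slab, whereas $\hat{W}(H_\ell)=\tilde{Sl}_2^1(H_\ell)\cap(f_*\ell)^{-1}(q(h_0^i))$ is a single codimension-one cross-section. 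A priori the $c_4$-fraction of $\tilde{Sl}_2^1$ lying in $\phi(U_*^c)$ could be concentrated exactly at the level $q(h_0^i)$, so Lemma~\ref{lemma 4.2} alone does not control the slice; as written, this part of your argument is circular. The paper closes this precisely via the $\ell$-plane clause in the definition of ``good'': if a fraction $\theta$ of shadow vertices are bad for this reason, then for each of them the $\ell$-half-plane through it (running from the slice level up through the slab) misses $\phi(U_*)$, and since the slab is foliated by such planes with uniform measure in the $\ell$-direction, this forces $\bigl|\tilde{Sl}_2^1(H_\ell)\cap\phi(U_*^c)\bigr|\geq\theta\bigl|\tilde{Sl}_2^1(H_\ell)\bigr|$, hence $\theta\leq c_4$ by Lemma~\ref{lemma 4.2}. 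You need this lifting step from the slice to the slab to make the estimate legitimate; once it is inserted, your conclusion and constants go through unchanged.
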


\begin{proof}
Bad shadow vertices are defined in two stages.  First we have the set $S_{1}$ of vertices
in $\hat{W}(H_{\ell})$ that are either within $10 \kappa \beta'' diam(\omega)$ of
$\partial \mathbf{B}$ or outside of $\beta' diam(\omega)$ neighborhood of a point in $\phi(U_{*})$
whose $\ell$ value is $h_{0}^{i}$ smaller than $\ell(H_{\ell})$.  That this set has small
measure in $\hat{W}(H_{\ell})$ follows from two facts.  First, the subset that are close
to $\partial \mathbf{B}$ has relative small measure by Lemma \ref{boxes are folner}.
\smallskip

Second, if the proportion of $S_{1}$ in $\hat{W}(H_{\ell})$ is $\theta$, then the set of
points in $\tilde{Sl}_{2}^{1}(H_{\ell}) \cap \phi(U_{*}^{c})$ contained in a $\ell$ half
plane through a point of $S_{1}$ has measure at most $\theta$ relative to
$\tilde{Sl}_{2}^{1}(H_{\ell})$.  However by Lemma \ref{lemma 4.2}, the proportion of
$\tilde{Sl}_{2}^{1}(H_{\ell}) \cap \phi(U_{*}^{c})$ in $\tilde{Sl}_{2}^{1}(H_{\ell})$ is
at most $c_{4}$.  Therefore $\theta \leq c_{4}$.

In the second stage, we enlarge the set of bad vertices in $\hat{W}(H)$ by adding the set
$N_{\rho_{1}} \phi(U_{*}^{c} \cap \hat{W}(H_{\ell}))$.  That this set has small measure
follows from our choice of $h_{0}^{i}$.  \end{proof}

\bold{The $\hat{S}$-graph } We now modify the $S$-graph near $\phi(\mathbf{H_{\ell}})$ so
that it reflects divergence property dictated by standard maps.

For $x \in W^{+}_{\ell}$, $y \in W^{-}_{\ell}$ and $t \in \mathbb{R}$, we write
$\gamma_{x,y}(t)$ for the set $(x,y)\mbox{  }\ell^{-1}(t)$, and $\gamma_{x,y}([c,d])$ for
the $\cup_{t \in [c,d]} \gamma_{x,y}(t)$, which is a $\ell$ half plane of length $|c-d|$. \smallskip

Let $K_{i}$ be the union of $\gamma_{x,y}([q(h_{0}^{i}),q(\ell(\tilde{H}_{\ell}))-
\rho_{5}/4 ])$'s that have non-empty intersection with $\hat{W}_{i}(H_{\ell})$\symbolfootnote[3]{Note that $\hat{W}_{i}(H_{\ell})$ is only defined for $i \in
\mathbf{I}_{g}(\ell)$.}.  We begin by replacing $K_{i}$ as a subset of the $S$ graph by
disjoint union of $\gamma_{x,y}$'s, then define the $\hat{S}$ graph by declaring a new set of vertices and a new incidence relation
on $K_{i}$. \smallskip

For each $t_{j} \in \frac{1}{\rho_{1}}(q(\ell(\tilde{H}_{\ell}))- \rho_{5}/4 -
q(h_{0}^{i}))$, call the $S$ vertices of $\gamma_{x,y}(t_{j}) \in K_{i}$
\emph{pre-vertices}.  Recall that for each linear functional $\Xi$, we can evaluate
$\Xi(p)$ where $p \in G$ to be the $\Xi$ value of $\pi_{A}(p)$.  In the range, we tile
left cosets of $W^{-}_{(f_{i})_{*}\ell}$ in $((f_{i})_{*}\ell)^{-1}(t_{j})$ by rectangles
$T_{-}$'s of diameter $10 \rho_{1}$; in the domain, we tile left cosets of $W^{+}_{\ell}$
in $\ell^{-1}(q_{i}^{-1}(t_{j}))$ by rectangles $T_{+}$'s of diameter $10 \kappa^{2}
\rho_{1}$.  We identify two vertices $p,q$ if
\begin{enumerate}
\item $p,q$ are in the same $T_{-}$.

\item $\phi_{i}^{-1}(p)$ and $\phi_{i}^{-1}(q)$ are in the same $T_{+}$ which has the
    property that
    \[ | \partial^{-}_{\ell}(T_{+}) \cap \tilde{E}_{**} | \geq 1/2
    |\partial^{-}_{\ell}(T_{+})| \]

    \noindent where $\tilde{E}_{**}$ is the union of $E_{**}$ coming from each $\mathbf{B}_{i}$
    , $i \in \mathbf{I}_{0}$ as in Lemma \ref{lemma 4.1}. \end{enumerate}

\noindent We also remove any edges in $K_{i}$ that ends at a bad shadow vertex.  A
$\hat{S}$ vertex is called \emph{regular} unless it arise from the procedure above, in
which case it is called \emph{irregular}. \\


In our original $S$ graph, every point has the same valence provided the vertex is not close to the boundary.  However, upon the
changes made for the $\hat{S}$ graph, the 'homogeneous'-ness of valence is not so clear. This next lemma says that we only change
the valence by bounded amount, so that $\hat{S}$ graph is essentially homogeneous away from boundary.

\begin{lemma} \label{uniform valence}
There exist constants $M_{l}$, $M_{u}$ depending only on $\kappa$, $C$ such that for any
two $\hat{S}$ vertices $v_{1}, v_{2}$, the ratio of numbers of $f_{*} \ell$ half planes
through them is bounded between $M_{l}$ and $M_{u}$.  \end{lemma}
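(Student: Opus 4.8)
The plan is to establish the upper and lower bounds separately by tracking how the $\hat S$-graph construction alters the number of $f_*\ell$ half planes through a vertex, starting from the homogeneity of the original $S$-graph. Recall that in the $S$-graph, away from $\partial\mathbf{B}$, the number of $f_*\ell$ half planes of length $L$ leaving a vertex is $e^{\mathbf{b}_{f_*\ell}L}$ up to multiplicative constants depending only on the net scale (this is Lemma \ref{lemma 3-5} applied to the relevant weight hyperbolic space, together with the branching constant formula \eqref{branching constant defined}). So before any modification, every $S$-vertex has valence (counted via half planes) pinned between two constants that depend only on $\kappa$, $C$, $G$, $G'$. The issue is purely local to the sets $K_i$, $i\in\mathbf{I}_g(\ell)$, where we (a) replaced $K_i$ by a disjoint union of $\gamma_{x,y}$'s, (b) identified pre-vertices lying in a common $T_-$ whose $\phi_i^{-1}$-preimages lie in a common $T_+$ satisfying the $\tilde E_{**}$-density condition, and (c) deleted edges terminating at bad shadow vertices.

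First I would handle step (c): by Lemma \ref{bad shadow are small}, bad shadow vertices are a $c_5$-proportion of all shadow vertices, and each deletion of such an edge removes at most a bounded number of half planes through any given regular vertex — in fact, since a bad shadow vertex sits on $O(1)$ many $\gamma_{x,y}$'s at the relevant scale, removing it changes the half-plane count through a neighbouring regular vertex by a bounded factor. Next, for step (b), the key point is that the identification is exactly the coarsening operation $\mathcal{C}_h$ transported through $\phi_i$: identifying vertices in a common $T_-$ of diameter $10\rho_1$ collapses an $O(1)$-diameter cross-section, and Lemma \ref{lemma 3-5} (together with Corollary \ref{cross area preservation}, which says the coarsened cross-sections of $g(\pi^-_{[\alpha]})$ and $g(\pi^+_{[\alpha]})$ have measure comparable to $|\pi^-_{[\alpha]}(H)||\pi^+_{[\alpha]}(H)|$ up to constants $d,b$ depending only on $\kappa,C$) shows the number of $f_*\ell$ half planes through an identified ($\hat S$) vertex differs from that through an $S$-vertex only by a factor in $[d', b']$ for constants depending on $\kappa, C$. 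The $\tilde E_{**}$-density condition on $T_+$ guarantees we are in the regime where Lemmas \ref{lemma 4.1} and \ref{lemma 4.2} apply, so these comparison estimates are legitimate.

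Putting these together: for any two $\hat S$-vertices $v_1, v_2$, chain the comparisons — $v_1$ to a nearby unmodified $S$-vertex (bounded factor from (b),(c)), then $S$-vertex to $S$-vertex (bounded, by homogeneity of the original graph), then back to $v_2$ (bounded factor again) — and absorb all constants into a single pair $M_l, M_u$. I would take $M_l, M_u$ to be the product of the finitely many intermediate constants, each of which is a function of $\kappa$ and $C$ (and the fixed data $G$, $G'$, net scale) only. The main obstacle I anticipate is step (b): one must be careful that the rectangles $T_+$ of diameter $10\kappa^2\rho_1$ in the domain and $T_-$ of diameter $10\rho_1$ in the range match up correctly under the $(1/2\kappa, 2\kappa)$-distortion of $\phi_i$ restricted to $U_i$, so that the identification genuinely corresponds to collapsing an $O(1)$-diameter piece rather than something growing with $\mathrm{diam}(\omega)$ — this is where the constant $\kappa^2$ in the definition is doing its work, and where the $\tilde E_{**}$-density hypothesis is essential to invoke Corollary \ref{cross area preservation}. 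The deletions in (c) are comparatively harmless, but one does need that a regular vertex is never incident to more than a bounded number of deleted edges at the working scale, which follows from the local finiteness of the net.
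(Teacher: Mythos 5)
Your plan is in the same spirit as the paper's proof --- reduce to showing that the irregular $\hat{S}$-vertices inside the $K_{i}$'s have valence comparable to that of the regular vertices, using the measure-preservation encoded in Corollary \ref{cross area preservation} --- and you correctly identify some of the relevant tools. But the crucial estimate in your step (b) is asserted rather than derived, and there is a concrete gap.

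You claim the identification ``collapses an $O(1)$-diameter cross-section'' and that Lemma \ref{lemma 3-5} together with Corollary \ref{cross area preservation} therefore ``shows'' the valence through an identified $\hat{S}$-vertex differs from that of an $S$-vertex only by a bounded factor. That \emph{is} the conclusion you need, not an immediate consequence of those lemmas. For an irregular vertex $v \in K_{i}$, the downward count (towards $\hat{W}_{i}(H_{\ell})$) is not governed by a local collapsing argument: inside $K_{i}$ the $S$-graph was wholesale replaced by a structure transported from the domain through $\phi_{i}$, so there is no ``nearby unmodified $S$-vertex'' below $\hat{W}(H_{\ell})$ against which to perturb. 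The downward valence is instead controlled by the measure $\left|\mathcal{C}_{q_{i}(h_{0}^{i})} g_{i}(\pi^{-}_{\ell}(H'))\right|$ of a coarsened lower boundary, normalized via equation (\ref{measure on boundary}), and showing this equals the expected $e^{\mathbf{b}_{f_{*}\ell}\left(q_{i}(\ell(H'))-h_{0}^{i}\right)}$ (which is what a regular vertex at the same height would have) requires invoking Proposition \ref{map on heights} \emph{twice}: once to trade the coarsened lower-boundary measure for the coarsened upper-boundary ratio $\left|\mathcal{C}_{q_{i}(h_{2})} g(\pi^{+}_{\ell}(H') \cap S)\right|/\left|\pi^{+}_{\ell}(H')\right|$, converting the range height $h_{0}^{i}$ into the domain height $q_{i}^{-1}(h_{0}^{i})$, and again at the end to convert the resulting domain-branching exponential $e^{\mathbf{b}_{\ell}\left(\ell(H')-q_{i}^{-1}(h_{0}^{i})\right)}$ back into a range-branching one. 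Your proposal never invokes Proposition \ref{map on heights}, which is precisely what reconciles the domain's branching constant $\mathbf{b}_{\ell}$ with the range's $\mathbf{b}_{f_{*}\ell}$ through the affine map $q_{i}$. Without that conversion, the bounded-factor claim in step (b) has no justification, and the chain of comparisons you describe cannot be closed.
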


\begin{proof}
Let $v$ be an irregular vertex, and let $z_{top}=\max (f_{*}\ell)(\mathbf{B})$.  In the
following calculations, we use $\approx$ to mean that up to a multiplicative error of
$e^{O(\hat{\eta} diam(\omega))}$.  First, the number of $f_{*} \ell$ half planes of length
$z_{top}-(f_{*}\ell)(v)$ containing $v$ and $(f_{*}\ell)^{-1}(z_{top})$ is $\approx
e^{\mathbf{b}_{f_{*}\ell}(z_{top}-f_{*}\ell(v))}$. \smallskip

As $v$ is an irregular vertex, there exists a $\ell$ block $H'$ in $\mathbf{B}_{i}$ such
that $v \in \phi_{i}(H')$ and $\partial^{-}_{\ell}(H')$ contains a point of $E_{**}$. The
number of $f_{*}\ell$ half planes containing $v$ and $\hat{W}_{i}(H_{\ell})$ (whose
$f_{*}\ell$ and $q_{i}$ value is $h_{0}^{i}$) is
\begin{align*}
\approx &|\mathcal{C}_{q_{i}(h_{0}^{i})} g_{i}(\pi^{-}_{\ell}(H'))| e^{-\mathbf{b}_{f_{*}\ell} h_{0}^{i}} &  \mbox{by (\ref{measure on boundary})} \\
\approx &|\mathcal{C}_{q_{i}(h_{0}^{i})} g_{i}(\pi^{-}_{\ell}(H'))|
\frac{|\mathcal{C}_{q_{i}(h_{2})} g(\pi^{+}_{\ell}(H') \cap S) |}{|\pi^{+}_{\ell}(H')|}
e^{-\mathbf{b}_{\ell}q_{i}^{-1}(h_{0}^{i})} & \mbox{ by Proposition \ref{map on heights}} \\
\approx &|\pi^{-}_{\ell}(H')| e^{-\mathbf{b}_{\ell} q_{i}^{-1}(h_{0}^{i})} & \mbox{ by \ref{cross area preservation}} \\
\approx &e^{\mathbf{b}_{\ell} \left(  \ell(H') - q_{i}^{-1}(h_{0}^{i})  \right) }   & \mbox{ by (\ref{measure on boundary})} \\
=&e^{\mathbf{b}_{f_{*}\ell}  \left(  q_{i}(\ell(H')) - h_{0}^{i}  \right) } & \mbox{ by
Proposition \ref{map on heights}} \end{align*}  \end{proof}

\bold{The $H$ graph }  We now define the $H$ graph as a subgraph of the $\hat{S}$ graph.
A good $\ell$ vertex is an irregular $\hat{S}$ vertex $v \in K_{i}$ such that
$f_{*}\ell(v)$ is within $O(\rho_{1})$ of $q_{i}(\ell(H_{\ell}))$ and the $W^{-}_{\ell}$
coordinate of $\phi_{i}^{-1}(v)$ is within $O(\rho_{1})$ of the $W^{-}_{\ell}$ coordinate
of $H_{\ell}$.  A bad $\ell$ vertex is a bad $\ell$ shadow vertices, which is always a
regular $\hat{S}$ vertices.

A vertex of the $H$ graph is either good or bad.  Good (resp. bad) $H$ vertices is the
union over $\ell \in \mathcal{S}_{0}$, all the good (resp. bad) $\ell$ vertices.  If a vertex comes
from the $\phi$ image of $x_{0}W^{+}_{\mathfrak{b}} \cap U_{*}$, then it is called a
$\mathfrak{b}$ vertex.

An edge of the $H$ graph is concatenation of edges in the $\hat{S}$ graph that connects
two $H$ vertices, or connect a good $H$ vertex with $\partial \mathbf{B}$.  An $\mathcal{E}_{0}$ edge is one that connects two
good $H$ vertices or one good $H$ vertex with $\partial \mathbf{B}$.


%

\subsection{Some geometric lemmas}
The idea behind the proof of Theorem \ref{aligning horocycle} is the following observation.
In $\mathit{H}_{n+1}$ (as in Lemma \ref{QI embedding}) suppose two travelers leave a common starting point via segments of
diverging vertical geodesics.  If they are to meet up again without having to travel for long, then they
can only do so in some neighborhood of the starting point. \smallskip

\bold{The projection $\Pi_{\ell}$ and the function $\rho_{\ell}(\centerdot, \centerdot)$}
Let $\Pi_{\ell}: G \rightarrow W^{-}_{\ell} \rtimes \mathbb{R} \vec{v}_{\ell}$ be the projection defined by $(\mathbf{x}, \mathbf{t}) \mapsto
([\mathbf{x}]_{W^{-}_{\ell}}, \ell(\mathbf{t}))$\symbolfootnote[3]{$[\mathbf{x}]_{W^{-}_{\ell}}$
denotes for the $W^{-}_{\ell}$ coordinate of $\mathbf{x}$}.  Let $H$ be a $\ell$ block, and write
$\rho_{\ell}(p,q)=(\Pi_{\ell}(p),\Pi_{\ell}(q))_{\Pi_{\ell}(H)}$ for the Gromov product of $\Pi_{\ell}(p)$ and $\Pi_{\ell}(q)$ with respect to
$\Pi_{\ell}(H)$ in the negatively curved space $W^{-}_{\ell} \rtimes \mathbb{R} \vec{v}_{\ell}$.

Recall that for three points $x,y,z$ in a metric space, the Gromov product is defined as
\[ (y|z)_{z} = \frac{1}{2} \left(  d_{X}(x,y) + d_{X}(x,z) - d_{X}(z,y) \right) \]

\noindent In a $\delta$ hyperbolic space $X$, the geodesic joining $y$ to $z$,
$\gamma_{yz}$, satisfies
\[ d_{X}(\gamma_{yz}, x) - \delta \leq (y|z)_{x} \leq d_{X}(\gamma_{yz},x) \]

For the remaining of this subsection, $H$ denotes for a $\ell$ block. Also, until the end
of Section 4, $\epsilon$ is a positive number less than 1 such that $O(diam(\mathbf{B})) \ll e^{\epsilon diam(\mathbf{B})}$,
and $\hat{\eta} diam(\omega) \ll \epsilon diam(\mathbf{B})$. The length of any path
considered in this section is less than $e^{\epsilon diam(\mathbf{B})}$.

In the next lemma, we list some properties of $\rho_{\ell}$, where $\approx$ is used to
denote two quantities whose ratio depends only on $\kappa, C$, $\ell$ and the space $G$.

\begin{lemma} \label{lemma 5.15} \noindent
\begin{enumerate}
\item Suppose $d(p',p) \ll d(p,H)$, $d(q',q) \ll d(q,H)$, and $\rho_{\ell}(p,q) \ll \min \{ d(p,H), d(q,H) \}$.
Then  \[ \rho_{\ell}(p,q) \approx \rho_{\ell}(p',q') \]

\item Suppose $\ell(p') < \ell(p)$, $\ell(q') < \ell(q)$, and each of the pairs
    $(p,p')$, $(q,q')$ can be connected by a geodesics having at least $\eta$
    proportion of the component in direction $\vec{v}_{\ell}$.

    If $d(p,H) , d(q,H) \gg \rho_{\ell}(p,q)$, then
    \[ \rho_{\ell}(p,q) \approx \rho_{\ell}(p',q') \]

\item If $\rho_{\ell}(p,q), \rho_{\ell}(q,q') \gg s$, then $\rho_{\ell}(p,q') \gg s$.
    \end{enumerate} \end{lemma}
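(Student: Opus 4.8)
The plan is to reduce all three assertions to facts about the Gromov product in the negatively curved space $X_{\ell}:=W^{-}_{\ell}\rtimes\mathbb{R}\vec{v}_{\ell}$, and to run everything through the $\delta$--hyperbolicity of $X_{\ell}$. First I would record the structural facts. Since $W^{-}_{\ell}=\oplus_{\Xi(\vec{v}_{\ell})<0}V_{\Xi}$, the one--parameter group $\varphi(t\vec{v}_{\ell})|_{W^{-}_{\ell}}$ is a contraction, so $X_{\ell}$ is a weight hyperbolic space exactly as in Lemma \ref{QI embedding}: it is $\delta$--hyperbolic with $\delta$ depending only on $\ell$ and $G$, it carries a ``vertical'' direction $\vec{v}_{\ell}$ and a height $\pi_{\vec{v}_{\ell}}$, and its distance obeys a logarithmic formula analogous to (\ref{distance}), with $U_{Q}$ satisfying the bound (\ref{property of U}). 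Moreover $\Pi_{\ell}:G\to X_{\ell}$ is distance non--increasing up to a bounded constant and $1$--Lipschitz on the height, and $\Pi_{\ell}(H)$ is a bounded subset of the single level set $\pi_{\vec{v}_{\ell}}^{-1}(\ell(H))$, so $\rho_{\ell}$ is defined up to an additive error $O(diam(\mathbf{B}))$; the distances $d(\,\cdot\,,H)$ in the statement are to be read in $X_{\ell}$ (equivalently in $G$, since in the configurations where the lemma is applied $\Pi_{\ell}$ does not collapse the relevant distance). The only tools I would use are the two elementary properties in a $\delta$--hyperbolic space: $(\xi_{1}|\xi_{3})_{\omega}\ge\min\{(\xi_{1}|\xi_{2})_{\omega},(\xi_{2}|\xi_{3})_{\omega}\}-\delta$, and that $(\xi_{1}|\xi_{2})_{\omega}$ agrees with $d(\omega,[\xi_{1},\xi_{2}])$ to within $\delta$. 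Since every Gromov product below is $\gg\delta$ and $\gg diam(\mathbf{B})$, errors of these sizes are absorbed by ``$\gg$'' and ``$\approx$''.

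Assertion (iii) is then immediate: it is precisely the Gromov--product form of $\delta$--hyperbolicity, since $\rho_{\ell}(p,q')=(\Pi_{\ell}(p)|\Pi_{\ell}(q'))_{\Pi_{\ell}(H)}\ge\min\{\rho_{\ell}(p,q),\rho_{\ell}(q,q')\}-\delta$. For assertion (i) the mechanism is a double use of the hyperbolic inequality. From $d(p,p')\ll d(p,H)$ one gets $(\Pi_{\ell}(p)|\Pi_{\ell}(p'))_{\Pi_{\ell}(H)}\ge d(\Pi_{\ell}(H),\Pi_{\ell}(p))-d(\Pi_{\ell}(p),\Pi_{\ell}(p'))$, which is a definite fraction of $d(\Pi_{\ell}(H),\Pi_{\ell}(p))$, hence $\gg\rho_{\ell}(p,q)$ because $\rho_{\ell}(p,q)\ll d(p,H)$. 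Feeding $\Pi_{\ell}(p'),\Pi_{\ell}(p),\Pi_{\ell}(q)$ into the hyperbolic inequality in both orders forces $(\Pi_{\ell}(p')|\Pi_{\ell}(q))_{\Pi_{\ell}(H)}<(\Pi_{\ell}(p)|\Pi_{\ell}(p'))_{\Pi_{\ell}(H)}$ (otherwise $\rho_{\ell}(p,q)$ would dominate itself) and therefore differs from $\rho_{\ell}(p,q)$ by at most $\delta$. Repeating the step with $q,q'$, using $d(q,q')\ll d(q,H)$ and the bound just obtained, yields $|\rho_{\ell}(p,q)-\rho_{\ell}(p',q')|\le2\delta$, i.e. $\rho_{\ell}(p,q)\approx\rho_{\ell}(p',q')$.

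The substantive case is (ii), and I expect it to be the main obstacle. As in (i) it suffices to prove the two domination estimates $(\Pi_{\ell}(p)|\Pi_{\ell}(p'))_{\Pi_{\ell}(H)}\gg\rho_{\ell}(p,q)$ and $(\Pi_{\ell}(q)|\Pi_{\ell}(q'))_{\Pi_{\ell}(H)}\gg\rho_{\ell}(p,q)$, after which the two--fold hyperbolic inequality argument of (i) applies verbatim. For the first: because the geodesic $[p,p']$ has at least an $\eta$--fraction of its length in the direction $\vec{v}_{\ell}$ and $\ell(p')<\ell(p)$, its $\Pi_{\ell}$--image has $\ell$--displacement at least an $\eta$--fraction of its length, so it is a quasigeodesic of $X_{\ell}$ with constants depending only on $\eta$; by the Morse lemma it stays boundedly close to the actual geodesic $[\Pi_{\ell}(p),\Pi_{\ell}(p')]$, which is therefore also roughly vertical and descends from $\Pi_{\ell}(p)$. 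Now $d(p,H)\gg\rho_{\ell}(p,q)$, i.e. $d(\Pi_{\ell}(H),\Pi_{\ell}(p))\gg d(\Pi_{\ell}(H),[\Pi_{\ell}(p),\Pi_{\ell}(q)])$, rules out $\Pi_{\ell}(H)$ lying in the ``shadow below $\Pi_{\ell}(p)$'': if it did, the geodesic $[\Pi_{\ell}(p),\Pi_{\ell}(q)]$ would leave $\Pi_{\ell}(p)$ going away from $\Pi_{\ell}(H)$, forcing $\rho_{\ell}(p,q)\approx d(p,H)$. Consequently the distance formula (\ref{distance}) shows that descending vertically from $\Pi_{\ell}(p)$ only increases the distance to $\Pi_{\ell}(H)$, so $d(\Pi_{\ell}(H),[\Pi_{\ell}(p),\Pi_{\ell}(p')])\ge d(\Pi_{\ell}(H),\Pi_{\ell}(p))-O(1)$ and hence $(\Pi_{\ell}(p)|\Pi_{\ell}(p'))_{\Pi_{\ell}(H)}\ge d(\Pi_{\ell}(H),\Pi_{\ell}(p))-O(1)\gg\rho_{\ell}(p,q)$. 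The estimate for $q,q'$ is identical.

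The delicate points, and the reason I single out (ii), are two. First, one must verify that the $\Pi_{\ell}$--image of an $\eta$--vertical $G$--geodesic remains quantitatively vertical, and in particular a quasigeodesic, in $X_{\ell}$, so that the Morse lemma applies with constants controlled by $\eta,\kappa,C$ and $G$ alone; this rests on $\Pi_{\ell}$ being distance non--increasing and $1$--Lipschitz on the height. Second, the monotonicity statement ``descending vertically from a point never decreases the distance to a fixed point, up to bounded error'' must be justified from the explicit form (\ref{distance}) of the metric together with the estimate (\ref{property of U}) for $U_{Q}$, rather than from abstract hyperbolicity; combined with the exclusion of the bad shadow configuration via $d(p,H)\gg\rho_{\ell}(p,q)$, this is exactly what makes the two domination estimates go through.
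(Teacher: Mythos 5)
The paper gives no proof of Lemma~\ref{lemma 5.15}: it is stated without justification and used in the proofs of Lemmas~\ref{lemma 5.16}--\ref{illegal circuit}, evidently on the grounds that all three parts are routine facts about Gromov products in the $\delta$--hyperbolic rank one space $X_{\ell}=W^{-}_{\ell}\rtimes\mathbb{R}\vec{v}_{\ell}$. Your write-up supplies exactly this missing argument, and the overall structure is correct: (iii) is the four-point inequality verbatim, (i) follows from the cheap lower bound $(\Pi_{\ell}(p)|\Pi_{\ell}(p'))_{\Pi_{\ell}(H)}\geq d(\Pi_{\ell}(H),\Pi_{\ell}(p))-d(\Pi_{\ell}(p),\Pi_{\ell}(p'))$ together with a two-sided application of the four-point inequality, and (ii) reduces to the same scheme once the two domination estimates are in hand.

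Two things in your treatment of (ii) are worth correcting. First, the assertion that $\Pi_{\ell}$ is distance non-increasing up to a bounded constant is not true in general: decomposing $\mathbf{t}=\ell(\mathbf{t})\vec{v}_{\ell}+\mathbf{t}'$ with $\mathbf{t}'\in\ker(\ell)$, the ratio of the $\Pi_{\ell}$-pullback metric to the $G$-metric in the $V_{\Xi}$-direction is $e^{\Xi(\mathbf{t}')}$, which has no uniform bound. Second, the Morse-lemma detour is unnecessary, and the fact that makes it unnecessary also repairs the first issue: a geodesic segment through $p$ in $G$ is a left translate $p\,\overline{AB}$, whose $\mathbf{H}$-coordinate is constant, so its $\Pi_{\ell}$-image is a genuine vertical segment in $X_{\ell}$, i.e.\ already a geodesic. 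The $\eta$-fraction hypothesis then merely controls the parametrization speed (harmless), $\ell(p')<\ell(p)$ says the segment descends, and the monotonicity of $d(\cdot,\Pi_{\ell}(H))$ along a descending vertical segment follows directly from~(\ref{distance}) once $\ell(H)\geq\ell(p)$ (which holds in every configuration where the lemma is invoked, since the points in question lie in shadows and slabs below $H_{\ell}$). You should make this last hypothesis explicit rather than trying to exclude the ``$H$ below $p$'' case by the shadow argument, which as written does not rule out the geodesic $[\Pi_{\ell}(p),\Pi_{\ell}(q)]$ itself descending toward $\Pi_{\ell}(H)$. Finally, for ``$\approx$'' (a multiplicative comparison) to follow from your additive $O(\delta)$ estimates, you need the Gromov products to dominate $\delta$; this is guaranteed by the scale conventions set just before the lemma ($\rho_{1}\ll\cdots\ll\rho_{4}\ll\operatorname{diam}(\omega)$) but deserves a sentence.
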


\begin{lemma} \label{lemma 5.16}
Suppose $p,q \in G$ can be connected by a path $\hat{\gamma}$ of length less than
$e^{\epsilon R}$ such that
\begin{enumerate}
\item $\ell(\pi_{A}(p)), \ell(\pi_{A}(q)) \leq \ell(H) - \rho_{4} $

\item The $\ell$ values of $\hat{\gamma}$ decreases for at least $\epsilon R$ units at both
ends, and the $\ell$ values of points on the the remaining subsegment are no more than $\ell(H) - \epsilon R$. \end{enumerate}

Then, $\rho_{\ell}(p,q) \geq \Omega(\rho_{4})$. \end{lemma}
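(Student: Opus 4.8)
The plan is to reduce everything to a rank-one statement inside the negatively curved space $W^{-}_{\ell} \rtimes \mathbb{R}\vec{v}_{\ell}$ via the projection $\Pi_{\ell}$, since $\rho_{\ell}(p,q)$ is by definition the Gromov product of $\Pi_{\ell}(p)$ and $\Pi_{\ell}(q)$ with respect to $\Pi_{\ell}(H)$ there. The geometric picture is exactly the one described in the preamble to this subsection: two travelers leaving from (or near) the upper boundary region associated to $H$ along diverging vertical geodesics can only reconnect cheaply near their common starting region; here the travelers are the two end-portions of $\hat{\gamma}$, which by hypothesis (2) descend in $\ell$-value for at least $\epsilon R$ units, hence project under $\Pi_{\ell}$ to paths that drop at least $\Omega(\epsilon R)$ in height in the rank-one space.

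First I would project the whole path $\hat\gamma$ by $\Pi_{\ell}$, noting $\Pi_{\ell}$ is coarsely Lipschitz, so $\Pi_{\ell}(\hat\gamma)$ is a path of length $\ll e^{\epsilon R}$ (after adjusting $\epsilon$) joining $\Pi_{\ell}(p)$ to $\Pi_{\ell}(q)$ in $W^{-}_{\ell}\rtimes\mathbb{R}\vec{v}_{\ell}$. By hypothesis (1), the height (the $\ell$-value) of $\Pi_{\ell}(p)$ and $\Pi_{\ell}(q)$ is at most $\ell(H) - \rho_4$, i.e.\ both endpoints sit at height at least $\rho_4$ below the level of $\Pi_{\ell}(H)$; and by hypothesis (2) the path first descends further by $\epsilon R \gg \rho_4$ at each end and, in between, never rises above $\ell(H)-\epsilon R$. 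So $\Pi_{\ell}(\hat\gamma)$ is entirely confined to heights $\le \ell(H) - \rho_4$, and in fact stays below $\ell(H)-\epsilon R$ except within the two initial $\epsilon R$-segments, which only push it lower. Next I would invoke the standard divergence estimate for a Gromov-hyperbolic space (or, more concretely, the explicit distance formula \eqref{distance}–\eqref{property of U} for $H_{s+1}$, to which $W^{-}_{\ell}\rtimes\mathbb{R}\vec v_{\ell}$ is quasi-isometric): if $\Pi_{\ell}(p)$ and $\Pi_{\ell}(q)$ had small Gromov product $\rho_{\ell}(p,q)$ relative to $\Pi_{\ell}(H)$ — say $\rho_{\ell}(p,q) \le c\rho_4$ for a suitable small constant $c$ — then any path joining them that stays below level $\ell(H) - \rho_4$ would have to pass down near the "bottom" of the two geodesic rays spanned by $\Pi_{\ell}(p)$ and $\Pi_{\ell}(q)$, and in the hyperbolic metric the length of such a path is at least of order $e^{\,c'(\rho_4 - \rho_{\ell}(p,q))}$ for some $c'>0$. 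Since we have arranged $\rho_4 \ll \epsilon R$ and all paths under consideration have length $\ll e^{\epsilon R}$, but $e^{\,c'\rho_4}$ is a fixed-order quantity while actually the relevant comparison forces the length lower bound to blow up exactly when $\rho_{\ell}(p,q)$ drops below $\Omega(\rho_4)$, we obtain a contradiction unless $\rho_{\ell}(p,q) \ge \Omega(\rho_4)$.

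To make the last step precise I would argue as follows: let $\gamma_{yz}$ be the geodesic in $W^{-}_{\ell}\rtimes\mathbb{R}\vec v_{\ell}$ joining $\Pi_\ell(p)$ to $\Pi_\ell(q)$; by the $\delta$-hyperbolic inequality recalled above, $d(\gamma_{yz}, \Pi_\ell(H)) \le \rho_\ell(p,q) + \delta$, so the entire geodesic $\gamma_{yz}$ rises to within $\rho_\ell(p,q)+O(1)$ of the level of $\Pi_\ell(H)$. Now $\Pi_\ell(\hat\gamma)$ is a path with the same endpoints that never rises above $\ell(H)-\rho_4$; comparing it with $\gamma_{yz}$ and using that in a hyperbolic space a path avoiding an $(r)$-neighborhood of a point on a geodesic has length exponential in $r$, we get that $\Pi_\ell(\hat\gamma)$ has length at least $\exp\bigl(\Omega(\rho_4 - \rho_\ell(p,q))\bigr)$. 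If $\rho_\ell(p,q) < \tfrac12\rho_4$ this length is at least $e^{\Omega(\rho_4)}$; but $\rho_4 \ll diam(\mathbf{B})$ does not by itself beat $e^{\epsilon R}$, so I should instead phrase the conclusion contrapositively at the scale $\rho_4$ itself — the point is simply that a path of any length joining two points at depth $\ge \rho_4$ and staying at depth $\ge \rho_4$ automatically has the two points' geodesic reaching up to within $O(\rho_\ell(p,q))$, forcing $\rho_\ell(p,q) \ge \rho_4 - O(1) = \Omega(\rho_4)$. The role of the length bound $e^{\epsilon R}$ and of hypothesis (2) is precisely to guarantee that the path cannot "cheat" by going around through $W^-_\ell$-directions at a much lower level and coming back, i.e.\ to confine $\Pi_\ell(\hat\gamma)$ to the sub-level set as claimed.

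The main obstacle, and the step to write out carefully, is this confinement argument: one must check that the $\ell$-value of points on $\hat\gamma$ really does control the height of $\Pi_\ell(\hat\gamma)$ (true, since $\Pi_\ell$ sends $\ell$ to the height coordinate on the nose), and that a short path in $W^-_\ell \rtimes \mathbb{R}\vec v_\ell$ joining two deep points and forbidden from the shallow region is forced to keep the Gromov product large — this is the quantitative "divergence of geodesics" estimate, which in the model $H_{s+1}$ follows directly from \eqref{distance} and the logarithmic bounds \eqref{property of U}, and which I would quote in that form. Handling the small corrections coming from the hypotheses $d(p',p)\ll d(p,H)$-type perturbations and the discretization scale $\rho_1$ is routine given Lemma \ref{lemma 5.15}(1).
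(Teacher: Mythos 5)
Your proposal correctly identifies the main structure of the argument: project by $\Pi_{\ell}$ to the rank-one space $W^{-}_{\ell}\rtimes\mathbb{R}\vec{v}_{\ell}$, note that $\Pi_{\ell}$ preserves the $\ell$-coordinate on the nose and is coarsely Lipschitz so that $\Pi_{\ell}(\hat\gamma)$ has length $\ll e^{\epsilon R}$, and then use the exponential expansion of horizontal distances in that hyperbolic model. You also correctly observe that the naive divergence estimate at depth $\rho_{4}$ produces a lower bound of order $e^{\Omega(\rho_{4})}$, which does \emph{not} contradict the length hypothesis $|\hat\gamma|<e^{\epsilon R}$ since $\rho_{4}\ll\epsilon R$. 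So far this tracks the paper.

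The genuine gap is in your attempted rescue. You assert that ``a path of any length joining two points at depth $\ge\rho_{4}$ and staying at depth $\ge\rho_{4}$ automatically $\ldots$ forc[es] $\rho_{\ell}(p,q)\ge\rho_{4}-O(1)$,'' and then say the role of the length bound and of hypothesis~(2) is merely to ``confine $\Pi_{\ell}(\hat\gamma)$ to the sub-level set.'' This is not a proof for two reasons. First, the confinement of $\Pi_{\ell}(\hat\gamma)$ to $\{\ell\le\ell(H)-\rho_{4}\}$ is \emph{already} part of the hypotheses, not something that the length bound has to be invoked to secure. Second, and more importantly, staying in the sub-level set at depth $\rho_{4}$ places no constraint at all on the Gromov product: in $H_{s+1}$ any two points at depth $\rho_{4}$, however far apart horizontally, can be joined by a path that dives below $\rho_{4}$, runs horizontally and comes back up, so the existence of such a path does not force $\rho_{\ell}(p,q)$ to be large. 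The step ``hence $\rho_{\ell}(p,q)\ge\rho_{4}-O(1)$'' does not follow from what precedes it.

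The paper's proof works at the \emph{deeper} level $\ell(H)-\epsilon R$, which your write-up notices but does not exploit. It introduces $p'$ and $q'$ as the first points along $\hat\gamma$ at which the $\ell$-value drops to $\ell(H)-\epsilon R$, and observes that the middle subsegment $\Pi_{\ell}(\hat\gamma|_{[p',q']})$ is a path of length $<e^{\epsilon R}$ that stays $\epsilon R$ units below $\ell(H)$. At that depth the metric on $W^{-}_{\ell}$ is scaled by $e^{\Omega(\epsilon R)}$ relative to the reference level, so such a path can realize only $O(1)$ horizontal displacement between $\Pi_{\ell}(p')$ and $\Pi_{\ell}(q')$ --- any non-negligible separation would force length $\ge e^{\epsilon R}$, contradicting the length hypothesis. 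Combined with the monotone-$\ell$ end segments provided by hypothesis~(2), this forces $\Pi_{\ell}(p)$ and $\Pi_{\ell}(q)$ to coincide (at the discretization scale), and \emph{then} the depth $\rho_{4}$ of $p,q$ from $H$ yields $\rho_{\ell}(p,q)\ge\rho_{4}-O(1)$. In short: the exponential divergence estimate must be applied at depth $\epsilon R$, where it beats the length bound $e^{\epsilon R}$; $\rho_{4}$ enters only at the very last step, via hypothesis~(1), once the horizontal separation has already been killed. Your proposal has all the right ingredients but applies the divergence estimate at the wrong scale and then papers over the resulting deficit with an argument that does not hold.
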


\begin{proof}
Let $p'$ and $q'$ be the closest points to $p$ and $q$ whose $\ell$ value first dips
below $\ell(H) -\epsilon R$.  Since the length of $\hat{\gamma}$ is less than $e^{\epsilon R}$, so is the length of
$\Pi_{\ell}(\hat{\gamma})$, which connects $\Pi_{\ell}(p)$ and $\Pi_{\ell}(q)$, as well
as the subsegment of $\Pi_{\ell}(\hat{\gamma})$ between $\Pi_{\ell}(p')$ and
$\Pi_{\ell}(q')$.  This means that if $\Pi_{\ell}(p) \not= \Pi_{\ell}(q)$, then any path
connecting $\Pi_{\ell}(p')$ to $\Pi_{\ell}(q')$ whose $\ell$ value stays $\epsilon R$
units below $\ell(H)$ would have length at least $e^{\epsilon R}$, contradicting the
assumption about the length of $\hat{\gamma}$.  \end{proof}

\begin{lemma}\label{lemma 5.17}
Let $p_{0}, q_{0}$ be good $\ell$ vertices, and $\gamma$ be an $\mathcal{E}_{0}$ edge
connecting them.  Let $p,q \in \gamma$ be points that are within the same good box as
$p_{0}$ and $q_{0}$ respectively.  Then the following holds:
\begin{enumerate}
\item Except near the end points, $\gamma$ never pass through any irregular $\ell$
    vertices.

\item we have $\rho_{\ell}(\hat{\phi}^{-1}(p), \hat{\phi}^{-1}(q)) \geq  \Omega(\rho_{4})$

\end{enumerate} \end{lemma}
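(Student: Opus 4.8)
The plan is to pull the whole configuration back, box by box, through the standard maps $\hat{\phi}_{i}=g_{i}\times f_{i}$ --- legitimate on the good boxes $\gamma$ crosses, up to an additive error $\hat{\eta}\,diam(\omega)\ll\rho_{1}\ll\rho_{4}$ by Theorem~\ref{exisence of standard maps in small boxes} and Corollary~\ref{strengthening part 2} --- and then to run the divergence observation that opens this subsection inside the negatively curved quotient $W^{-}_{\ell}\rtimes\mathbb{R}\vec{v}_{\ell}$ carrying $\Pi_{\ell}$, in the quantitative form of Lemma~\ref{lemma 5.16}.

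First I would pin down where the irregular vertices sit. Every irregular $\hat{S}$ vertex lies in some $K_{i}$ with $i\in\mathbf{I}_{g}(\ell)$, and from the definitions of $K_{i}$, of $\hat{W}_{i}(H_{\ell})=\tilde{Sl}_{2}^{1}(H_{\ell},S)\cap(f_{*}\ell)^{-1}(q(h_{0}^{i}))$ and of $\tilde{Sl}_{2}^{1}$, together with the affineness of $q$ on the relevant range (Lemma~\ref{lemma 4-7} and Remark~\ref{general section 3}), one checks that $\hat{\phi}_{i}^{-1}(K_{i})$ lies in a thin \emph{collar} of $H_{\ell}$: its points have $\ell$-value inside a band of width $O(\beta\,diam(\omega)+\rho_{5})$ about $\ell(H_{\ell})$, and their $W^{-}_{\ell}$-coordinate is $O(\rho_{5})$ from that of $H_{\ell}$. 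The good $\ell$ vertices $p_{0},q_{0}$ occupy the extreme $f_{*}\ell$-level of this collar, so by definition $\hat{\phi}^{-1}(p_{0})$ and $\hat{\phi}^{-1}(q_{0})$ have the same $W^{-}_{\ell}$-coordinate and the same $\ell$-value up to $O(\rho_{5}+\hat{\eta}\,diam(\omega))$; equivalently $\Pi_{\ell}(\hat{\phi}^{-1}(p_{0}))$ and $\Pi_{\ell}(\hat{\phi}^{-1}(q_{0}))$ are that close to each other and to $\Pi_{\ell}(H_{\ell})$ --- in particular much closer than $\beta\,diam(\omega)$. For part (i): let $\hat{\gamma}$ be obtained from $\gamma$ by the box-by-box pullback (away from a negligible ``bad'' portion, exactly as in the proof of Lemma~\ref{uniform valence}). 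It is a quasi-geodesic of length $O(diam(\mathbf{B}))$ joining two points whose $\Pi_{\ell}$-images are $O(\rho_{5}+\hat{\eta}\,diam(\omega))$ apart. Since $\gamma$, and hence $\hat{\gamma}$, admits a geodesic approximation, the thin-triangles inequality in $W^{-}_{\ell}\rtimes\mathbb{R}\vec{v}_{\ell}$ applied to $\Pi_{\ell}(\hat{\gamma})$ forces $\hat{\gamma}$, once it has moved a bounded distance off its endpoints, to have left the collar (its $\ell$-profile makes a single excursion off $\ell(H_{\ell})$, staying on one side of it and, away from the ends, more than $\rho_{5}+O(\beta\,diam(\omega))$ away from it). Hence away from bounded neighbourhoods of $p_{0},q_{0}$ the path $\hat{\gamma}$ avoids $\bigcup_{i}\hat{\phi}_{i}^{-1}(K_{i})$, so $\gamma$ avoids $\bigcup_{i}K_{i}$ and meets no irregular vertex; this is (i).

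For part (ii) I would apply Lemma~\ref{lemma 5.16} to the path $\hat{\gamma}_{pq}$ obtained from the subpath $\gamma_{pq}\subset\gamma$ between $p$ and $q$ by the same pullback, prepending and appending the length-$O(\hat{\eta}\,diam(\omega))$ segments from $\hat{\phi}^{-1}(p)$ to $\phi^{-1}(p)$ and from $\phi^{-1}(q)$ to $\hat{\phi}^{-1}(q)$ (far shorter than $\epsilon R$), with $R=diam(\mathbf{B})$; its length is $O(diam(\mathbf{B}))<e^{\epsilon R}$. By the choice of $p,q$ --- points of $\gamma$ that have moved off $p_{0},q_{0}$ along the excursion above while remaining in the respective good boxes --- and the appropriate placement of the auxiliary $\ell$-block $H$ (its $\ell$-value separated by at least $\rho_{4}$ from that of $\hat{\phi}^{-1}(p),\hat{\phi}^{-1}(q)$, on the side opposite the excursion, after replacing $\ell$ by $-\ell$ if necessary), hypothesis (1) of Lemma~\ref{lemma 5.16} holds; and hypothesis (2) --- that the $\ell$-values of $\hat{\gamma}_{pq}$ move $\epsilon R$ in the right direction at both ends and stay $\epsilon R$ clear of $\ell(H)$ in between --- follows from the single-excursion shape of the profile together with part (i), which keeps the middle of $\hat{\gamma}_{pq}$ out of the collar and hence $O(\beta\,diam(\omega)+\rho_{5})\ll\epsilon R$ clear of $\ell(H_{\ell})$. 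Lemma~\ref{lemma 5.16} then yields $\rho_{\ell}(\phi^{-1}(p),\phi^{-1}(q))\ge\Omega(\rho_{4})$, and since $d(\hat{\phi}^{-1}(p),\phi^{-1}(p))=O(\hat{\eta}\,diam(\omega))\ll d(\phi^{-1}(p),H)$ and similarly for $q$, Lemma~\ref{lemma 5.15}(1) transfers this to $\rho_{\ell}(\hat{\phi}^{-1}(p),\hat{\phi}^{-1}(q))\ge\Omega(\rho_{4})$, which is (ii).

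The hard part will be the metric/height bookkeeping feeding hypothesis (2) of Lemma~\ref{lemma 5.16}: one must be sure the pulled-back path genuinely moves by the full $\epsilon R$ at each end in the correct direction and stays that far clear of $\ell(H)$ across the middle. This rests on (a) the quasi-geodesic control of the $\ell$-profile of $\hat{\gamma}$, which is of the stated single-excursion form only up to the $(\kappa,C)$ constants and the geodesic-approximation error of Corollary~\ref{strengthening part 2}; (b) part (i), which forbids re-entry into the collar; and (c) careful tracking of the scale hierarchy $\rho_{1}\ll\cdots\ll\rho_{5}\ll\beta\,diam(\omega)\ll\epsilon\,diam(\mathbf{B})\ll e^{\epsilon\,diam(\mathbf{B})}$, together with getting the orientation of the excursion right (which of $W^{\pm}_{\ell}$ the separation of $\hat{\phi}^{-1}(p_{0})$ from $\hat{\phi}^{-1}(q_{0})$ lives in, and hence on which side of $\ell(H_{\ell})$ the auxiliary block $H$ must be placed). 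A minor separate point: in the regime of the lemma $p_{0}$ and $q_{0}$ lie in distinct good boxes, so $\gamma$ crosses a box boundary and the excursion of $\hat{\gamma}$ has amplitude $\Omega(diam(\omega))\gg\rho_{4}$; the degenerate case of a single common good box, where no such amplitude is available, does not arise.
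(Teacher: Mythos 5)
Your proof tries to pull $\gamma$ back through the standard maps and then argue metrically: you assert that $\gamma$ (and hence $\hat{\gamma}$) \emph{admits a geodesic approximation}, and from thin triangles conclude that the $\ell$-profile of $\hat{\gamma}$ makes a single excursion off $\ell(H_{\ell})$, staying clear of the collar away from the endpoints. That first assertion is not justified and is where the argument breaks. An $\mathcal{E}_{0}$ edge is by definition a concatenation of $\hat{S}$-edges joining two good $H$-vertices that meets no other $H$-vertex on the way; it is a path in the $\hat{S}$-graph of length $< e^{\epsilon\,diam(\mathbf{B})}$, not a quasi-geodesic, and nothing in the construction endows it with a geodesic approximation. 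Consequently you cannot deduce the single-excursion profile that your proof of (i) and the verification of hypothesis (2) of Lemma~\ref{lemma 5.16} both rest on. The paper's argument for (i) is instead purely combinatorial and uses the specific structure imposed on the $\hat{S}$-graph inside $K_{i}$: there $K_{i}$ has been replaced by a disjoint union of $\gamma_{x,y}$'s (so motion inside $K_{i}$ is essentially vertical), the top of $K_{i}$ consists of $H$-vertices which an $\mathcal{E}_{0}$ edge cannot cross, and the bottom at $f_{*}\ell$-value $q(h_{0}^{i})$ is exactly $\hat{W}_{i}(H_{\ell})$. Hence $\gamma$ must exit $K_{i}$ through a shadow vertex $p_{1}$, and since it is too short to do the transverse traversal below $h_{0}^{i}$, $\phi^{-1}(\gamma)$ re-enters $W(H_{\ell})$ at $p'_{2}$ and climbs $K_{j}$ to the next good $\ell$-vertex, which must be $q_{0}$; between $p_{1}$ and $p_{2}$ the path is outside the collar, which gives (i).

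For (ii) the deviation is also consequential. The function $\rho_{\ell}$ is the Gromov product taken with respect to a \emph{fixed} reference $\ell$-block, here $H_{\ell}$; you are not free to relocate an auxiliary block $H$ at distance $\rho_{4}$ from $\hat{\phi}^{-1}(p)$ and then apply Lemma~\ref{lemma 5.16} to it — the conclusion of the lemma would be about a different functional. The paper gets the bound directly: by (i) and Lemma~\ref{lemma 5.15}(2), $\rho_{\ell}(\hat{\phi}^{-1}(p),\hat{\phi}^{-1}(q))$ is comparable to $\rho_{\ell}(\phi^{-1}(p_{1}),\phi^{-1}(p_{2}))$; and $p'_{1},p'_{2}$ sit in $W(H_{\ell})$ at $\ell$-value $h_{0}^{i}\le\ell(H_{\ell})-\beta\,diam(\omega)$, so their $\Pi_{\ell}$-images lie a depth $\gtrsim\beta\,diam(\omega)\gg\rho_{4}$ below $\Pi_{\ell}(H_{\ell})$ in the rank-one space, whence the Gromov product is $\Omega(\beta\,diam(\omega))\ge\Omega(\rho_{4})$. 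You yourself flag the ``metric/height bookkeeping'' around hypothesis (2) of Lemma~\ref{lemma 5.16} as the hard point; in fact the paper avoids that lemma here precisely because the needed depth is already built into the definition of $\hat{W}$ via the choice of $h_{0}^{i}$. Your claim that $\beta\,diam(\omega)\ll\epsilon\,diam(\mathbf{B})$ in the scale hierarchy is not established in the paper and is not needed once the argument is set up combinatorially.
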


\begin{proof}
Starting from $p_{0}$, let $p_{1}$ be the first place where $\gamma$ hits
$\hat{W}(H_{\ell})$.  As $\gamma$ is a $\mathcal{E}_{0}$ edge, it doesn't hit a bad
$\ell$ shadow vertex, then there exists a good $\ell$ shadow vertex $p'_{1} \in U_{*}
\cap W(H_{\ell})$ such that $\hat{\phi}(p'_{1})=p_{1}$ and $d(\phi^{-1}(p_{1}), p'_{1}) =
O(\hat{\eta} diam(\omega))$.  Note that $p'_{1}$ and $\phi^{-1}(p_{1})$ are both
$\Omega(\epsilon diam(\mathbf{B}))$ away from $\partial (\phi^{-1}(\mathbf{B}))$.

Let $p'_{2}=\phi^{-1}(p_{2})$ be the next point after $p'_{1}$ when $\phi^{-1}(\gamma)$
intersects $U_{*} \cap W(H_{\ell})$.  We know there must be such a point because the
length of $\gamma$ is less than $e^{\epsilon diam(\mathbf{B})}$, so whenever it moves
transverse to $H_{\ell}$, it must do so in $\ell^{-1}[h_{0}^{i}, \infty]$.

Since $\gamma$ does not hit a bad shadow vertex, $p'_{2} \in \ell^{-1}(h_{0}^{i})$ at $i
\in \mathbf{I}_{g}(\ell)$, so $p_{2}$ is a good shadow vertex.  Therefore continuing
$\gamma$ after $p_{2}$ hits a vertex in $\phi(U_{*}) \cap H_{\ell}$, which is a good
$\ell$ vertex, and must be $q_{0}$.

By Lemma \ref{lemma 5.15}
\[ \rho_{\ell}(\hat{\phi}^{-1}(p), \hat{\phi}^{-1}(q)) = \rho_{\ell}(\hat{\phi}^{-1}(p_{1}), \hat{\phi}^{-1}(p_{2}))
\approx \rho_{\ell}(\phi^{-1}(p_{1}), \phi^{-1}(p_{2})) \geq \Omega(\rho_{4}) \] \end{proof}


\begin{lemma}\label{lemma 5.18}
Suppose $p_{0}, q_{0}$ are good $\ell$ vertices and $\overline{p_{0}q_{0}}$ is a
$\mathcal{E}_{0}$ edge connecting them.  If for some $\rho_{1} \ll s \ll \rho_{3} \ll
\rho_{4}$, we have $p, q \in \overline{p_{0}q_{0}}$ in the same good box as $p_{0}$ and
$q_{0}$ satisfying
\[ (f_{i})_{*}\ell (p)-(f_{i})_{*}\ell(p_{0})=(f_{j})_{*}\ell(q)-(f_{j})_{*}\ell(q_{0}) =
s, \mbox{where } i,j \in \mathbf{I}_{g}(\ell) \]

\noindent Then, there is a a $\ell$ block $H'_{\ell}$ such that $p,q$ are within
$O(\rho_{1})$ of $\hat{\phi}(H'_{\ell})$. \end{lemma}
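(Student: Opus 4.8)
The plan is to push the statement through the standard maps on the two good boxes carrying $p$ and $q$, reduce it to two facts about the preimages $\hat{\phi}^{-1}(p)$ and $\hat{\phi}^{-1}(q)$ — that they lie at the same $\ell$-level and have nearly the same $W^{-}_{\ell}$-coordinate — and then take for $H'_{\ell}$ the $\ell$-block with that common $\ell$-level and $W^{-}_{\ell}$-coordinate. Write $\mathbf{B}_{i}\ni p,p_{0}$ and $\mathbf{B}_{j}\ni q,q_{0}$ for the good boxes, on which $\phi$ is $\hat{\eta}\,diam(\omega)$-close to standard maps $\hat{\phi}_{i}=g_{i}\times f_{i}$ and $\hat{\phi}_{j}=g_{j}\times f_{j}$, and set $u_{0}=\hat{\phi}_{i}^{-1}(p_{0})$, $u=\hat{\phi}_{i}^{-1}(p)$, $v_{0}=\hat{\phi}_{j}^{-1}(q_{0})$, $v=\hat{\phi}_{j}^{-1}(q)$. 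By the definition of a good $\ell$-vertex, $u_{0}$ and $v_{0}$ lie within $O(\rho_{1})$ of $H_{\ell}$; in particular $\ell(u_{0})=\ell(v_{0})=\ell(H_{\ell})$ and both have $W^{-}_{\ell}$-coordinate within $O(\rho_{1})$ of that of $H_{\ell}$.

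For the $\ell$-levels: applying Lemma \ref{lemma 4-7} with the regular functional $\ell$ in place of a root (permitted by Remark \ref{general section 3}, since the $\mathbf{A}'$-parts are already known to be affine), the induced height maps $q_{i},q_{j}$ are affine of slopes $\mathbf{b}_{\ell}/\mathbf{b}_{(f_{i})_{*}\ell}$ and $\mathbf{b}_{\ell}/\mathbf{b}_{(f_{j})_{*}\ell}$. By Corollary \ref{linear part the same} the linear parts $A_{f_{i}}$ and $A_{f_{j}}$ coincide, so $(f_{i})_{*}\ell$ and $(f_{j})_{*}\ell$ differ only by a constant and $\mathbf{b}_{(f_{i})_{*}\ell}=\mathbf{b}_{(f_{j})_{*}\ell}$; hence the two slopes are equal. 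Feeding in the hypothesis $(f_{i})_{*}\ell(p)-(f_{i})_{*}\ell(p_{0})=(f_{j})_{*}\ell(q)-(f_{j})_{*}\ell(q_{0})=s$ together with $\ell(u_{0})=\ell(v_{0})=\ell(H_{\ell})$ gives $\ell(u)=\ell(v)$ up to an additive error $O(\hat{\eta}\,diam(\omega))$, this common level differing from $\ell(H_{\ell})$ by $\Theta(s)$; in particular $u$ and $v$ sit at a single $\ell$-level only $O(s)\ll\rho_{4}$ away from that of $H_{\ell}$.

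The $W^{-}_{\ell}$-matching is the substance of the lemma and is exactly the "two travelers who diverge can only reunite near the start" picture. Since $\overline{p_{0}q_{0}}$ is an $\mathcal{E}_{0}$ edge, Lemma \ref{lemma 5.17}(1) says it meets no irregular $\ell$-vertex except near its ends, and Lemma \ref{lemma 5.17}(2) (whose proof sends $\overline{p_{0}q_{0}}$ down into the shadow $\hat{W}(H_{\ell})$ and back, slides the endpoints along the almost-vertical terminal segments via Lemma \ref{lemma 5.15}, and applies Lemma \ref{lemma 5.16} on the deep part) gives $\rho_{\ell}(u,v)\geq\Omega(\rho_{4})$ for our $p,q$. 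Intrinsically this is a statement in the negatively curved rank-$1$ space $W^{-}_{\ell}\rtimes\mathbb{R}\vec v_{\ell}$: the branches through $u$ and through $v$ cannot separate above $\ell$-level $\ell(H_{\ell})-\Omega(\rho_{4})$. Since $u$ and $v$ themselves lie at $\ell$-level only $O(s)\ll\rho_{4}$ below $H_{\ell}$ — above where the branches part — the projections $\Pi_{\ell}(u)$ and $\Pi_{\ell}(v)$ have not yet diverged, i.e. the $W^{-}_{\ell}$-coordinates of $u$ and $v$ agree, in the intrinsic metric at their level, to within $e^{-\Omega(\rho_{4})}\ll\rho_{1}$; combined with the first paragraph, both lie within $O(\rho_{1})$ of $H_{\ell}$'s $W^{-}_{\ell}$-coordinate. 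Thus $u$ and $v$ lie within $O(\rho_{1})$ of the $\ell$-block $H'_{\ell}$ with $\ell$-level $\ell(u)$ and $W^{-}_{\ell}$-coordinate that of $H_{\ell}$; applying $\hat{\phi}_{i}$ (resp. $\hat{\phi}_{j}$), whose linear parts agree and which coincide near $\hat{\phi}(\mathbf{H_{\ell}})$ because we are on an $\mathcal{E}_{0}$ edge, and using the $\hat{\eta}\,diam(\omega)$-closeness of $\phi$, shows that $p$ and $q$ lie within $O(\rho_{1})$ of $\hat{\phi}(H'_{\ell})$.

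The main obstacle is making the $W^{-}_{\ell}$-matching precise: one has to be sure that the divergence seen by $\rho_{\ell}$ between the two branches is genuinely organised at scale $\rho_{4}$ and is not already present at the shallow scale $s$ at which $p$ and $q$ live — this is where the $\mathcal{E}_{0}$ condition, through Lemma \ref{lemma 5.17}(1), is indispensable, ruling out the edge re-entering the irregular region near $\hat{\phi}(\mathbf{H_{\ell}})$ and letting the branches peel apart early — and to keep track of the various errors, in particular reconciling the $O(\hat{\eta}\,diam(\omega))$ coming from the standard-map approximation with the target $O(\rho_{1})$, which is absorbed by the freedom in the choice of $H'_{\ell}$.
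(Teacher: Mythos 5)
Your overall approach matches the paper's: invoke Lemma \ref{lemma 5.17} to get $\rho_{\ell}(\hat{\phi}_{i}^{-1}(p),\hat{\phi}_{j}^{-1}(q))\geq\Omega(\rho_{4})$, show the two preimages sit at the same $\ell$-level roughly $s$ away from $\ell(H_{\ell})$, and conclude from $s\ll\rho_{3}\ll\rho_{4}$ that their $\Pi_{\ell}$-projections coincide, so that both preimages lie within $O(\rho_{1})$ of a common $\ell$-block $H'_{\ell}$. (The paper is terse on the last step; your ``branches cannot yet have diverged'' unpacking of it is correct.) The genuine problem is the level-matching step: you invoke Corollary \ref{linear part the same} to get $A_{f_{i}}=A_{f_{j}}$ and hence equal slopes for $q_{i},q_{j}$ (and again in your last sentence), but Corollary \ref{linear part the same} is deduced from Theorem \ref{aligning horocycle}, whose proof runs through Theorem \ref{horocycle extenion II} and Lemma \ref{illegal circuit}, and Lemma \ref{illegal circuit} is proved using Lemma \ref{lemma 5.18} itself. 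So the reference is circular. The paper instead cites Remark \ref{branching constant the same}, which follows from Lemma \ref{leaving RG invariant} and is legitimately available at this point.

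In fact no comparison of $f_{i}$ with $f_{j}$ is needed for the level-matching. Since $\hat{\phi}_{i}=g_{i}\times f_{i}$ with $f_{i}$ affine with linear part $M_{i}$ and $(f_{i})_{*}\ell=\ell\circ M_{i}^{-1}$, one has for any $x,y$ in the $i$-th box
\[
(f_{i})_{*}\ell\bigl(\hat{\phi}_{i}(x)\bigr)-(f_{i})_{*}\ell\bigl(\hat{\phi}_{i}(y)\bigr)=\ell\bigl(M_{i}^{-1}M_{i}(\pi_{A}(x)-\pi_{A}(y))\bigr)=\ell(x)-\ell(y),
\]
independently of $M_{i}$: the normalization $\|(f_{i})_{*}\ell\|$ exactly cancels the slope $\mathbf{b}_{\ell}/\mathbf{b}_{(f_{i})_{*}\ell}$ of $q_{i}$, so your slope comparison is both circular and unnecessary. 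The hypothesis therefore gives $\ell(u)-\ell(u_{0})=\ell(v)-\ell(v_{0})=s$ directly, and with $\ell(u_{0})=\ell(v_{0})=\ell(H_{\ell})$ up to $O(\rho_{1})$ (the good $\ell$-vertex condition) you get $\ell(u)=\ell(v)$ without any appeal to Corollary \ref{linear part the same}. Replacing the two circular citations by this observation repairs the argument, and the remainder of your proof agrees with the paper's.
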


\begin{proof}
Let $p',q'$ be points on $\overline{p_{0}q_{0}}$ close to where it enters respective good
boxes.  By Lemma \ref{lemma 5.17}, $\rho_{\ell}(\hat{\phi}^{-1}(p'), \hat{\phi}^{-1}(q'))
\geq \Omega(\rho_{4})$.  Since $s \ll \rho_{3} < \rho_{4}$, by Remark \ref{branching constant the same}, we conclude that the
$\ell$ values of $\hat{\phi}_{i}^{-1}(p)$ and $\hat{\phi}_{j}^{-1}(q)$ are the same, both are $s$ lower than that of $H_{\ell}$. Since
$s < \rho_{3} \ll \rho_{4}$, $\Pi_{\ell}(\hat{\phi}_{i}^{-1}(p))$ and
$\Pi_{\ell}(\hat{\phi}_{j}^{-1}(q))$ are the same point, which is the same as saying that
$\hat{\phi}_{i}^{-1}(p)$ and $\hat{\phi}_{j}^{-1}(q)$ are within $O(\rho_{1})$ of a
$\ell$ block $H'_{\ell}$ as claimed.  \end{proof}

\begin{lemma} \label{illegal circuit}
Let $n$ be $4$ or $6$.  Suppose for $0 \leq i \leq n-1$, $p_{i}$ are $\hat{S}$ vertices
whose $\phi$ pre-images support standard maps.  Let $\overline{p_{i-1}p_{i}}$ be
subsegments of $\mathcal{E}_{0}$ edges in $H$ graph, where the indices are counted mod
$n$.

Let $r(p_{i})=\min\{ |(f_{i})_{*}\ell(v) -(f_{i})_{*}\ell(p_{i})|, v \mbox{ is a good
$\ell$ vertex.} \}$, where $\phi_{i}=g_{i} \times f_{i}$ is the standard map supported in
a neighborhood of $\phi^{-1}(p_{i})$.

Suppose there is an index $k$ such that $r(p_{k}) \ll \rho_{4}$, and for all $i \not= k$,
$r(p_{i}) > r(p_{k}) + 2 \rho_{1}$.  Then $\overline{p_{k}p_{k+1}}$ and
$\overline{p_{k}p_{k-1}}$ cannot have only $p_{k}$ in common.  \end{lemma}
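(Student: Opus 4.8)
The plan is to argue by contradiction, exploiting the divergence principle recalled at the start of this subsection: in a rank--one negatively curved space, two travelers leaving a common point along diverging downward geodesics can only reconvene, within bounded travel, near that common point. So assume $\overline{p_{k}p_{k+1}}\cap\overline{p_{k}p_{k-1}}=\{p_{k}\}$. I fix the regular functional $\ell$ and the block $H_{\ell}$, and recall from Remark~\ref{branching constant the same} that the linear part of every $f_{i}$ is a fixed positive scalar times a finite--order element of $O(n)$; this lets me pass freely between $(f_{i})_{*}\ell$--heights downstairs in $\mathbf{B}$ and $\ell$--heights upstairs in $\phi^{-1}(\mathbf{B})$, bi--Lipschitzly and with only the usual $O(\hat{\eta}\,diam(\omega))$ slippage of a good box (Lemma~\ref{lemma 4-7}). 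Since $r(p_{k})\ll\rho_{4}$ and $\overline{p_{k}p_{k+1}}$ is a subsegment of an $\mathcal{E}_{0}$ edge whose endpoints are good $\ell$ vertices --- hence have $\phi^{-1}$--preimages within $O(\rho_{1})$ of $H_{\ell}$ --- the point $p_{k}$ lies near the top of that $\mathcal{E}_{0}$ edge, so $\phi^{-1}(p_{k})$ is within $O(r(p_{k}))+O(\rho_{1})$ of $H_{\ell}$ and I may take every $\rho_{\ell}$ below relative to $H_{\ell}$ itself, up to additive error $\ll\rho_{4}$.

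Step 1 is an upper bound from the short way. Fix a scale $\rho_{1}\ll s\ll\rho_{3}$. Because the two germs at $p_{k}$ share only $p_{k}$, they have already separated just below $p_{k}$, so we may pick points $a\in\overline{p_{k}p_{k+1}}$ and $b\in\overline{p_{k}p_{k-1}}$ on these two distinct subsegments whose $\phi^{-1}$--preimages lie exactly $s$ below $\phi^{-1}(p_{k})$ in $\ell$--height. Both preimages then lie only $O(s)$ below $\Pi_{\ell}(H_{\ell})$ in the negatively curved block $W^{-}_{\ell}\rtimes\mathbb{R}\vec{v}_{\ell}$, hence the geodesic joining their $\Pi_{\ell}$--images descends at most $O(s)$ below $\Pi_{\ell}(H_{\ell})$, and therefore
\[
\rho_{\ell}\!\left(\phi^{-1}(a),\phi^{-1}(b)\right)=O(s)\ \ll\ \rho_{3}\ \ll\ \rho_{4}.
\]
(This is the one place the hypothesis ``only $p_{k}$ in common'' enters: had the two subsegments shared a deep initial arc, $a$ and $b$ would be forced down with it and this bound would fail.)

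Step 2 is the matching lower bound from the long way. Set $\Gamma=\overline{a\,p_{k+1}}\cup\overline{p_{k+1}p_{k+2}}\cup\cdots\cup\overline{p_{k-1}\,b}$, a path from $a$ to $b$ of length at most $n\cdot O(diam(\mathbf{B}))<e^{\epsilon\,diam(\mathbf{B})}$, and cut it at every good $\ell$ vertex it traverses. Because $p_{k}$ is a strict minimum of $r$ among the $p_{i}$, just after $a$ (and just before $b$) the path cannot climb, so it descends along its $\mathcal{E}_{0}$ edge; and an $\mathcal{E}_{0}$ edge between good $\ell$ vertices plunges to the shadow $\hat{W}(H_{\ell})$, which sits at least $\beta\,diam(\omega)$ --- hence $\gg\rho_{4}$ --- below the $H_{\ell}$--level (this is the geometry extracted in the proof of Lemma~\ref{lemma 5.17}). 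Hence each piece of the cut-up $\Gamma$ is, or is a terminal arc of, a single $\mathcal{E}_{0}$ edge making such a plunge, so Lemma~\ref{lemma 5.17} gives $\rho_{\ell}\geq\Omega(\rho_{4})$ between the $\phi^{-1}$--preimages of its two endpoints; chaining the pieces (at most $n\le 6$ of them) through Lemma~\ref{lemma 5.15}(3) yields
\[
\rho_{\ell}\!\left(\phi^{-1}(a),\phi^{-1}(b)\right)\ \geq\ \Omega(\rho_{4}),
\]
which contradicts Step 1. So $\overline{p_{k}p_{k+1}}$ and $\overline{p_{k}p_{k-1}}$ must meet outside $\{p_{k}\}$. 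The cases $n=4$ and $n=6$ are handled identically; $n$ enters only through the crude length bound on $\Gamma$.

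The genuinely delicate point is the geometry invoked in Step 2: that slicing $\Gamma$ at its good $\ell$ vertices produces pieces each of which is (a terminal arc of) a single $\mathcal{E}_{0}$ edge dipping $\gg\rho_{4}$ below the $H_{\ell}$--level, and that $\Gamma$ cannot drift back up to within $\rho_{1}$ of that level except at such vertices. This is exactly where one must use $r(p_{i})>r(p_{k})+2\rho_{1}$ for all $i\neq k$ --- the $2\rho_{1}$ of slack being what prevents any good $\ell$ vertex interior to $\Gamma$ from sitting, to within the discretization scale, at the level of $p_{k}$ --- together with the U--shape forced on $\mathcal{E}_{0}$ edges by the construction of the $\hat{S}$ graph. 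The remaining ingredients --- the height bookkeeping across the several $f_{i}$, the comparison of $\rho_{\ell}$ relative to nearby base blocks, and the chaining of divergence estimates --- are routine given Lemmas~\ref{lemma 5.15}--\ref{lemma 5.18}.
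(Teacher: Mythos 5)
Your proposal is correct and follows essentially the same route as the paper: establish a small upper bound on the Gromov product $\rho_{\ell}$ across the two branches at $p_{k}$ (using that $r(p_{k})\ll\rho_{4}$ and the two subsegments separate at $p_{k}$), establish a lower bound $\Omega(\rho_{4})$ for the chained product around the long way (via Lemma~\ref{lemma 5.17} piece by piece and Lemma~\ref{lemma 5.15}(3) to chain), and derive a contradiction. The paper re-centres on an $\ell$-block $H'_{\ell}$ through $\hat{\phi}^{-1}(p_{k})$ via Lemma~\ref{lemma 5.18} and marks entry/exit points $p_{i}^{\pm}$ of the shadow, whereas you carry the $O(r(p_{k})+\rho_{1})$ offset explicitly and pick auxiliary points $a,b$ at a fixed small scale $s$; these are presentational variants of the same argument.

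One step in your write-up is stated imprecisely, though the conclusion is right. In Step~1 you infer that since both preimages lie $O(s)$ below $\Pi_{\ell}(H_{\ell})$ in $\ell$-height, the geodesic joining their $\Pi_{\ell}$-images descends at most $O(s)$ below that level. Equal height alone does not control the dip of the connecting geodesic: two points at the same $\ell$-height but horizontally far apart are joined by a geodesic that plunges arbitrarily deep. What actually bounds $\rho_{\ell}(\phi^{-1}(a),\phi^{-1}(b))$ is that $a$ and $b$ are chosen at distance $O(s)$ from $p_{k}$ along edges of bounded slope, so $\phi^{-1}(a),\phi^{-1}(b)$ lie within $O(s)$ of $\phi^{-1}(p_{k})$ in the metric; hence $\Pi_{\ell}(\phi^{-1}(a))$, $\Pi_{\ell}(\phi^{-1}(b))$, and the whole geodesic between them stay within $O(s)$ of $\Pi_{\ell}(\phi^{-1}(p_{k}))$, which is within $O(r(p_{k})+\rho_{1})$ of $\Pi_{\ell}(H_{\ell})$. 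Since the Gromov product from $\Pi_{\ell}(H_{\ell})$ is bounded above by the distance from $\Pi_{\ell}(H_{\ell})$ to that geodesic, this gives $\rho_{\ell}(\phi^{-1}(a),\phi^{-1}(b))=O(s+r(p_{k})+\rho_{1})\ll\rho_{4}$. This is a one-line correction and does not change the structure of the proof.
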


\begin{proof}
we can assume $k=0$.  Let $H'_{\ell}$ be the $\ell$ block passing through
$\hat{\phi}^{-1}(p_{0})$.  By Lemma \ref{lemma 5.18}, we can consider $H'_{\ell}$ in
place of $H_{\ell}$.  Namely, we can replace the appearance of $H_{\ell}$ in the
definition of $\ell$ vertices by $H'_{\ell}$.  Let $p_{i}^{+}$ and $p_{i}^{-}$ be the
first and last time that $\overline{p_{i-1}p_{i}}$ leaves $\hat{W}(H_{\ell})$. \smallskip

\noindent By Lemma \ref{lemma 5.17}, for all $i \not=0$,
$\rho_{\ell}(\hat{\phi}^{-1}(p_{i-1}^{+}), \hat{\phi}^{-1}(p_{i}^{-})) \geq
\Omega(\rho_{4})$, and $\rho_{\ell}(\hat{\phi}^{-1}(p_{0}^{-}),
\hat{\phi}^{-1}(p_{0}^{+})) \leq \rho_{1}$.  This is a contradiction to Lemma \ref{lemma
5.15} \end{proof}


\subsection{Proof of Theorem \ref{aligning horocycle}}
Let $\mathbb{R}^{k} \rtimes_{\tau} \mathbb{R}$ be a rank 1 space with roots $\alpha_{i}$,
$-\beta_{j}$'s, where $\alpha_{i}, \beta_{j} >0$, and $\tau(t)$ is matrix consisting of blocks of the form
$e^{\alpha_{i}t}N_{i}(\alpha_{i}t)$, $e^{-\beta_{j}t}N_{j}(\beta_{j}t)$ where $N_{i}$, $N_{j}$ are
unipotent matrices with polynomial entries.  For the following two lemmas, let $B[T]$ denote for the subset
$\left( \prod_{i} \Omega_{i} \times \prod_{j} U_{j} \right) \rtimes [-T, T]$, where $\Omega_{i} \subset V^{\alpha_{i}}$, and
$U_{j} \subset V^{-\beta_{j}}$, and $\alpha_{i}, \beta_{j} > 0$, $|\Omega_{i}| e^{-\alpha_{i}T}, |U_{j}| e^{-\beta_{j}T} \geq 1$.
We call the set
\[ \left( \prod_{i} \Omega_{i} \times \prod_{j} U_{j} \right) \rtimes -T \bigcup
\left( \prod_{i} \Omega_{i} \times \prod_{j} U_{j} \right) \rtimes T \]

\noindent the \emph{top and bottom of B[T]}, denoted by $\bar{\partial}B[T]$.

\begin{lemma} \label{rank1generalBOX}
The total number of geodesics in $B[T]$ is $e^{T(\sum_{i}\alpha_{i} + \sum_{j}
\beta_{j})}\prod_{i,j} |\Omega_{i}| |U_{j}|$, and the number of geodesics in $B[T]$
through each vertex is $e^{T (\sum_{i}\alpha_{i} + \sum_{j} \beta_{j})}$.
\end{lemma}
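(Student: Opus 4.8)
The plan is to count geodesics in $B[T]$ by reducing everything to the rank-one coordinate factors $\mathit{H}_{\dim V^{\alpha_i}+1}$, $\mathit{H}_{\dim V^{-\beta_j}+1}$ of Lemma \ref{QI embedding} and exploiting the logarithmic behaviour of the height functions $U_Q$ recorded in (\ref{property of U}).

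First I would note that, since the Finsler metric on $\mathbb{R}^k\rtimes_\tau\mathbb{R}$ has the form $d\mathbf t + \sum_i e^{-\alpha_i t}(d\mathbf x_i+\cdots) + \sum_j e^{\beta_j t}(d\mathbf y_j+\cdots)$, the $\pi_A$-coordinate is essentially monotone along any geodesic; hence a geodesic meeting both pieces of $\bar\partial B[T]$ is within bounded (net-scale) distance of the vertical segment $\{(\vec x,\vec y)\}\times[-T,T]$ through any of its points. So ``geodesics in $B[T]$'' may be identified with vertical segments over $[-T,T]$, taken up to the resolution of the net $\gothic n$. Then comes the key divergence estimate: projecting onto the $(V^{\alpha_i},\mathbb R)$-coordinates and the $(V^{-\beta_j},\mathbb R)$-coordinates and applying the distance formula (\ref{distance}) together with (\ref{property of U}), two vertical segments through $(\vec x,\vec y)$ and $(\vec x',\vec y')$ stay within bounded distance over all of $[-T,T]$ exactly when $|x_i-x'_i|=O(e^{-\alpha_i T})$ for every $i$ (the worst height being the bottom $t=-T$, where the $V^{\alpha_i}$-metric is dilated by $e^{\alpha_i T}$) and $|y_j-y'_j|=O(e^{-\beta_j T})$ for every $j$ (the worst height being the top $t=+T$); closeness in every factor is equivalent to closeness in $B[T]$ because there are only finitely many roots.

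Consequently the geodesics in $B[T]$ are in bijection with the cells of the partition of $\prod_i\Omega_i\times\prod_j U_j$ into boxes of side $\asymp e^{-\alpha_i T}$ in each $V^{\alpha_i}$ and $\asymp e^{-\beta_j T}$ in each $V^{-\beta_j}$. The hypotheses $|\Omega_i|e^{-\alpha_i T}\ge 1$ and $|U_j|e^{-\beta_j T}\ge 1$ ensure each factor is a non-degenerate union of such cells, so counting them gives $\prod_i|\Omega_i|e^{\alpha_i T}\cdot\prod_j|U_j|e^{\beta_j T}=e^{T(\sum_i\alpha_i+\sum_j\beta_j)}\prod_{i,j}|\Omega_i||U_j|$, the first assertion; this is the rank-one, single-axis incarnation of the cross-section-times-$e^{\mathbf b(h_1-h_2)}$ bookkeeping already used in Lemma \ref{lemma 3-5}, with $h_1-h_2=2T$. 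For the second assertion I would rerun the analysis keeping only segments passing within bounded distance of a fixed vertex $v$ at height $t_0\in[-T,T]$: at height $t_0$ the $V^{\alpha_i}$-metric is scaled by $e^{-\alpha_i t_0}$ and the $V^{-\beta_j}$-metric by $e^{\beta_j t_0}$, so such segments fill a box of side $O(e^{\alpha_i t_0})$ in $V^{\alpha_i}$ and $O(e^{-\beta_j t_0})$ in $V^{-\beta_j}$ about $v$; dividing by the cell sizes above yields $\prod_i e^{\alpha_i(t_0+T)}\cdot\prod_j e^{\beta_j(T-t_0)}$, which equals $e^{T(\sum_i\alpha_i+\sum_j\beta_j)}$ at the central vertex $t_0=0$, and for every $t_0$ in the case $\sum_i\alpha_i=\sum_j\beta_j$ relevant to the applications (where $\mathbb R^k\rtimes_\tau\mathbb R$ arises as a projection $\Pi_{\vec v}(G)$ of a unimodular $G$).

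I expect the only real obstacle to be bookkeeping of constants, i.e.\ making the counts the stated equalities rather than merely comparable quantities. Two points need care. The polynomial/unipotent corrections $N_i(\alpha_i t)$, $N_j(\beta_j t)$ — equivalently the polynomials $Q_i,Q_j$ entering (\ref{distance}) — shift $U_Q$ by only an additive $O(1)$ by (\ref{property of U}), hence leave the exponential-order cell count unchanged; and the normalization of the volumes $|\Omega_i|,|U_j|$ and of the net is precisely the one already fixed in (\ref{measure on boundary}) and in the definition of $\mathbf B(\Omega)$, so the cell count comes out on the nose. Once these normalizations are in place, both formulas follow directly from the cell-counting above.
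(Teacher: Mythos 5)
Your proof is correct; for the total count you take essentially the paper's route, and for the per-vertex count you take a genuinely different one. The paper derives the per-vertex count by double counting: it writes $(\mbox{no.\ geodesics})(\mbox{vertices per geodesic})=(\mbox{no.\ vertices})(\mbox{geodesics per vertex})$, takes the vertex count to be the Euclidean volume $2T\prod_{i,j}|\Omega_i||U_j|$, and divides. You instead compute directly that the geodesics passing near a vertex $v$ at height $t_0$ fill a box of coordinate side $e^{\alpha_i t_0}$ in $V^{\alpha_i}$ and $e^{-\beta_j t_0}$ in $V^{-\beta_j}$, and dividing by the cell sizes $e^{-\alpha_i T}$, $e^{-\beta_j T}$ gives $\prod_i e^{\alpha_i(t_0+T)}\prod_j e^{\beta_j(T-t_0)}=e^{T(\sum_i\alpha_i+\sum_j\beta_j)}\,e^{t_0(\sum_i\alpha_i-\sum_j\beta_j)}$. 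Your computation buys something the double count does not: it exposes that the per-vertex count is height-independent precisely when $\sum_i\alpha_i=\sum_j\beta_j$, which you correctly identify as the unimodular case arising in the applications (since the rank-one space here comes from $\Pi_{\vec u}$ applied to the unimodular $G'$). The paper's double-counting argument tacitly assumes this twice over: identifying the scale-one net density with Euclidean volume is a left-Haar statement that requires unimodularity, and even granting that, dividing the identity by the vertex count recovers the per-vertex count (rather than merely its average over the box) only when that count is actually homogeneous. So the lemma's second assertion really does need the hypothesis $\sum_i\alpha_i=\sum_j\beta_j$ that the paper introduces only in Lemma \ref{counting lemma}; your direct calculation makes this explicit and is the more careful of the two arguments.
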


\begin{proof}
To specify a geodesic in $B[T]$, we need to specify its coordinates in $\alpha_{i}$,
$\beta_{j}$ root spaces, and for every choice of $\alpha_{i}$ and $\beta_{j}$ coordinate,
there is a unique geodesic segment in $B[T]$ going from the top to the bottom.  The
number of different coordinates in $\alpha_{i}$ root spaces is $\prod_{i} |\Omega_{i}|
e^{\alpha_{i}T}$, and those in $\beta_{i}$ root spaces is $\prod_{j} |U_{j}|
e^{\beta_{j}T}$, so the number of geodesics is
\[ \left( \prod_{i} |\Omega_{i}| e^{\alpha_{i}T} \right) \left(\prod_{j} |U_{j}| e^{\beta_{j}T} \right) \]

We know that in $B[T]$
\[ \mbox{no. of geodesics } \times \mbox{no. of vertices on a geodesic } = \mbox{no.
vertices} \times \mbox{no. of geodesics through each vertex } \]

The number of vertices on each geodesic is $2T$, and the number of vertices is
$2T(\prod_{i} \Omega_{i})(\prod_{j} U_{j})$, and we now see the number of geodesics
through each point is indeed as claimed. \end{proof}

\begin{lemma}\label{counting lemma}
Hypothesis as in Lemma \ref{rank1generalBOX}.  Now further assume that $\mathbb{R}^{k} \rtimes_{\tau} \mathbb{R}$ is
unimodular, and let the common values of $\sum_{i} \alpha_{i}$ and $\sum_{j} \beta_{j}$ be $m$.  Let $X \subset
B[T]$ be a subset of vertices.  If $\mathcal{F}$ is a family of geodesics in $B[T]$ with
size $\sigma e^{2mT} \prod_{i,j} |\Omega_{i}| |U_{j}|$, where $\sigma \gg
\frac{2T}{e^{m\rho_{2}}}$, then there is a vertex $v \in X$, and two geodesics through $v$
in in the same direction that stay together for shorter than $\rho_{2}$ units.
\end{lemma}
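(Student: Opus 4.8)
The plan is to prove Lemma \ref{counting lemma} by a counting/pigeonhole argument, contrasting the number of geodesics in $\mathcal{F}$ with the number that can be ``spread out'' if no two of them travel together for $\rho_2$ units. The key mechanism is that a geodesic in $B[T]$ traveling in a fixed direction is pinned down, once we know its position at a single height to within $e^{-m\rho_2}$-scale, only up to the branching that occurs over $\rho_2$ units, which by Lemma \ref{rank1generalBOX} (and the discussion around the branching constant, with $\mathbf{b}_\ell = m$ here) is a factor of $e^{m\rho_2}$. So if $\mathcal{F}$ is large, many of its geodesics must be forced to coincide on some $\rho_2$-length subsegment at some vertex $v \in X$.

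\medskip\noindent\textbf{Main steps.} First I would fix a height $t_0 \in [-T,T]$ — conveniently a height at which $X$ has comparatively many vertices, or simply average over all heights — and look at the slice of $\mathcal{F}$ at that height, i.e.\ the set of vertices $\{\gamma(t_0) : \gamma \in \mathcal{F}\}$ together with the directions of the geodesics. By Lemma \ref{rank1generalBOX} the total number of geodesics through a single vertex is $e^{2mT}$ (using unimodularity, $\sum\alpha_i = \sum\beta_j = m$), and the total number of vertices at height $t_0$ is $\left(\prod_i|\Omega_i|\right)\left(\prod_j|U_j|\right)$. Second, I would partition the geodesics of $\mathcal{F}$ according to which $T_{\pm}$-type cell (a box of diameter comparable to $e^{-m\rho_2}$ scaled appropriately, i.e.\ a cell small enough that any two geodesics passing through it at height $t_0$ in the same direction must, by the hyperbolicity/divergence estimate, have stayed together over the preceding $\rho_2$ units) they meet at height $t_0$, and also according to direction discretized at scale $\rho_1 \ll \rho_2$. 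The number of such (cell, direction) classes is bounded by the number of vertices at height $t_0$ divided by the branching factor over $\rho_2$ units, namely by $e^{-m\rho_2}\left(\prod_i|\Omega_i|e^{\alpha_i T}\right)\left(\prod_j|U_j|e^{\beta_j T}\right) \cdot (\text{direction count})$; the direction count is absorbed into constants depending only on the space. Third, I would compare: $|\mathcal{F}| = \sigma e^{2mT}\prod_{i,j}|\Omega_i||U_j|$, and by pigeonhole, averaging over heights $t_0 \in \gothic{n}$-discretized in $[-T,T]$ (there are $2T$ of them), some height $t_0$ and some (cell, direction) class receives at least
\[ \frac{\sigma e^{2mT}\prod_{i,j}|\Omega_i||U_j|}{2T \cdot e^{-m\rho_2}\prod_{i,j}|\Omega_i|e^{\alpha_i T}|U_j|e^{\beta_j T}} = \frac{\sigma e^{m\rho_2}}{2T} \]
geodesics (using $e^{(\sum\alpha_i + \sum\beta_j)T} = e^{2mT}$). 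Since $\sigma \gg 2T/e^{m\rho_2}$ means precisely $\sigma e^{m\rho_2}/(2T) \gg 1$, this class contains at least two distinct geodesics, and if I have chosen the cell small enough that membership in one cell at height $t_0$ in a common direction forces the two geodesics to coincide over $[t_0-\rho_2, t_0]$ (or the appropriate sub-interval), I am done; the vertex $v$ is any vertex of that shared subsegment, and it lies in the relevant portion of $B[T]$. To land inside $X$ one should run the averaging over heights against the weight of $X$, which is where the hypothesis that $X$ is a subset of vertices (and implicitly of positive proportion, or where the statement means ``there is a vertex $v\in X$ on the shared subsegment'') gets used; rereading the statement, it suffices to additionally intersect $\mathcal{F}$'s slice with $X$ and observe the loss is at most the relative size of $X$, which I will fold into the $\gg$.

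\medskip\noindent\textbf{Expected main obstacle.} The delicate point is the geometric input that converts ``two geodesics lie in the same small cell at one height, pointing in nearly the same direction'' into ``they stayed $\rho_2$-close over the preceding $\rho_2$ units.'' This requires the divergence estimate in the rank-1 space $\mathbb{R}^k \rtimes_\tau \mathbb{R}$: two vertical geodesics in the same direction that are distance $\le \rho_1$ apart at height $t_0$ have been within bounded distance for all heights below $t_0 + O(\log(\text{separation}))$, and conversely geodesics that separated before height $t_0 - \rho_2$ are already $e^{m\rho_2}$-far apart at height $t_0$ in the relevant transverse direction — exactly the content underlying the branching constant and formula (\ref{property of U}). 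So the cell diameter must be calibrated to $\rho_2$ via the $U_Q$ function, and one must be careful that the polynomial factors $N_i, N_j$ in $\tau$ only perturb this by sub-exponential amounts, hence are harmless once $\rho_2$ is large relative to $\log(\text{diam})$. Packaging this cleanly — rather than re-deriving the hyperbolic-space divergence from scratch — is the part that needs the most care; everything else is bookkeeping with the counting formula of Lemma \ref{rank1generalBOX}.
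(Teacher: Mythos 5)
There is a genuine gap, and it is a matter of direction: your pigeonhole is set up to produce two geodesics that \emph{stay together} for at least $\rho_{2}$ units, whereas the lemma asks for two geodesics that diverge (``stay together shorter than $\rho_{2}$ units'') after passing through some vertex $v \in X$.  When you calibrate the cell at height $t_{0}$ so small that two geodesics landing in it must coincide over a $\rho_{2}$-interval, you have in effect partitioned all geodesics into ``$\rho_{2}$-tubes,'' and \emph{every} such tube already contains about $e^{m\rho_{2}}$ geodesics by construction --- so finding a tube containing at least two elements of $\mathcal{F}$ is true under essentially no hypothesis on $\sigma$, and tells you only what the tube was defined to record.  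Declaring ``I am done'' at that point proves a tautology, not the lemma.  To recover the actual conclusion you would have to pass to the endpoint of the shared subsegment, argue that the two geodesics branch there, and argue that the branching vertex lies in $X$; none of this is in the write-up, and the second point cannot be absorbed by ``the loss is at most the relative size of $X$'' because the lemma places no density hypothesis on $X$ whatsoever.

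The paper's proof runs the count in the opposite, useful, direction.  It argues by contradiction: suppose that through every $X$-vertex $v$, every pair of $\mathcal{F}$-geodesics stays together for at least $\rho_{2}$ units.  Then the $\mathcal{F}$-geodesics through $v$ are confined to a single $\rho_{2}$-tube around $v$, so at most $e^{2mT}/e^{2m\rho_{2}}$ of them pass through $v$ when $v$ is away from the top and bottom, with a modified bound $e^{(2T-h(v))m}/e^{m\rho_{2}}$ for $v$ in the $\rho_{2}$-collar of $\bar\partial B[T]$.  Summing these bounds over all vertices, the paper gets a total of at most $\frac{e^{2mT}}{e^{m\rho_{2}}}(2T)\prod_{i,j}|\Omega_{i}||U_{j}|$, which is incompatible with $|\mathcal{F}| = \sigma e^{2mT}\prod_{i,j}|\Omega_{i}||U_{j}|$ under the hypothesis $\sigma \gg 2T/e^{m\rho_{2}}$.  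In other words, the branching constant $m$ is used to say that a single vertex can carry only $e^{-2m\rho_{2}}$ of all geodesics through it without forcing a $\rho_{2}$-divergence --- so a family as large as $\mathcal{F}$ must overload some vertex and create a diverging pair there.  That is the direction of the count your proposal needs, and the $2T$ in the hypothesis is precisely the price paid for summing over the $2T$ heights of $B[T]$ rather than over a fixed cross-section.
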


\begin{proof}
Suppose the claim is not true. Then for each $X$ vertex $v$, every pairs of geodesics
through $v$ stay together at least $\rho_{2}$ units.  If $v$ is within $\rho_{2}$
neighborhood of top and bottom of $B[T]$, the number of geodesics through $v$ with
properties is $\frac{e^{(2T-h(v))m}}{e^{m\rho_{2}}}$, where $h$ is the height function on
$\mathbb{R}^{m} \rtimes \mathbb{R}$.  On the other hand, if $v$ is outside of $\rho_{2}$
neighborhood of top and bottom of $B[T]$, the number of geodesics through $v$ with this
property is  \[ \frac{e^{h(v)\sum_{i}\alpha_{i}}}{e^{m\rho_{2}}} \frac{e^{(2T-h(v))\sum_{j}\beta_{j}}}{e^{m\rho_{2}}}
= \frac{e^{2mT}}{e^{2m\rho_{2}}} \]

\noindent The number of $X$ vertices outside of $\rho_{2}$ neighborhood of the top and
bottom of $B[T]$ is at most $2(T-\rho_{2}) \prod_{i,j} |\Omega_{i}||U_{j}|$, and the
number of $X$ vertices within $\rho_{2}$ neighborhood of top and bottom of $B[T]$ is at
most $2\rho_{2} \prod_{i,j} |\Omega_{i}| |U_{j}|$.  So the number of geodesics satisfying
this scenario is at most

\[ \sum_{v \in N_{\rho_{2}}\bar{\partial}B[T]} \frac{e^{(2T-h(v))m}}{e^{m\rho_{2}}} +
\frac{e^{2mT}}{e^{2m\rho_{2}}} 2(T-\rho_{2}) \prod_{i,j} |\Omega_{i}||U_{j}|
\leq \frac{e^{2Tm}}{e^{m\rho_{2}}} (2T) \prod_{i,j} |\Omega_{i}| |U_{j}|  \]

\noindent Since the size of $\mathcal{F}$ is larger than this number, it is not possible
that every pairs of geodesics in $\mathcal{F}$ satisfy the scenario described above.  So
there must be a $X$ vertex $v$, and two geodesics in $\mathcal{F}$ that stay together for
less than $\rho_{2}$ units after passing through $v$.  \end{proof}

In the remaining of this subsection, given a regular vector $\vec{u}$, we
write $I_{\lambda, \vec{u}}(p)$, $p \in G'$, for the subset of $p W^{+}_{\vec{u}}$ that
can be reached by two geodesics of length $\lambda$ in direction $\vec{u}$ in the rank 1
space $G'_{\vec{u}}=\mathbf{H}' \rtimes \mathbb{R} \vec{u}$ containing $p$, and
$I'_{\lambda, \vec{u}}(p)$ for the subset of the left $W^{-}_{\vec{u}}$ coset that can be
reached from $p$ by a geodesic in direction $\vec{u}$ (or in direction $-\vec{u}$ as
viewed from $p$) of length $\lambda$. \bigskip

For the next two propositions, we make the following assumptions. \smallskip
\begin{enumerate}
\item Let $v$ be a $\mathfrak{b}$ vertex such that $\phi^{-1}(v)$ locally supports a standard map $\phi_{v}=f_{v} \rtimes g_{v}$.
Since $f_{v}$ is affine and permutes root classes of $G$ to root classes of $G'$, its linear part induces a permutation from the
chambers of $G$ to chambers of $G'$ and we write $(f_{v})_{*}$ for this permutation.

\item Suppose for $\ell \in (\mathbf{A})^{*}$ a regular linear functional, and $\vec{u} \in \mathbf{A}'$,
the vectors $\vec{v}_{\ell}$ and $(f_{v})_{*}^{-1} \vec{u}$ lie in a common chamber $\mathfrak{b}$ of
$G$. \end{enumerate}

\begin{proposition} \label{horocycle extension I}
If $\lambda$ is a number such that at least $\sigma$ fraction of geodesics leaving $v$ in
direction $\vec{u}$ are unobstructed by $H_{\ell}$ vertices for length at least $\lambda
+ \rho_{2}$, where $\sigma \gg 2 \lambda  / e^{\mathbf{b}_{\vec{u}} \rho_{2}}$.  Suppose $\sigma > O(\eta)$
for some $\eta < 1$, then at least $1-O(\eta)$ fraction of the $\hat{S}$ vertices in $I_{\lambda, \vec{u}}(v)$ are
$H_{\ell}$ vertices.  \end{proposition}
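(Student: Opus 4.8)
The plan is to run the coarse-divergence argument of \cite{EFW2} in the rank $1$ picture above $v$: assume the conclusion fails, combine the failure with the unobstructed geodesics leaving $v$ into a single large family of geodesics in a rank $1$ box, apply the counting Lemma \ref{counting lemma} to extract two geodesics that separate within $\rho_{2}$, and then contradict this separation using the reconvergence forced by membership in $I_{\lambda,\vec{u}}(v)$, via the geometric lemmas of the previous subsection.

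First I would transfer everything to the domain. Since $v$ is a $\mathfrak{b}$ vertex, $\phi^{-1}(v)\in x_{0}W^{+}_{\mathfrak{b}}\cap U_{*}$ and supports a standard map $\phi_{v}=g_{v}\rtimes f_{v}$; by Corollary \ref{strengthening part 2}, $\phi$ is within $O(\hat{\eta}\,\mathrm{diam})$ of $\phi_{v}$ on a box around $\phi^{-1}(v)$, and $\phi_{v}^{-1}$ is within the same error of a coarse inverse there. The hypothesis that $\vec{v}_{\ell}$ and $(f_{v})_{*}^{-1}\vec{u}$ share the chamber $\mathfrak{b}$ gives $W^{\pm}_{(f_{v})_{*}^{-1}\vec{u}}=W^{\pm}_{\ell}$, so under $\phi_{v}$ the rank $1$ projection $\Pi_{\vec{u}}$ of $G'$ matches the rank $1$ projection $\Pi_{\ell}$ of $G$ near $\phi^{-1}(v)$; a geodesic leaving $v$ in direction $\vec{u}$ pulls back to a path leaving $\phi^{-1}(v)$ that admits a geodesic approximation in direction $(f_{v})_{*}^{-1}\vec{u}$, and the three notions ``$\hat{S}$ vertex in $I_{\lambda,\vec{u}}(v)$'', ``geodesic leaving $v$ in direction $\vec{u}$ unobstructed by $H_{\ell}$ vertices'', and ``$H_{\ell}$ vertex'' all acquire faithful counterparts inside $\phi^{-1}(\mathbf{B})$, up to the discretization scales $\rho_{1},\dots,\rho_{5}$ and the error $O(\hat{\eta}\,\mathrm{diam})$. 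Thus I may work in the rank $1$ box $B[\lambda]$ in $G'_{\vec{u}}$ of height $\lambda$ whose contracted face is a bounded neighborhood of $v$: its geodesics are the length-$\lambda$ geodesics leaving that neighborhood in direction $\vec{u}$ (counted by Lemma \ref{rank1generalBOX}), and the $\hat{S}$ vertices of $I_{\lambda,\vec{u}}(v)$ sit along its expanded face.

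Now suppose the proposition is false, so the set $X$ of $\hat{S}$ vertices in $I_{\lambda,\vec{u}}(v)$ that are \emph{not} $H_{\ell}$ vertices has relative measure $\gg\eta$. Let $\mathcal{F}$ be the family of geodesics of $B[\lambda]$ that are unobstructed by $H_{\ell}$ vertices for length at least $\lambda+\rho_{2}$; by hypothesis $\mathcal{F}$ has relative size $\gg\sigma$, and since $\sigma\gg 2\lambda/e^{\mathbf{b}_{\vec{u}}\rho_{2}}$ its size exceeds the threshold of Lemma \ref{counting lemma} applied with $T=\lambda$ and $m=\mathbf{b}_{\vec{u}}$. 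Lemma \ref{counting lemma}, used with the vertex set $X$, then produces a vertex $w\in X$ and two geodesics $\gamma_{1},\gamma_{2}\in\mathcal{F}$ through $w$ in direction $\vec{u}$ that stay together for fewer than $\rho_{2}$ units past $w$. To finish, I derive a contradiction from this pair. Both lie in $\mathcal{F}$, so each is unobstructed by $H_{\ell}$ vertices for length $\ge\lambda+\rho_{2}$ and both pass through $w$; pulling back by $\phi^{-1}$ and using Lemma \ref{lemma 5.15} together with the analysis of unobstructed ($\mathcal{E}_{0}$-type) edges in Lemmas \ref{lemma 5.16}--\ref{lemma 5.17}, the two preimage paths satisfy $\rho_{\ell}(\,\cdot\,,\,\cdot\,)\ge\Omega(\rho_{4})$ along the stretch after they have separated. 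But $w\in I_{\lambda,\vec{u}}(v)$, so $v$ and $w$ are joined to a common point $\lambda$ units up in direction $\vec{u}$ by geodesics of length $\lambda$; since $\gamma_{1},\gamma_{2}$ leave $v$ in direction $\vec{u}$ and meet $w$, Lemma \ref{lemma 5.18} forces $\Pi_{\ell}(\phi^{-1}(\gamma_{1}))$ and $\Pi_{\ell}(\phi^{-1}(\gamma_{2}))$ to recombine to within $O(\rho_{1})$ before travelling $\lambda$ units — exactly the ``diverges by $\Omega(\rho_{4})$ yet recombines within $O(\rho_{1})$'' configuration ruled out in the proof of Lemma \ref{illegal circuit}, contradicting Lemma \ref{lemma 5.15}. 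Hence $|X|=O(\eta)\,|I_{\lambda,\vec{u}}(v)|$, which is the assertion.

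The conceptual core — divergence of unobstructed geodesics versus the reconvergence forced by membership in $I_{\lambda,\vec{u}}(v)$ — is clean, so I expect the main obstacle to be bookkeeping. The delicate points will be (i) justifying the transfer through $\phi_{v}$ and $\phi_{v}^{-1}$ on a box large enough to contain all of $I_{\lambda,\vec{u}}(v)$ and across the several good boxes through which the geodesics leaving $v$ pass, and (ii) checking that the hierarchy $\rho_{1}\ll\dots\ll\rho_{5}$ together with $\hat{\eta}\,\mathrm{diam}\ll\epsilon\,\mathrm{diam}(\mathbf{B})$ keeps all the errors above negligible against $\rho_{2}$, so that Lemma \ref{counting lemma} and Lemmas \ref{lemma 5.15}--\ref{illegal circuit} apply with the stated constants; one also has to make precise, from the constructions of $\hat{W}(H_{\ell})$ and of $H_{\ell}$ vertices, why ``not an $H_{\ell}$ vertex'' is genuinely incompatible with being a reconvergence point for unobstructed geodesics issued from a $\mathfrak{b}$ vertex.
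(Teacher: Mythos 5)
Your framework --- Lemma \ref{counting lemma} followed by Lemma \ref{illegal circuit} --- is the right one, but there is a genuine gap in the way you feed the hypothesis into Lemma \ref{counting lemma}. The family $\mathcal{F}$ you define, the unobstructed geodesics \emph{leaving} $v$, has size roughly $\sigma e^{\mathbf{b}_{\vec{u}}\lambda}$, whereas the total number of geodesics in a rank-one box of the kind the counting lemma quantifies over (top and bottom faces both of the form $\prod\Omega_i\times\prod U_j$ with $|\Omega_i|e^{-\alpha_i T},\,|U_j|e^{-\beta_j T}\geq 1$) of height $2\lambda$ is of order $e^{2\mathbf{b}_{\vec{u}}\lambda}$. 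Your family is therefore only a $\sigma e^{-\mathbf{b}_{\vec{u}}\lambda}$-fraction of the whole, far below the threshold $2\lambda/e^{\mathbf{b}_{\vec{u}}\rho_2}$; and shrinking the box to a ``cone from $v$'' of height $\lambda$ with contracted bottom face a bounded neighbourhood of $v$ is not admissible, because such a set does not have the required product form for Lemma \ref{rank1generalBOX} or Lemma \ref{counting lemma}. The paper repairs this by passing to the set $E_\lambda\subset I'_{\lambda,\vec{u}}(v)$ of \emph{upper endpoints} of those unobstructed geodesics (still $\geq\sigma e^{\mathbf{b}_{\vec{u}}\lambda}$ of them) and then taking $\mathcal{F}_0'$ to be \emph{all} geodesics leaving $E_\lambda$ in direction $-\vec{u}$; that double fan does occupy a $\sigma$-fraction of a legitimate rank-one box, and this is the family to which the counting lemma is applied.

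The second gap is in the downstream logic, and it is where the argument fails to close even after the family is fixed. The paper applies the counting lemma to $\mathcal{F}_0'$ with $X=\{\ell\text{ vertices}\}$, not their complement; the dichotomy it produces, combined with an illegal-circuit contradiction on the loop $z$--$y$--$w_1$--$x$--$z$, is used first to establish that there is an $\ell$ vertex within $\rho_2$ of $v$'s $\vec{u}^{*}$-value, i.e.\ $r(v)\lesssim\rho_2$. This is an essential preliminary that your proposal omits and that is not automatic: good $\ell$ vertices live at the level of $H_\ell$, which is $\rho_5$ below $\tilde{H}_\ell$, so a $\mathfrak{b}$ vertex $v$ has no a priori small $r$-value, and without $r(v)\approx 0$ the loop $v$--$x$--$w$--$y$--$v$ you propose does not satisfy the hypothesis of Lemma \ref{illegal circuit}, which demands a unique minimizer of $r$ with all the other $r$-values strictly larger. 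With $r(v)\approx 0$ in hand, the measure bound is then proved directly rather than by contradiction: the set $U'\subset I_{\lambda,\vec{u}}(v)$ of vertices hit by at least two elements of $\mathcal{F}_0'$ has relative measure $\geq 1-O(\eta)$ by a multiplicity count (each such vertex is reached by at most $|E_\lambda|$ geodesics of $I'_{\lambda,\vec{u}}(v)$), and for every $w\in U'$ the loop $v$--$x$--$w$--$y$--$v$ with $r(v)\approx 0$ and $r(x),r(y)\geq\rho_2$ forces, via Lemma \ref{illegal circuit}, $r(w)\approx 0$, i.e.\ $w$ is an $\ell$ vertex. So the divergence--reconvergence intuition is correct, but it has to run through this two-stage illegal-circuit argument, and the first stage is missing from the proposal.
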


\begin{proof}
Let $E$ denote the set of geodesics leaving $v$ in direction $\vec{u}$ that are
unobstructed by $\ell$ vertices of length at least $\lambda + \rho_{2}$.  Let
$E_{\lambda}$ be the subset of $I'_{\lambda, \vec{u}}(v)$ passing through an element of
$E$.  By assumption, we have
\[ |E_{\lambda}| \geq \sigma e^{\mathbf{b}_{\vec{u}} \lambda} \]

\noindent Let $\mathcal{F}'_{0}$ be the union of geodesics leaving $E_{\lambda}$ in
direction $-\vec{u}$. (as viewed from $E_{\lambda}$).  Applying Lemma \ref{counting
lemma} to $\mathcal{F}'_{0}$, where $X$ means $\ell$ vertices, we see that either there
is a $\ell$ vertex whose $\vec{u}^{*}=\langle \vec{u}, \pi_{A} (\centerdot) \rangle$
value is at most $\rho_{2}$ from that of $v$, or that there is a $\ell$ vertex $w$ whose
$\vec{u}^{*}$ value differ from that of $v$ by more than $\rho_{2}$, and two geodesics in
$G_{\vec{u}}$ through $w_{1}$ that stay together for less than $\rho_{2}$ units after
passing through $w_{1}$.  Suppose the latter happens.  Let $x,y \in I'_{\lambda, \vec{u}}(v)$ be two upper end points
of those two geodesics, and $w_{1}$ be the first time that $\overline{wx}$ diverge from
$\overline{wy}$.  See Figure~\ref{fig:horoEXT1} below.

\begin{figure}[htbp]
\begin{center}

\input{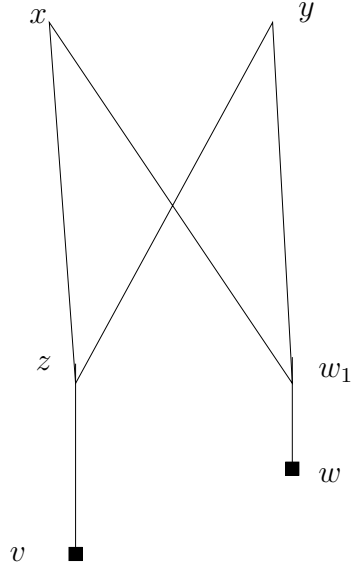}
\caption{The loop in the proof of Proposition \ref{horocycle extension I}.
Filled boxes are $H_{\ell}$ vertices. }
\label{fig:horoEXT1}

\end{center}
\end{figure}

Let $z$ be the first time that $\overline{vx}$ diverge from $\overline{vy}$.  Applying
Lemma \ref{illegal circuit} to the loop $z-y-w_{1}-x-z$ creates a contradiction.  So there
is a $\ell$ vertex whose $\vec{u}^{*}=\langle \vec{u}, \pi_{A} (\centerdot) \rangle$
value is at most $\rho_{2}$ from that of $v$.  That is, there is a $\ell$ vertex in the
$\rho_{2}$ neighborhood of $I_{\lambda, \vec{u}}(v)$. \smallskip

Let $U' \subset I_{\lambda, \vec{u}}(v)$ be those vertices that can be reached by two
elements of $\mathcal{F}'_{0}$.  Since every vertex in $I_{\lambda, \vec{u}}(v)$ can be
reached by at most $|E_{\lambda}|$ geodesics in $I'_{\lambda, \vec{u}}(v)$, it follows
that the relative measure of $U'$  in $I_{\lambda, \vec{u}}(v)$ is at least $1-O(\eta)$. \smallskip

Now suppose $w \in U'$, and let $x,y \in I'_{\lambda, \vec{u}}(v)$ be such that
$\overline{xw}, \overline{yw}$ are element of $\mathcal{F}'_{0}$ that are not obstructed
by $\ell$ vertices.  Applying Lemma \ref{illegal circuit} to the loop $v-x-w-y-v$, and
noting that $r(v)=0$, $r(x), r(y) \geq \rho_{2}$, it follows that $r(w)=0$ (otherwise
$r(v)$ would be the smallest, a contradiction), which means that $w$ is a $\ell$ vertex.
\end{proof}

\begin{proposition}\label{zero-one law}
(Hypothesis as in Proposition \ref{horocycle extension I})  Let $\mathcal{F}$ be the
union of geodesics in direction $\vec{u}$ leaving a point of $I_{\lambda, \vec{u}}(v)$.
Then at least $1-O(\eta)$ fraction of $\mathcal{F}$ are unobstructed by $H_{\ell}$
vertices for length $\lambda + \rho_{2}$.   \end{proposition}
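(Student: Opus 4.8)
My plan is to reduce, via Proposition \ref{horocycle extension I}, to the case of geodesics of $\mathcal{F}$ that actually leave $H_\ell$ vertices, and then to show such geodesics are unobstructed by combining a height–monotonicity observation with the divergence lemmas of this subsection.

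First I would invoke Proposition \ref{horocycle extension I} verbatim: under the standing hypothesis, at least $1-O(\eta)$ of the $\hat{S}$ vertices in $I_{\lambda,\vec{u}}(v)$ are $H_\ell$ vertices, and (discarding the $c_5$ proportion of bad ones via Lemma \ref{bad shadow are small}) all but an $O(\eta)$ fraction of them are \emph{good} $\ell$ vertices. Since the number of geodesics of $\mathcal{F}$ leaving a fixed $\hat{S}$ vertex is bounded above and below by constants depending only on $\kappa, C$ (Lemma \ref{uniform valence}), it follows that all but an $O(\eta)$ fraction of $\mathcal{F}$ leaves a good $\ell$ vertex $w_0 \in I_{\lambda,\vec{u}}(v)$; so it suffices to show each such geodesic is unobstructed by $H_\ell$ vertices for length $\lambda+\rho_2$.

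The geometric core is that the $H_\ell$ vertices occupy a very thin band of $(f_v)_*\ell$-levels: the good ones lie at level $q(\ell(H_\ell))$ up to $O(\rho_1)$, while the bad ones are shadow vertices, hence lie at the levels $q(h_0^i)$, which are $\Omega(\beta\,\mathrm{diam}(\omega))$ below $q(\ell(H_\ell))$. A geodesic of $\mathcal{F}$ leaving $w_0$ travels in direction $\vec{u}$ inside the rank-one space $G'_{\vec{u}}$, so after its first $O(\rho_1)$ units its $(f_v)_*\ell$-value leaves the level $q(\ell(H_\ell))$ and, being weakly monotone (Lemma \ref{Forste-how to show weakly monotone} in \cite{Pg}), never returns to it; moreover, since its total length $\lambda+\rho_2 \ll \mathrm{diam}(\mathbf{B})$, it can never descend to the far-lower shadow levels $q(h_0^i)$. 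Hence it meets no $H_\ell$ vertex beyond its initial $O(\rho_1)$, i.e.\ it is unobstructed for length $\lambda+\rho_2$. To make ``after $O(\rho_1)$ units the level has moved'' rigorous I would pass through $\hat{\phi}^{-1}$ into the domain, where $\hat{\phi}$ is an honest standard map, and use Lemma \ref{lemma 5.15} together with the description of $I_{\lambda,\vec{u}}(v)$ to control the trajectory of $\Pi_\ell(\hat{\phi}^{-1}(\,\cdot\,))$.

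The delicate point — and what I expect to be the main obstacle — is the borderline case in which the direction $\vec{u}$ is (nearly) tangent to the level sets of $(f_v)_*\ell$, so that the monotonicity above degenerates and one cannot conclude from levels alone. In that case I would argue by contradiction: if a definite fraction of $\mathcal{F}$ were obstructed, pick $w_0 \in I_{\lambda,\vec{u}}(v)$ a good $\ell$ vertex whose outgoing geodesic meets an $H_\ell$ vertex $w_1$ beyond its first $\rho_2$ units, pull the whole configuration back by $\hat{\phi}^{-1}$, use Lemma \ref{lemma 5.18} to put $\hat{\phi}^{-1}(w_0)$ and $\hat{\phi}^{-1}(w_1)$ on a common $\ell$ block $H'_\ell$, and then apply Lemma \ref{lemma 5.16} to the pulled-back path to force $\rho_\ell(\hat{\phi}^{-1}(w_0),\hat{\phi}^{-1}(w_1)) \ge \Omega(\rho_4)$, contradicting the fact that $w_0$ and $w_1$, being $\ell$ vertices attached to $H'_\ell$, have $\rho_\ell$-Gromov product $O(\rho_1)$; alternatively one closes a $4$-circuit through $v$ and applies Lemma \ref{illegal circuit}. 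Beyond this, the remaining work is purely quantitative: tracking the discretization scales $\rho_1 \ll \cdots \ll \rho_5$ and the small proportions $O(\eta), c_2, c_4, c_5$ through the reduction so that ``definite fraction'' survives, which follows the pattern already used in Lemmas \ref{lemma 4.1}--\ref{lemma 5.18}.
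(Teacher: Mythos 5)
Your first two paragraphs (reducing via Proposition \ref{horocycle extension I}, Lemma \ref{bad shadow are small}, and Lemma \ref{uniform valence} to geodesics of $\mathcal{F}$ leaving good $\ell$ vertices) are a reasonable opening, but the ``thin band'' claim at the geometric core has a gap that the rest of the sketch cannot repair. You assert that the good $\ell$ vertices all sit at a single $(f_v)_*\ell$-level $q(\ell(H_\ell))$ up to $O(\rho_1)$. The definition is per box: a good $\ell$ vertex in $K_i$ sits at level $q_i(\ell(H_\ell))$, and while Corollary \ref{linear part the same} aligns the \emph{linear} parts of the $q_i$, their constant terms come from the per-box translations $\mathbf{t}_0^i$, which are not yet known to agree — establishing exactly that kind of coherence is what the whole patching section is building toward. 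So a geodesic rising from $w_0$ in direction $\vec{u}$ can perfectly well run into another $H_\ell$ vertex belonging to a differently calibrated box at a higher $(f_*\ell)$-level, and your monotonicity argument says nothing about it. The assertion is, in effect, assuming something at least as strong as what needs to be proved.

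The ``delicate tangency'' you flag is not the real issue — the hypothesis that $\vec{v}_\ell$ and $(f_v)_*^{-1}\vec{u}$ lie in a common chamber already guarantees $\vec{u}$ has a definite $(f_*\ell)$-component — while the genuine difficulty (per-box calibration) is the one your main step ignores. Your fallback (pull back, invoke Lemmas \ref{lemma 5.18}/\ref{lemma 5.16}, or close a $4$-circuit through $v$) is closer in spirit to what works, but Lemma \ref{lemma 5.16} requires the pulled-back path to first \emph{descend} below $\ell(H)$ at both ends, whereas your geodesics rise monotonically. This is precisely why the paper first extends every geodesic of $\mathcal{F}''$ by $\rho_4$ units downward on the $I_{\lambda,\vec{u}}(v)$ side to form $\mathcal{F}''_{long}$, introduces the lowered block $H'_\ell$ and its $\ell'$ vertices so that the downward stub is available, and only then applies the counting Lemma \ref{counting lemma} to $\mathcal{F}''_{long}$ to produce two quickly-diverging geodesics through a common $\ell'$ vertex, which are fed into Lemma \ref{illegal circuit} via the six-vertex loop $v$--$x_1$--$w_1$--$q_*$--$w_2$--$x_2$--$v$. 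Your proposal also has no mechanism to upgrade a contradiction at one bad geodesic into the required ``$1-O(\eta)$ fraction'' statement; in the paper that quantitative step is exactly what Lemma \ref{counting lemma} supplies.
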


\begin{proof}
Let $E_{\lambda} \subset I'_{\lambda, \vec{u}}(v)$, $U' \subset I_{\lambda, \vec{u}}(v)$
and $\mathcal{F}'_{0}$ be as in Proposition \ref{horocycle extension I}.  Note that the
measure of $U'$ is at least $1-O(\eta)$ that of $I_{\lambda, \vec{u}}(v)$.  \smallskip

Let $\mathcal{F}''$ be the set of geodesics leaving $U'$ in direction $\vec{u}$. Let $\mathcal{F}''_{long}$ be the set of
geodesics coming from extending elements of $\mathcal{F}''$ by extending $\rho_{4}$ units on the
$I_{\lambda, \vec{u}}(v)$\symbolfootnote[2]{Ideally, we would like to apply Lemma \ref{counting lemma} to the family $\mathcal{F}''$
in a rank 1 box of size $\lambda$ but in order to use illegal circuit we need the stub - vertical segment from $H_{\ell}$ instead
of being on $H_{\ell}$. Hence the choice of $H'_{\ell}$ lower down.}.  Let $H'_{\ell}$ be the $\ell$ block whose $\ell$ value is
$\rho_{4}$ less than $\ell(H_{\ell})$.  We call the resulting vertices $\ell ^{'}$ vertices if we replace occurrence of $H_{\ell}$
 in the definition of $\ell$ vertices by $H'_{\ell}$.

If all elements of $\mathcal{F}''_{long}$ are unobstructed by images of
$\ell^{'}$ vertices, then almost all elements of $\mathcal{F}$ are unobstructed by $\ell$
vertices, where `almost' here means with relative proportion at least $1-O(\eta)$.  Let $U'_{long}$ be the set of
$\ell^{'}$ vertices that are within $\rho_{4}$ of $U'$. \smallskip

We have that $|\mathcal{F}''_{long}| \geq (1-O(\eta)) e^{\mathbf{b}_{\vec{u}} (2\lambda +
\rho_{4})}$.  Applying Lemma \ref{counting lemma} allows us to conclude that either there
is a $\ell$ vertex whose $\vec{u}^{*}$ value is within $\rho_{2}$ to $\partial \mathbf{B}
\cap G'_{\vec{u}}$, or that there is a $\ell$ vertex $q$ whose $\vec{u}^{*}$ value differ
from that of $\partial \mathbf{B} \cap G'_{\vec{u}}$ by more than $\rho_{2}$ units, and
two elements of $\mathcal{F}''_{long}$ that stay together for less than $\rho_{2}$ units
after passing through $q$.

Suppose the latter scenario happens.  Then there are $w_{1}, w_{2} \in I_{\lambda,
\vec{u}}(v)$ and a $\ell^{'}$ vertex $q$ such that $\overline{w_{1}q}$,
$\overline{w_{2}q} \in \mathcal{F}''_{long}$.  Let $q_{*}$ be the first point where
$\overline{w_{1}q}$ and $\overline{w_{2}q}$ come together.  Then by assumption, the
$d(q,q_{*}) < \rho_{2}$.  Let $x_{1} \in I'_{\lambda, \vec{u}}(v)$ be the first point where geodesics in direction
$\vec{u}$ leaving $w_{1}$ and $v$ first meet, and let $x_{2} \in I'_{\lambda,
\vec{u}}(v)$ be similarly defined for $w_{2}$ and $v$.  Let $r(\centerdot)$ now denotes
for the distance to the closest $\ell^{'}$ vertex.  Then in the loop
$v-x_{1}-w_{1}-q_{*}-w_{2}-x_{2}-v$, (see Figure~\ref{fig:horoEXT2}) the $r$ value of all points but $q_{*}$ are at least
$\rho_{4}$, while $r(q_{*}) \leq \rho_{2}$, which is a contradiction by Lemma
\ref{illegal circuit}.

\begin{figure}[htbp]
\begin{center}

\input{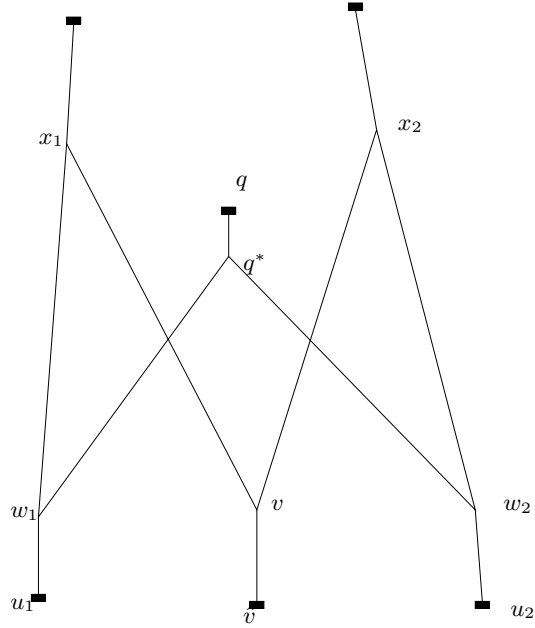}
\caption{The loop in the Proof of Proposition \ref{zero-one law}.  Filled boxes
are $H_{\ell'}$ vertices. }
\label{fig:horoEXT2}
\end{center}
\end{figure}

Therefore if elements of $\mathcal{F}''_{long}$ is to contain a $\ell'$ vertex, this vertex is within $\rho_{2}$ neighborhood
of $\partial \mathbf{B} \cap G'_{\vec{u}}$, which is just saying that no elements of
$\mathcal{F}''_{long}$ are obstructed by $\ell'$ vertices, therefore at least $1-O(\eta)$
proportion of elements in $\mathcal{F}$ are unobstructed by $\ell$ vertices by construction. \end{proof}

\begin{theorem} \label{horocycle extenion II}
Let $v$ be a $\mathfrak{b}$ vertex and $\hat{\phi}_{v}=f_{v} \times g_{v}$ be a standard map supported in a neighborhood of
$\phi^{-1}(v)$.  Let $\lambda_{0}=d(v, \partial \mathbf{B})$.  Then at least $1-O(\eta)$ proportion of vertices in
$v W^{+}_{(f_{v})_{*} \mathfrak{b}}$ are $\mathfrak{b}$ vertices, where $O(\eta)$ is the proportion of
geodesics leaving $\phi^{-1}(v)$ that admits a geodesic approximation of length  $\Omega(diam(\omega))$. \end{theorem}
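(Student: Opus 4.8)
The plan is to run an exhaustion/bootstrap argument over the rank-1 weight-hyperbolic space $G'_{\vec u}$ directions, building up the set of $\mathfrak b$-vertices in $v W^{+}_{(f_v)_*\mathfrak b}$ one "shell" at a time. Fix a regular $\ell \in \mathbf A^*$ with $\vec v_\ell$ and $(f_v)_*^{-1}\vec u$ in the common chamber $\mathfrak b$, as in the hypotheses of Propositions \ref{horocycle extension I} and \ref{zero-one law}. First I would observe that, by hypothesis, all but an $O(\eta)$-proportion of the geodesics leaving $\phi^{-1}(v)$ in the relevant directions admit a geodesic approximation of length $\Omega(diam(\omega))$; pushing this forward by $\phi$ and using that $\hat\phi_v$ is a standard map, all but $O(\eta)$ of the geodesics leaving $v$ in direction $\vec u$ are unobstructed by $H_\ell$-vertices for length $\gg \rho_2$. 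This gives the base case: Proposition \ref{horocycle extension I} (with $\lambda$ a fixed small multiple of $\rho_4$ and $\sigma = 1-O(\eta) \gg 2\lambda/e^{\mathbf b_{\vec u}\rho_2}$) shows at least $1-O(\eta)$ of the $\hat S$-vertices in $I_{\lambda,\vec u}(v)$ are $H_\ell$-vertices, hence (after identifying $H_\ell$-vertices lying on the translate of $W^+$ through $v$ with $\mathfrak b$-vertices via the definition in the $H$-graph section) are $\mathfrak b$-vertices.

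Next I would iterate. Suppose inductively that at scale $\lambda$ (with $\lambda + \rho_4 \le \lambda_0 = d(v,\partial\mathbf B)$) at least $1-O(\eta)$ of the vertices in $I_{\lambda,\vec u}(v)$ are $\mathfrak b$-vertices. Proposition \ref{zero-one law} then yields that at least $1-O(\eta)$ of the geodesics in direction $\vec u$ leaving $I_{\lambda,\vec u}(v)$ are unobstructed by $H_\ell$-vertices for length $\lambda+\rho_2$; feeding this back into Proposition \ref{horocycle extension I} applied at the vertices of $I_{\lambda,\vec u}(v)$ (or, more efficiently, re-running the illegal-circuit argument of Proposition \ref{horocycle extension I} one shell further out) propagates the conclusion to $I_{\lambda',\vec u}(v)$ for $\lambda' = \lambda + \Omega(\rho_4)$, with the error staying $O(\eta)$ because each step only loses a bounded multiple of $\eta$ and the number of steps is $O(\lambda_0/\rho_4) = O(diam(\omega)/\rho_4)$, a constant in the relevant regime. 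Since the union $\bigcup_\lambda I_{\lambda,\vec u}(v)$, as $\lambda \to \lambda_0$, exhausts (a $\rho_1$-net of) $v W^+_{\vec u} \cap \mathbf B$ up to a neighborhood of $\partial\mathbf B$, and $\mathbf B$ has small boundary by Lemma \ref{boxes are folner}, we conclude at least $1-O(\eta)$ of the $\hat S$-vertices in $v W^+_{\vec u}$ are $\mathfrak b$-vertices.

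Finally, to pass from the slice $vW^+_{\vec u}$ to the full $vW^+_{(f_v)_*\mathfrak b}$, I would vary $\ell$ over the relative-large-measure set $\mathcal S_0 \subset \mathfrak b$ from the $S$-graph construction (properties A, B there): for each such $\ell$ the above produces $\mathfrak b$-vertices filling the corresponding $W^+_\ell$-direction up to $O(\eta)$, and $W^+_{(f_v)_*\mathfrak b} = \sum_{[\Xi]:\,\Xi(\vec v_\ell)>0} V_{[\Xi]}$ is spanned as $\ell$ ranges over any chamber since each chamber contains a basis of $\mathbf A$ (the argument used at the end of Lemma \ref{leaving RG invariant}); a Fubini/Chebyshev argument across these slices then upgrades "$1-O(\eta)$ in each slice" to "$1-O(\eta)$ in $vW^+_{(f_v)_*\mathfrak b}$". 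The main obstacle I anticipate is controlling the accumulation of error through the inductive exhaustion — making sure the constant in $O(\eta)$ does not blow up with the number of shells — which is handled precisely because the number of shells is bounded by $diam(\omega)/\rho_4 = O(1)$ and each application of Propositions \ref{horocycle extension I}--\ref{zero-one law} and Lemma \ref{illegal circuit} costs only a fixed multiplicative constant in the error; the second delicate point is verifying that the $\rho_2$-obstruction counting hypothesis $\sigma \gg 2\lambda/e^{\mathbf b_{\vec u}\rho_2}$ remains satisfied at every scale, which follows from the choice $\rho_2 \ll \rho_3 \ll \rho_4$ together with $diam(\omega) \ll e^{\epsilon\, diam(\mathbf B)}$.
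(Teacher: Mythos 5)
Your proposal misses the central mechanism of the paper's argument, and the bootstrap you describe does not close. Propositions~\ref{horocycle extension I} and~\ref{zero-one law} together yield a zero--one dichotomy for the quantity $f^{*}_{\ell}(v,\lambda)$ at each \emph{fixed} $\lambda$: if it is at least $O(\eta)$, it is automatically at least $1-O(\eta)$. They do \emph{not} give a propagation rule from height $\lambda$ to height $\lambda+\Omega(\rho_{4})$; nothing in either proposition forbids $f^{*}_{\ell}(v,\lambda)$ from crashing below $O(\eta)$ once $\lambda$ passes a threshold. Ruling out that ``blocking'' threshold is the entire content of the theorem. The paper therefore argues by contradiction: it assumes there is a maximal blocking height $\lambda_{\vec u,\ell}$, shows the map $\ell\mapsto\lambda_{\vec u,\ell}$ has $O(\hat\eta\,diam(\omega))$-discrete image so that a minimal positive-measure value $\hat\lambda_{\vec u}$ in fact has \emph{full} measure, then crucially introduces a \emph{second} direction $\vec u'$ in the same chamber (chosen away from the $\vec u$-orbit under the finite group permuting $R_{G}$ to $R_{G'}$), obtains $\hat\lambda_{\vec u'}$ analogously, and applies Lemma~\ref{illegal circuit} to a loop through $v$, $y_{0}\in I'_{\hat\lambda_{\vec u}}(v)$, and $y\in I'_{\hat\lambda_{\vec u'}}(v)$ to force $f_{y_0}^{-1}(t_{\vec u}\vec u)=f_{y}^{-1}(t_{\vec u'}\vec u')$, contradicting the choice of $\vec u'$. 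This two-direction loop is the key idea of the proof; your proposal mentions ``re-running the illegal-circuit argument one shell further out,'' but that is the one-direction use inside Proposition~\ref{horocycle extension I}, not the $\vec u$ versus $\vec u'$ comparison that actually rules out blocking.

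Your error accounting is also not correct. You claim the number of shells is $O(\lambda_{0}/\rho_{4})=O(diam(\omega)/\rho_{4})$ and declare this $O(1)$, but the parameters are chosen with $\rho_{4}\ll diam(\omega)\ll diam(\mathbf B)$, so $\lambda_{0}/\rho_{4}$ is enormous, not bounded. Even if a shell-by-shell bootstrap existed, multiplying the $O(\eta)$ loss by that many shells would destroy the conclusion; the paper avoids this precisely by not iterating and instead invoking the zero--one law once and handling the failure case globally by contradiction. Finally, the passage from ``$H_{\ell}$ vertices for each $\ell\in\mathcal S_{0}$'' to ``$\mathfrak b$ vertices'' is not a simple Fubini step: in the paper it appears inside the contradiction argument, where the full-measure property of the pre-image of $\hat\lambda_{\vec u}$ is what upgrades membership in $x_{0}W^{+}_{\mathfrak b}\rtimes\ker\ell$ (for almost every $\ell$) to membership in $x_{0}W^{+}_{\mathfrak b}$.
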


\begin{proof}
Let $s_{\vec{u}}$ be the difference in $v_{\vec{u}}^{*}$ values between $\partial
\mathbf{B} \cap G'_{\vec{u}}$ and $v$. \smallskip

Fir a $\rho_{2} = \hat{\eta} s_{\vec{u}}$.  For every $\ell \in \mathcal{S}_{0}$, and $w \in I_{\lambda, \vec{u}}(v)$,
we let $f_{\ell}(w, \lambda)$ denotes the proportion of geodesics leaving $w$ that are
unobstructed by $H_{\ell}$ vertices for length at least $\lambda + \rho_{2}$.  Let
\[ f^{*}_{\ell}(v, \lambda) = \sup_{w \in I_{\lambda, \vec{u}}} f_{\ell}(w, \lambda) \]

\noindent In view of Proposition \ref{horocycle extension I} and \ref{zero-one law}, if $f^{*}_{\ell}(v, \lambda) \geq
O(\eta)$, then $f^{*}_{\ell}(v, \lambda) \geq 1-O(\eta)$. \smallskip

Then, either for all $\lambda \leq s_{\vec{u}}$, we have $f^{*}_{\ell} \geq 1-O(\eta)$; or that there is a maximal number
$\lambda_{\vec{u}, \ell} -1$ such that $f^{*}_{\ell}(v, \lambda_{\vec{u}, \ell}-1) \geq 1-O(\eta)$,
but $f^{*}_{\ell}(v, \lambda_{\vec{u}, \ell}) < O(\eta)$.  We are done if the latter does not
happen.  We now proceed to show that this is indeed the case.

In the second scenario, we know that $\lambda_{\vec{u}, \ell} \geq \hat{\eta} diam(\omega)$, and at least $1-O(\eta)$ proportion of
vertices in $I_{\lambda_{\vec{u},\ell}, \vec{u}}(v)$ and $I'_{\lambda_{\vec{u},\ell}, \vec{u}}(v)$ are $H_{\ell}$ vertices.
That is, they are $\phi$ images of $U_{*} \cap \left( x_{0} W^{+}_{\mathfrak{b}} \rtimes ker(\ell) \right)$. \smallskip

\bold{Claim: } If $\lambda_{\vec{u}, \xi} > \lambda_{\vec{u}, \ell}$ for $\xi, \ell \in \mathcal{S}_{0}$, then
$\lambda_{\vec{u}, \xi} - \lambda_{\vec{u}, \ell} > O(\hat{\eta} diam(\omega))$. \newline

\noindent Suppose not.  Then we will have subsets, one in $ker(\xi)$ and one in
$ker(\ell)$ that are within $O(\hat{\eta} diam(\omega))$ Hausdorff distance from each
other.  This can only happen if the subsets are within $O(\hat{\eta} diam(\omega))$ of
$ker(\xi) \cap ker(\ell)$. But this would mean that most of $I'_{\lambda_{\vec{u}, \ell},
\vec{u}}(v)$ come from $\phi$ images of $x_{0}W^{+}_{\mathfrak{b}} \rtimes (ker(\xi)\cap
ker(\ell))$, contradicting the assumption that $\lambda_{\vec{u}, \xi}$ is the minimal
height $t$ where most of the $I'_{t, \vec{u}}(v)$ are obstructed by $H_{\xi}$ vertices.\\

In this way, the image of the map $\mathcal{S}_{0} \rightarrow [0,s_{\vec{u}}]$ defined by sending
$\xi \rightarrow \lambda_{\vec{u}, \xi}$ is a $O(\hat{\eta} diam(\omega))$ discrete set.  Let $\hat{\lambda}_{\vec{u}}$ be the minimal
image value whose pre-images has positive measure.  This means that most elements of
$I'_{\hat{\lambda}_{\vec{u}}, \vec{u}}(v)$ and $I_{\hat{\lambda}_{\vec{u}}, \vec{u}}(v)$
are $\phi$ images of $U_{*} \cap x_{0}W^{+}_{\mathfrak{b}}$, thus the subset of
$\mathcal{S}_{0}$ consisting of elements $\xi$ such that $\lambda_{\vec{u},
\xi} > \hat{\lambda}_{\vec{u}}$ is empty.  Since for all $t < \hat{\lambda}_{\vec{u}}$,
the pre-images of $t$ in $\mathcal{S}_{0}$ has zero measure, this means not only does the
pre-images of $\hat{\lambda}_{\vec{u}}$ has positive measure, it has full measure. \smallskip

Now pick another direction $\vec{u}'$ in the same chamber as $\vec{u}$, but not in the
$\hat{\eta}^{1/2}$ neighborhood of the $\vec{u}$ orbit under the finite group of affine maps permuting $R_{g}$ to $R_{G'}$,
and repeat the same argument as above to obtain a number $\hat{\lambda}_{\vec{u}'}$ such that most of
$I'_{\hat{\lambda}_{\vec{u}'}, \vec{u}'}(v)$ come from $\phi$ images of
$x_{0}W^{+}_{\mathfrak{b}}$. \smallskip

Pick $y_{0} \in I'_{\hat{\lambda}_{\vec{u}}}(v)$ and $y \in
I'_{\hat{\lambda}_{\vec{u}'}}(v)$ so that each locally supports a standard map.  Take
two geodesics leaving $y_{0}$ in direction $\vec{u}$ (as viewed from $v$) that stay
together for $t_{\vec{u}}$ units (where $\hat{\eta}^{1/2}diam(\omega) \ll  t_{\vec{u}} \ll diam(\omega)$)
after they leave $y_{0}$, followed by a short segment $diam(\omega)$ away from $v$,
before joining the geodesics connecting $v$ to $y$. Let's say they stay together for
$t_{\vec{u}'}$ units before coming to a stop at $y$.  See Figure~\ref{fig:block1}

\begin{figure}[htbp]
\begin{center}

\input{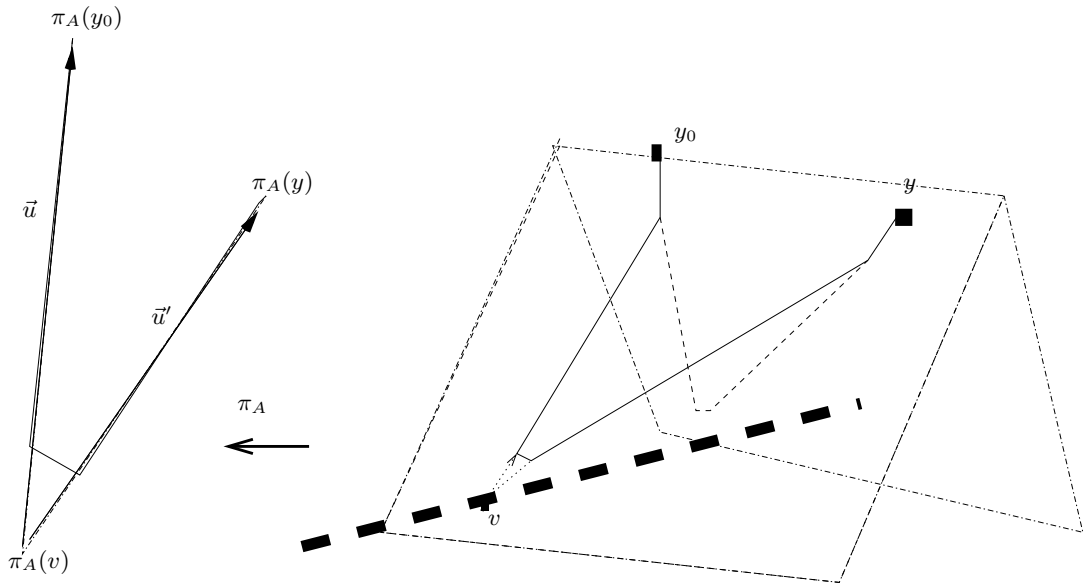}

\caption{The loop that prevents blocking.  Filled boxes
represent $\mathfrak{b}$ vertices. The left hand is the $\pi_{A}$ projection
image of the loop on the right.}
\label{fig:block1}
\end{center}
\end{figure}

As most of $I'_{\hat{\lambda}_{\vec{u}}}(v)$ and $I'_{\hat{\lambda}_{\vec{u}'}}(v)$ come
from $U_{*} \cap W^{+}_{\mathfrak{b}}$, for a full measure of $\ell \in \mathfrak{b}$,
Lemma \ref{illegal circuit} requires us to have $\Pi_{\ell} \phi_{y_{0}}^{-1}(t_{\vec{u}}
\vec{u}) \geq \Pi_{\ell} \phi_{y}^{-1} (t_{\vec{u}'} \vec{u}')$, as well as $\Pi_{\ell}
\phi_{y_{0}}^{-1}(t_{\vec{u}} \vec{u}) \leq \Pi_{\ell} \phi_{y}^{-1} (t_{\vec{u}'}
\vec{u}')$.  This means that $f_{y_{0}}^{-1}(t_{\vec{u}} \vec{u}) = f_{y}^{-1}
(t_{\vec{u}'} \vec{u}')$, where $f_{y_{0}}$ and $f_{y}$ are linear part of standard maps
$\phi_{y_{0}}$ and $\phi_{y}$.  That is, $t_{\vec{u}}/t_{\vec{u}'} \in [1/(1-\hat{\eta}),
1+ \hat{\eta}]$, and $t_{\vec{u}} \vec{u}$ lies in the $\hat{\eta}diam(\omega)$ neighborhood of the orbit of $t_{\vec{u}'} \vec{u}'$ under the finite group of
linear maps that permutes $R_{G}$ to $R_{G'}$.  But this contradicts our choice of
$\vec{u}'$ and $t_{\vec{u}}$.   \end{proof}

\begin{proof} \textit{of Theorem \ref{aligning horocycle}  }
As any root class horocycle is the intersection of finitely many left translates of $W^{+}_{\mathfrak{b}}$, where $\mathfrak{b}$ is a chamber, it suffices to show that the
analogue claim holds for left translates of $W^{+}_{\mathfrak{b}'}$ of $G'$ in place of left
translates of $V_{[\alpha]'}$ of $G'$. \smallskip

We start with $\mathbf{B}(\Sigma)$ sufficiently large so that we can apply Theorem \ref{exisence of standard maps in small
boxes} to $\phi^{-1}(\mathbf{B}(\Sigma))$ and obtaining a tiling as in equation
(\ref{tileB}).  Let $U_{*}=\bigcup_{i \in \mathbf{I}_{0}} U_{i}$.  Since $U_{*}$ has
large measure relative to $\phi^{-1}(\mathbf{B}(\Sigma))$, for a fixed chamber
$\mathfrak{b}$, we can find a large subset $U_{\mathfrak{b}} \subset U_{*}$ with the
property that for every point $p \in U_{\mathfrak{b}}$, there is a subset
$\mathcal{S}_{0} \subset \mathfrak{b}$ of relative proportion at least
$1-\vartheta$ such that $E_{\ell}(p)$ is very favourable for every $\ell \in
\mathcal{S}_{0}$. By constructing the corresponding $\hat{S}$ and $H$ graph, application
of Theorem \ref{horocycle extenion II} to a point $v \in \phi(U_{\mathfrak{b}})$ shows
that the $\phi^{-1}$ image of $v W^{+}_{\mathfrak{c}} \cap \mathbf{B}(\Sigma)$ is $O(\hat{\eta} diam(\mathbf{B}(\Sigma) )$ away from
a left translate of $W^{+}_{\mathfrak{b}}$, where $\mathfrak{c}$ is the image of $\mathfrak{b}$ under
the linear part of the standard map supported in a neighborhood of $\phi^{-1}(v)$. \end{proof}

\section{Patching}
In the previous section, we aligned the linear part of standard maps appeared in Theorem \ref{exisence of standard maps in small boxes} by showing that they are all the
same.  In this last section, we remove the condition that standard maps are only defined for a subset of relative large
measure and align the translational part of the standard maps by adopting the procedure used to achieve this in \cite{EFW1}.

\subsection{A weak version of an affine map}
We have by now seen that given a box, there is a subset of large measure supporting a standard map.  In this subsection, by
controlling the sizes of increasingly larger and larger boxes, we get rid of the 'subset of large relative measure' and extend the
result to all pairs of points $p,q \in G$ on the same flat.  The precise statement is the following:

\begin{theorem} \label{weak height preservation}
Let $G, G'$ be non-degenerate, unimodular, split abelian-by-abelian Lie groups and $\phi:
G \rightarrow G'$ be a $(\kappa, C)$ quasi-isometry between them.  Given $0< \delta, \eta \ll \tilde{\eta} <1 $, then there
exists $\tau < 1$, $M$ depending on $\delta$, $\eta$, $\tilde{\eta}$ and QI constants of
$\phi$ such that whenever $x, y $ belong to the same left coset of $\mathbf{H}$,
\begin{equation}\label{weak height eq}
\left| \pi_{A}(\phi(x)) - \pi_{A}(\phi(y)) \right| \leq \tau d(x,y) + M \end{equation}

\noindent where $\tau \rightarrow 0$, $M \rightarrow \infty$ as the input parameters approach zeros. \end{theorem}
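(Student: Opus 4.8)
\noindent The plan is to run the argument at a single scale comparable to $d(x,y)$, using Corollary \ref{strengthening part 2} to produce a standard approximation there, and then to propagate this approximation off its (already large) domain of validity by a descent over scales modelled on the patching argument of \cite{EFW1}. First I would dispose of the easy range: if $d(x,y)$ is below a threshold $L_1$ (to be fixed in terms of $L_0,\kappa,C$), then since $\phi$ is a $(\kappa,C)$ quasi-isometry and $|\pi_A(p)-\pi_A(q)|\le d(p,q)$ for the metric of Lemma \ref{QI embedding},
\[
|\pi_A(\phi(x))-\pi_A(\phi(y))|\le d(\phi(x),\phi(y))\le \kappa\, d(x,y)+C\le \kappa L_1+C,
\]
which is covered by taking $M\ge\kappa L_1+C$. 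So I may assume $R:=d(x,y)\ge L_1$, and I recall that $\pi_A(x)=\pi_A(y)$ since $x,y$ lie on a common left coset of $\mathbf H$.

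Next I would choose a box $\mathbf B=\mathbf B(\Omega)$ with $\Omega\subset\mathbf A$ a product of intervals of equal size $\asymp R$ (so this size exceeds $L_0$, provided $L_1$ was taken large enough) containing both $x$ and $y$ on their common flat. Corollary \ref{strengthening part 2} then gives a subset $U\subset\mathbf B$ of relative measure at least $1-\tilde Q$ and a standard map $\hat\phi=g\times f$ with $f$ affine such that $d(\phi|_U,\hat\phi)=O(\hat\eta\,\mathrm{diam}(\mathbf B))=O(\hat\eta R)$. Because $\hat\phi$ is standard we have $\pi_A(\hat\phi(p))=f(\pi_A(p))$ for all $p$, and since $f$ is affine and $\pi_A(x)=\pi_A(y)$ this yields $\pi_A(\hat\phi(x))=\pi_A(\hat\phi(y))$. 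Hence, \emph{if} both $x,y\in U$, then $|\pi_A(\phi(x))-\pi_A(\phi(y))|\le 2\,d(\phi|_U,\hat\phi)=O(\hat\eta R)$, which is at most $\tau R$ once $\tau$ is chosen to be at least the implied constant times $\hat\eta$.

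The genuinely hard point is that $x$ or $y$ may lie in the exceptional set $U^{c}$, where no control on $\phi$ is available a priori; here I would follow the procedure of \cite{EFW1}. The idea is a descent over the geometric sequence of scales $R,\ R/\mu,\ R/\mu^{2},\dots,\gtrsim L_0$ (for a fixed ratio $\mu>1$): at each scale Theorem \ref{exisence of standard maps in small boxes} tiles $\mathbf B$ by sub-boxes, all but a small fraction of which are good and carry a standard map off a small-measure subset, with linear part pinned down by Corollary \ref{linear part the same} and constant part read off from boundary measures via Proposition \ref{map on heights}. Since $U^{c}$ is relatively small, every good sub-box meeting $U$ carries a standard map agreeing with $f$ up to an additive error $O(\hat\eta\cdot(\text{scale}))$; chaining overlapping good sub-boxes from a $U$-heavy region out to the sub-box containing $x$ (resp. $y$) propagates the estimate $\pi_A(\phi(\cdot))=f(\pi_A(\cdot))+O(\hat\eta R)$ to all but a negligible set, the scale-wise errors along an $O(R/(\text{scale}))$-long chain summing geometrically to $O(\hat\eta R)$. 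Anything still exceptional down at the bottom scale $\asymp L_0$ is finally handled by the trivial quasi-isometry estimate over the $O(L_0)$ distance to a good point, an error absorbed into $M$. I expect the main obstacle to be exactly this bookkeeping: keeping the accumulated error over $O(\log(R/L_0))$ scales and $O(R/L_0)$-long chains bounded by $O(\hat\eta R)$, which is the adaptation of the \cite{EFW1} patching argument to the abelian-by-abelian setting.

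Assembling the three cases gives $|\pi_A(\phi(x))-\pi_A(\phi(y))|\le\tau\,d(x,y)+M$ for all $x,y$ on a common flat with $\tau=O(\hat\eta)$ and $M=O(\kappa L_0+C)$; since $\hat\eta\to0$ and $L_0\to\infty$ as $\delta,\eta,\tilde\eta\to0$, we obtain $\tau\to0$ and $M\to\infty$ as required.
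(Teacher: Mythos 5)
Your opening reductions are sound: the case $d(x,y)\le L_1$ is absorbed into $M$ by the quasi-isometry inequality, and if both $x$ and $y$ happen to lie in the good set $U$ of Corollary \ref{strengthening part 2} at scale $\asymp d(x,y)$, the standard map gives the bound directly. You also correctly identify the hard point: $x$ or $y$ may lie in $U^{c}$.

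Your proposed resolution of that hard point has a genuine gap. You want to propagate the estimate $\pi_A(\phi(\cdot))=f(\pi_A(\cdot))+O(\hat\eta R)$ from a $U$-heavy region out to the sub-box containing $x$ by ``chaining overlapping good sub-boxes,'' with an $O(R/\mathrm{scale})$-long chain at each scale. That count is far too optimistic: inside a box of linear size $R$ the number of tiles of linear size $s$ is \emph{exponential} in the horocyclic directions, of order $e^{\sum_i \max\alpha_i((R-s)\Omega)}$, as the ``Warning'' paragraph before Lemma \ref{two small in a big} makes explicit. A spatial chain of good sub-boxes linking a $U$-dense region to $x$ would therefore be exponentially long, so errors of size $O(\hat\eta\cdot\mathrm{scale})$ per step do not sum to $O(\hat\eta R)$; the tiles of Theorem \ref{exisence of standard maps in small boxes} also do not overlap, so comparing standard maps on consecutive tiles is not a direct pasting. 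Even granting your $O(R/\mathrm{scale})$ count, you flag but do not resolve the extra $\log(R/L_0)$ factor from summing over scales.

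The paper avoids both problems by telescoping \emph{across} scales rather than chaining \emph{within} one. Fix $\Omega_j=(1+\varsigma)^j\Omega$, form $U_j$ as the union of the good subsets of the scale-$j$ tiles of the horocycle $\mathbf{P}$, and choose $N$ with $\mathrm{diam}(\mathbf{B}(\Omega_N))\asymp d(x,y)$. For $0\le j\le N$ pick a single $x_j\in\mathbf{R}_j[x]\cap U_j$ (possible since $U_j$ has relative measure $1-\varrho$ in every scale-$j$ tile) and likewise $y_j$, arranging $x_N=y_N$. The quantitative engine is Lemma \ref{one big one small}, built on the quadrilateral argument of Lemma \ref{two small in a big}: it bounds $|\pi_A(\phi(x_j))-\pi_A(\phi(x_{j+1}))|$ by $O(\hat\eta\,\mathrm{diam}(\mathbf{B}(\Omega_{j+1})))$ directly, without chaining through the exponentially many scale-$j$ tiles inside $\mathbf{R}_{j+1}[x]$, by producing a $0$-quadrilateral whose $\phi$-image is again close to a quadrilateral and invoking Lemma \ref{Forste-structure of a quadrilateral}. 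The telescoping sum then has only $O(\log(d(x,y)/L_0))$ terms whose magnitudes form a geometric series dominated by the last one, giving the linear term $O((\hat\eta/\varsigma)\,d(x,y))$; the constant $M$ comes from $|\pi_A(\phi(x))-\pi_A(\phi(x_0))|\le 2\kappa\,\mathrm{diam}(\mathbf{B}(\Omega))$. You should replace your spatial-chaining step with this cross-scale telescope and the two lemmas that power it; without them the bookkeeping you yourself identify as the ``main obstacle'' does not close.
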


\noindent The setup to the proof of theorem \ref{weak height preservation} follows the same sequence of steps as the analogue
result in Section 6.1 of \cite{EFW1}. \bigskip

Fix $0< \delta, \eta \ll \tilde{\eta} < 1$.  Let $\Omega \subset \mathbf{A}$ be a product of intervals of size $L_{0}$ with barycenter
located at the origin of $\mathbf{A}$.  By Corollary \ref{strengthening part 2}, there is a subset
$\mathcal{P}^{0} \subset \mathcal{P}(\Omega)$ of relative large measure which is the support of a standard map $g \times f$
where $f$ is affine.  Let $\vartheta \ll 1$ satisfies $\left| \mathcal{P}^{0} \right| \geq (1-\vartheta) \left| \mathcal{P}(\Omega) \right|$,
and set $\varrho=\sqrt{\vartheta}$.  Let $\mathbf{P}$ be a left translate of $\mathbf{H}$, and we can assume $\mathbf{P}$ contains the
identity element.  Let $R(\Omega)=\mathbf{B}(\Omega) \cap \mathbf{P}$, and $\mathbf{R}(\Omega)=\bigcup_{g \in R(\Omega)} g(\varrho \Omega)$.
 The following is a rehash of Corollary \ref{strengthening part 2}.

\begin{corollary} \label{implication of refinement}
There is a standard map $f \times g$ where $f$ is affine with linear part $A_{f}$, defined on $\mathcal{P}^{0} \subset \mathcal{P}(\mathbf{R}(\Omega))$, with
$\left| \mathcal{P}^{0} \right| \geq (1-\varrho) \left| \mathcal{P}(\mathbf{R}(\Omega)) \right|$, such that
$d(\phi|_{\mathcal{P}^{0}}, f \times g) \leq \hat{\eta} diam(\mathbf{B}(\Omega))$.

Furthermore, if $p \in \mathcal{P}^{0}$, there is a subset $L^{0}(p) \subset L(p)$ of relative large measure such that the
$\phi$ image of every element $\zeta \in L^{0}(p)$ is within $\hat{\eta}$-linear neighborhood of a geodesic
segment in direction that of $A_{f} \circ \zeta$, whose direction makes an angle of at most $\sin^{-1}(\tilde{\eta})$ with root kernels.
\end{corollary}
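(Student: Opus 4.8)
The plan is to obtain this statement by a direct bookkeeping reduction from Corollary \ref{strengthening part 2}, which already supplies, for any sufficiently large product-of-intervals box, a subset of relative measure $1-\tilde Q$ carrying a standard map $g\times f$ with $f$ affine and $d(\phi|_U,\hat\phi)=O(\hat\eta\,\mathrm{diam})$. First I would observe that $\mathbf{R}(\Omega)=\bigcup_{g\in R(\Omega)}g(\varrho\Omega)$ is, up to the tiling remainder $\Upsilon$, precisely the union of the small tiles $\mathbf{B}(\omega_i)$ whose $\mathbf{H}$-translate meets $\mathbf{P}$; so $\mathbf{R}(\Omega)$ differs from $\mathbf{B}(\Omega)$ only by tiles disjoint from $\mathbf{P}$, and restricting the standard map from Corollary \ref{strengthening part 2} to $\mathbf{R}(\Omega)$ loses nothing. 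The relative-measure bound $|\mathcal P^0|\ge(1-\varrho)|\mathcal P(\mathbf{R}(\Omega))|$ with $\varrho=\sqrt\vartheta$ is then a Chebyshev-type statement: since the good set has relative measure $1-\vartheta$ inside $\mathbf{B}(\Omega)$ and the tiles are all isometric copies of $\mathbf{B}(\varrho\Omega)$, the fraction of tiles in which the good set occupies less than $1-\sqrt\vartheta$ of the points is at most $\sqrt\vartheta$, and intersecting with those good tiles that moreover lie in $\mathbf{R}(\Omega)$ only improves (does not hurt) the count relative to $\mathcal P(\mathbf{R}(\Omega))$. This gives the first paragraph of the statement with $\hat\eta$ absorbing the various constants.

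For the second paragraph — the statement about geodesics $\zeta\in L(p)$ through a point $p\in\mathcal P^0$ — I would invoke the geodesic-level conclusions already established inside the proof of Corollary \ref{strengthening part 2}: there it is shown that for $\mathit{l}\in\mathcal L^0$ the image $\phi(\mathit{l})$ is within $(\hat\eta,\theta'^{1/2}|\phi(\mathit{l})|)$-linear neighborhood of a geodesic segment in direction $A_f\circ\mathit{l}$, and that $\pi_A(\phi(\mathit{l}))$ makes an angle at most $\sin^{-1}(\tilde\eta)$ with root kernels (because on each good subtile the $f_i$ respect root kernels and have a common linear part $A_f$, and a line in $\mathcal L^0$ spends all but a $\theta'^{1/2}$ fraction of its length in $\mathcal P^0$). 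Fixing $p\in\mathcal P^0$, a Chebyshev argument over the geodesics through $p$ (using that the relative measure of $\mathcal L^0$ in $\mathcal L(\mathbf{R}(\Omega))$ is $1-\theta''$ with $\theta''\to0$) produces a subset $L^0(p)\subset L(p)$ of relative large measure all of whose elements are in $\mathcal L^0$, hence enjoy the stated linear-neighborhood and angle bounds. One should check that the direction bound survives restricting from full-box geodesics to the sub-box $\mathbf{R}(\Omega)$, but since directions of $A_f\circ\zeta$ only depend on $\zeta$ and $A_f$, not on the ambient box, this is immediate.

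The main obstacle I anticipate is purely combinatorial rather than geometric: making the double Chebyshev reduction (first over tiles to control $|\mathcal P^0|$ relative to $\mathbf{R}(\Omega)$, then over geodesics through a fixed $p$ to extract $L^0(p)$) genuinely uniform, so that the single parameter $\hat\eta$ and the single error rate $\varrho$ really dominate all the accumulated quantities $\vartheta,\theta,\theta',\theta'',\tilde Q$ from Theorem \ref{exisence of standard maps in small boxes} and Corollary \ref{strengthening part 2}, with all of them tending to zero together as $\delta,\eta,\tilde\eta\to0$. The geometry — that the image of a good geodesic is straight in direction $A_f\circ\zeta$ with small angle to root kernels — is entirely inherited from the proof of Corollary \ref{strengthening part 2} and requires no new argument; only the passage from "$\phi$ almost-splits on a large-measure set of the big box" to "it does so on a large-measure set of $\mathbf{R}(\Omega)$, with large-measure families of good geodesics through each good point" needs the careful, but routine, measure accounting sketched above.
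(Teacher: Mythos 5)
The paper itself offers no proof of this corollary; the preceding sentence simply asserts it is ``a rehash of Corollary \ref{strengthening part 2},'' so what you are doing is filling in a step the paper leaves implicit. The geometric half of your argument (that the standard map and the geodesic conclusions of Corollary \ref{strengthening part 2} transfer, and that the angle bound with root kernels is preserved under restriction) is fine. The problem is in the measure accounting, and it is precisely the sentence ``intersecting with those good tiles that moreover lie in $\mathbf{R}(\Omega)$ only improves (does not hurt) the count relative to $\mathcal{P}(\mathbf{R}(\Omega))$'' that does not hold.

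Concretely: $\mathbf{R}(\Omega)=R(\Omega)\cdot(\varrho\Omega)$ is a thin slab whose $\mathbf{H}$-extent equals that of $\mathbf{B}(\Omega)$ but whose $\mathbf{A}$-extent is $\varrho\Omega$ rather than $\Omega$. Since the Haar measure on the unimodular group $G$ is the product measure, $|\mathbf{R}(\Omega)|/|\mathbf{B}(\Omega)|=|\varrho\Omega|/|\Omega|=\varrho^{n}$, where $n=\mathrm{rank}(G)$. With $\varrho=\sqrt{\vartheta}$ this ratio is $\vartheta^{n/2}$, which is $\le\vartheta$ once $n\ge 2$. Consequently the exceptional set of relative measure $\vartheta$ supplied by Corollary \ref{strengthening part 2} in $\mathbf{B}(\Omega)$ is large enough, in absolute measure, to swallow $\mathbf{R}(\Omega)$ completely; nothing forbids this worst case. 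Your Chebyshev step correctly shows that at most a $\sqrt{\vartheta}$ fraction of \emph{all} tiles have bad relative measure exceeding $\sqrt{\vartheta}$, but it gives no control over the specific $\varrho^{n}$-proportion subfamily of tiles constituting $\mathbf{R}(\Omega)$; the bad tiles could all lie there. (For $n=1$ the naive restriction is exactly tight --- $\vartheta/\varrho=\sqrt{\vartheta}=\varrho$ --- which is the original EFW rank-one setting; the difficulty appears only in the higher-rank extension.) A genuine proof must therefore not merely restrict a single application of Corollary \ref{strengthening part 2} on $\mathbf{B}(\Omega)$: one would, for example, apply Corollary \ref{strengthening part 2} directly to a family of boxes covering $\mathbf{R}(\Omega)$, obtaining good sets of relative measure $1-\vartheta$ in each, and then use Corollary \ref{linear part the same} together with an overlap argument to align the constant parts of the various affine $f_i$ into a single $f$, which is an additional step rather than pure bookkeeping. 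Your second paragraph (the Chebyshev over geodesics producing $L^{0}(p)$) is fine in spirit but also needs one further refinement of $\mathcal{P}^{0}$ so that the ``large $L^{0}(p)$'' property holds for \emph{every} $p\in\mathcal{P}^{0}$, not merely for most $p$.
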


\bold{ The tiling.  } Choose $\hat{\eta} \ll \varsigma \ll 1$. For
each $j \in \mathbb{N}$, set $\Omega_{j}=(1+\varsigma)^{j} \Omega$.
We tile $\mathbf{P}$ by $R(\Omega_{j})$, where each tile is denoted
by $R_{j,\iota}$, $\iota \in \mathbb{N}$. For $p \in G$, we write
$R_{j}[x]$ for the tile in the $j$-th tiling containing the point in
$\mathbf{P}$ that lies on the same flat as $x$.

\bold{ Warning. }  Despite the fact that
$\Omega_{j+1}=(1+\varsigma)\Omega_{j}$, the number $R_{j,k}$'s
needed to cover a rectangle $R_{j+1}[p]$ is very large, on the order
of $e^{\sum_{i=1}^{|\triangle|} \varsigma \max\{\alpha_{i}(\Omega)\}}$.

\bold{ The sets $U_{j}$ }  For each tile $R_{j,k}$ in the $j$-th
tiling of $\mathbf{P}$, Corollary \ref{implication of refinement}
produces a subset $\mathcal{P}^{0}_{j,k} \subset
\mathcal{P}(\mathbf{R}_{j,k})$ of relative large measure. We set
\[ U_{j} =\bigcup_{k \in \mathbb{N}} \mathcal{P}^{0}_{j,k} \]

\noindent In view of Corollary \ref{implication of refinement}, for
any $x \in U_{j}$,
\begin{equation} \label{in the same box}
\sup_{y \in \mathbf{R}[x] \cap U_{j}} \left| \pi_{A}(\phi(x))-\pi_{A}(\phi(y))  \right| \leq \hat{\eta} diam(\mathbf{B}(\Omega_{j}))
\end{equation}

\noindent  Recall that $\triangle'$ is the set of roots in $G'$, and we write
$n$ for the rank of $G$ (which is also the rank of $G'$).  We also have the following generalization:

\begin{lemma} \label{two small in a big}
For any $x \in U_{j}$, and $y \in \mathbf{R}_{j+1}[x] \cap U_{j}$,
\[ \left| \pi_{A}(\phi(x)) - \pi_{A}(\phi(y)) \right| \leq   (|\triangle'|)(n) 4 \hat{\eta} diam(\mathbf{B}(\Omega_{j})) \] \end{lemma}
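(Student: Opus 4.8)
The plan is to chain together estimates of the form \eqref{in the same box} along a path in $\mathbf{P}$ that connects $x$ to $y$ while staying inside $\mathbf{R}_{j+1}[x]$ and passing through the good sets $U_j$. The key point is that although $\mathbf{R}_{j+1}[x]$ is enormously larger than a single $\mathbf{R}_{j,k}$ (see the Warning), \emph{along any line segment} it can be crossed in a controlled number of steps: a line segment in $\mathbf{P}$ of length comparable to $\mathrm{diam}(\mathbf{B}(\Omega_{j+1}))$ meets only $O(|\triangle'|)$ of the tiles $\mathbf{R}_{j,k}$ when taken in a coordinate-like direction adapted to the product structure, since each $\mathbf{R}_{j,k}$ is a box associated to $\Omega_j$ and $(1+\varsigma)\Omega_j=\Omega_{j+1}$. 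Here the factor $(|\triangle'|)(n)$ in the statement already tells us the intended bookkeeping: $n=\mathrm{rank}(G)$ moves to get from $x$ to $y$ one coordinate direction of $\mathbf{A}$ at a time (this is where an $n$-step polygonal path through $\mathbf{H}$ enters), and $|\triangle'|$ accounts for the number of consecutive tiles crossed inside a single such move, each contributing one application of \eqref{in the same box} together with the overlap estimate \eqref{two small in a big}-type triangle inequality.

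Concretely, I would proceed as follows. First, fix $x\in U_j$ and $y\in \mathbf{R}_{j+1}[x]\cap U_j$, and let $\bar x,\bar y\in\mathbf{P}$ be the points on the flats of $x$ and $y$. Connect $\bar x$ to $\bar y$ by a path $\bar x = p_0, p_1,\dots, p_n=\bar y$ in $\mathbf{P}\cong\mathbf{H}$, where each leg $\overline{p_{k-1}p_k}$ changes only the $V_{[\sigma]}$-coordinates for one root class (or only the coordinates lying in one of the $n$ adapted blocks), so that each leg is a geodesic segment in $\mathbf{B}(\Omega_{j+1})$ whose $\pi_A$-shadow is trivial. Second, on each leg, use the fact (Corollary \ref{strengthening part 2} / Corollary \ref{implication of refinement}) that $U_j$ has relative large measure in each $\mathbf{R}_{j,k}$, together with a Chebyshev/averaging argument as in the proof of Corollary \ref{strengthening part 2}, to find a nearby leg whose endpoints lie in $U_j$ and which meets at most $O(|\triangle'|)$ tiles $\mathbf{R}_{j,k}$, picking in the overlap of each pair of consecutive tiles a point of $U_j$. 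Third, apply \eqref{in the same box} on each of these $O(|\triangle'|)$ sub-steps: within a single tile $\mathbf{R}_{j,k}$, any two points of $U_j$ on the same flat have $\pi_A\circ\phi$ images within $\hat\eta\,\mathrm{diam}(\mathbf{B}(\Omega_j))$. Summing the triangle inequality over the $O(|\triangle'|)$ sub-steps on one leg gives a bound $O(|\triangle'|)\hat\eta\,\mathrm{diam}(\mathbf{B}(\Omega_j))$ per leg, and summing over the $n$ legs yields the claimed $(|\triangle'|)(n)\,4\hat\eta\,\mathrm{diam}(\mathbf{B}(\Omega_j))$, the constant $4$ absorbing the small perturbations incurred when sliding endpoints into $U_j$ and the $(1+\varsigma)$-factor between $\Omega_j$ and $\Omega_{j+1}$.

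The main obstacle is the second step: ensuring that the polygonal path can be perturbed so that \emph{all} of its relevant intermediate vertices — the $O(n|\triangle'|)$ of them — simultaneously land in $U_j$, and simultaneously lie in the overlaps of consecutive tiles. A naive union bound over $n|\triangle'|$ bad events of small measure is fine since that product is still a fixed finite constant times $\vartheta$, but one has to be careful that the "slide into $U_j$" on leg $k$ does not ruin the membership arranged on leg $k-1$; the fix is to carry out the perturbation globally, choosing the full broken-line path at once from a positive-measure family of such paths (a Fubini argument over the product of the fiber directions), exactly as the analogous step is handled in Section 6.1 of \cite{EFW1}. One also has to check that each perturbed leg genuinely meets only $O(|\triangle'|)$ tiles; this is where the precise shape of $\mathbf{B}(\Omega)$ as $\prod_j b(e^{\max\alpha_j(\Omega)})\,\Omega$ is used, together with $\Omega_{j+1}=(1+\varsigma)\Omega_j$, to see that crossing from one $\Omega_j$-box to an adjacent one in a fixed direction costs a bounded-in-$j$ number of tiles. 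Granting these two points, the conclusion follows by the iterated triangle inequality described above.
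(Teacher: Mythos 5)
There is a genuine gap, and it is precisely the one the paper's ``Warning'' paragraph (which immediately precedes the definition of the sets $U_j$) is there to flag. Your plan is to walk from $\bar x$ to $\bar y$ through the tiles $\mathbf{R}_{j,k}$ inside $\mathbf{R}_{j+1}[x]$, chaining the estimate (\ref{in the same box}) one tile at a time, and you assert that a line segment crossing $\mathbf{R}_{j+1}[x]$ in a coordinate direction meets only $O(|\triangle'|)$ tiles. This is false: the tile $R_{j,k}$ has extent $e^{\max\alpha_i(\Omega_j)}$ in the $V_{\alpha_i}$-direction, while $R_{j+1}$ has extent $e^{(1+\varsigma)\max\alpha_i(\Omega_j)}$, so a single one-coordinate crossing already passes through roughly $e^{\varsigma\max\alpha_i(\Omega_j)}$ tiles --- exponentially many, exactly as the Warning says. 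Chaining (\ref{in the same box}) over that many tiles would give an error term on the order of $e^{\varsigma\max\alpha_i(\Omega_j)}\cdot\hat\eta\,\mathrm{diam}(\mathbf{B}(\Omega_j))$, which swamps the claimed bound completely; no Fubini/union-bound refinement of the perturbation step can rescue this, because the obstruction is the raw tile count, not the measure of the bad set.

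The paper's proof does not walk horizontally through $\mathbf{P}$ at all. The factor $|\triangle'|$ comes from a short chain $x=p_0,\dots,p_l=y$ with $l\le|\triangle'|$, chosen so that consecutive $p_\iota,p_{\iota+1}$ have a common root-class horocycle meeting $\mathbf{R}[p_\iota]\cap U_j$ and $\mathbf{R}[p_{\iota+1}]\cap U_j$; after sliding to $x_1,y_1\in U_j$ on that horocycle (cost $\varrho\,\mathrm{diam}(\mathbf{B}(\Omega_j))$ by (\ref{in the same box})), the argument goes \emph{up} into $\mathbf{A}$-directions: it builds a geodesic $0$-quadrilateral with vertices $x_1,y_1$, uses Corollary \ref{implication of refinement} to get geodesic approximations of the $\phi$-images of the legs (this is where $x_1,y_1\in U_j$ is used), and invokes the structure-of-a-quadrilateral lemma from Part I to conclude that $\pi_{\vec v}\circ\Pi_{\vec v}$ applied to $\phi(x_1)$ and $\phi(y_1)$ differ by at most $\hat\eta\,\mathrm{diam}(\mathbf{B}(\Omega_j))$, where $\vec v$ is the edge direction of the quadrilateral. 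Repeating with $n$ linearly independent edge directions gives control of the full $\pi_A$-difference, which is where the factor $n$ comes from. So you correctly guessed what the bookkeeping factors $|\triangle'|$ and $n$ should count, but the mechanism --- quadrilateral arguments in the $\mathbf{A}$-direction rather than tile-by-tile chaining in $\mathbf{H}$ --- is the whole point, and the proposal as written would not close.
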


\begin{proof}
WLOG, we can assume that there is a horocycle $H_{[\alpha]}$, $[\alpha]$ an equivalence class of roots, that intersect both
$\mathbf{R}[x] \cap U_{j}$ and $\mathbf{R}[y] \cap U_{j}$.  (If not, then we can find a sequence of points $x=p_{0}, p_{1}, \cdots ,
p_{l}=y$ where $l \leq |\triangle'|$ such that for each pair of consecutive points, there is a horocycle intersecting
$\mathbf{R}[p_{\iota}] \cap U_{j}$ and $\mathbf{R}[p_{\iota+1}] \cap U_{j}$).  Let $x_{1} \in H_{[\alpha]} \cap \mathbf{R}[x] \cap
U_{j}$, $y_{1} \in H_{[\alpha]} \cap \mathbf{R}[y] \cap U_{j}$ be points of intersection.  Therefore by equation (\ref{in the same box}),
\[ \left| \pi_{A}(\phi(x)) - \pi_{A}(\phi(x_{1})) \right| \leq \varrho diam(\mathbf{B}(\Omega_{j})),
\hspace{3mm} \left| \pi_{A}(\phi(y)) - \pi_{A}(\phi(y_{1})) \right| \leq \varrho diam(\mathbf{B}(\Omega_{j})) \]

\noindent  Since $d(x_{1},y_{1}) \leq diam(\mathbf{B}(\Omega_{j+1}))$, for $\iota=1,2$ we can find geodesic segments $\gamma_{x_{1},\iota}$,
$\gamma_{y_{1},\iota}$ leaving $x_{1},y_{1}$ respectively such that $Q=\{ \gamma_{x_{1}, \iota}, \gamma_{y_{1}, \iota} \}_{\iota=1,2}$
is a $0$-quadrilateral.  Additionally, because $x_{1}, y_{1} \in U_{j} \cap H_{[\alpha]}$, we can assume for $\iota=1,2$,
$*=x_{1},y_{1}$, the subsegment $\hat{\gamma}_{*, \iota} \subset \gamma_{*,\iota}$ containing $*$ and satisfies $\left|
\hat{\gamma}_{*, \iota} \right| = \frac{1}{1+ \varsigma} \left| \gamma_{*,\iota} \right|$ admit geodesic approximation to its $\phi$
image.  That is, $\phi(\hat{\gamma}_{*,\iota})$ is within $\eta |\hat{\gamma}_{*,\iota} |$ Hausdorff neighborhood of another
geodesic segment. \smallskip

Let $\mathit{l}_{*,\iota}$ be a geodesic approximation to $\phi(\hat{\gamma}_{*,\iota})$. Then angle between the direction of
$\mathit{l}_{*,\iota}$ and that of $\hat{\gamma}_{*,\iota}$ is at most $\sin^{-1}(\hat{\eta})$.  Therefore by modifying each
$\mathit{l}_{*,\iota}$ by an amount at most $\hat{\eta} 2\kappa diam(Q) \leq \hat{\eta} diam(\mathbf{B}(\Omega_{j+1}))$, we can
assume $\mathit{l}_{*,\iota}$ all have parallel directions.  Since $\varsigma \ll 1$, the four geodesic segment $\mathit{l}_{*,\iota}$
do constitute a quadrilateral $\tilde{Q}$ and Lemma \ref{Forste-structure of a quadrilateral} of \cite{Pg} applied to $\tilde{Q}$ yields
\begin{equation*} \left| \pi_{\vec{v}} \circ \Pi_{\vec{v}}
(\phi(x_{1})) - \pi_{\vec{v}} \circ \Pi_{\vec{v}} (\phi(y_{1}))
\right| \leq \hat{\eta} diam(\mathbf{B}(\Omega_{j}) \end{equation*}

\noindent where $\vec{v}$ is parallel to edge directions of $\tilde{Q}$.  Therefore
\begin{equation*} \left| \pi_{\vec{v}} \circ
\Pi_{\vec{v}} (\phi(x)) - \pi_{\vec{v}} \circ \Pi_{\vec{v}} (\phi(y)) \right| \leq \hat{\eta} diam(\mathbf{B}(\Omega_{j}) + 2
\varrho diam(\mathbf{B}(\Omega_{j})) \leq  4 \hat{\eta} diam(\mathbf{B}(\Omega_{j})) \end{equation*}

\noindent since $\varrho \leq \hat{\eta}$.  The claim now follows by constructing quadrilaterals whose image is approximated by
quadrilaterals whose edge direction ranges over at least $n$ many linearly independent directions. \end{proof}

\begin{lemma} \label{one big one small}
Suppose $p \in \mathbf{R}_{j}[x] \cap U_{j}$, $q \in \mathbf{R}_{j+1}[x] \cap U_{j+1}$.  Then,
\[ \left| \pi_{A}(\phi(x)) - \pi_{A}(\phi(y)) \right| \leq (4|\triangle| n +2) \hat{\eta} diam(\mathbf{B}(\Omega_{j+1}))  \]  \end{lemma}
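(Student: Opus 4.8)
The plan is to bootstrap from the two estimates already available: equation~(\ref{in the same box}), which controls $\pi_A\circ\phi$ within a single $(j+1)$-tile when both points lie in $U_{j+1}$, and Lemma~\ref{two small in a big}, which controls it when two points of $U_j$ lie in a common $(j+1)$-tile. The point $p$ is in $U_j$ but only in a $j$-tile, and $q$ is in $U_{j+1}$; neither lemma applies directly to the pair $(p,q)$, so the idea is to produce an intermediate point that is simultaneously in $U_j$ and in $U_{j+1}$, lying on the same flat as $x$ (equivalently in $\mathbf{R}_j[x]$ and $\mathbf{R}_{j+1}[x]$), and then chain the two estimates.

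First I would argue that such an intermediate point exists. Since $p\in\mathbf{R}_j[x]\cap U_j$ and the $j$-tiles have relative large measure of $U_j$ inside each $\mathbf{B}(\Omega_j)$-box by Corollary~\ref{implication of refinement}, and similarly $q\in\mathbf{R}_{j+1}[x]\cap U_{j+1}$ with $U_{j+1}$ of large relative measure in the $(j+1)$-box, the set $\mathbf{R}_j[x]\cap\mathbf{R}_{j+1}[x]\cap U_j\cap U_{j+1}$ restricted to the flat $\mathbf{P}$ is non-empty: the $j$-tile $\mathbf{R}_j[x]$ sits inside the $(j+1)$-tile $\mathbf{R}_{j+1}[x]$, and both $U_j$ and $U_{j+1}$ miss only a $\varrho$-fraction of the relevant box, while $\varrho\ll 1$; so a positive-measure subset of $\mathbf{R}_j[x]$ lies in $U_j\cap U_{j+1}$. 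Pick a point $z$ there. (If a direct measure count is awkward because $\mathbf{R}_j[x]$ is small relative to the $(j+1)$-box, I would instead chain through an auxiliary point in $\mathbf{R}_j[x]\cap U_j$ and then use Lemma~\ref{two small in a big} to pass to $U_{j+1}$, inserting at most one extra application and absorbing the cost into the constant $4|\triangle|n+2$.)

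Then the estimate is a triangle-inequality chain. By equation~(\ref{in the same box}) applied in the $j$-th tiling (both $p$ and $z$ in $\mathbf{R}_j[x]\cap U_j$), $|\pi_A(\phi(p))-\pi_A(\phi(z))|\le \hat\eta\, diam(\mathbf{B}(\Omega_j))$. By equation~(\ref{in the same box}) applied in the $(j+1)$-th tiling (both $q$ and $z$ in $\mathbf{R}_{j+1}[x]\cap U_{j+1}$), $|\pi_A(\phi(q))-\pi_A(\phi(z))|\le \hat\eta\, diam(\mathbf{B}(\Omega_{j+1}))$. Adding these, and noting $diam(\mathbf{B}(\Omega_j))\le diam(\mathbf{B}(\Omega_{j+1}))$, gives $|\pi_A(\phi(p))-\pi_A(\phi(q))|\le 2\hat\eta\, diam(\mathbf{B}(\Omega_{j+1}))$; the extra $4|\triangle|n$ term in the stated bound is exactly the overhead of Lemma~\ref{two small in a big} used when the single intermediate point must itself be reached by a chain of horocycle steps, as in that lemma's proof. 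The main obstacle is the first step: verifying that the four constraints ($\mathbf{R}_j[x]$, $\mathbf{R}_{j+1}[x]$, $U_j$, $U_{j+1}$, all along one flat) can be met simultaneously, since $U_j$ and $U_{j+1}$ are large only relative to boxes of different scales and $\mathbf{R}_j[x]$ occupies a tiny proportion of $\mathbf{R}_{j+1}[x]$ — so one cannot naively intersect "large" sets. The resolution is that $U_j$ is large in $\mathbf{R}_j[x]$ on its own terms (Corollary~\ref{implication of refinement} is applied tile-by-tile), so $\mathbf{R}_j[x]\cap U_j$ is already a large subset of $\mathbf{R}_j[x]$; one then only needs it to meet $U_{j+1}$, and if it does not meet it on the same flat directly, one routes through horocycles exactly as in Lemma~\ref{two small in a big}, which is where the combinatorial factor $|\triangle|n$ re-enters.
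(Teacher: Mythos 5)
Your plan correctly names the two ingredients (equation (\ref{in the same box}) and Lemma \ref{two small in a big}) and the triangle-inequality strategy, but the central step is not right, and you flag the difficulty yourself without actually resolving it. Searching for a single point $z\in\mathbf{R}_j[x]\cap U_j\cap U_{j+1}$ cannot work as a measure argument: $U_{j+1}$ is large only relative to $\mathbf{R}_{j+1}[x]$, whose volume exceeds that of $\mathbf{R}_j[x]$ by a factor of order $e^{\sum_i\varsigma\max\alpha_i(\Omega)}$ (this is exactly the ``Warning'' just before the lemma), so $\mathbf{R}_j[x]$ could lie entirely inside the exceptional set for $U_{j+1}$. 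Your fallback (``route through horocycles as in Lemma \ref{two small in a big}'' to ``pass to $U_{j+1}$'') is the right instinct but is not a proof: Lemma \ref{two small in a big} only compares two points that are \emph{both} in $U_j$; it does not transfer a point from $U_j$ into $U_{j+1}$, and it is a black box, not an argument to be rerun.

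The missing idea is to give up on a single intermediate point and instead produce \emph{two} intermediate points $p'$ and $q'$, one in each of $U_j$ and $U_{j+1}$, and force them to be close by projecting to the flat. Both $\mathbf{R}_{j+1}[x]\cap U_j$ and $\mathbf{R}_{j+1}[x]\cap U_{j+1}$ project onto subsets of $R_{j+1}[x]$ of relative measure at least $1-\varrho$ (for $U_j$ one uses that it is large in \emph{every} $j$-tile, not merely in $\mathbf{R}_j[x]$), so the projections overlap. This gives $p'\in\mathbf{R}_{j+1}[x]\cap U_j$ and $q'\in\mathbf{R}_{j+1}[x]\cap U_{j+1}$ lying over a common point of $R_{j+1}[x]$, hence in a common small box, so $|\pi_A(\phi(p'))-\pi_A(\phi(q'))|\le\varrho\,diam(\mathbf{B}(\Omega_{j+1}))$. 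Now Lemma \ref{two small in a big} applies to the pair $(p,p')$ (both in $U_j$, both in $\mathbf{R}_{j+1}[x]$), giving $4|\triangle|n\,\hat\eta\,diam(\mathbf{B}(\Omega_j))$, and equation (\ref{in the same box}) applies to $(q',q)$ (both in $U_{j+1}\cap\mathbf{R}_{j+1}[x]$), giving $\hat\eta\,diam(\mathbf{B}(\Omega_{j+1}))$; the triangle inequality then yields the stated bound. So the gap is precisely the substitution of ``one point in $U_j\cap U_{j+1}$'' by ``one point in each, made close via projection to $R_{j+1}[x]$,'' which is what lets Lemma \ref{two small in a big} be invoked legitimately.
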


\begin{proof}
Note that the projection of both $\mathbf{R}_{j+1}[x] \cap U_{j}$ and $\mathbf{R}_{j+1}[x] \cap U_{j+1}$ to $R_{j+1}[x]$ have relative
measure up at least $1-\varrho$.  Therefore we can find $p' \in \mathbf{R}_{j+1}[x] \cap U_{j}$, $q' \in \mathbf{R}_{j+1}[x] \cap U_{j+1}$,
and $\left| \pi_{A}(\phi(p')) - \pi_{A}(\phi(q')) \right| \leq \varrho diam(\mathbf{B}(\Omega_{j+1}))$.   Now by Lemma \ref{two small in a big},
\[ \left| \pi_{A}(\phi(p)) - \pi_{A}(\phi(p')) \right| \leq (|\triangle|)n4 \hat{\eta} diam(\mathbf{B}(\Omega_{j})) \]

\noindent and by equation (\ref{in the same box})
\[  \left| \pi_{A}(\phi(q')) - \pi_{A}(\phi(q)) \right| \leq \hat{\eta} diam(\mathbf{B}(\Omega_{j+1})) \]

\noindent the claim now follows by triangle inequality. \end{proof}

\begin{proof} \textit{to Theorem \ref{weak height preservation} }
We have
\[ R_{0}[x] \subset R_{1}[x] \subset R_{2}[x] \cdots \]

\noindent and

\[ R_{0}[y] \subset R_{1}[y] \subset R_{2}[x] \cdots \]

\noindent There exists $N$ with diam$(\mathbf{B}(\Omega_{N}))$ is comparable to $d(x,y)$ (after possibly shifting the $N$'s grid by a
bit) such that $R_{N}[x]=R_{N}[y]$.  Now for $0 \leq j \leq N$, pick $x_{j} \in \mathbf{R}_{j}[x] \cap U_{j}$, $y_{j} \in
\mathbf{R}_{j}[y] \cap U_{j}$.  We may assume that $x_{N}=y_{N}$. By Lemma \ref{one big one small}:
\begin{eqnarray*}
|\pi_{A}(\phi(x_{0}))-\pi_{A}(\phi(y_{0}))| & \leq &
\sum_{j=0}^{N-1} |\pi_{A}(\phi(x_{j+1})) - \pi_{A}(\phi(x_{j}))| +
\sum_{j=0}^{N-1}|\pi_{A}(\phi(y_{j+1}))-\pi_{A}(\phi(y_{j}))|\\
&\leq & 2 (4 |\triangle| n + 2) \sum_{j=0}^{N-1} \hat{\eta}
diam(\mathbf{B}(\Omega_{j+1})) \\
& \leq & 4 (2 |\triangle| n + 1) \frac{\hat{\eta}}{\varsigma}
diam(\mathbf{B}(\Omega_{N})) \end{eqnarray*}

\noindent where last inequality comes from $diam(\mathbf{B}(\Omega_{j+1}))=(1+ \varsigma) diam(\mathbf{B}(\Omega_{j}))$.  Now since
$x_{0} \in R_{0}[x]$ $\left| \pi_{A}(\phi(x)) - \pi_{A}(\phi(x_{0})) \right| \leq 2\kappa d(x,x_{0}) \leq 2\kappa diam(\mathbf{B}(\Omega))=M $,
and similarly $\left| \pi_{A}(\phi(y)) - \pi_{A}(\phi(y_{0})) \right| \leq M$. The claim now follows by noting that we chose
$\frac{diam(\mathbf{B}(\Omega_{N}))}{d(x,y)} \in [1/2, 2]$ and $\varsigma \gg \hat{\eta}$ \end{proof}

\subsection{Consequence of weak height preservation - flats go to flats }
Theorem \ref{weak height preservation} is the first statement we have that places no additional constraints on the points other than
their natural relation in $G$. In this subsection we show that as a first consequence, the image of a flat is within $O(1)$ of another flat,
which eventually culminating in the proof of theorem \ref{behaviors of phi} in the next subsection.

\begin{proposition} \label{FTF} $\phi$ sends each flat to within $O(1)$ of another flat. \end{proposition}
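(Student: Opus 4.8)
The plan is to reduce Proposition \ref{FTF} to the single assertion that, for every flat $\mathbf{P}\subset G$, the projection $\pi_{A}(\phi(\mathbf{P}))\subset \mathbf{A}'$ is a \emph{bounded} subset, and then to extract that boundedness from Theorem \ref{weak height preservation}. The reduction is straightforward once one observes that the flats of $G'$ are exactly the fibres of $\pi_{A}\colon G'\to\mathbf{A}'$: if $\pi_{A}(\phi(\mathbf{P}))\subset B(a_{0},D)$, then for any $g'\in\phi(\mathbf{P})$ the path obtained by moving from $g'$ to the point with the same $\mathbf{H}'$-coordinate and $\mathbf{A}'$-coordinate $a_{0}$ has length comparable to $|\pi_A(g')-a_0|\le D$, so $\phi(\mathbf{P})\subset N_{D'}(\mathbf{P}')$ with $\mathbf{P}'=\pi_{A}^{-1}(a_{0})$ and $D'=D'(D,\kappa,C)$. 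For the reverse inclusion $\mathbf{P}'\subset N_{O(1)}(\phi(\mathbf{P}))$, apply the same boundedness statement to the quasi-isometry $\phi^{-1}$ (again a quasi-isometry between groups of the class considered): it puts $\phi^{-1}(\mathbf{P}')$ in a bounded neighbourhood of some flat $\mathbf{P}''\subset G$; since $\phi$ is coarsely surjective and since the induced coarse map $\mathbf{A}\to\mathbf{A}'$ is itself a quasi-isometry (so distinct flats of $G$ have Hausdorff-far images), $\mathbf{P}''$ must be $O(1)$-close to $\mathbf{P}$, and then $\mathbf{P}'=\pi_A\phi(\mathbf P'')$-neighbourhood of $\phi(\mathbf{P})$ up to $O(1)$. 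So everything comes down to boundedness of $\pi_A(\phi(\mathbf{P}))$.

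For the boundedness I would start from Theorem \ref{weak height preservation}, which gives $|\pi_{A}(\phi(x))-\pi_{A}(\phi(y))|\le \tau\, d(x,y)+M$ for $x,y$ on the flat $\mathbf{P}$, with $\tau$ as small as desired (at the cost of $M$). Since $\mathbf{P}$, with the metric induced from $G$, is unbounded (its diameter is infinite, though it is only logarithmically distorted compared with the Euclidean metric on $\mathbf{H}$), this inequality by itself only yields sublinear drift of the $\mathbf{A}'$-coordinate along $\mathbf{P}$, not a uniform bound. The upgrade to a uniform bound is the heart of the matter, and I would run it as a scale induction in the spirit of \cite{EFW1}: tiling $\phi^{-1}$ of a very large box as in the proof of Corollary \ref{strengthening part 2}, one notes that a standard map carries a flat \emph{exactly} into a flat, so on the $1-\tilde Q$ portion of the box where $\phi$ is $\hat\eta\operatorname{diam}$-close to a standard map $g\times f$, the restriction $\pi_{A}\circ\phi$ to $\mathbf{P}$ intersected with that box is within $\hat\eta\operatorname{diam}$ of a constant. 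The output of iterating over the nested boxes $\Omega_{j}=(1+\varsigma)^{j}\Omega$ is exactly Theorem \ref{weak height preservation}, with slope $\asymp\hat\eta/\varsigma$; the new point is that if the true drift along $\mathbf{P}$ had slope $\tau_{0}>0$ bounded away from $0$, then restricting $\phi$ to a ball on which this slope is nearly realized and renormalizing produces a quasi-isometry for which the standard-map approximations supplied by Theorem \ref{exisence of standard maps in small boxes} already contradict slope $\tau_{0}$. Hence the infimal slope is $0$, and, pushing the renormalization argument at all scales at once, the drift along $\mathbf{P}$ is in fact $O(1)$.

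The hard part is precisely this last step: Theorem \ref{weak height preservation} is an honestly \emph{sublinear} statement, and converting it into the $O(1)$ bound that Proposition \ref{FTF} needs cannot be done by any further purely local estimate (taking $\tau\to 0$ only improves the drift to $\log$- or $\log\log$-type, never to bounded). What makes it work is the self-similar nature of the whole coarse-differentiation machine — boxes of every size produce standard maps with the same error budget relative to their own diameter — which is exactly the mechanism used in \cite{EFW1} to pass from "height-respecting on large balls with small slope" to "height-respecting up to a bounded error." Once this is in hand, the reduction in the first paragraph completes the proof: $\phi$ sends $\mathbf{P}$ into a bounded neighbourhood of the flat $\pi_{A}^{-1}(a_{0})$, and the matching lower bound coming from $\phi^{-1}$ shows this neighbourhood is two-sided, i.e.\ $\phi(\mathbf{P})$ is within $O(1)$ of that flat.
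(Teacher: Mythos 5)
Your proposal runs into trouble immediately because of what a ``flat'' is in this section of the paper. You treat the flats of $G'$ as the fibres of $\pi_{A}\colon G'\to\mathbf{A}'$, i.e.\ as left cosets of $\mathbf{H}'$, and then aim to show $\pi_{A}(\phi(\mathbf{P}))$ is bounded. But throughout the flats-go-to-flats argument (Corollary \ref{the same piA}, Lemma \ref{preleude to FTF}, Proposition \ref{QI preserving flats}, and the proof of Proposition \ref{FTF} itself) a flat is a left coset $p\mathbf{A}$, on which $\pi_{A}$ is a bijection onto $\mathbf{A}$ and the induced metric is Euclidean. (The wording at the end of \S\ref{geometry of G} is misleading, but the identity $|\pi_A(p)-\pi_A(q)|=d(p,q)$ used in Corollary \ref{the same piA} only holds for $\mathbf{A}$-cosets, and Lemma \ref{preleude to FTF} speaks of flats containing geodesics, which are $\mathbf{A}$-translates.) With the correct reading, your target statement is false: the paper's proof \emph{begins} by showing $\pi_{A}\circ\phi|_{\mathcal{F}}$ is \emph{onto} $\mathbf{A}'$, via a degree/homology argument, so $\pi_A(\phi(\mathbf{P}))$ is certainly not bounded.

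The second gap is that you diagnose Theorem \ref{weak height preservation} as irreducibly sublinear and try to hand-wave an ``$O(1)$'' improvement by renormalization. That is not what the paper uses, and your sketch of it (``restrict to a ball where the slope is nearly realized and renormalize, contradiction'') is not a proof. The actual leverage comes from a different direction: Lemma \ref{weak affine map} upgrades Theorem \ref{weak height preservation} to the statement that $\pi_{A}\circ\phi$ is within $\hat{\tau}\,d(x,y)+\hat{M}$ of a genuine \emph{affine} map $A_0$ on \emph{all} of $G$, and Corollary \ref{the same piA} then gives, in one line, that the level sets of $\pi_A\circ\phi$ inside a flat have diameter at most $M_0=\hat M/(1-\hat\tau)$, an honest $O(1)$ bound. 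That is the ``linear-to-$O(1)$'' improvement, and it is immediate once $A_0$ is available — no further scale iteration is required. The proof then lays a good grid of mesh $4M_0$ in $\mathbf{A}'$, picks one point of $\phi(\mathcal{F})$ per grid cell (possible by the onto-ness), and uses the weak affine estimate once more to see that consecutive points along a grid line have preimages in $\mathcal{F}$ at bounded distance, hence the points themselves at bounded distance, hence (Lemma \ref{Forste-lemma A1-2} of \cite{Pg}) lie within $O(1)$ of a geodesic. Lemma \ref{preleude to FTF} is the bootstrap from ``grid with lines near geodesics'' to ``grid near a flat.'' Your proposal has none of this structure, and the step you yourself flag as the hard part is not filled.
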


We now proceed to establish some necessary observations.
\begin{lemma} \label{weak affine map}
There is a linear map $A_{0}: \mathbf{A} \rightarrow \mathbf{A}'$ and numbers $\hat{\tau} < 1$, $\hat{M} >0$ such that for any $x, y \in G$,
\begin{equation}\label{weak affine eq}
\left|(\pi_{A} (\phi(x)) - \pi_{A}(\phi(y))) - (A_{0}(\pi_{A}(x)) - A_{0}(\pi_{A}(y))) \right| \leq \hat{\tau} d(x,y) + \hat{M}
\end{equation} \end{lemma}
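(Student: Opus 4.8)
The goal is to upgrade Theorem~\ref{weak height preservation}, which controls $\pi_A\circ\phi$ for points on a common flat, to an honest \emph{affine} control valid for \emph{all} pairs $x,y\in G$, at the cost of a fixed linear map $A_0$ and additive/multiplicative errors. Here is the plan.

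\textbf{Constructing $A_0$.} The candidate $A_0$ is forced on us: in Corollary~\ref{strengthening part 2} every sufficiently large box carries a standard map whose $\mathbf{A}'$-part is affine with a common linear part $A_f$ (this is exactly Corollary~\ref{linear part the same}). So I would define $A_0:=A_f$, the common linear part of the standard maps on large boxes. I must first check this is well-defined independent of the box; this follows because two large boxes can be joined through a chain of overlapping large boxes, and on an overlap of positive measure the two standard maps agree up to $O(\hat\eta\,\mathrm{diam})$, which pins down the linear parts to be literally equal since they lie in the finite set of maps sending $R_G$ to $R_{G'}$ (Lemma~\ref{leaving RG invariant}, Remark~\ref{branching constant the same}).

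\textbf{From flats to all pairs.} Given arbitrary $x,y\in G$, write $\pi_A(x)=\mathbf{s}$, $\pi_A(y)=\mathbf{t}$. Pick the point $y^\sharp$ on the flat through $x$ with $\pi_A(y^\sharp)=\mathbf{t}$; then $d(y,y^\sharp)$ is comparable to the $\mathbf{H}$-displacement between $x$ and $y$, in particular $d(y,y^\sharp)\le 2d(x,y)+O(1)$ roughly (this is the geometry of the left-invariant metric — points with the same $\mathbf{A}$-coordinate differ only in $\mathbf{H}$). Apply Theorem~\ref{weak height preservation} to the pair $x,y^\sharp$ on the common flat to get $|\pi_A(\phi(x))-\pi_A(\phi(y^\sharp))|\le\tau d(x,y^\sharp)+M$. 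Now I need $|\pi_A(\phi(y^\sharp))-\pi_A(\phi(y))-\bigl(A_0\pi_A(y^\sharp)-A_0\pi_A(y)\bigr)|$ to be small; but $\pi_A(y^\sharp)=\pi_A(y)$, so the $A_0$-terms cancel and I just need $|\pi_A(\phi(y^\sharp))-\pi_A(\phi(y))|$ small. This is \emph{not} immediate from Theorem~\ref{weak height preservation} since $y,y^\sharp$ need not lie on a common flat; instead I would run the telescoping-box argument of Lemmas~\ref{two small in a big}, \ref{one big one small} once more, now along a path from $y$ to $y^\sharp$ that changes only the $\mathbf{H}$-coordinate: cover the geodesic from $y$ to $y^\sharp$ by the nested tiling, use that consecutive boxes share a positive-measure set mapping into $U_j$, and that within each box the standard map's $\mathbf{A}'$-part is affine with linear part $A_0$. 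The affine part contributes exactly $A_0$ applied to the $\mathbf{A}$-displacement — which is zero here — and the errors sum geometrically in $\varsigma$ to $O\bigl(\tfrac{\hat\eta}{\varsigma}\,d(y,y^\sharp)\bigr)$. Combining the two estimates gives \eqref{weak affine eq} with $\hat\tau=O(\tau+\hat\eta/\varsigma)$ and $\hat M=O(M)$.

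\textbf{Linearity of $A_0$.} Strictly $A_0=A_f$ is linear by construction (it's the common linear part of affine maps), so no separate argument is needed; but I should remark that $A_0$ agrees on the nose with the standard-map linear parts, which is what lets the $A_0$-terms cancel so cleanly above and which will be needed downstream in Proposition~\ref{FTF}.

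\textbf{Main obstacle.} The delicate point is the second estimate: controlling $|\pi_A(\phi(y))-\pi_A(\phi(y^\sharp))|$ for two points on a common \emph{horocyclic} fiber (same $\mathbf{A}$-coordinate, different $\mathbf{H}$). One must verify that a path from $y$ to $y^\sharp$ of length comparable to $d(x,y)$ stays inside the region where the tiling-and-standard-map machinery applies, and that the per-box errors genuinely accumulate only linearly — the warning after Corollary~\ref{implication of refinement} about the explosion in the \emph{number} of small tiles needed is exactly the trap to avoid, so I would route the argument through the nested boxes $R_j[\cdot]$ (as in Lemma~\ref{one big one small}) rather than through a single scale. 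I expect this bookkeeping, plus the comparison $d(y,y^\sharp)\lesssim d(x,y)$, to be the bulk of the work; everything else is assembling already-proved statements.
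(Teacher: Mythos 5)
There is a genuine gap, and it comes from a misreading of what Theorem~\ref{weak height preservation} asserts. That theorem is about pairs $x,y$ on the same left coset of $\mathbf{H}$, i.e.\ pairs with $\pi_A(x)=\pi_A(y)$ (a ``horocyclic fiber'' in your terminology); its content is that the quasi-isometry nearly preserves the common $\mathbf{A}$-coordinate of such a pair, which is why a contraction constant $\tau<1$ (with $\tau\to0$) makes sense there. You apply it to the pair $(x,y^\sharp)$ lying on a common \emph{flat} (left coset of $\mathbf{A}$), and claim $|\pi_A(\phi(x))-\pi_A(\phi(y^\sharp))|\le\tau d(x,y^\sharp)+M$. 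This is false: for $\phi$ the identity, $|\pi_A(\phi(x))-\pi_A(\phi(y^\sharp))|=d(x,y^\sharp)$ exactly, so no such bound with $\tau<1$ can hold, and the hypotheses of Theorem~\ref{weak height preservation} are simply not satisfied. What the estimate across a flat actually requires is the $A_0$-affine correction, and that must come from the standard-map estimate of Corollary~\ref{strengthening part 2} on a box of scale $\approx d(x,y)$, which you never invoke.

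The confusion then inverts itself in your ``main obstacle'' paragraph: for the pair $(y,y^\sharp)$, which lies on a common $\mathbf{H}$-coset, you say the control is ``not immediate from Theorem~\ref{weak height preservation} since $y,y^\sharp$ need not lie on a common flat'' and propose to re-run the telescoping-box argument. But this pair is precisely the situation Theorem~\ref{weak height preservation} covers, and it applies verbatim; the re-run is unnecessary. Once the two legs are sorted out, the remaining issue is that $x$ and $y^\sharp$ need not lie in the good set $\mathcal{P}^0$ where the standard map lives. The paper handles this by choosing $\hat x\in x\mathbf{H}\cap\mathcal{P}^0$ and $\hat y\in y\mathbf{H}\cap\mathcal{P}^0$ and routing $x\to\hat x\to\hat y\to y$: the outer two legs lie on $\mathbf{H}$-cosets (Theorem~\ref{weak height preservation}), and the middle leg has both endpoints in $\mathcal{P}^0$, so the standard map gives $|(\pi_A\phi(\hat x)-\pi_A\phi(\hat y))-(A_0\pi_A(\hat x)-A_0\pi_A(\hat y))|\le\hat\eta\,\mathrm{diam}(\mathbf{B})$. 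Your identification of $A_0=A_f$ and the triangle-inequality skeleton are right; the missing idea is that the flat-direction leg is controlled by the box-level standard map, not by Theorem~\ref{weak height preservation}.
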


\begin{proof}
Let $\mathbf{B}$ be a box such that $diam(\mathbf{B})/d(x,y) \in [1/2,2]$.  By Corollary \ref{strengthening part 2}, there is a
subset $\mathcal{P}^{0} \subset \mathcal{P}(\mathbf{B})$ of relative measure at least $(1-\sqrt{\hat{\eta}})$ that supports a standard
map which is $\hat{\eta}diam(\mathbf{B})$ away from $\phi|_{\mathcal{P}^{0}}$.  Write
$A_{0}$ for the linear part of the $\mathbf{A}'$ part of the standard map.  Without loss of generality we can assume that
$x \mathbf{H} \cap \mathcal{P}^{0} \not= \emptyset$, and $y \mathbf{H} \cap \mathcal{P}^{0} \not= \emptyset$.
Let $\hat{x} \in x\mathbf{H} \cap \mathcal{P}^{0}$, and $\hat{y} \in y \mathbf{H} \cap \mathcal{P}^{0}$.  Then by
Theorem \ref{weak height preservation}
\[ \left| \left(  \pi_{A}(\phi(\hat{y})) - \pi_{A}(\phi(y)) \right) -
\left(   A_{0}(\pi_{A}(\hat{y})) - A_{0}(\pi_{A}(y)) \right) \right| \leq \tau d(\hat{y},y) + M\]

\noindent since $\pi_{A}(\hat{y})=\pi_{A}(y)$.  By Corollary \ref{strengthening part 2} we also have

\[ \left| (\pi_{A}(\phi(\hat{x})) - \pi_{A}(\phi(\hat{y}))) - (A_{0}(\pi_{A}(\hat{x})) - A_{0}(\pi_{A}(\hat{y}))) \right|
\leq \hat{\eta} diam(\mathbf{B}) \]

\noindent and by Theorem \ref{weak height preservation} again, we have
\[ \left| (\pi_{A}(\phi(x)) - \pi_{A}(\phi(\hat{x}))) - (A_{0}(\pi_{A}(x))-A_{0}(\pi_{A}(\hat{x}))) \right| \leq \tau d(x, \hat{x}) + M \]

\noindent Summing all three equations and apply triangle inequality to the left hand side we have
\begin{eqnarray*}
\left| (\pi_{A}(\phi(x))-\pi_{A}(\phi(y))) - (A_{0}(\pi_{A}(x))- A_{0}(\pi_{A}(y))) \right| &\leq& (2\tau + \eta) diam(\mathbf{B}) + 2M \\
&\leq&  2(2\tau + \hat{\eta}) d(x,y) + 2M \end{eqnarray*}

\noindent since $\hat{\eta}$ and $\tau$ depends on our initial $\epsilon, \delta$, and approach zero as those initial input approach zero,
we can assume that $2(2\tau + \hat{\eta}) < 1$, and we set $\hat{\tau}=2(2\tau + \hat{\eta})$, $\hat{M}=2M$.  \end{proof}

\begin{corollary} \label{the same piA}
There is a number $M_{0}$ such that if $p,q \in G$ are two points on the same flat and $\pi_{A}(\phi(p))=\pi_{A}(\phi(q))$, then
$d(x,y)\leq M_{0}$.  \end{corollary}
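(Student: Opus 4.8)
The plan is to deduce the statement almost immediately from Lemma~\ref{weak affine map}, once one combines it with two facts: the intrinsic geometry of $G$ along a flat, and the invertibility of the linear part $A_0$ recorded in Remark~\ref{branching constant the same}. First I would record the purely internal observation that for $p,q$ lying on a common flat the distance $d(p,q)$ is comparable, up to a multiplicative constant $C_G$ depending only on $G$, to $\|\pi_A(p)-\pi_A(q)\|$. This is immediate from the Finsler description in Section~\ref{geometry of G}, or from Lemma~\ref{QI embedding}: in each weight hyperbolic space $\mathit{H}_{\dim V_\alpha+1}$ the two projections $\pi_\alpha(p),\pi_\alpha(q)$ lie on a common vertical geodesic, so their distance is exactly the difference of heights $|\alpha(\pi_A(p))-\alpha(\pi_A(q))|$, and since the roots span $\mathbf{A}^{*}$ the maximum of these over $\alpha$ is comparable to $\|\pi_A(p)-\pi_A(q)\|$.

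Next I would apply Lemma~\ref{weak affine map}. Writing $\Phi=\pi_A\circ\phi$ and letting $A_0$ be the linear map furnished there, we have
\[
\bigl|(\Phi(p)-\Phi(q))-A_0(\pi_A(p)-\pi_A(q))\bigr|\le \hat{\tau}\,d(p,q)+\hat{M}.
\]
The hypothesis $\pi_A(\phi(p))=\pi_A(\phi(q))$ is exactly $\Phi(p)=\Phi(q)$, so this becomes $|A_0(\pi_A(p)-\pi_A(q))|\le \hat{\tau}\,d(p,q)+\hat{M}$. By Remark~\ref{branching constant the same} the linear part $A_0$ equals $c\,A_f$ with $A_f\in O(n)$ and $c=c(\phi,G,G')>0$, so $|A_0(\vec v)|=c\|\vec v\|$; combining with the first step,
\[
c\,\|\pi_A(p)-\pi_A(q)\|\le \hat{\tau}\,C_G\,\|\pi_A(p)-\pi_A(q)\|+\hat{M}.
\]
Since $\hat{\tau}\to 0$ as the initial data $\delta,\eta,\tilde{\eta}\to 0$ while $c$ and $C_G$ do not, we may assume the initial data were chosen small enough that $\hat{\tau}\,C_G\le c/2$; then $\|\pi_A(p)-\pi_A(q)\|\le 2\hat{M}/c$, and the first step yields $d(p,q)\le 2C_G\hat{M}/c=:M_0$.

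The one point that genuinely requires care — and the step I expect to be the crux — is the bookkeeping showing that $c$ is bounded below by a constant not involving the small parameters, so that absorbing $\hat{\tau}\,C_G$ into $c$ is legitimate. This is where one uses that $A_0$ is the linear part of the standard map produced by Corollary~\ref{strengthening part 2} (or Lemma~\ref{weak affine map}) at a scale comparable to $d(p,q)$: its norm is pinned down, via the branching‑constant identities in Lemma~\ref{leaving RG invariant}, to data of $G$ and $G'$ alone, while the quasi‑isometry inequalities for $\phi$ force $c$ away from $0$. Everything else is the two‑line computation above.
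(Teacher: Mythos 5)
Your proof follows the paper's primary argument: both substitute into Lemma~\ref{weak affine map}, use that the hypothesis kills the $\pi_A\circ\phi$ term, and use that on a flat $|\pi_A(p)-\pi_A(q)|=d(p,q)$ (the paper gets the exact equality from the Finsler metric; your ``comparable'' with constant $C_G$ is a harmless weakening). Where you differ is in the care taken over the norm of $A_0$: the paper passes in one line from $|A_0(\pi_A(p)-\pi_A(q))|\le\hat\tau\,d(p,q)+\hat M$ to $(1-\hat\tau)d(p,q)\le\hat M$, which tacitly treats $A_0$ as norm-nondecreasing. You correctly flag that one must know the scalar $c$ in $A_0=cA_f$ is bounded below independently of $\delta,\eta,\tilde\eta$, and your justification is the right one: by Lemma~\ref{leaving RG invariant} the linear part is determined by a permutation between the finite sets $R_G$ and $R_{G'}$, so $c$ comes from a finite set of $(G,G')$-data and the absorption $\hat\tau C_G\le c/2$ is legitimate. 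So your version is the paper's first proof with the implicit step made explicit. Worth noting: the paper also records a second, shorter argument that avoids $A_0$ altogether, applying Theorem~\ref{weak height preservation} to $\phi^{-1}$ and the points $\phi(p),\phi(q)$ (which satisfy its hypothesis exactly because $\pi_A(\phi(p))=\pi_A(\phi(q))$), giving $d(p,q)\le\tau\kappa\,d(p,q)+M+C$ directly; that route sidesteps the norm issue you were worried about.
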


\begin{proof}
In equation (\ref{weak affine eq}) substitute $x,y$ by $p,q$ and note that since $p,q$ lies on the same flat, $\left| \pi_{A}(p) -
\pi_{A}(q) \right| = d(p,q)$. So we have \[ (1-\hat{\tau}) d(p,q) \leq \hat{M} \]

\noindent to which the result follows.   Alternatively, this can be obtained from Theorem \ref{weak height preservation} applied to the
inverse map $\phi^{-1}$ and $\phi(p)$, $\phi(q)$, for then equation (\ref{weak height eq}) becomes
\begin{eqnarray*}
\left| \pi_{A} \circ \phi^{-1} (\phi(p)) - \pi_{A} \circ \phi^{-1}(\phi(q)) \right| & \leq & \tau d(\phi(p), \phi(q)) + M
\leq \tau \kappa d(p,q) + M+C \\
\left| \pi_{A}(p) - \pi_{A}(q) \right| = d(p,q) &\leq& \tau \kappa d(p,q) + M+C \\
d(p,q) &\leq& \frac{M+C}{1-\tau \kappa} \end{eqnarray*} \end{proof}

A subset $L$ of $\mathbf{A}' \simeq \mathbb{R}^{n}$ is called a 'grid' if it is the image
of an injective homomorphism $\psi: \mathbb{Z}^{n} \rightarrow \mathbf{A}'$.  A line in $L$ refers to a subset of the form:
$\{ \psi(\mathbf{c}+ t \mathbf{u}) : t \in \mathbb{Z} \}$ for some $\mathbf{c}, \mathbf{u} \in \mathbb{Z}^{n}$, and each
coordinate of $\mathbf{u}$ is either $+1, -1$ or $0$.  A grid is said to be good if none of its lines are parallel to root kernels.

\begin{lemma} \label{preleude to FTF}
Let $\{ x_{i} \} \subset G'$ be a sequence of points with the following properties:
\begin{enumerate}
\item $\pi_{A}(x_{j}) \not= \pi_{A}(x_{i})$ if $i \not= j$. \item $\{ \pi_{A}(x_{i}) \} \subset \mathbf{A}'$ is a good grid.
\item for any subsequence $\{x_{i_{j}} \}$ such that $\{ \pi_{A}(x_{i_{j}}) \}$ is a line, $\{x_{i_{j}} \}$ is within $O(1)$ of a
(bi-infinite) geodesic.  \end{enumerate}

Then $\{ x_{i} \}$ is within $O(1)$ of a flat. \end{lemma}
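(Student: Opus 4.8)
The plan is to pass to coordinates and build the target flat by hand. Parametrise the grid as $\psi(\mathbb{Z}^{n})$ and index the points by $\mathbf{k}\in\mathbb{Z}^{n}$ with $\pi_{A}(x_{\mathbf{k}})=\psi(\mathbf{k})$, writing $x_{\mathbf{k}}=(\mathbf{h}_{\mathbf{k}},\psi(\mathbf{k}))$ with horocyclic coordinate $\mathbf{h}_{\mathbf{k}}=(\mathbf{h}_{\mathbf{k},[\beta]})_{[\beta]}\in\mathbf{H}'=\bigoplus_{[\beta]}V_{[\beta]'}$. Since the grid is good, $\beta(\psi(\mathbf{u}))\neq 0$ for every root $\beta$ of $G'$ and every grid direction $\mathbf{u}\in\{0,\pm1\}^{n}\setminus\{0\}$, so every grid line is transverse to every root kernel; this non-degeneracy is used repeatedly. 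I would then exhibit a single $\mathbf{h}_{0}\in\mathbf{H}'$ and prove that $x_{\mathbf{k}}$ is within $O(1)$ of the flat $F$ with horocyclic coordinate $\mathbf{h}_{0}$, uniformly in $\mathbf{k}$; that is the whole statement.

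First I would upgrade hypothesis (iii) from ``near a geodesic'' to ``near a vertical line''. Fix a grid line $L=\{x_{\mathbf{c}+t\mathbf{u}}\}_{t}$ and the bi-infinite geodesic $\gamma$ it fellow-travels; $\pi_{A}(\gamma)$ is a full line of $\mathbf{A}'$ in direction $\psi(\mathbf{u})$. Projecting $\gamma$ to each weight hyperbolic space $V_{[\beta]'}\rtimes\langle\vec v_{[\beta]}\rangle$ (a projection that is coarsely Lipschitz by Lemma \ref{QI embedding}), the image is a quasigeodesic whose height $\beta\circ\pi_{A}$ is comparable to arclength and unbounded in both directions, because $\beta(\psi(\mathbf{u}))\neq 0$. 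In a hyperbolic space such a quasigeodesic converges at its ``downward'' end to one boundary point $a_{[\beta]}(L)\in V_{[\beta]'}$, and its $V_{[\beta]'}$-coordinate at height $h$ differs from $a_{[\beta]}(L)$ by at most $e^{h+O(1)}$ (using also the Gromov-product estimates of Lemmas \ref{lemma 5.15} and \ref{lemma 5.16}). Transporting back, I get for every root class $|\mathbf{h}_{\mathbf{c}+t\mathbf{u},[\beta]}-a_{[\beta]}(L)|\le e^{\beta(\psi(\mathbf{c}+t\mathbf{u}))+O(1)}$ for all $t$, the constant depending only on $G'$, the QI constants, and the $O(1)$ in (iii); in particular $L$ fellow-travels the vertical line with horocyclic coordinate $(a_{[\beta]}(L))_{[\beta]}$.

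The heart of the argument is to show that for each fixed root class $[\beta]$ all the anchors $a_{[\beta]}(L)$, over grid lines oriented so that $\beta$ decreases along them, are equal. The idea is to compare two such lines at a grid point of arbitrarily negative $\beta$-height, where the slack in Step 1 collapses. If $P\in L$, $P'\in L'$ lie on parallel lines, differ by a coordinate vector, and have $\beta(\psi(P)),\beta(\psi(P'))$ very negative, then the coordinate line $M$ through $P,P'$ stays at $\beta$-height $\beta(\psi(P))+O(1)$, so Step 1 for $M$ gives $|a_{[\beta]}(L)-a_{[\beta]}(M)|,|a_{[\beta]}(L')-a_{[\beta]}(M)|\le e^{\beta(\psi(P))+O(1)}$; letting $\beta(\psi(P))\to-\infty$ (possible exactly because $\beta(\psi(\mathbf{u}))\neq 0$, so each line realises arbitrarily negative $\beta$-heights) forces $a_{[\beta]}(L)=a_{[\beta]}(L')$. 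Sliding $P,P'$ one coordinate at a time links all parallel lines, and running the same comparison at a single grid point $Q$ with $\beta(\psi(Q))$ very negative through which two lines of different directions pass links the directions. Call the common value $a_{[\beta]}$ and set $\mathbf{h}_{0}=(a_{[\beta]})_{[\beta]}$, $F$ the flat with this horocyclic coordinate.

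To conclude, for any $\mathbf{k}$ and any root class $[\beta]$ I would pick a grid direction $\mathbf{u}$ with $\beta(\psi(\mathbf{u}))<0$ and apply Step 1 to the line through $\mathbf{k}$ in direction $\mathbf{u}$, whose anchor is $a_{[\beta]}$ by Step 2, obtaining $|\mathbf{h}_{\mathbf{k},[\beta]}-a_{[\beta]}|\le e^{\beta(\psi(\mathbf{k}))+O(1)}$. By the description in Section \ref{geometry of G} of the locus where two flats come together, this says precisely that $\pi_{A}(x_{\mathbf{k}})=\psi(\mathbf{k})$ is within $O(1)$ of the set where the flat through $x_{\mathbf{k}}$ meets $F$, so $d(x_{\mathbf{k}},F)=O(1)$ with constant independent of $\mathbf{k}$, i.e. $\{x_{i}\}\subset N_{O(1)}(F)$. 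I expect the genuinely technical point to be Step 1 — showing that the weight-space projection of a geodesic of $G'$ is a quasigeodesic with the stated convergence and exponential transverse control — while Step 2 is the conceptual crux but short; the reason the final constant does not grow along the grid (as a naive induction from $\mathbf{0}$ would suggest) is exactly that the anchors are pinned without error by the deep-point comparison.
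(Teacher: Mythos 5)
Your proof is correct, but takes a genuinely different route from the paper's. The paper argues by induction on the dimension $I$ of affine grid subsets: two parallel grid lines are fellow-traveled by geodesics which, via the forking obstruction in each weight hyperbolic space $V_{\Xi}\rtimes\mathbb{R}$, must lie on a common flat; then parallel $I$-dimensional grid pieces near a common flat are promoted to $(I+1)$-dimensional ones by the same forking argument applied to the connecting grid lines. You instead build the target flat directly: for each grid line $L$ and root class $[\beta]$ you read off an anchor $a_{[\beta]}(L)$ from the downward boundary point of the projected geodesic, prove all anchors coincide by comparing lines at grid points of arbitrarily negative $\beta$-height (where the exponential slack in the anchor estimate collapses), and exhibit the flat with horocyclic coordinate $(a_{[\beta]})_{[\beta]}$. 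The geometric core --- projecting to weight hyperbolic spaces, where distinct vertical geodesics fork, so hypothesis (iii) forbids a third geodesic crossing between them deep below the fork --- is identical in both. Your version makes the uniformity of the final constant completely explicit (the anchors are pinned exactly by the deep-point limit) and puts the good-grid hypothesis to transparent use ($\beta(\psi(\mathbf{u}))\neq 0$ lets every line realise arbitrarily negative $\beta$-heights); the paper's induction avoids the boundary-point bookkeeping and achieves uniformity implicitly, because parallel pieces are shown to lie on the \emph{same} flat, so no constants accumulate across the inductive steps.
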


\begin{proof}
Write $\{ \pi_{A}(x_{i}) \}=L$.  Let $\{ x_{1_{j}} \}$, and $\{ x_{2_{j'}} \}$ be two subsequences such that their $\pi_{A}$ images
are two parallel lines, and let $\mathit{l}_{1}$, $\mathit{l}_{2}$ be two geodesics within $O(1)$ of $\{ x_{1_{j}} \}$ and $\{
x_{2_{j}} \}$ respectively.  We note to every $x_{1_{j}}$, there is a $x_{2_{j}}$ such that $\pi_{A}(x_{2_{j}})$ is closest to
$\pi_{A}(x_{1_{j}})$ amongst $\{ \pi_{A}(x_{2_{j}}) \}$, and furthermore, there is a line in $L$ containing $\pi_{A}(x_{1_{j}})$
and $\pi_{A}(x_{2_{j}})$, and by assumption, this means that there is a geodesic $\hat{\mathit{l}}_{j}$ (whose direction is the same
for all $j$) within $O(1)$ of $x_{1_{j}}$ and $x_{2_{j}}$.  Now if $\mathit{l}_{1}$ and $\mathit{l}_{2}$ are not in the same flat, then
for any root $\Xi$, $\pi_{\Xi}(\mathit{l}_{1})$ and $\pi_{\Xi}(\mathit{l}_{2})$ fork out with respect to the orientation
we previously fixed on $\mathit{l}_{1}$ and $\mathit{l}_{2}$.  When $x_{1_{j}}$, $x_{2_{j}}$ are far away from the fork point, this
causes a contradiction because the existence of $\hat{\mathit{l}}$ mean that $\pi_{\Xi}(x_{1_{j}})$ and $\pi_{\Xi}(x_{2_{j}})$ can be
connected by a vertical geodesic in $V_{\Xi}$, but they lie far away from the forking point of two geodesics.
Therefore if $\{ x_{i_{j}} \}$ is a subsequence for which their $\pi_{A}$ image is a affine 2-subspace, then $\{ x_{i_{j}} \}$ lie
within $O(1)$ of an affine 2-subspace in a flat. \medskip


Now suppose whenever $\{ x_{i_{j}} \}$ is a subset whose $\pi_{A}$ image is an affine $I$-subspace, $\{ x_{i_{j}} \}$ is within $O(1)$
of a flat.  Let $\{ x_{1_{j}} \}$, $\{ x_{2_{j}} \}$ be two subsets such that $\{ \pi_{A}(x_{1_{j}}) \}$, $\{ \pi_{A}(x_{2_{j}}) \}$ are
two parallel $I$-hyperplane, and $\mathit{h}_{1}$, $\mathit{h}_{2}$ be two affine $I$ subspace within $O(1)$ of $\{ x_{1_{j}} \}$ and
$\{ x_{2_{j}} \}$ respectively. \smallskip

Then we know for every $x_{1_{j}}$, there is a $x_{2_{j}}$ such that there is a line in $L$ containing $\pi_{A}(x_{1_{j}})$ and
$\pi_{A}(x_{2_{j}})$, so by assumption, $x_{1_{j}}$, $x_{2_{j}}$ are within $O(1)$ of a (straight) geodesic $\hat{\mathit{l}}_{j}$.
Furthermore we can assume WLOG that the direction of $\hat{\mathit{l}}_{j}$ are the same for all $j$.  Therefore if
$\mathit{h}_{1}$ and $\mathit{h}_{2}$ lie on two different flats, then for some root $\Xi$, $\pi_{\Xi}(\mathit{h}_{1})$ and
$\pi_{\Xi}(\mathit{h}_{2})$ are two vertical geodesic that fork apart. Therefore for $x_{1_{j}}$, $x_{2_{j}}$ such that
$\pi_{\Xi}(x_{1_{j}})$, $\pi_{\Xi}(x_{2_{j}})$ lying very far from the fork point where $\pi_{\Xi}(\mathit{h}_{1})$ and
$\pi_{\Xi}(\mathit{h}_{2})$ diverge from each other, this is a contradiction to the existence of $\hat{\mathit{l}}_{j}$ within
$O(1)$ of $x_{1_{j}}$ and $x_{2_{j}}$, for the latter would imply that $\pi_{\Xi}(x_{1_{j}})$ and $\pi_{\Xi}(x_{2_{j}})$ are within
$O(1)$ of a vertical geodesic in $V_{\Xi}$.  \end{proof}

\begin{proof}\textit{to Proposition \ref{FTF}} \newline
Fix a flat $\mathcal{F} \simeq \mathbf{A}$.  The composition
$\pi_{A} \circ \phi |_{\mathcal{F}}$ is homotopic to the identity
map when we consider $\mathcal{F}$ as $\mathbf{A}$, so $\pi_{A}
\circ \phi|_{\mathcal{F}}$ preserves homologies.  By equation
(\ref{weak affine eq}), it takes balls to neighborhood of balls, so
it preserves homologies relative to the complement of a ball,
therefore $\pi_{A} \circ \phi|_{\mathcal{F}}$ is onto. \medskip

Let $L$ be a good grid in $\mathbf{A}'$ under a group isomorphism $\psi$.  We can further make sure that for each basis $e_{i} \in \mathbb{Z}^{n}$,
$| \psi(e_{i}) - \psi(\vec{0}) | = 4M_{0}$, where $M_{0}$ is as in Corollary \ref{the same piA}.  The same corollary
also implies that for every point $\mathbf{b} \in L$, the subset $s_{\mathbf{b}}=\{ x \in \phi(\mathcal{F}): \pi_{A}(x) =\mathbf{b}\}$ of $G'$
is contained in a ball of radius at most $M_{0}$.  Therefore for distinct points $\mathbf{b}, \mathbf{d} \in L$,
$s_{\mathbf{b}} \cap s_{\mathbf{d}} = \emptyset$. \smallskip

Let $\{ x_{j} \} \subset \phi(\mathcal{F})$ be a subset such that $x_{j} \in s_{\mathbf{b}}$ for some $\mathbf{b} \in L$.
We choose $x_{j}$'s so that if $j \not= i$, $x_{j} \in s_{\mathbf{b}}$, $x_{i} \in s_{\mathbf{c}}$ for $\mathbf{b} \not= \mathbf{c}$.
Now let $\{ x_{i_{j}} \}$ be a subsequence whose $\pi_{A}$ images is a line in $L$, and $\{ y_{i_{j}} \} \subset \mathcal{F}$ be their
$\phi$-preimages in $\mathcal{F}$.  Then by equation (\ref{weak affine eq}), $d(y_{i_{j}},y_{i_{j+1}}) \leq \hat{C}(L)$, for all
$j$, for some constant $\hat{C}$ depending on $L$.  This means that that $d(x_{i_{j}}, x_{i_{j+1}}) \leq 2\kappa \hat{C}$ for all $j$.
Apply (i) of Lemma \ref{Forste-lemma A1-2} to $\{ x_{i_{j}} \}$ shows that the sequence is within $O(\hat{C})$ of a (bi-infinite) geodesic.
The claim now follows by applying Lemma \ref{preleude to FTF} to $\{ x_{j} \}$.  \end{proof}


\subsection{Consequences of flats go to flats}

The primarily reason for introducing root boundaries $\partial_{[\alpha]}$ for each root
class $[\alpha]$ and the (psudo) distance function is that a quasi-isometry of $G$
induces quasi-similarities among $(\partial_{[\alpha]}, D_{[\alpha]})$'s.

Recall that a map $F: X \rightarrow Y$ between metric spaces is called a
$(N,K)$-\emph{quasi-similarities} if
\[ N/K d(x,y) \leq d(F(x), F(y)) \leq NK d(x,y) \]

\noindent When $K=1$, the map is called a \emph{similarity}.  We write the group of
quasi-similarities of a space by $QSim(X)$.

\begin{proposition} \label{QI preserving flats}
If $\psi: G=\mathbf{H} \rtimes_{\varphi} \mathbf{A} \rightarrow G'=\mathbf{H}' \rtimes_{\varphi'} \mathbf{A}'$ is a quasi-isometry such that the image of left
translate of $\mathbf{A}$ is within $O(1)$ neighborhood of a left translates of $\mathbf{A}'$.  Then
\begin{enumerate}
\item $\psi$ sends foliations by root kernels of $G$ to the foliations by root kernels of $G'$.
\item The number of root classes of $G$ and $G'$ are the same.

\item $\psi$ sends foliations by root class horocycles of $G$ to foliations by root class horocycles of $G'$.

\item $\psi$ is within $O(1)$ of a product map $f \times g$, where $f: \mathbf{A} \rightarrow \mathbf{A}'$ is affine whose linear part
is some scalar multiple of a finite order element of $O(n)$; while $g=(g_{1},g_{2}, \cdots g_{s_{0}})$, each
$g_{i}$ is a bilipschitz map from $V_{[\alpha]}$ to $V_{f_{*}[\alpha]}$, where $f_{*}$ is the bijection from root classes of $G$ to root
classes of $G'$ as induced by the linear part of $f$.

\item For each root class $[\Xi]$, if we list the roots $\xi_{1} < \xi_{2} < \xi_{3} \cdots
\xi_{l}$, then $f$ induces a map on the roots such that $f_{*} \xi_{1} < f_{*} \xi_{2}
\cdots f_{*} \xi_{l}$, where $\{ f_{*} \xi \} = f_{*}[\Xi]$.  Furthermore, with respect
to this order of roots, $g|_{V_[\Xi]}: V_{[\Xi]} \rightarrow V_{f_{*}[\Xi]}$ has the form

\[ (x_{1}, x_{2}, \cdots x_{l} ) \mapsto ( g_{1}(x_{1},x_{2}, \cdots x_{l}), \cdots   g_{l-1}(x_{l-1},x_{l}) ,
g_{l}(x_{l}) ) \]

\noindent such that for each $i$, any $x_{j} \in V_{\xi_{j}}$ for $j > i$, the map $\bullet \mapsto g_{i}(\bullet, x_{i+1}, \cdots x_{l})$
is a $(e^{f_{*}\xi_{i}(\mathbf{t}_{0})}, 2\kappa e^{c})$ quasi-similarity, where $\mathbf{t}_{0}$ is the constant part of $f$, and $c$ is a
constant depending on $\psi$.


\item $\psi$ induces an quasi-similarity between root boundaries of $G$ to root boundaries of $G'$. \end{enumerate} \end{proposition}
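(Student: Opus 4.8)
The plan is to deduce each of the six items more or less in the order stated, using as the single main input the hypothesis that $\psi$ carries left translates of $\mathbf{A}$ to within bounded distance of left translates of $\mathbf{A}'$, together with the coarse geometry of the weight hyperbolic spaces recorded in Lemma \ref{QI embedding}. First I would observe that, since flats go uniformly to within $O(1)$ of flats in both directions (apply the same hypothesis to a coarse inverse of $\psi$), $\psi$ induces a bijection between the spaces of flats that is itself a quasi-isometry for the natural metric on the space of flats. Two flats $p\mathbf{H}$, $q\mathbf{H}$ of $G$ come together exactly along the unbounded convex set $\bigcap_{\alpha:\ln|x_\alpha-y_\alpha|\ge 1}\alpha^{-1}[U_\alpha(|x_\alpha-y_\alpha|),\infty)$ described in Section \ref{geometry of G}, whose bounding hyperplanes are translates of root kernels; since $\psi$ is a quasi-isometry it must send the asymptotic-overlap set of two flats to within $O(1)$ of the asymptotic-overlap set of their images, and since it is bijective on flats this forces the combinatorial pattern of root kernels — i.e. the chamber structure — to be preserved. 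This gives (i), and counting chambers (equivalently, the number of walls through a generic point) gives (ii). The key tool throughout is that for a regular direction $\vec v$ the projection $\Pi_{\vec v}$ lands $G$ in a genuine negatively curved space, so "being within $O(1)$ of a common geodesic ray" is a robust, quasi-isometry-invariant notion; a horocycle of root class $[\alpha]$ is precisely a maximal family of flats that are pairwise asymptotic in every direction $\vec v$ with $\alpha_0(\vec v)<0$, so (iii) follows from (i) by the same argument applied fibrewise.

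For (iv) I would first produce the map $f$ on $\mathbf{A}$: because $\psi$ is within $O(1)$ of flats-to-flats and preserves the wall pattern by (i), $\psi$ descends to a map on the space of flats which, in the flat-metric, is a quasi-isometry of $\mathbf{A}\cong\mathbb{R}^n$ permuting the root-kernel foliations; by the argument already used in Lemma \ref{leaving RG invariant} and Remark \ref{branching constant the same} (which is purely about a quasi-isometry respecting root kernels) this forces a genuine affine map $f$ whose linear part is a scalar multiple of a finite-order element $A_f\in O(n)$, and $f$ induces the root-class bijection $f_*$. Having $f$, fix a flat and a root class $[\alpha]$; restricting $\psi$ to $p V_{[\alpha]}$ and projecting to $V_{f_*[\alpha]}$ via $\pi^{-}_{f_*[\alpha]}$ (using the boundary identifications of Section \ref{shadow+slab}) gives for each $[\alpha]$ a map $g_{[\alpha]}:V_{[\alpha]}\to V_{f_*[\alpha]}$; that this is well-defined up to $O(1)$ and bilipschitz comes from Corollary \ref{cross area preservation}, which says exactly that $\psi$ distorts the measure of sets in these boundaries by only a bounded multiplicative factor, hence $g_{[\alpha]}$ is a bilipschitz homeomorphism. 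Proposition \ref{map on heights} pins down the constant part $\mathbf t_0$ of $f$ in terms of $\log$ of a measure ratio, and this is what converts the bounded multiplicative measure distortion into the specific quasi-similarity constant $(e^{f_*\xi_i(\mathbf t_0)},\,2\kappa e^c)$ appearing in (v).

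For (v), the finer triangular shape of $g|_{V_{[\Xi]}}$ comes from the ordering of roots inside a root class: the sub-horocycles $V_{\xi_i}\oplus\cdots\oplus V_{\xi_l}$ for decreasing $i$ form a nested flag that is again quasi-isometry-invariant (each is the set of flats asymptotic in a slightly larger cone of directions), and $f_*$ must respect this flag, so $f_*\xi_1<\cdots<f_*\xi_l$; restricting $\psi$ to each step of the flag and applying Proposition \ref{map on heights} / Corollary \ref{cross area preservation} at the corresponding height gives that the induced map on the quotient $V_{\xi_i}$ is a quasi-similarity with the stated constants, with the dependence on the higher coordinates $x_{i+1},\dots,x_l$ reflecting which sub-flag one is sitting inside. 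Finally (vi) is essentially a restatement of (v) in the language of the pseudo-distance $D_{[\alpha]}(p,q)=e^{ct_{p,q}}$: since $t_{p,q}$ is the height at which the corresponding negative half-planes come to distance $1$, and $\psi$ changes that height affinely by $q$ (Lemma \ref{lemma 4-7}: slope $\mathbf b_{[\alpha]}/\mathbf b_{f_*[\alpha]}$, constant term controlled by Proposition \ref{map on heights}), the induced map on $(\partial_{[\alpha]},D_{[\alpha]})$ multiplies distances by a fixed power up to a bounded factor, i.e. is a quasi-similarity. The main obstacle I expect is (iv)–(v): getting from "preserves the wall pattern and is $O(1)$ from flats-to-flats" to an honest affine $f$ with the $O(n)$-linear-part statement requires re-running the counting/branching-constant argument in this global setting and checking that the bounded measure distortion of Corollary \ref{cross area preservation} genuinely upgrades the coarse fibre maps to bilipschitz (and then quasi-similarity) maps uniformly over all flats, rather than just on the single box where that corollary was proved; assembling these box-local statements into a global one is where the real work lies.
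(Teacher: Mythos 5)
Your treatment of (i)--(ii) is essentially the paper's: $\psi$ preserves the combinatorics of pairwise flat intersections, whose boundaries are unions of hyperplanes parallel to root kernels, and this forces the chamber pattern and hence the number of root kernels to be preserved. But for (iv)--(vi) you propose a genuinely different route, and it has a gap.

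Your plan for (iv)--(v) is to globalize the Section 3 box-level machinery (Lemma \ref{leaving RG invariant}, Corollary \ref{cross area preservation}, Proposition \ref{map on heights}, Lemma \ref{lemma 4-7}) and extract affineness of $f$ from the branching/counting argument and bilipschitz of $g$ from "bounded measure distortion." Two problems. First, Lemma \ref{leaving RG invariant} is not, as you parenthetically claim, "purely about a quasi-isometry respecting root kernels"; its proof runs entirely through the shadow/slab/coarsening apparatus and the partially defined standard map of Theorem \ref{exisence of standard maps in small boxes}, and its conclusion carries an $O(\hat\eta\,\mathrm{diam}\,\mathbf B)$ error; you cannot quote it off the shelf in the global flats-to-flats setting. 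The paper does not do this: once flats go to flats, the restriction $\psi|_{\mathcal F}$ carries $\geq n+1$ parallel hyperplane families to parallel hyperplane families when $G$ has at least $\mathrm{rank}(G)+1$ root kernels, and this forces $\psi|_{\mathcal F}$ to be honestly affine by elementary projective geometry, with no error term; quadrilaterals then show the affine data is flat-independent. When $G$ has exactly $\mathrm{rank}(G)$ root kernels this argument fails, and the paper invokes a rank-one result of Farb--Mosher instead; your proposal makes no provision for this dichotomy, which is where the real difficulty in the proposition lives. Second, and more seriously, "$\psi$ distorts the measure of boundary sets by a bounded multiplicative factor, hence $g_{[\alpha]}$ is bilipschitz" is a non sequitur: bounded Radon--Nikodym distortion alone gives no modulus of continuity whatsoever. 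The paper obtains bilipschitz for $g|_{V_\Xi}$ by a pointwise metric estimate — comparing $e^{-\Xi(\mathbf t)}|x-y|$ with the corresponding expression in $G'$ along a family of translates and letting $\Xi(\mathbf t)\to\infty$ — which is a fundamentally different (and valid) mechanism. You correctly flag (iv)--(v) as the hard part, but the specific tools you reach for do not close the gap; they would have to be replaced by the affine/metric argument above, or supplemented by a substantial new argument upgrading measure distortion to metric distortion in this specific geometric situation.
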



\begin{proof}
Since two flats are within a finite Hausdorff distance of each other if and only if they are the same flat, our assumption means that
a flat $\hat{\mathcal{F}}$ within $O(1)$ Hausdorff neighborhood of $\psi(\mathcal{F})$ for $\mathcal{F}$ a flat, is unique.

If two flats $\mathcal{F}_{1}, \mathcal{F}_{2}$ have no-empty intersection (i.e. come together) then
$\hat{\mathcal{F}}_{1} \cap \hat{\mathcal{F}}_{2} \not= \emptyset$, and $d_{H}(\phi( \partial (\mathcal{F}_{1} \cap \mathcal{F}_{2})), \partial
(\hat{\mathcal{F}}_{1} \cap \hat{\mathcal{F}}_{2})) \leq M_{0}$. \medskip

The boundary of the set where two flats come together viewed as a subset of $\mathbf{A}$ is a finite union of hyperplanes parallel root
kernels.  If for two flats $\mathcal{F}$, $\mathcal{F}'$, $\partial( \mathcal{F} \cap \mathcal{F}')$ is a
union of hyperplanes parallel to at least two roots kernels, then there are flats $\mathcal{F}_{1}, \mathcal{F}_{2}, \cdots
\mathcal{F}_{k}$ such that $\mathcal{F} \bigcap \mathcal{F}' = \bigcap_{\iota=1}^{k} \mathcal{F} \bigcap
\mathcal{F}_{\iota}$.\smallskip

Therefore if $\partial(\mathcal{F}_{1} \cap \mathcal{F}_{2})$ is a hyperplane parallel to just one root kernel, then so is
$\partial(\hat{\mathcal{F}}_{1} \bigcap \hat{\mathcal{F}}_{2})$. \smallskip

Fix a flat $\mathcal{F}$.  Since every hyperplane in $\mathcal{F}$ parallel to a root kernel is the boundary of $\mathcal{F} \cap
\mathcal{F}'$ for some other flat $\mathcal{F}'$, it follows that if $h_{1},h_{2} \subset \mathcal{F}$ are two hyperplanes parallel to a
common root kernel, then there exists hyperplanes $\hat{h}_{1},\hat{h}_{2} \subset \hat{\mathcal{F}}$ parallel to a common root
kernel such that $d_{H}(\phi(h_{i}), \hat{h}_{i}) \leq A$ for $i=1,2$. In other words, $\phi|_{\mathcal{F}}: \mathcal{F}
\rightarrow \hat{\mathcal{F}}$ sends hyperplanes parallel to the kernel of a root, to hyperplanes parallel to the kernel of some
other root.  As $\psi$ is coarsely onto, it follows that the number of root kernels of
$G$ and $G'$ are the same.

For the remaining claims, we consider two cases separately.

\bold{Case I - $G$ has rank(G) many root kernel}

Since $G$ is unimodular, the sum of all its roots is zero.  But with only $rank(G)$ many root kernels, this means that there is a basis
$\{ \alpha_{i} \}$ of $\mathbf{A}^{*}$ such that every root of $G$ is of the form $c \alpha_{i}$ where $c \in \mathbb{R} - \{ 0 \}$.
By abuse of notation, we will write $V^{+}_{\alpha_{i}}$ to be the direct sum of root spaces where the root is positive multiples of
$\alpha_{i}$, even though $\alpha_{i}$ itself might not be a root, and similarly, $V^{-}_{\alpha_{i}}$ denote for direct sum of root spaces
where the root is negative multiples of $\alpha_{i}$. Let $\{ \beta_{i} \} \subset \mathbf{A}'^{*}$ be the analogous linear functionals
for $G'$.  In this way, $\mathbf{H}=\bigoplus_{i} V^{+}_{\alpha_{i}} \oplus V^{-}_{\alpha_{i}}$, and similarly
$\mathbf{H}'= \bigoplus_{i} V^{+}_{\beta_{i}} \oplus V^{-}_{\beta_{i}}$. \smallskip

First we show that left translates of $\mathbf{H}$ are taken to $O(1)$ Hausdorff neighborhood of left translates of $\mathbf{H}'$.
Take $p, q \in V^{\star}_{\alpha_{1}}$, $\star \in \{+ ,- \}$.  We construct a quadrilateral $Q$ (see Definition \ref{Forste-quadrilateral}
in \cite{Pg}) with $p,q$ as two of its vertices using left translates of $\bigcap_{j \not= 1} ker(\alpha_{j})$.  Since flats are taken to
$O(1)$ neighborhoods of flats, $\psi(Q)$ is within $O(1)$ neighborhood of a quadrilateral $\hat{Q}$ whose edges are left translates of
$\bigcap_{j \not=l} ker(\beta_{j})$ for some $l$, and therefore $\psi(p)$, $\psi(q)$ are within $O(1)$ neighborhood of a left translate of
$V^{\star}_{\beta_{l}}$, $\star \in \{+,- \}$.

More generally, for any two points $x, y \in \mathbf{H}$, we can find a sequence of
points $p_{i} \in \mathbf{H}$, such that $p_{0}=x$, $p_{s}=y$, and for all $i$, $p_{i}$,
$p_{i+1}$ differ only in a $V^{\star}_{\alpha_{i}}$, $\star \in \{ +, = \}$ coordinate.
The previous argument then implies $\psi(x)$ and $\psi(y)$ are within $O(1)$ of a left
translate of $\mathbf{H}'$, because the same is true of $\psi(p_{i})$ and $\psi(p_{i+1})$ for
all $i$. \medskip

Next we show that the restriction of $\psi$ to different flats agree. If $p, q \in \mathbf{H}$ are points such that for each
$\alpha_{i}$, these two points differ in only one of $V^{+}_{\alpha_{i}}$, $V^{-}_{\alpha_{i}}$ coordinate (but not both), then the
intersection of flats based on them, $p \mathcal{F}$, $q \mathcal{F}$, intersect at an unbounded set whose boundary is a union of hyperplane
parallel to kernels of $\alpha_{i}$'s that appear in the coordinate difference between $p$ and $q$.  Furthermore, we can find a third point
$x \in \mathbf{H}$ that differ in one of $V^{+}_{\alpha_{i}}$, $V^{-}_{\alpha_{i}}$ coordinate with each of $p$ and $q$ so that
$p \mathcal{F} \cap q \mathcal{F} \subset p \mathcal{F} \cap x \mathcal{F}$, and
$p \mathcal{F} \cap q \mathcal{F} \subset q \mathcal{F} \cap x \mathcal{F}$.  In this way, we see that
$\phi|_{p \mathcal{F}}$ is the same as $\phi|_{q \mathcal{F}}$.

More generally, for any two points $x, y \in \mathbf{H}$, we can find a finite number of
points $p_{i} \in \mathbf{H}$ such that $p_{0}=x$, $p_{l}=y$, such that for all $i$,
$p_{i}$ differ from $p_{i+1}$ in one of $V^{+}_{\alpha_{j}}$, $V^{-}_{\alpha_{j}}$
coordinate (but not both).  The previous argument then implies $\phi|_{x \mathcal{F}}$ is
the same as $\phi|_{x \mathcal{F}}$ (as $\phi|_{p_{i} \mathcal{F}}$ is the same as
$\phi|_{p_{i+1} \mathcal{F}}$). \medskip

Denote the common value of the restriction of $\psi$ to any flat as $f$.  Then we know
that $f$ induces a bijection between $\{ \alpha_{i} \}$ and $\{ \beta_{i} \}$ which are
numbered such that for all $i$, $\psi$ sends left translates of kernel of $\alpha_{i}$ to left
translates of kernel of $\beta_{i}$.  As left translate of $ker(\alpha_{i})$ (resp.
$ker(\beta_{i})$) can be identified with a rank 1 space $G_{\alpha_{i}}=\mathbf{H} \rtimes \mathbb{R}(\vec{v}_{\alpha_{i}})$
(resp. $G'_{\beta_{i}}=\mathbf{H}' \rtimes \mathbb{R}(\vec{v}_{\beta_{i}})$), $\psi$ induces a
quasi-isometry from $G_{\alpha_{i}}$ to $G'_{\beta_{i}}$ that sends left translates of
$\mathbf{H}$ to $O(1)$ neighborhood of left translates of $G'_{\beta_{i}}$.  By
Proposition 5.8 of \cite{Benson}, we conclude that the $f$-induced map from
$\mathbf{A}/ker(\alpha_{i})$ to $\mathbf{A}'/ ker(\beta_{i})$ is bounded distance from an
affine map. \smallskip

Repeat the above argument to all other $\alpha_{i}$'s shows that $f$ is $O(1)$ away from
being affine that respects root kernels of $G$ and $G'$.

\bold{Case II - $G$ has at least rank(G)+1 many root kernels. }
In this case, the restriction to a flat, $\phi|_{\mathcal{F}}$ preserves at least $n+1$ many parallel
families of hyperplanes, thus forcing $\phi|_{\mathcal{F}}$ to be affine (when $\mathcal{F}$ and $\hat{\mathcal{F}}$ are viewed as
$\mathbf{A}$ and $\mathbf{A}'$ respectively.) that respects root kernels.  \smallskip

So by identifying $\mathcal{F}$ and $\hat{\mathcal{F}}$ with $\mathbb{R}^{n}$, $\phi|_{\mathcal{F}}$ can be written as
$\mathbf{t} \mapsto M_{\mathcal{F}}(\mathbf{t}) + \mathbf{t}_{0}^{\mathcal{F}}$, where $M_{\mathcal{F}} \in GL(n)$ is
a linear map preserving root kernels.  This implies (i), that images of (straight) geodesics are within $O(1)$ Hausdorff neighborhood of
straight geodesics.  At this stage, both the affine map $M_{\mathcal{F}}$ and the translation $\mathbf{t}_{0}^{\mathcal{F}}$
depend on the flat $\mathcal{F}$. \medskip

Fix a root equivalence class $[\Xi]$.  Let $\vec{v} \in \mathbb{R}^{n}$ be a direction such that $\alpha(\vec{v}) \not= 0$
for all roots $\alpha$.  Then for any two points $x, y$ in the same left coset of $V_{[\Xi]}$, there is a quadrilateral $Q$ with $x,y$
as two of its vertices, and whose edges are all in direction $\vec{v}$.  WLOG we can assume $\Xi(\vec{v}) < 0$, so $x,y$ are in
the same left coset of $W^{-}_{\vec{v}}$.  Since $\psi$ takes geodesics to $O(1)$ neighborhood of geodesics, it follows that
$\psi(Q)$ is within $O(1)$ of another (0) quadrilateral.  Write $\mathcal{F}_{x}=x \mathbf{A}$, $\mathcal{F}_{y}=y \mathbf{A}$ for
the flats based at $x$ and $y$ respectively.  Then $\psi(Q)$ close to a quadrilateral means that $M_{\mathcal{F}_{x}}(\vec{v}) =
M_{\mathcal{F}_{y}}(\vec{v})$, and the dot product between $\vec{v}$ and $\mathbf{t}_{0}^{\mathcal{F}_{x}}$, as well as
$\mathbf{t}_{0}^{\mathcal{F}_{y}}$ are the same.  More over, writing $\vec{u}= M_{\mathcal{F}_{x}}(\vec{v})$, by
Lemma \ref{Forste-quadrilateral word in rank 1} in \cite{Pg}, $\psi(x)$, $\psi(y)$ are within $O(1)$ of a left coset of $W^{-}_{\vec{u}}$.  \smallskip

We can find at least $n$ $\vec{v}_{i}$'s, such that $\cap_{i}W^{\Xi(\vec{v}_{i})/|\Xi(\vec{v}_{i})|}_{\vec{v}_{i}} = V_{[\Xi]}$ and none of them
lies in a root kernel.  Repeat the same argument as before yields that $M_{\mathcal{F}_{x}}=M_{\mathcal{F}_{y}}=M$,
$\mathbf{t}_{0}^{\mathcal{F}_{x}}=\mathcal{t}_{0}^{\mathcal{F}_{y}}$. That is, the restriction of $\psi$ to the flats based at $x,y$ are
within $O(1)$ of each other.  WLOG, we can assume that we have so many $\vec{v}_{i}$'s, such that
$\cap_{i}W^{\Xi(\vec{v}_{i})/|\Xi(\vec{v}_{i})|}_{M(\vec{v}_{i})} = V_{[\beta]}$ for some root equivalence class $[\beta]$, which shows
that $\psi(x)$, $\psi(y)$ are within $O(1)$ of a left translate of $V_{[\beta]}$ for some root class $[\beta]$.  \smallskip

Since any two points on the same left translate of $\mathbb{R}^{m}$ (or $\mathbf{H}$) can be 'connected' by finitely many points, for
which successive pairs of points lie on a left translate of $V_{[\alpha]}$ for some root class $[\alpha]$, it follows that
$\psi$ sends a left translate of $\mathbf{H}$ to within $O(1)$ neighborhood of another left translate of $\mathbf{H}$. Since the
$\psi$ restricted to flats based at two points that lie on a common horocycle agree, we also have now that the restriction of $\psi$ to
any two flats agree. That is, the restriction of $\psi$ to any flat does not depend on the base point of the flat.
\\

So now we know that regardless of the number of root kernels, $\psi$ splits into $f \times g$, where $f: \mathbf{A} \rightarrow \mathbf{A}'$
affine and respects root kernels, while $g: \mathbf{H} \rightarrow \mathbf{H}'$ takes root class horocycles to root class horocycles.
Furthermore, the permutation on root classes induced by $f$ and $g$ agree.\smallskip

We now proceed to show that the $f$ actually induces a bijection between roots of $G$ and $G'$ (not just root
classes). \smallskip

Since we know now the $\psi$ restricted to any flat is the map $\mathbf{t} \mapsto M(\mathbf{t})+\mathbf{t}_{0}$.  This means that
(straight) geodesics are taken to straight geodesics, and we can compare the rate of divergence between two geodesics in the same direction
but at based at different points of $\mathbf{H}$. \smallskip

Specifically, take $\Xi$ a root, and let $p,q$ be two points on a common $\Xi$ horocycle. Pick some $\vec{v}
\in \mathbf{A}$, and let $\mathit{l}_{x}=x(t \vec{v})$, $\mathit{l}_{y}=y(t \vec{v})$ be two geodesic rays in direction
$\vec{v}$ leaving $x,y$ respectively.  Then $\psi(\mathit{l}_{x})$, $\psi(\mathit{l}_{y})$ are within $O(1)$ of $\mathit{l}_{x'}=x'(t
M(\vec{v}))$ and $\mathit{l}_{y'}=y'(t M(\vec{v}))$ respectively, where $x', y'$ are within $O(1)$ of a left translate of
$V_{[\beta]}$. \smallskip

The rate of divergence between $\mathit{l}_{x}$ and $\mathit{l}_{y}$ is $e^{t\Xi(\vec{v})}$; the divergence rate between
$\mathit{l}_{x'}$ and $\mathit{l}_{y'}$ is $e^{t \xi(M(\vec{v}))}$ for some $\xi \in [\beta]$. Since the two rates are QI to each
other, there must be some $\tilde{\beta} \in [\beta]$ such that $\Xi(\vec{v})=\tilde{\beta}(M(\vec{v}))$.  But $\vec{v}$ is
arbitrary, so $\Xi=\tilde{\beta} \circ M$, $x',y'$ are in the same left translate of $\oplus_{\xi \in [\beta]: \xi < \tilde{\beta}}
V_{\xi}$. \smallskip

This means that $f$ induces a bijection $\sigma_{f}$ between roots of $G$ and $G'$.  So that for each
root $\alpha$, $\psi$ sends left translates of $\oplus_{\xi \in [\alpha]: \xi \leq \alpha} V_{\xi}$, to $O(1)$ Hausdorff
neighborhoods of left translates of direct sum of root spaces where the roots are in the
same root class as $\sigma_{f}(\alpha)$ but less than $\sigma_{f}(\alpha)$. \smallskip

For $x, y \in V_{\Xi}$, write $g_{\Xi}(x), g_{\Xi}(y) \in V_{\sigma(\Xi)}$ for the $V_{\sigma(\Xi)}$
, and for each $\xi < \sigma(\Xi)$, write $\tilde{g}_{\xi}(x), \tilde{g}_{\xi}(y) \in
V_{\xi}$ for the $V_{\xi}$ component of $g(x)$ and $g(y)$, so that $g(x)=g_{\Xi}(x) + \sum_{\xi} \tilde{g}_{\xi}(x)$,
$g(y)=g_{\Xi}(y) + \sum_{\xi} \tilde{g}_{\xi}(y)$.  Pick a $\mathbf{t} \in \mathbf{A}$, then the distance between $(\mathbf{t},x)$ and
$(\mathbf{t},y)$ with respect to path metric in $\mathbf{t} \mathbf{H}$ is $e^{-\Xi(\mathbf{t})}|x-y|$.  The $\psi$ images of
$(\mathbf{t},x)$, $(\mathbf{t},y)$ is $c$ away from $(M\mathbf{t}+\mathbf{t}_{0}, g(x))$ and $(M \mathbf{t} + \mathbf{t}_{0},
g(y))$, so we have the following inequality:

\begin{eqnarray*}
\frac{1}{2 \kappa } e^{-c} e^{-\Xi(\mathbf{t})} |x-y | & \leq &
\left( e^{-\sigma(\Xi)(M \mathbf{t} + \mathbf{t}_{0})} P_{\Xi}(\sigma(\Xi)(M \mathbf{t} + \mathbf{t}_{0}))  |g_{\Xi}(x)-g_{\Xi}(y)| \right) \\
& + & \left( \sum_{\xi< \sigma(\Xi)}e^{-\xi(M \mathbf{t} + \mathbf{t}_{0})} P_{\xi}(\xi(M \mathbf{t} + \mathbf{t}_{0}))
|\tilde{g}_{\xi}(x)-\tilde{g}_{\xi}(y)| \right) \\
& \leq & 2\kappa e^{c} e^{-\Xi(\mathbf{t})} |x-y | \\ \end{eqnarray*}

Since $\sigma(\Xi) \circ M = \Xi$, dividing by $\Xi P_{\Xi}$ on both sides and let
$\Xi(\mathbf{t}) \rightarrow \infty$, and noting that for any polynomial $Q$, $\lim_{t \rightarrow \infty}
Q(t+t_{0})/Q(t)$ is bounded above by a number independent of $t_{0}$, so we end up with

\[ \frac{1}{2 \kappa } e^{-c} e^{\sigma(\Xi)(\mathbf{t}_{0})} \left| x-y \right| \leq
\left| g_{\Xi}(x) - g_{\Xi}(y) \right| \leq 2\kappa e^{c} e^{\sigma(\Xi)(\mathbf{t}_{0})} \left|x-y \right| \]

\noindent so the restriction of $g|_{V_{\Xi}}: V_{\Xi} \rightarrow V_{\sigma(\Xi)}$ is bilip with bilip constants
$2\kappa e^{c} e^{\sigma(\Xi)(\mathbf{t}_{0})}$, $1/2\kappa e^{-c} e^{\sigma(\Xi)(\mathbf{t}_{0})}$.

To summarize, $\psi$ is $O(1)$ from a map of the form $(\mathbf{x}, \mathbf{t}) \mapsto (g(\mathbf{x}), mA_{f}(\mathbf{t}) + \mathbf{t}_{0})$,
where $m> 0$, $A_{f}: \mathbf{A} \rightarrow \mathbf{A}'$ is a finite order element in $O(n)$ that preserves foliations by root kernels, while
$g: \mathbf{H} \rightarrow \mathbf{H}'$ sends root horocycles to root horocycles and furthermore respects the graded foliations
in each root horocycle. \medskip

Let $\sigma$ be the permutation on root classes induced by $M$.  Then as $\psi$ sends negative $[\alpha]$ half planes
to bounded neighborhood of negative $\sigma([\alpha])$ half planes, $\psi$ induces a map
from $\partial^{-}_{[\alpha]}$ to $\partial^{-}_{\sigma ([\alpha])}$ for every root class
$[\alpha]$ of $G$.

Furthermore, the map $\hat{q}: \mathbb{R} \sim \mathbf{A}/ker(\alpha_{0}) \rightarrow \mathbf{A}/ker(\sigma(\alpha)_{0})\sim \mathbb{R}$ is
bounded distance from an affine with linear term as $m$ and constant term as
$\sigma(\alpha)_{0}(\mathbf{t}_{0})$, we now show the induced map on lower root
boundaries is a quasi-similarity.

Take $p,q \in \partial_{[\alpha]} \sim V_{[\alpha]}$.  Then, definition of $D_{[\alpha]}(p,q)$ says that
\[ D_{[\alpha]}(p,q) = e^{m t_{p,q}} \] \noindent under $\phi$, we have

\[ D_{\sigma([\alpha])}(g(p),g(q)) = e^{\hat{q}(t_{p,q})} \] \noindent therefore
\begin{eqnarray*}
mt_{p,q} + \sigma(\alpha)_{0}(\mathbf{t}_{0}) - c & \leq & \hat{q}(t_{p,q}) \leq mt_{p,q} + \sigma(\alpha)_{0}(\mathbf{t}_{0}) + c \\
e^{m t_{p,q}} \mbox{    }   \frac{e^{\sigma(\alpha)_{0}(\mathbf{t}_{0})}}{e^{c}} & \leq & e^{\hat{q}(t_{p,q})}
\leq e^{m t_{p,q}}  \mbox{      }  e^{\sigma(\alpha)_{0}(\mathbf{t}_{0})}e^{c} \\
\frac{1}{e^{c}} \left( e^{\sigma(\alpha)_{0}(\mathbf{t}_{0})} D_{[\alpha]}(p,q) \right) & \leq &
D_{\sigma([\alpha])}(g(p), g(q)) \leq  e^{c} \left(e^{\sigma(\alpha)_{0}(\mathbf{t}_{0})} D_{[\alpha]}(p,q) \right)
\end{eqnarray*} \end{proof}

\begin{theorem} \label{behaviors of phi}
Let $G$, $G'$ be a non-degenerate, unimodular, split abelian-by-abelian solvable Lie group, and $\phi: G \rightarrow G'$ a $(\kappa, C)$
quasi-isometry.  Then $\phi$ is bounded distance from a composition of a left translation followed
by a map of the form $(\mathbf{x}, \mathbf{t}) \rightarrow (g(\mathbf{x}), f(\mathbf{t})$,
where $f$ is affine whose linear part is a positive of a finite order element $A_{f} \in
O(n)$ (n is the rank of $G$) that preserves foliations by root kernels, while $g=(g_{1}, g_{2}, \cdots, g_{\sharp})$,
$g_{i}$ is a bilip map from $V_{[\alpha_{i}]}$ to $V_{[\alpha_{i}]}$ with bilip constants depending only on $\kappa,
C$. \end{theorem}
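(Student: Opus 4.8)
The plan is to obtain Theorem~\ref{behaviors of phi} by assembling the three structural results already in place. First, Theorem~\ref{weak height preservation} gives the estimate $|\pi_A(\phi(x))-\pi_A(\phi(y))|\le \tau d(x,y)+M$ for $x,y$ on a common left coset of $\mathbf H$, and Proposition~\ref{FTF} (through Lemma~\ref{weak affine map}, Corollary~\ref{the same piA} and Lemma~\ref{preleude to FTF}) upgrades this to the assertion that the $\phi$-image of every flat lies in a uniformly bounded neighborhood of a unique flat of $G'$, the bound depending only on $\kappa,C$ and the fixed data of $G,G'$. This is precisely the hypothesis needed to invoke Proposition~\ref{QI preserving flats}, which then yields a bijection $f_*$ between the root classes of $G$ and of $G'$, a splitting of $\phi$ up to bounded error as a product map $(\mathbf x,\mathbf t)\mapsto(g(\mathbf x),f(\mathbf t))$ with $f$ affine, respecting root kernels, and with linear part $mA_f$ ($m>0$, $A_f\in O(n)$ of finite order by Remark~\ref{branching constant the same} and Lemma~\ref{leaving RG invariant}), together with the graded description of $g=(g_1,\dots,g_\sharp)$ in which $g_i:V_{[\alpha_i]}\to V_{f_*[\alpha_i]}$ is a quasi-similarity of the shape recorded in item~5 of that proposition.

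It remains to put this into the normalized form of the statement. Write $f(\mathbf t)=mA_f(\mathbf t)+\mathbf t_0$; then the product map above equals $L_{(\mathbf 0,\mathbf t_0)}\circ\Psi$, where $L_{(\mathbf 0,\mathbf t_0)}$ is left translation of $G'$ by $(\mathbf 0,\mathbf t_0)$ and $\Psi(\mathbf x,\mathbf t)=\big(\varphi'(-\mathbf t_0)\,g(\mathbf x),\,mA_f(\mathbf t)\big)$, so $\Psi$ already has the desired form with linear $\mathbf A'$-part. On each block $V_{f_*[\alpha_i]}=\bigoplus_{\xi\in f_*[\alpha_i]}V_\xi$ the factor $\varphi'(-\mathbf t_0)$ is block-triangular with diagonal blocks $e^{-\xi(\mathbf t_0)}N_\xi(\cdot)$, and the scalars $e^{-\xi(\mathbf t_0)}$ cancel exactly the scale factors $e^{f_*\xi(\mathbf t_0)}$ appearing in the quasi-similarity constants of $g_i$, so the components of $\varphi'(-\mathbf t_0)\,g$ are bilipschitz with distortion equal to the residual constant from the proof of Proposition~\ref{QI preserving flats}; that constant traces back to the additive errors in Theorem~\ref{weak height preservation} and Corollary~\ref{the same piA}, hence is bounded in terms of $\kappa,C$ alone. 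When $G=G'$ and $f_*$ acts nontrivially on root classes, one further postcomposes with the standard map realizing $f_*^{-1}\in\mathrm{Sym}(G)$ so that each $g_i$ reads as a map $V_{[\alpha_i]}\to V_{[\alpha_i]}$, without changing the bilipschitz constants; absorbing $L_{(\mathbf 0,\mathbf t_0)}$ into the left-translation factor of the statement then completes the argument.

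Since essentially all of the analytic content has been discharged by the cited results, the work here is bookkeeping, and the one delicate point is the normalization step. One must verify that conjugating $g$ by $\varphi'(-\mathbf t_0)$ truly produces a genuinely bilipschitz (and not merely quasi-similar) map with constants independent of $\phi$ beyond its quasi-isometry constants — equivalently, that the unipotent factors $N_\xi$ and the off-diagonal graded pieces $\tilde g_\xi$ of $g$ contribute only bounded distortion, and that the polynomial corrections these introduce do not accumulate. This is precisely the place where one has to check that the constant ``$c$'' of Proposition~\ref{QI preserving flats} is itself controlled by $\kappa,C$.
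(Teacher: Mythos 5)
Your proposal matches the paper's own proof, which is literally the one line ``Apply Proposition~\ref{QI preserving flats} in light of Proposition~\ref{FTF}''; the normalization you add to obtain bilipschitz constants for the $g_i$'s depending only on $\kappa,C$, and to replace $V_{f_*[\alpha_i]}$ by $V_{[\alpha_i]}$ by postcomposing with an element of $Perm(G)$, is a correct amplification of what the paper leaves implicit.

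On your flagged delicate point: it does close, and the mechanism is worth recording. Precompose $\phi$ with a left translation in $G$ (an isometry of $G$, so the quasi-isometry constants are unchanged) to arrange $d(\phi(e),e')=O(C)$. Since by Proposition~\ref{QI preserving flats} $\phi$ lies within $O(1)$ of $g\times f$ with $f(\mathbf t)=mA_f(\mathbf t)+\mathbf t_0$, this forces $|\mathbf t_0|=|f(0)|\le |\pi_{A'}(\phi(e))|+O(1)=O_{\kappa,C}(1)$. Therefore the unipotent correction $N_\xi(-\xi(\mathbf t_0))$ appearing in $\varphi'(-\mathbf t_0)|_{V_\xi}$ has operator norm bounded in terms of $\kappa,C$ alone (it is a fixed polynomial evaluated at a bounded argument), so conjugating $g$ by $\varphi'(-\mathbf t_0)$ changes bilipschitz constants only by a $\kappa,C$-controlled factor, and the scalar $e^{-\sigma(\Xi)(\mathbf t_0)}$ cancels the quasi-similarity scale $e^{\sigma(\Xi)(\mathbf t_0)}$ exactly, as you observed. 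Likewise, the constant $c$ of Proposition~\ref{QI preserving flats} is, by construction, the Hausdorff error of Proposition~\ref{FTF}, which traces back through Lemma~\ref{weak affine map} and Corollary~\ref{the same piA} and hence depends only on $\kappa,C$; there is no hidden dependence on $\phi$ beyond its quasi-isometry constants.
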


\begin{proof}
Apply Proposition \ref{QI preserving flats} in light of Proposition \ref{FTF}.  \end{proof}

\begin{corollary}\label{diagORnot}
Let $G=\mathbf{H} \rtimes_{\varphi} \mathbf{A}$ be a a non-degenerate, unimodular, split
abelian-by-abelian group such that $\varphi(\mathbf{A})$ is diagonalizable, while $G'=\mathbf{H}' \rtimes_{\varphi^{'}} \mathbf{A}'$
is another non-degenerate, unimodular, split abelian-by-abelian group such
that $\varphi'(\mathbf{A}')$ is not diagonalizable.  Then $G$ and $G'$ are not
quasi-isometric. \end{corollary}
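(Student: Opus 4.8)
The plan is to derive Corollary \ref{diagORnot} as a direct consequence of Theorem \ref{behaviors of phi} together with the structural description of standard maps given in Proposition \ref{QI preserving flats}. Suppose, for contradiction, that $\phi: G \to G'$ is a $(\kappa, C)$ quasi-isometry with $\varphi(\mathbf{A})$ diagonalizable but $\varphi'(\mathbf{A}')$ not. By Theorem \ref{behaviors of phi}, after composing with a left translation we may assume $\phi$ is bounded distance from a map $(\mathbf{x}, \mathbf{t}) \mapsto (g(\mathbf{x}), f(\mathbf{t}))$, where $f$ is affine with linear part $mA_f$, $A_f \in O(n)$ of finite order preserving root kernels, and $g = (g_1, \dots, g_\sharp)$ with each $g_i$ a bilipschitz map $V_{[\alpha_i]} \to V_{f_*[\alpha_i]}$. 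The inverse $\phi^{-1}$ has the analogous form, so in particular $f_*$ is a bijection between root classes of $G$ and root classes of $G'$ that comes from the orthogonal linear map $A_f$, and each block $V_{[\alpha]}$ maps bilipschitzly to $V_{f_*[\alpha]}$.

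The key step is to extract from part (v) of Proposition \ref{QI preserving flats} the constraint on the \emph{internal grading} of each root class. Diagonalizability of $\varphi(\mathbf{A})$ is equivalent to saying that every $N(\alpha_i(\mathbf{t}))$ in condition (4) of the structure theorem for $G$ is trivial, i.e. every $\varphi(\mathbf{t})$ is literally diagonal in the basis $\mathcal{B}$; equivalently, every root class $[\alpha]$ of $G$ consists of a single root (if two distinct roots $\xi_1 < \xi_2$ were positive multiples of each other in a diagonalizable situation the associated block would still be diagonal, but the relevant point is that the graded foliation inside $V_{[\alpha]}$ is trivial and the quasi-similarity in (v) is just a genuine scaling on a single $V_\xi$). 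Non-diagonalizability of $\varphi'(\mathbf{A}')$ means there is a root $\xi'$ of $G'$ whose polynomial $N(\xi'(\mathbf{t}))$ is non-constant; equivalently there is a root class $[\beta']$ of $G'$ carrying a nontrivial graded structure $V_{\beta'_1} \oplus \dots \oplus V_{\beta'_l}$ with $l \geq 2$ or a single-root block with a genuine nilpotent part. I would phrase this carefully as: diagonalizability is exactly the statement that the weight-hyperbolic spaces $V_{[\alpha]} \rtimes \langle \vec v_{[\alpha]}\rangle$ are all genuine real hyperbolic spaces (by the Remark after Lemma \ref{QI embedding}), while non-diagonalizability means at least one $V_{[\beta']} \rtimes \langle \vec v_{[\beta']}\rangle$ of $G'$ is \emph{not} a real hyperbolic space.

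Now the contradiction comes from part (vi) of Proposition \ref{QI preserving flats}: $\phi$ induces a quasi-similarity between the root boundaries $(\partial_{[\alpha]}, D_{[\alpha]})$ of $G$ and $(\partial_{f_*[\alpha]}, D_{f_*[\alpha]})$ of $G'$. When $G$ is diagonalizable, each $(\partial_{[\alpha]}, D_{[\alpha]})$ is a Euclidean space $V_{[\alpha]} = V_\alpha$ with its standard snowflaked metric $|x-y|^{c/l_{[\alpha]}}$ or, up to quasi-symmetry, simply a Euclidean space; in any case it is the boundary of a real hyperbolic space and hence has no nontrivial "graded" or Carnot-type structure. On the other side, the boundary $\partial_{[\beta']}$ of a non-diagonalizable root class of $G'$ carries a genuinely non-trivial graded (Heintze-group / Carnot-type) structure — the internal foliation described in (v) — whose quasi-conformal / quasi-similarity group, as analyzed by Dymarz \cite{Dy}, is strictly smaller and, crucially, such a space is \emph{not} bilipschitz-to-Euclidean, nor quasi-symmetric to a Euclidean space. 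But $g_i$ restricted to this block must be a bilipschitz (indeed quasi-similar) homeomorphism from a Euclidean $V_{[\alpha]}$ onto $V_{[\beta']}$, forcing the two boundaries to be quasi-symmetrically equivalent — a contradiction. Therefore no such $\phi$ exists.

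The main obstacle, and the step I would expect to write most carefully, is the last one: making precise that the boundary of a weight-hyperbolic space with nontrivial nilpotent part is genuinely non-Euclidean at the level of bilipschitz (or quasi-symmetric) geometry, and that this incompatibility is detected by the quasi-similarity of part (vi) of Proposition \ref{QI preserving flats}. Concretely I would argue as follows. Part (v) tells us $g|_{V_{[\beta']}}$ must respect the graded foliation $V_{\beta'_1} \oplus \dots \oplus V_{\beta'_l}$, and the induced map on the fiber $V_{\beta'_i}$ (for fixed higher coordinates) scales by $e^{f_*\xi_i(\mathbf{t}_0)}$; since the ratios $f_*\xi_i(\mathbf{t}_0) / f_*\xi_j(\mathbf{t}_0)$ are not all equal when the $\xi_i$ are distinct positive multiples of each other (or when the nilpotent part is present the metric genuinely distorts different directions at different polynomial rates), $g|_{V_{[\beta']}}$ cannot be a similarity of a single Euclidean space — yet $V_{[\alpha]}$, being a single root space of a diagonalizable group, is exactly such a Euclidean space and its self-quasi-similarities are (quasi-)conformal in the classical sense. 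Invoking the classification in \cite{Dy} of quasi-conformal maps of these boundaries then shows the source and target boundaries cannot be quasi-similar, contradicting (vi). An alternative, cleaner route if one wants to avoid invoking \cite{Dy} directly is to use the QI-embedding of Lemma \ref{QI embedding} and compare $U_Q$ functions: a diagonalizable $G$ embeds so that $Q \equiv 1$ in each factor, giving the sharp exponent in \eqref{property of U}, whereas a factor of $G'$ with genuine polynomial growth has $Q$ of positive degree; the quasi-similarity relating boundary pseudo-metrics would force these $U_Q$-asymptotics to agree up to bounded multiplicative error, which fails. Either way, the heart of the argument is a rigidity statement about boundaries, and that is where the care is needed.
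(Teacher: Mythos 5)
Your proposal identifies the right starting point---Theorem \ref{behaviors of phi} and Proposition \ref{QI preserving flats}---but the paper's own argument is considerably shorter and more direct than your main route, and your main route contains a genuine imprecision that would need repair. The paper argues: by Theorem \ref{behaviors of phi}, $\phi$ is bounded distance from an affine-on-$\mathbf{A}$ standard map, so geodesics are carried to geodesics with affine change of height; this makes the divergence function of a pair of parallel geodesics a quasi-isometry invariant. In $G$ with $\varphi$ diagonal that divergence is a pure exponential $e^{\lambda t}$, while in $G'$ with $\varphi'$ non-diagonalizable some pair of geodesics diverges like $t^{k}e^{\lambda' t}$ with $k\ge 1$, and these two functions are not quasi-isometric (their ratio is unbounded). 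That is the entire proof. Your ``alternative route'' at the end is the same idea, but as phrased it does not quite close: equation (\ref{property of U}) already sandwiches every $U_Q$ between $\log x - C_Q$ and $2\log x + C_Q$, so ``the $U_Q$-asymptotics agree up to bounded multiplicative error'' is always true and does not produce a contradiction. What actually distinguishes the two cases is the \emph{additive} polynomial correction in the divergence rate, not the multiplicative asymptotics of $U_Q$.

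The more serious issue is in your main route. You assert that diagonalizability of $\varphi(\mathbf{A})$ is ``equivalent'' to every root class of $G$ consisting of a single root, and later that it is ``exactly the statement that the weight-hyperbolic spaces $V_{[\alpha]}\rtimes\langle\vec v_{[\alpha]}\rangle$ are all genuine real hyperbolic spaces.'' Both claims are false. A diagonal $\varphi$ may well have two roots $\alpha$ and $2\alpha$ in the same root class (e.g.\ $\varphi(t)=\mathrm{diag}(e^{t},e^{2t},e^{-3t})$); the associated $V_{[\alpha]}\rtimes\langle\vec v_{[\alpha]}\rangle$ is then a Heintze group with anisotropic boundary, not real hyperbolic space, and the graded foliation inside $V_{[\alpha]}$ from Proposition \ref{QI preserving flats}(v) is nontrivial. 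The Remark after Lemma \ref{QI embedding} applies to the per-root factors $\mathit{H}_{\dim V_\alpha+1}$, not to the per-root-class weight hyperbolic spaces. Consequently your claim that, on the $G$ side, $(\partial^-_{[\alpha]},D_{[\alpha]})$ is always a snowflaked Euclidean space is not justified, and the contrast ``Euclidean boundary vs.\ Carnot-type boundary'' that drives your contradiction is not the correct dichotomy. The true dichotomy is between boundaries whose contracting action is semisimple and those whose action has a genuine unipotent part; showing that no quasi-similarity can match these is plausible and is indeed related to Dymarz's analysis, but you do not prove it, and it is substantially harder than the divergence-rate computation the paper uses. If you want to salvage the boundary route, you would need to isolate precisely what geometric feature of $D_{[\alpha]}$ changes when $N(\alpha(\mathbf{t}))$ is nontrivial and show that this feature is a quasi-similarity invariant; as written, the argument has a gap exactly at the step you yourself flag as needing care.
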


\begin{proof}
If there were, then Theorem \ref{behaviors of phi} implies that geodesics are taken to
geodesics and the induced height function on the geodesics are affine, which means that
we can compare rates of divergence between two geodesics in the same direction. In $G'$,
we would detect exponential polynomial growth while in $G$, only exponential growth can
be detected, and those two growth types are not q.i. to each other. \end{proof}

When the homomorphism appeared in the semidirect expression of a non-degenerate,
unimodular, abelian-by-abelian solvable Lie group is diagonalizable, uniform subgroups
of quasi-similarities of its root boundaries are analyzed in \cite{Dy}, and in this case we are able to
say something about an arbitrary finitely generated group in its quasi-isometric class.

\begin{corollary} \label{showing polycyclic}
Let $G=\mathbf{H} \rtimes_{\psi} \mathbf{A}$ be a non-degenerate, unimodular, split abelian-by-abelian solvable Lie group
where $\psi$ is diagonalizable.  If $\Gamma$ is a finitely generated group quasi-isometric to
$G$, then $\Gamma$ is virtually polycyclic. \end{corollary}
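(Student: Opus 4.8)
The plan is to realize $\Gamma$ as a cobounded, metrically proper group of quasi-isometries of $G$, transport that action to the root boundaries, and then invoke the rigidity of uniform quasi-similarity groups from \cite{Dy}. First I would run the standard construction: fix a quasi-isometry $\phi\colon\Gamma\to G$ with coarse inverse $\bar\phi$; then $\gamma\mapsto[\phi\circ L_\gamma\circ\bar\phi]$ defines a homomorphism $\rho\colon\Gamma\to\mathcal{QI}(G)$ whose kernel is finite (it consists of those $\gamma$ for which left multiplication $L_\gamma$ has bounded displacement on $\Gamma$, hence of a bounded, and so finite, set of $\gamma$), whose image is cobounded (its orbit of $\phi(e)$ is coarsely $\phi(\Gamma)$, which is coarsely dense) and metrically proper, and whose elements have quasi-isometry constants bounded independently of $\gamma$. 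By Corollary~\ref{QIgroup}, $\gamma\mapsto$ ($Sym(G)$-component of $\rho(\gamma)$) is a homomorphism to a finite group, and by Theorem~\ref{behaviors of phi} and Remark~\ref{branching constant the same} the linear part of the $\mathbf{A}$-part of $\rho(\gamma)$ is $\lambda_\gamma A_\gamma$ with $A_\gamma$ in the fixed finite group $G_R\subset O(n)$; passing to a finite-index subgroup of $\Gamma$ I may assume both of these are trivial, so that each $\rho(\gamma)$ is bounded distance from $L_{a_\gamma}\circ(g_\gamma\times f_\gamma)$ with $f_\gamma$ affine of linear part $\lambda_\gamma\cdot\mathrm{Id}$, with $g_\gamma=(g_\gamma^{[\alpha]})_{[\alpha]}$ bilipschitz with uniform constants, and with $\gamma\mapsto\lambda_\gamma$ a homomorphism $\Gamma\to\mathbb{R}_{>0}$.

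Next I would pass to the boundaries. Since $\phi$ sends flats to within $O(1)$ of flats (Proposition~\ref{FTF}) and the same holds for every $\rho(\gamma)$, Proposition~\ref{QI preserving flats} applies to each $\rho(\gamma)$: for every root class $[\alpha]$ it induces a quasi-similarity $\Psi_\gamma^{[\alpha]}$ of the lower root boundary $(\partial^{-}_{[\alpha]},D_{[\alpha]})$, with quasi-similarity constants bounded independently of $\gamma$ and with multiplicative constant controlled (up to the bounded error $c$ of Proposition~\ref{QI preserving flats}(v)--(vi)) by $\lambda_\gamma$. Assembling these over all root classes, and recording in addition the induced affine action on $\mathbf{A}\cong\mathbb{R}^{n}$ (whose linear part is $\lambda_\gamma\cdot\mathrm{Id}$), one obtains a homomorphism
\[
\bar\rho\colon\ \Gamma\ \longrightarrow\ \big(\textstyle\prod_{[\alpha]}QSim(\partial^{-}_{[\alpha]},D_{[\alpha]})\big)\times Aff(\mathbf{A})
\]
with finite kernel, whose image is a \emph{uniform} quasi-similarity group in each factor, and whose image is cobounded and metrically proper once one transfers these properties from the $\Gamma$-action on $G$ through the fibration $G\to\mathbf{A}$ and the decomposition $\mathbf{H}=\bigoplus_{[\alpha]}V_{[\alpha]}$, using the graded/triangular form of $g$ from Proposition~\ref{QI preserving flats}(v).

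Now I would use the hypothesis that $\psi$ is diagonalizable: then each $(\partial^{-}_{[\alpha]},D_{[\alpha]})$ is one of the metric spaces whose uniform quasi-similarity groups are classified in \cite{Dy}, and a cobounded such group is, after passing to a finite-index subgroup and dividing by a finite normal subgroup, a cocompact lattice in a connected solvable Lie group of the form (abelian)$\rtimes\mathbb{R}$. Combining these factorwise statements through the common scaling homomorphism $\lambda$ together with the translational parts on the $\partial^{-}_{[\alpha]}$ and on $\mathbf{A}$, one finds that $\bar\rho(\Gamma)$ is, virtually and modulo a finite normal subgroup, a cocompact lattice in a connected solvable (indeed abelian-by-abelian) Lie group $S$, which also recovers the sharper assertion announced in the abstract. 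Since $\ker\bar\rho$ is finite, $\Gamma$ is virtually a cocompact lattice in $S$; by Mostow's theorem every cocompact lattice in a connected solvable Lie group is polycyclic, so $\Gamma$ is virtually polycyclic.

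The hard part will be the passage to the boundary and the assembling step: one must check carefully that coboundedness and properness of the $\Gamma$-action on $G$ really do descend to the product of the root-boundary actions (the $V_{[\alpha]}$-fibres being noncompact, this is not automatic and is exactly where the precise form of Proposition~\ref{QI preserving flats}(v) and the control of the constants $c$ are used), and that the a priori independent quasi-symmetric conjugacies produced by \cite{Dy} on the different boundary factors can be organized --- via the single homomorphism $\lambda$ and the coherence coming from the fact that all $\rho(\gamma)$ arise from one quasi-action on $G$ --- into a single lattice embedding into a connected solvable Lie group. Everything else is the standard quasi-isometry-to-group-action dictionary together with Mostow's polycyclicity theorem.
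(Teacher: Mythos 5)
Your overall strategy mirrors the paper's: conjugate the left translations of $\Gamma$ by a quasi-isometry to obtain a uniform subgroup of $\mathcal{QI}(G)$, pass to a finite-index subgroup to trivialize the $Sym(G)$ (root-class permutation) part, descend to the induced actions on the lower root boundaries $(\partial^{-}_{[\alpha]}, D_{[\alpha]})$, and invoke the rigidity of uniform quasi-similarity groups from \cite{Dy}. You also correctly flag the delicate step: the factorwise boundary data must be organized into a single coherent object over $\mathbf{A}$. So the skeleton is right.

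The problem is that you never supply the argument that carries out this assembling, and that argument is the actual content of the proof. The paper's key Claim is that after conjugating the image of $\tilde{\Gamma}_{0}$ into $\prod_{[\alpha]} ASim(\partial^{-}_{[\alpha]})$ via Theorem 2 of \cite{Dy}, the resulting similarity constants $t_{[\Xi],\gamma}$ on the \emph{various} boundary factors are simultaneously of the form $e^{\Xi_{0}(\mathbf{s}_{\gamma})}$ for a single $\mathbf{s}_{\gamma}\in\mathbf{A}$ depending only on $\gamma$. This is proved by exploiting unimodularity: because $\sum_{\alpha}\alpha=0$, both $\prod_{[\Xi]}t_{[\Xi],\gamma}^{\,\mathit{l}_{[\Xi]}/\Xi_{0}}=1$ and its analogues obtained from generic $\ell$ hold, and together these pin down the $t_{[\Xi],\gamma}$ to lie on the image of $\mathbf{A}$ under $\Xi\mapsto e^{\Xi_{0}(\cdot)}$. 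Without this step, your homomorphism $\lambda\colon\Gamma\to\mathbb{R}_{>0}$ only records the linear part on $\mathbf{A}$; it says nothing about the boundary scaling constants, which a priori live in $\prod_{[\alpha]}\mathbb{R}_{>0}$ independently. You cannot then build the single translation homomorphism $h\colon\hat{\Gamma}_{0}\to\mathbf{A}$, $\gamma\mapsto\mathbf{s}_{\gamma}$, which the paper uses to close the argument. Writing that ``the hard part will be \dots the assembling step'' names the gap but does not fill it; the Claim \emph{is} the proof.

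Two smaller points. Your assertion that $\ker\rho$ is finite because it ``consists of those $\gamma$ with bounded displacement, hence a bounded and so finite set'' is not valid for a general finitely generated group: the elements with bounded conjugacy class form the FC-center, which can be infinite. The paper sidesteps this by analyzing the proper quasi-action of $\tilde{\Gamma}_{0}$ rather than the kernel of $\rho$, concluding via Theorem 18 of \cite{Dy}. Replacing that citation by Mostow's polycyclicity theorem for cocompact lattices in solvable Lie groups is a legitimate alternative ending, but it requires you to actually produce the lattice, which is the harder Corollary \ref{showing lattice in almost me} in the paper; claiming your argument ``recovers the sharper assertion announced in the abstract'' is overreaching here.
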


\begin{proof}
Let $\varphi: \Gamma \rightarrow G$ be a $(\kappa, C)$ quasi-isometry.  For each $\gamma
\in \Gamma$, write $L_{\gamma}$ for the left translation of $\gamma$, and $\tilde{L}_{\gamma}=\varphi
\circ L_{\gamma} \circ \varphi^{-1}$.  Then $\tilde{\Gamma}=\{ \tilde{L}_{\gamma}  \}_{\gamma \in \Gamma}$ constitute an uniform subgroup of
$QI(G)$, all with the same q.i. constants (they are all $(\kappa, C)$ quasi-isometries).

By Theorem \ref{behaviors of phi}, each $\tilde{L}_{\gamma}$ induces a permutation on root classes.  Therefore the map from
$\tilde{\Gamma}$ into the permutations on root classes of $G$ is a well-defined homomorphism, whose kernel, $\tilde{\Gamma}_{0}$ is finite
index in $\tilde{\Gamma}$.

Since $\tilde{\Gamma}$ is a uniform subgroup of $QI(G)$, by Proposition \ref{QI preserving flats}
$\tilde{\Gamma}_{0}$ is a uniform subgroup of $\prod_{[\alpha]} QSim(\partial^{-}_{[\alpha]})$.  Applying Theorem 2 in
\cite{Dy} to the image of $\tilde{\Gamma}_{0}$ in each $QSim(\partial^{-}_{[\alpha]})$
factor, we can conjugate $\tilde{\Gamma}_{0}$ into $\prod_{[\alpha]} ASim(\partial^{-}_{[\alpha]})$, the group of almost similarities.
Denote the image of $\tilde{\Gamma}_{0}$ in
$\prod_{[\alpha]} ASim(\partial^{-}_{[\alpha]})$ by $\hat{\Gamma}_{0}$.  Note that $\hat{\Gamma}_{0}$ and $\tilde{\Gamma}_{0}$ are isomorphic.

For each $\tilde{L}_{\gamma} \in \tilde{\Gamma}_{0}$, write $g_{[\Xi], \gamma}$ for the almost similarity on $(\partial^{-}_{[\Xi]},
D_{[\Xi]})$ and $t_{[\Xi], \gamma}$ the corresponding similarity constant, as induced by the image of $\tilde{L}_{\gamma}$ in $\hat{\Gamma}_{0}$.

\bold{Claim:} For each $\tilde{L}_{\gamma} \in \tilde{\Gamma}_{0}$, there is a $\mathbf{s}_{\gamma} \in \mathbf{A}$ such
that $t_{[\Xi], \gamma} = e^{\Xi_{0}(\mathbf{s}_{\gamma})}$ for each root class $[\Xi]$. \smallskip

We know that for each $\tilde{L}_{\gamma}$, there is a $\mathbf{t}_{0, \gamma} \in \mathbf{A}$ such that
$\tilde{L}_{\gamma}$ induces $(e^{\Xi_{0}(\mathbf{t}_{0, \gamma})}, e^{c})$ quasi-similarity $\tilde{g}_{[\Xi],\gamma}$ on
$(\partial_{[\Xi]}, D_{[\Xi]})$.  Theorem 2 of \cite{Dy} says that we can find $F_{[\Xi]} \in Bilip(\partial_{[\Xi]}, D_{[\Xi]})$ with bilip
constant $K'$ such that for every root class $[\Xi]$ and every $\tilde{L}_{\gamma} \in \tilde{\Gamma}_{0}$,

\[ F_{[\Xi]}\tilde{g}_{[\Xi], \gamma} F_{[\Xi]}^{-1}= g_{[\Xi],\gamma} \]

\noindent Therefore $t_{[\Xi], \gamma}$, the similarity constant of $g_{[\Xi], \gamma}$ satisfies

\begin{equation} \label{scaling constant}
t_{[\Xi], \gamma} \in [e^{\Xi_{0}(\mathbf{t}_{0, \gamma})}
\frac{1}{e^{c}K'^{2}}, e^{\Xi_{0}(\mathbf{t}_{0, \gamma})}e^{c}K'^{2} ] , \mbox{for all root classes }[\Xi] \end{equation}

\noindent Since the sum of roots is zero and
\[ 0 = \sum_{\alpha \mbox{ roots}} \alpha = \sum_{[\Xi]} \Xi_{0}
\frac{\mathit{l}_{[\Xi]}}{\Xi_{0}} \] \noindent it follows that

\[ \prod_{[\Xi]} \left( e^{\Xi_{0}(\mathbf{t}_{0, \gamma})} \right)^{\frac{\mathit{l}_{[\Xi]}}{\Xi_{0}}} = 1 \]

\noindent therefore
\begin{equation}\label{canonical product}
\prod_{[\Xi]} \left(  t_{[\Xi], \gamma} \right)^{\frac{\mathit{l}_{[\Xi]}}{\Xi_{0}}}
=1 \end{equation}

\noindent (because the left hand side lies in an interval those end points are constants independent of $\gamma$,
so if the left hand side was not 1, then the product for sufficiently high powers of
$\gamma$ escape the interval) \smallskip

On the other hand, for a generic linear functional $\ell$, we know that
\[
0 = \sum_{[\Xi]} \mathit{l}_{[\Xi]}(\vec{v}_{\ell}) = \sum_{[\Xi]} \Xi_{0}(\vec{v}_{\ell}) \frac{\mathit{l}_{[\Xi]}}{\Xi_{0}}
= \sum_{[\Xi]} \Xi_{0}(\mathbf{t}_{0,\gamma}) \frac{\Xi_{0}(\vec{v}_{\ell})}{\Xi_{0}(\mathbf{t}_{0,\gamma})} \frac{\mathit{l}_{[\Xi]}}{\Xi_{0}}
\]

\noindent By equation (\ref{scaling constant}) this means that
\begin{equation}\label{product in other direction}
\prod_{[\Xi]} \left( t_{[\Xi], \gamma} \right)^{\frac{\Xi_{0}(\vec{v}_{\ell})}{\Xi_{0}(\mathbf{t}_{0,\gamma})} \frac{\mathit{l}_{[\Xi]}}{\Xi_{0}}} =1
\end{equation}

\noindent By letting $\ell$ ranging over a subset of positive measure, equation (\ref{product in other
direction}) and (\ref{canonical product}) means that the $t_{[\Xi], \gamma}$ must be of
the form $e^{\Xi_{0}(\mathbf{s}_{\gamma})}$ for some $\mathbf{s}_{\gamma} \in \mathbf{A}$\footnote{When rank of $G$ is 2 or higher,
$t_{[\Xi],\gamma}$ equals to $e^{\Xi_{0}(\mathbf{t}_{0,\gamma})}$; but in rank 1 all that we can say is that it is of
the form $e^{\Xi_{0}(\mathbf{s}_{\gamma})}$ where $\mathbf{s}_{\gamma}$ might not be the
same as $\mathbf{t}_{0,\gamma}$}. \smallskip

This means that for each $\tilde{L}_{\gamma} \in \tilde{\Gamma}_{0}$, $\{ g_{[\Xi],\gamma} \}_{[\Xi]}$ determines
an element $\psi_{\gamma} \in QI(G)$ of the form
\[ \psi_{\gamma}((\mathbf{x}_{[\Xi]})_{[\Xi]}, \mathbf{t}) = \left(
( g_{[\Xi],\gamma}(\mathbf{x}_{[\Xi]}) )_{[\Xi]}, \mathbf{t} + \mathbf{s}_{\gamma} \right) \]

\noindent and we can define a homomorphism $h: \hat{\Gamma}_{0} \rightarrow \mathbf{A}$ as $\gamma
\mapsto \mathbf{s}_{\gamma}$.

The kernel of $h$ consist of elements with no
translations, so they leave the subgroup $\mathbf{H}$ invariant.

Since $\Gamma$ is quasi-isometric to $G$, the quasi-action of $\tilde{\Gamma}_{0}$ on $G$ is proper,
which means $\hat{\Gamma}_{0}$ and $ker(h)$ quasi acts properly on $\prod_{[\Xi]} (\partial^{-}_{[\Xi]},
D_{[\Xi]})$.  Now by Theorem 18 of \cite{Dy}, $\Gamma$ is virtually polycyclic.
\end{proof} \bigskip

In a group $G$, an element $x \in G$ is called \emph{exponentially distorted} if there are numbers $c, \epsilon$ such that for all
$n \in \mathbb{Z}$,
\[ \frac{1}{c} \log(|n| + 1) - \epsilon \leq \| x^{n} \|_{G} \leq c \log(|n|+1) + \epsilon
\]

\noindent where $\|x^{n} \|_{G}$ is the distance between the identity and $x^{n}$ in $G$.  In the case of a connected,
simply connected solvable Lie group $G$, Osin showed in \cite{Osin} that the set of
exponentially distorted elements forms a normal subgroup $R_{\emph{exp}}(G)$.

%
%

\begin{lemma}\label{splitting implied}
Let $G$ be a connected, simply connected solvable Lie group such that $$ 1 \rightarrow
R_{\emph{exp}}(G) \rightarrow G \rightarrow \mathbb{R}^{s} \rightarrow 1 ,$$ where
$R_{\emph{exp}}(G)$ is abelian.  Then the above sequence splits and $G$ is a semidirect
product of $R_{\emph{exp}}(G)$ and $\mathbb{R}^{s}$. \end{lemma}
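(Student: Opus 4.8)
The plan is to pass to Lie algebras, split the corresponding extension by a cohomology‑vanishing argument, and then integrate back up. First I would record the structural facts. By the theorem of Osin cited above, $R:=R_{\mathrm{exp}}(G)$ is a closed connected normal subgroup of $G$; since $G$ is simply connected solvable, $R$ is itself simply connected, and as it is abelian by hypothesis we get $R\cong\mathbb{R}^{m}$ with $\exp\colon\mathfrak{r}\to R$ a diffeomorphism, where $\mathfrak{r}=\mathrm{Lie}(R)\trianglelefteq\mathfrak{g}$. The hypothesis yields a short exact sequence of Lie algebras $0\to\mathfrak{r}\to\mathfrak{g}\xrightarrow{\pi}\mathfrak{a}\to 0$ with $\mathfrak{a}\cong\mathbb{R}^{s}$ abelian; in particular $[\mathfrak{g},\mathfrak{g}]\subseteq\mathfrak{r}$, and since $\mathfrak{r}$ is abelian the adjoint action of $G$ on $\mathfrak{r}$ is trivial on $R$ and hence factors through $\bar\rho\colon\mathbb{R}^{s}\to GL(\mathfrak{r})$. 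Differentiating gives a module structure $\rho\colon\mathfrak{a}\to\mathfrak{gl}(\mathfrak{r})$ with commuting image, and $\mathrm{ad}(X)|_{\mathfrak{r}}=\rho(\pi(X))$ for all $X\in\mathfrak{g}$.

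The heart of the matter is the claim that the zero generalized weight space $\mathfrak{r}_{0}$ of the $\mathfrak{a}$‑module $\mathfrak{r}$ is trivial. Suppose not. Then $\rho(\mathfrak{a})$ acts on $\mathfrak{r}_{0}$ by commuting nilpotent operators, so (taking iterated kernels over a basis of $\mathfrak{a}$) there is a nonzero $v\in\mathfrak{r}$ with $\rho(t)v=0$ for every $t\in\mathfrak{a}$; then $[X,v]=\rho(\pi(X))v=0$ for all $X\in\mathfrak{g}$, so $v$ lies in the center $\mathfrak{z}(\mathfrak{g})$. Hence $\exp(\mathbb{R}v)\subseteq R=R_{\mathrm{exp}}(G)$ would be a nontrivial central one‑parameter subgroup consisting of exponentially distorted elements. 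This is impossible: by Osin's description of $R_{\mathrm{exp}}(G)$ — whose Lie algebra is spanned by the generalized eigenvectors of the operators $\mathrm{ad}(X)$, $X\in\mathfrak{g}$, with eigenvalue of nonzero real part — one has $\mathrm{Lie}(R_{\mathrm{exp}}(G))\cap\mathfrak{z}(\mathfrak{g})=0$ (equivalently, a nontrivial $G$‑centralized one‑parameter subgroup is never exponentially distorted). Therefore $\mathfrak{r}_{0}=0$.

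Since $\mathfrak{a}$ is abelian, the Chevalley–Eilenberg cohomology $H^{\bullet}(\mathfrak{a},M)$ of any finite–dimensional module $M$ decomposes along generalized weights, and only the zero weight space contributes: for a nonzero weight $\lambda$, choosing $t$ with $\lambda(t)\neq 0$, Cartan's homotopy formula $L_{t}=d\iota_{t}+\iota_{t}d$ shows $L_{t}$ is null–homotopic on cohomology while acting invertibly on the complex, forcing $H^{\bullet}(\mathfrak{a},M_{\lambda})=0$. With $M=\mathfrak{r}$ this gives $H^{\bullet}(\mathfrak{a},\mathfrak{r})=H^{\bullet}(\mathfrak{a},\mathfrak{r}_{0})=0$, in particular $H^{2}(\mathfrak{a},\mathfrak{r})=0$, so the Lie–algebra extension splits: there is a section $\mathfrak{a}\to\mathfrak{g}$ whose image $\mathfrak{h}$ is a subalgebra with $\mathfrak{g}=\mathfrak{r}\oplus\mathfrak{h}$, and $\mathfrak{h}\cong\mathfrak{a}$ is abelian. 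Now integrate: in a simply connected solvable Lie group every connected subgroup is closed and simply connected (Malcev), so the connected subgroup $H\le G$ with $\mathrm{Lie}(H)=\mathfrak{h}$ is a closed copy of $\mathbb{R}^{s}$; the homomorphism $\pi|_{H}\colon H\to\mathbb{R}^{s}=G/R$ has differential $\mathfrak{h}\hookrightarrow\mathfrak{g}\to\mathfrak{a}$, which is an isomorphism, so $\pi|_{H}$ is a local isomorphism between simply connected groups, hence an isomorphism. Thus $H\cap R=\ker(\pi|_{H})=\{1\}$ and $R\cdot H=G$, giving $G=R\rtimes H\cong R_{\mathrm{exp}}(G)\rtimes\mathbb{R}^{s}$ and, via $\mathbb{R}^{s}\xrightarrow{(\pi|_{H})^{-1}}H\hookrightarrow G$, a splitting of the original sequence.

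The one genuinely non‑formal step is the vanishing $\mathfrak{r}_{0}=0$: it uses that $R$ is the exponential radical and not merely an arbitrary closed abelian normal subgroup with vector quotient — some such input is unavoidable, since the $3$‑dimensional Heisenberg group is a non‑split extension of $\mathbb{R}^{2}$ by $\mathbb{R}$. I would therefore organize the write‑up so that the only external fact invoked there is Osin's identification of $\mathrm{Lie}(R_{\mathrm{exp}}(G))$ with the span of the non‑purely‑imaginary $\mathrm{ad}$‑eigenspaces, from which the disjointness with the center is immediate.
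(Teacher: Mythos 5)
Your proof is correct, but it takes a more elaborate route than the paper's. The paper argues in one line: choose a Cartan subalgebra $\mathfrak h\subset\mathfrak g$; by Osin's characterization, $\mathfrak v=\mathrm{Lie}(R_{\exp}(G))$ is spanned by nonzero root spaces, and since $\mathfrak g/\mathfrak v$ is abelian every nonzero root space lies in $[\mathfrak g,\mathfrak g]\subseteq\mathfrak v$, so $\mathfrak g=\mathfrak h\oplus\mathfrak v$ with $\mathfrak h$ a subalgebra complement, which is automatically abelian because $\mathfrak g/\mathfrak v$ is. In other words the paper exhibits a splitting directly rather than proving it exists abstractly. You instead reduce to the vanishing of $H^{2}(\mathfrak a,\mathfrak r)$ and prove it by the weight-space decomposition and Cartan's homotopy formula, after first showing the zero generalized weight space of $\mathfrak r$ is trivial. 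Both arguments rest on the same input from Osin — that $\mathrm{Lie}(R_{\exp})$ is the span of the generalized $\mathrm{ad}$-eigenspaces with eigenvalue of nonzero real part — but you invoke it only through the corollary $\mathrm{Lie}(R_{\exp}(G))\cap\mathfrak z(\mathfrak g)=0$, while the paper uses it to identify a concrete complement. Your version is longer and brings in cohomological machinery the paper avoids, but it is more self-contained in the sense that it never needs to produce a Cartan subalgebra, and your remark about the Heisenberg group cleanly isolates where the hypothesis $R=R_{\exp}(G)$ (rather than an arbitrary abelian normal subgroup with vector quotient) is actually used — a point the paper's terse proof leaves implicit.
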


\begin{proof}
Let $\gothic{h}$ be a Cartan subalgebra of $\gothic{g}$, the Lie algebra of
$G$.  Then $\gothic{v}$, the Lie algebra of $R_{\emph{exp}}(G)$, is generated
by root spaces in the decomposition of $\gothic{g}$ with respect to $\gothic{h}$.  Since
this is abelian, it means that $\gothic{g}$ is a semidirect product of $\gothic{h}$ and
$\gothic{v}$.  Since $\gothic{g}/\gothic{v}$ is abelian, $\gothic{h}$ is abelian.
\end{proof}

\begin{corollary} \label{showing lattice in almost me}
Let $G=\mathbf{H} \rtimes_{\psi} \mathbf{A}$ be a non-degenerate, unimodular, split abelian-by-abelian solvable Lie group
where $\psi$ is diagonalizable.  If $\Gamma$ is a finitely generated group quasi-isometric to
$G$, then $\Gamma$ is virtually a lattice in a unimodular semidirect product of $\mathbf{H}$ and
$\mathbf{A}$.  \end{corollary}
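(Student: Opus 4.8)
The plan is to build on Corollary \ref{showing polycyclic}, which already tells us that any finitely generated $\Gamma$ quasi-isometric to $G$ is virtually polycyclic. A virtually polycyclic group $\Gamma$ is virtually a lattice in a connected, simply connected solvable Lie group $G_\Gamma$ (this is the classical embedding theorem for polycyclic groups; after passing to a finite-index torsion-free subgroup one gets a lattice in such a Lie group). Since being quasi-isometric is preserved, $G_\Gamma$ is quasi-isometric to $G$, and by Cornulier's result quoted in Section \ref{geometry of G} it has the same rank. So the problem reduces to identifying the structure of $G_\Gamma$: I want to show $G_\Gamma$ is a unimodular semidirect product $\mathbf{H} \rtimes \mathbf{A}$ with the same $\mathbf{H}$, $\mathbf{A}$ as $G$ (up to the automorphism coming from the quasi-isometry).

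The key step is to feed $G_\Gamma$ into the earlier machinery. First, $G_\Gamma$ is quasi-isometric to $G$, and $G$ is non-degenerate, unimodular, split abelian-by-abelian. I would apply Theorem \ref{behaviors of phi} (or, more precisely, its proof via Proposition \ref{QI preserving flats}) to a quasi-isometry $\phi: G \to G_\Gamma$, but $G_\Gamma$ is not a priori of the special form. Instead the right tool is Osin's subgroup $R_{\mathrm{exp}}(G_\Gamma)$ of exponentially distorted elements, which is normal. I would argue: (1) The quasi-isometry $\phi$ identifies the collection of flats (maximal quasi-isometrically embedded copies of $\mathbb{R}^n$, $n = \mathrm{rank}$) in $G$ with such subsets in $G_\Gamma$, forcing $G_\Gamma$ to have a codimension-$s$ nilpotent (in fact abelian, after checking distortion) normal subgroup on which the quotient $\mathbb{R}^s$ acts, with $s = n = \mathrm{rank}(G)$; (2) the distorted directions match up, so $R_{\mathrm{exp}}(G_\Gamma)$ is exactly the ``$\mathbf{H}$-like'' part and is abelian because the corresponding root spaces in $G$ are abelian (split abelian-by-abelian) and the quasi-isometry preserves the coarse geometry that detects commutativity of horocycle foliations; (3) $G_\Gamma / R_{\mathrm{exp}}(G_\Gamma) \cong \mathbb{R}^s$. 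Then Lemma \ref{splitting implied} gives $G_\Gamma = R_{\mathrm{exp}}(G_\Gamma) \rtimes \mathbb{R}^s$, i.e. an abelian-by-abelian split semidirect product, and unimodularity of $G_\Gamma$ follows because $\Gamma$ is a lattice in it (a finitely generated group containing a lattice forces unimodularity) and $\Gamma$ is quasi-isometric to the unimodular group $G$. Finally I would match $R_{\mathrm{exp}}(G_\Gamma)$ with $\mathbf{H}$ and $\mathbb{R}^s$ with $\mathbf{A}$ using the quasi-isometric rigidity of the root structure from Proposition \ref{QI preserving flats}: the roots of $G_\Gamma$ are determined up to a common scalar and a finite permutation by those of $G$, so $G_\Gamma$ is a unimodular semidirect product of $\mathbf{H}$ and $\mathbf{A}$ (with $\mathbf{A}$ acting through a homomorphism that is a rescaling/reparametrization of $\varphi$).

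Concretely the steps in order: (i) invoke Corollary \ref{showing polycyclic} to get $\Gamma$ virtually polycyclic; (ii) pass to a finite-index torsion-free subgroup $\Gamma_0$ and realize it as a lattice in a connected simply connected solvable Lie group $G_\Gamma$; (iii) note $G_\Gamma$ is quasi-isometric to $G$ and hence (Cornulier) has rank $n$; (iv) use Osin's theorem to produce $R_{\mathrm{exp}}(G_\Gamma)$ and show it is abelian with $G_\Gamma/R_{\mathrm{exp}}(G_\Gamma) \cong \mathbb{R}^n$, using that the quasi-isometry transports the flat structure and the distortion data of $G$ (where $\mathbf{H}$ is exactly the exponentially distorted part, since the root action is by eigenvalues $e^{\pm \alpha(\mathbf t)}$) onto $G_\Gamma$; (v) apply Lemma \ref{splitting implied} to split $G_\Gamma = R_{\mathrm{exp}}(G_\Gamma) \rtimes \mathbb{R}^n$; (vi) apply Proposition \ref{QI preserving flats} to the quasi-isometry $G \to G_\Gamma$ to see the splitting is non-degenerate split abelian-by-abelian with root data matching that of $G$ up to scalar and finite permutation, so $R_{\mathrm{exp}}(G_\Gamma) = \mathbf{H}$ and $\mathbb{R}^n = \mathbf{A}$ as abstract groups with the action a reparametrization of $\varphi$; (vii) conclude unimodularity from $\Gamma_0$ being a lattice. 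Then $\Gamma$ is virtually (namely via $\Gamma_0$) a lattice in the unimodular semidirect product $\mathbf{H} \rtimes \mathbf{A}$.

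The main obstacle I anticipate is step (iv)/(vi): showing that the solvable Lie group $G_\Gamma$ abstractly produced from the polycyclic group really has $R_{\mathrm{exp}}(G_\Gamma)$ abelian and isomorphic (as a group-with-$\mathbb{R}^n$-action) to $\mathbf{H} \rtimes \mathbf{A}$. The quasi-isometry gives us coarse/metric control but we must upgrade this to an algebraic statement about $G_\Gamma$; the bridge is that $G_\Gamma$ is itself quasi-isometric to $G$, so Theorem \ref{behaviors of phi} and Proposition \ref{QI preserving flats} — which only require the \emph{source} or one of the two groups to be of the special form, once flats go to flats — can be invoked after first establishing "flats go to flats" for the quasi-isometry $G \to G_\Gamma$, which is Proposition \ref{FTF} applied in this setting (it only uses that $G$ is of the special form, not $G_\Gamma$). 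Care is needed that Proposition \ref{FTF} and Theorem \ref{weak height preservation} were proved for quasi-isometries between two groups of the special class; extending the "flats to flats" conclusion when only the domain is special, using amenability of $G_\Gamma$ (it is polycyclic hence amenable) so that the Følner/box arguments still run on $\phi^{-1}(\mathbf B)$, is the delicate point. Once "flats go to flats" is in hand, Proposition \ref{QI preserving flats} forces $G_\Gamma$ to split as claimed, and the rest is bookkeeping.
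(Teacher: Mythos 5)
Your overall blueprint is right — Corollary \ref{showing polycyclic}, Mostow's embedding, Osin's exponential radical, Cornulier's rank invariance, Lemma \ref{splitting implied} all appear in the paper's proof — but the step you yourself flag as delicate, (iv)/(vi), is in fact a genuine gap, and the paper resolves it by a route you haven't proposed.

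The problem is that Theorem \ref{weak height preservation}, Proposition \ref{FTF}, Proposition \ref{QI preserving flats} and all the coarse-differentiation machinery feeding them (Theorem \ref{exisence of standard maps in small boxes}, the slab and shadow constructions, branching constants, standard maps) require \emph{both} ends of the quasi-isometry to be non-degenerate unimodular split abelian-by-abelian. The very definition of a standard map, and the counting of root-class half-planes used throughout Section 3, presuppose that the codomain has a chosen root-space decomposition, root kernels, and the associated boxes; none of this is available for an abstract connected simply connected solvable Lie group $G_\Gamma$ arising from Mostow's theorem. Amenability of $G_\Gamma$ lets you run F\o lner-type averaging on $\phi^{-1}(\mathbf B)$ as you suggest, but that only concerns the domain side; it does nothing to replace the structure of the codomain that the coarsening and slab arguments lean on. So you cannot extract $R_{\exp}(G_\Gamma)$ abelian with quotient $\mathbb{R}^n$ by ``transporting the flat structure'' from $G$ to $G_\Gamma$ through $\phi$ this way.

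What the paper does instead is never analyze the internal geometry of $\mathcal{L} = G_\Gamma$ directly. Using Furman's recipe it turns the left action of $\mathcal{L}$ on $\Gamma$, conjugated by the quasi-isometry $\Gamma \to G$, into a uniform quasi-action of $\mathcal{L}$ on $G$ by self-quasi-isometries. Theorem \ref{behaviors of phi} and Proposition \ref{QI preserving flats}(vi) then apply to each of these self-quasi-isometries of $G$ (both sides of that QI are the good space $G$), producing a continuous homomorphism $\mathcal{L} \to \prod_{[\alpha]} QSim(\partial^-_{[\alpha]})$. Dymarz's conjugation theorem (Theorem 2 of \cite{Dy}) upgrades the uniform image to sit in $\prod_{[\alpha]} ASim(\partial^-_{[\alpha]})$, which by the Claim in the proof of Corollary \ref{showing polycyclic} is realized in $AIsom(G)$, and projecting to the $\mathbf{A}$-translation part gives a continuous surjective homomorphism $\tilde h : \mathcal{L} \to \mathbf{A} \cong \mathbb{R}^n$. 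The identification $\ker \tilde h = R_{\exp}(\mathcal{L})$ is where Osin and Cornulier enter: Osin's Lemma 2.1 (a continuous surjective homomorphism of compactly generated groups is Lipschitz) forces $R_{\exp}(\mathcal{L}) \subset \ker \tilde h$ by comparing logarithmic against linear growth, and Cornulier's quasi-isometry invariance of $\dim \mathcal{L}/R_{\exp}(\mathcal{L})$ pins this dimension at $n$, so the containment is an equality. Abelianness of $R_{\exp}(\mathcal{L})$ is then obtained not from ``commutativity of horocycle foliations'' but by showing $\ker \tilde h$ quasi-acts properly on $\mathbf H$ (hence is finitely generated by Proposition 20 of \cite{Dy}), embeds quasi-isometrically into $\mathbf H$, and has the same cohomological dimension (Gersten), so by Farb--Mosher's Theorem 7.6 the embedding is coarsely onto, making $R_{\exp}(\mathcal{L})$ quasi-isometric, hence virtually isomorphic, to $\mathbf H$. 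Only then does Lemma \ref{splitting implied} split $\mathcal L$. You should replace your step (iv)/(vi) by this boundary-action argument.
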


\begin{proof}
By Corollary \ref{showing polycyclic}, $\Gamma$ contains a finite index subgroup that is
polycyclic.  By a theorem of Mostow (Theorem 4.28 in \cite{Raghu}) which says that a
polycyclic group contains a finite index subgroup that embeds as a lattice in a
connected, simply connected Lie group, we have $\mathcal{L}$, a connected, simply
connected solvable Lie group to which $\Gamma$ is virtually a lattice of.  The crux of the proof
consists of showing that $\mathcal{L}$ satisfies the short exact sequence in Lemma \ref{splitting
implied}, and the argument is practically that of Section 4.3 in \cite{Dy} with minor modifications.
We reproduce the skeleton of the proof below, and refer the readers to the relevant sections in \cite{Dy} for details. \\

We are now going to construct a continuous homomorphism $\tilde{h}: \mathcal{L} \rightarrow
\mathbf{A} = \mathbb{R}^{n}$ that is onto, whose finitely generated kernel not only is quasi-isometric to
$\mathbf{H}$ but also coincides with the exponential radical of $\mathcal{L}$.  As any
finitely generated in the same quasi-isometric class as $\mathbb{R}^{n}$ must be
virtually $\mathbb{R}^{n}$, by Lemma \ref{splitting implied}, $\mathcal{L}$ is virtually a semidirect product of $\mathbf{A}$
and $\mathbf{H}$.  If this semidirect product were not unimodular, then $\mathcal{L}$
would be non-amenable, which is a contradiction because amenability is preserved under
quasi-isometry. \\

The $\tilde{h}$ is going to be the composition of the following three homomorphisms:
\begin{enumerate} \renewcommand{\labelenumi}{\Alph{enumi}.}
\item $\mathcal{L} \rightarrow \prod_{[\alpha]} QSim(\partial^{-}_{[\alpha]})$
\item Conjugation of a uniform subgroup of $\prod_{[\alpha]}
QSim(\partial^{-}_{[\alpha]})$ into $AIsom(G)$, where $AIsom(G)$ is the set of all maps
of the form $ \psi_{\gamma}((\mathbf{x}_{[\Xi]})_{[\Xi]}, \mathbf{t}) = \left(
( g_{[\Xi],\gamma}(\mathbf{x}_{[\Xi]}) )_{[\Xi]}, \mathbf{t} + \mathbf{s}_{\gamma} \right)
$.

\item $h: \prod_{[\alpha]} AIsom(\partial^{-}_{[\alpha]}) \rightarrow \mathbf{A}=\mathbb{R}^{n}$ \end{enumerate}

\bold{Homomorphism A.} We can assume without loss of generality, that $\Gamma$ itself is
a lattice in $\mathcal{L}$.  We start with the following construction which can be found in Section 3.2
of \cite{Furman}.  Choose some open subset $E \subset \mathcal{L}$ with compact
closure, such that $\mathcal{L}$ is the union of left translates of $E$ by $\Gamma$.
Also fix a function $p: \mathcal{L} \rightarrow \Gamma$ such that $x \in p(x)E$ for every
$x \in \mathcal{L}$.  Then by defining $q_{h}: \Gamma \rightarrow \Gamma$ as
$q_{h}(\gamma)=p(h\gamma)$ for every $h \in \mathcal{L}$, we obtain a homomorphism from
$\mathcal{L}$ into $QI(\Gamma)$.  Since $\Gamma$ is quasi-isometric to $G$, conjugating
by the quasi-isometry between $\Gamma$ and $G$, we obtain a homomorphism from
$\mathcal{L}$ into $QI(G)$, where the images have uniform quasi-isometric constants.  By
(v) of Proposition \ref{QI preserving flats}, we can realize $QI(G)$ as a subgroup of $\prod_{[\alpha]} QSim(\partial^{-}_{[\alpha]})$.
By passing to a finite index subgroup of $\mathcal{L}$ if necessary, we now have the homomorphism $A$ from
$\mathcal{L}$ to $\prod_{[\alpha]} QSim(\partial^{-}_{[\alpha]})$, whose image is a uniform subgroup of quasi-similarities.
Continuity of homomorphism $A$ follows from Proposition 26 of \cite{Dy} where continuity in each factor was obtained. \smallskip

\bold{Homomorphism B. }  Theorem 2 of \cite{Dy} says that we can conjugate the image of
homomorphism A. into $\prod_{[\alpha]} ASim(\partial^{-}_{[\alpha]})$.  That elements of $\prod_{[\alpha]}
ASim(\partial^{-}_{[\alpha]})$ can be realized as elements of $AIsom(G)$ follows from the
\bold{Claim} in the proof of Corollary \ref{showing polycyclic}.  Homomorphism B. is
continuous because conjugation is continuous.

\bold{Homomorphism C. } The definition of $AIsom(G)$ means that we have a well-defined
homomorphism into $\mathbf{A}=\mathbb{R}^{n}$, which is our homomorphism C.  Now if $q_{i}$ is a sequence in
$AIsom(G)$ approaching to identity, then the map each one of them induces on the
$\mathbf{A}$ factor also has to approach that of what the identity does.  Since the
identity map produces no change in the $\mathbf{A}$ factor, it follows that the image of
$q_{i}$'s under homomorphism C. approaches $\vec{0} \in \mathbf{A}$.\medskip

Since $\Gamma$ is quasi-isometric to $G$, the quasi-action\footnote{Conjugating each left translation
of $\Gamma$ by the quasi-isometry between $\Gamma$ and $G$ gives a quasi-action on $G$} of $\Gamma$,
and therefore $\mathcal{L}$, on $G$ is cobounded, it follows that $\tilde{h}$ must be
onto because it is continuous. \\

We now claim that $R_{\emph{exp}}(\mathcal{L})= ker(\tilde{h})$.  To this end, we need
the following from \cite{Osin}.

\begin{lemma} (\textit{Lemma 2.1 in \cite{Osin}}) \newline
Suppose $G$, $H$ are locally compact groups generated by some symmetric compact
neighborhoods of the identities, $\| \dot \|_{G}$, $\| \dot \|_{H}$are canonical norms on
$G$ and $H$, and $dis_{G}$, $dist_{H}$ are the induced metrics.  Assume $\phi: G
\rightarrow H$ is a continuous surjective homomorphism, then there is a constant $K$ such
that \[ dist_{H}(\phi(g_{1}), \phi(g_{2})) \leq K dist_{G}(g_{1},g_{2}) \] \end{lemma}
\medskip

Applying the lemma above to $\tilde{h}$ gives us that
\[ \| \tilde{h}(\gamma) \| \leq K |\gamma|_{\mathcal{L}}, \forall \gamma \in \mathcal{L}
\]

Now let $\gamma \in R_{\emph{exp}}(\mathcal{L})$ such that $| \tilde{h} |=c$.
Then for any $n$ \[ cn = | \tilde{h}(\gamma^{n}) | \leq K \left| \gamma^{n}
\right|_{\mathcal{L}} = K \log(n + 1) \]

\noindent So we must have $\tilde{h} = \vec{0}$, hence $R_{\emph{exp}}(\mathcal{L})
\subset ker(\tilde{h})$. \smallskip

Conulier showed in \cite{Cornulier} that for a connected, simply connected solvable Lie group $X$, the
\emph{asymptotic dimension}, defined as the dimension of $X/R_{\emph{exp}}(X)$ is a quasi-isometric invariant.  This means that
\[ \mbox{dim } \mathcal{L}/R_{\emph{exp}}(\mathcal{L}) = \mbox{dim }
G/R_{\emph{exp}}(G) = \mbox{dim } \mathbf{A} = n \]

\noindent However as $\tilde{h}$ is onto, the dimension of $\mathcal{L}/ker(\tilde{h})$
also equals $n$.  So $ker(\tilde{h})$ cannot be strictly bigger than
$R_{\emph{exp}}(\mathcal{L})$. \medskip

By construction, $\mathcal{L}$ quasi-acts properly on $G$ as a uniform group of
quasi-isometries, which means $ker(\tilde{h})$ quasi-acts properly on $\mathbf{H}$ as a uniform
group of quasi-similarities, so $ker(\tilde{h})$ is finitely generated by Proposition 20
in \cite{Dy}.  Fix a $p \in G$.  Then $\gamma \mapsto B \circ A (\gamma)(p)$ is a quasi-isometry from $\mathcal{L}$ to $G$.  Here $B,A$ refers
to the homomorphisms mentioned above.  The restriction of this map to $ker(\tilde{h}) =
R_{\emph{exp}}(\mathcal{L})$ produces a quasi-isometric embedding of
$R_{\emph{exp}}(\mathcal{L})$ into $\mathbf{H}$.  However since the cohomological
dimension is a quasi-isometry invariant \cite{Gersten}, the dimension of
$R_{\emph{exp}}(\mathcal{L})$ must equal that of $\mathbf{H}$.  Now by theorem 7.6 of
\cite{Benson}, this embedding must be coarsely onto, which means
$R_{\emph{exp}}(\mathcal{L})$ is quasi-isometric to $\mathbf{H}$, so $R_{\emph{exp}}(\mathcal{L})$ must be virtually
$\mathbf{H}$ since the latter is abelian.  \end{proof}

\bold{Example of a unimodular solvable Lie group not Q.I. to any finitely generated
groups}

We need the following result which is stated in \cite{EFW1}, and whose proof is finished in \cite{Dy}.
\begin{theorem}\label{showing lattice} \textit{Theorem 1 in  \cite{Dy}}
If the rank of $G$ is 1, then a finitely generated group $\Gamma$ quasi-isometric to $G$
is virtually a lattice in $G$. \end{theorem}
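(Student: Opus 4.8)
The plan is to deduce this from the structure theorem for self-quasi-isometries proved above (Theorem \ref{behaviors of phi}) in the rank $1$ case, combined with Dymarz's analysis \cite{Dy} of uniform groups of quasi-similarities on the boundary of a rank $1$ space; the overall scheme mirrors the proofs of Corollaries \ref{showing polycyclic} and \ref{showing lattice in almost me}, except that here $\psi$ need not be diagonalizable, so one must work with the quasi-metric boundaries $(\partial^{-}_{[\alpha]}, D_{[\alpha]})$ of the two root classes of $G$ in their full generality. Let $\varphi\colon \Gamma \rightarrow G$ be a $(\kappa,C)$ quasi-isometry. Conjugating the left translations $L_{\gamma}$ by $\varphi$ produces a uniform subgroup $\tilde{\Gamma}=\{\tilde{L}_{\gamma}=\varphi L_{\gamma}\varphi^{-1}\} \le QI(G)$ whose quasi-action on $G$ is proper and cobounded. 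Since $G$ has rank $1$, $\mathbf{A}=\mathbb{R}$, and by unimodularity there are exactly two root classes $[\alpha]^{+}, [\alpha]^{-}$ with $\mathit{l}_{[\alpha]^{+}}+\mathit{l}_{[\alpha]^{-}}=0$.

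By Theorem \ref{behaviors of phi}, each $\tilde{L}_{\gamma}$ is bounded distance from a composition of a left translation and a standard map whose $\mathbf{A}'$-part is affine with linear part $\pm 1 \in O(1)$; the sign detects whether $[\alpha]^{+}$ and $[\alpha]^{-}$ are interchanged, so the map $\tilde{\Gamma}\rightarrow \mathbb{Z}/2$ recording it is a homomorphism with finite-index kernel $\tilde{\Gamma}_{0}$. For $\gamma \in \tilde{\Gamma}_{0}$, Proposition \ref{QI preserving flats} shows $\tilde{L}_{\gamma}$ induces quasi-similarities of $(\partial^{-}_{[\alpha]^{+}},D_{[\alpha]^{+}})$ and of $(\partial^{-}_{[\alpha]^{-}},D_{[\alpha]^{-}})$ with constants uniform in $\gamma$. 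Hence $\tilde{\Gamma}_{0}$ acts on $(\partial^{-}_{[\alpha]^{+}},D_{[\alpha]^{+}}) \times (\partial^{-}_{[\alpha]^{-}},D_{[\alpha]^{-}})$ as a uniform group of quasi-similarities, and this action is proper and cobounded because the quasi-action on $G$ was.

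Now I would feed this into Dymarz's machinery. By Theorem 2 of \cite{Dy} the image in each boundary factor can be conjugated into the group of almost similarities, and then, exactly as in the \textbf{Claim} inside the proof of Corollary \ref{showing polycyclic}, the relation $\mathit{l}_{[\alpha]^{+}}+\mathit{l}_{[\alpha]^{-}}=0$ forces the scaling constant of each root class to be $t_{[\Xi],\gamma}=e^{\Xi_{0}(\mathbf{s}_{\gamma})}$ for a single $\mathbf{s}_{\gamma}\in \mathbf{A}=\mathbb{R}$. This yields a continuous surjective homomorphism $h\colon \tilde{\Gamma}_{0}\rightarrow \mathbb{R}$, $\gamma \mapsto \mathbf{s}_{\gamma}$, whose kernel acts on $\mathbf{H}=\mathbb{R}^{m}$ as a uniform, proper, cobounded group of almost isometries; thus $\ker h$ is finitely generated, quasi-isometric to $\mathbb{R}^{m}$, hence virtually $\mathbb{Z}^{m}$. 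Reassembling the conjugated boundary action with the $\mathbb{R}$-translations exhibits $\tilde{\Gamma}_{0}$, virtually, as a cocompact lattice in $AIsom(G)$, and then the exponential-radical / asymptotic-dimension argument of Corollary \ref{showing lattice in almost me} (Cornulier \cite{Cornulier}, Osin \cite{Osin}, Lemma \ref{splitting implied}) identifies the connected envelope with a unimodular, non-degenerate, rank $1$ abelian-by-abelian group quasi-isometric to $G$; rank $1$ rigidity (the defining derivation being determined up to a positive scalar, realized by a Lie group isomorphism) forces this envelope to be isomorphic to $G$. Tracking back through the finite-index reductions shows $\Gamma$ is virtually a lattice in $G$.

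The main obstacle will be the final passage: upgrading a uniform quasi-similarity action on the (possibly merely quasi-metric) visual boundaries to an honest cocompact lattice embedding in $G$. This is precisely the content of \cite{Dy} and is not reproved here — it requires Dymarz's rigidity theorem conjugating quasi-similarities into almost similarities on these nilpotent-type boundaries, the cocycle computation pinning the scaling factors to the one-parameter family $\{e^{\Xi_{0}(\mathbf{s})}\}$, and a Furman-style coupling argument assembling the Lie envelope. The contribution of the present paper to the argument is the rank $1$ instance of Theorem \ref{behaviors of phi}, obtained through the coarse-differentiation arguments of \cite{EFW2}, which is exactly what makes the uniform boundary action available in the first place.
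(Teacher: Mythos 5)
The paper does not give its own proof of this statement: it is cited verbatim as Theorem~1 of \cite{Dy}, with the sentence immediately preceding it explaining that it ``is stated in \cite{EFW1}, and whose proof is finished in \cite{Dy}.'' There is therefore no in-paper argument to compare against. Your sketch is a reasonable reconstruction of how the argument runs, and it correctly mirrors the pattern of the proofs of Corollaries~\ref{showing polycyclic} and \ref{showing lattice in almost me}: conjugate the $\Gamma$-action to a uniform quasi-action on $G$, pass to the finite-index subgroup preserving root classes, invoke Theorem~2 of \cite{Dy} to conjugate the induced boundary quasi-similarities into almost similarities, pin the scaling constants to a one-parameter family via the unimodularity relation, and then assemble a connected envelope by the Furman/Osin/Cornulier machinery before invoking rank~$1$ rigidity.

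Two cautions. First, you open by asserting that ``here $\psi$ need not be diagonalizable,'' but you then immediately apply Theorem~2 of \cite{Dy}, whose proof lives on the quasi-conformal boundary of $\mathbb{R}^{n}\rtimes_{D}\mathbb{R}$ with $D$ diagonalizable; the theorem as quoted in this paper inherits that hypothesis (as do Corollaries~\ref{showing polycyclic} and \ref{showing lattice in almost me}), and the one place it is used in this paper --- the group with weights $1,1,-2$ --- is diagonalizable, so nothing is actually lost, but you should not advertise a generality that Dymarz's conjugation theorem does not supply. Second, the conclusion is ``virtually a lattice in $G$,'' not merely in some unimodular envelope; the step that the assembled envelope is isomorphic to $G$ itself --- because in rank~$1$ the diagonalizable derivation is determined, up to a positive scalar that can be absorbed into reparametrizing the $\mathbb{R}$-factor, by the roots and root-space dimensions recovered from Theorem~\ref{behaviors of phi} --- is genuinely special to rank~$1$ and deserves to be spelled out rather than gestured at, since it is exactly what fails in higher rank (where Corollary~\ref{showing lattice in almost me} only yields a lattice in \emph{some} semidirect product of $\mathbf{H}$ and $\mathbf{A}$).
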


Let $G$ be a rank 1 group with weights $1,1,-2$.  It has no lattice\footnote{because if an elements of
$SL_{3}(\mathbb{Z})$ has two distinct eigenvalues, one of them repeated twice, then they
have to be $-1,-1,1$.  See \cite{Hasegawa}. But the diagonal matrix with those eigenvalues as entries is not conjugate
to the action of $\mathbb{R}$ on $\mathbb{R}^{3}$ i.e. the diagonal matrix with entries $e^{1}, e^{1},
e^{-2}$.} so Theorem \ref{showing lattice} says that it cannot be
quasi-isometric to any finitely generated groups.


\begin{thebibliography}{999}

\bibitem[BH]{BH} M. Bridson, A. Haefliger.  Metric Spaces of non-positive curvature.
Springer-Verlag Berlin Heidelberg 1999


\bibitem[C]{Cornulier} Y. de Cornulier. Dimension of asymptotic cones in Lie groups.

\bibitem[D]{Dy} T. Dymarz. Large scale geometry of certain solvable groups.  to appear in GAFA.

\bibitem[EFW0]{EFW0} A. Eskin, D. Fisher, K. Whyte.  Quasi-isometries and rigidity of solvable groups.
Preprint.


\bibitem[EFW1]{EFW1} A. Eskin, D. Fisher, K. Whyte. Coarse differentiation of quasi-isometries I:
spaces not quasi-isometric to Cayley graphs.  Pur. Appl. Math. Q.


\bibitem[EFW2]{EFW2} A. Eskin, D. Fisher, K. Whyte. Coarse differentiation of quasi-isometries II:
Rigidity for Sol and Lamplighter groups, preprint

\bibitem[F]{Furman} A. Furman. Mostow-Margulis rigidity with locally compact targets,
Geom. Funct. Anal (GAFA) 11 (2001), 30-59.

\bibitem[FM]{Benson} B. Farb, L. Mosher.  On the asymptotic geometry of abelian-by-cyclic groups.
Acta Math. , Vol. 184, No.2 (2000), 145-202.




\bibitem[G]{Gersten} S.M. Gersten. Quasi-isometry invariance of cohomological dimension,
Compes Rendues Acad.Sci. Paris. Serie 1 Math 316 (1993), 411-416


\bibitem[H]{Hasegawa} K. Hasegawa. Four dimensional compact solvmanifold with and without complex analytic
structures.


\bibitem[K]{Knapp} A.W.Knapp. Lie groups beyond an introduction. Birkhauser. 2002

\bibitem[R]{Raghu} M. Raghunathan. Discrete subgroups of Lie groups. Springer-Verlag
1979


\bibitem[O]{Osin} D. Osin. Exponential radicals of solvalbe Lie groups, J. Algebra 248
(2002), 790-805.


\bibitem[P]{Pg} I. Peng. Coarse differentiation and quasi-isometries of a class of solvable Lie groups I. Preprint.


\end{thebibliography}
\end{document}